\colorlet{darkblue}{blue!50!black}
\newcommand{\vertiii}[1]{{\left\vert\kern-0.25ex\left\vert\kern-0.25ex\left\vert #1
		\right\vert\kern-0.25ex\right\vert\kern-0.25ex\right\vert}}
\newcommand{\e}{\varepsilon}
\def\cB{{\mathcal B}}
\newcommand{\R}{{\mathbb R}}
\newcommand{\pP}{{\mathbb P}}
\newcommand{\I}{{\mathbb I}}
\def\cN{{\mathcal N}}
\newcommand{\ty}{\infty}
\newcommand{\nn}{{\mathfrak n}}
\newcommand{\mm}{{\mathfrak m}}
\newcommand{\mmm}{{\mathfrak{m}}}
\newcommand{\mT}{{\mathbb T}}
\newcommand{\bae}{\begin{equation}\begin{aligned}}
		\newcommand{\eae}{\end{aligned}\end{equation}}
\newcommand{\baee}{\begin{equation*}\begin{aligned}}
		\newcommand{\eaee}{\end{aligned}\end{equation*}}
\newcommand{\aA}{{\cal A}}
\newcommand{\FF}{{\cal F}}
\def\mE{{\mathbb E}}
\def\dif{{\mathord{{\rm d}}}}
\def\cF{{\mathcal F}}
\def\cZ{{\mathcal Z}}
\def\mP{{\mathbb P}}
\def\mN{{\mathbb N}}
\def\mR{{\mathbb R}}
\def\mZ{{\mathbb Z}}
\def\cD{{\mathcal D}}
\def\cM{{\mathcal M}}
\def\cA{{\mathcal A}}
\def\cS{{\mathcal S}}
\def\cK{{\mathcal K}}
\def\cR{{\mathcal R}}
\def\eps{\varepsilon}
\def\cL{{\mathcal L}}
\def\mI{{\mathbb I}}
\def\cC{{\mathcal C}}
\newcommand{\lag}{\langle}
\newcommand{\rag}{\rangle}
\newcommand{\dd}{{\textup d}}
\theoremstyle{plain}
\newtheorem*{lemma*}{Lemma}
\newtheorem{theorem}{Theorem}[section]
\newtheorem{lemma}[theorem]{Lemma}
\newtheorem{proposition}[theorem]{Proposition}
\newtheorem{corollary}[theorem]{Corollary}
\theoremstyle{definition}
\newtheorem{definition}[theorem]{Definition}
\newtheorem{condition}[theorem]{Condition}
\newtheorem{remark}[theorem]{Remark}
\newtheorem{example}[theorem]{Example}
\numberwithin{equation}{section}
\theoremstyle{thmstyleone}%
\theoremstyle{thmstyletwo}%
\theoremstyle{thmstylethree}%
\begin{document}

\title[Article Title]{Ergodicity of the viscous scalar  conservation laws with  a degenerate noise}


\author[1]{\fnm{Xuhui} \sur{Peng}}\email{xhpeng@hunnu.edu.cn}

\author[2,3]{\fnm{Houqi} \sur{Su}}\email{marksu@amss.ac.cn}

\affil[1]{\orgdiv{MOE-LCSM, School of Mathematics and Statistics}, \orgname{Hunan Normal University}, \orgaddress{\street{Yuelushan road NO. 36}, \city{Changsha}, \postcode{410081}, \state{Hunan}, \country{P.R. China}}}

\affil[2]{\orgdiv{School of Mathematical Sciences}, \orgname{Capital Normal University}, \orgaddress{\street{Baiduizijia NO. 23}, \city{Beijing}, \postcode{100048}, \state{Beijing}, \country{P.R. China}}}

\affil[3]{\orgdiv{Academy of Mathematics and Systems Science}, \orgname{Chinese Academy of Sciences}, \orgaddress{\street{Zhongguancun road NO. 55}, \city{Beijing}, \postcode{100190}, \state{Beijing}, \country{P.R. China}}}


\abstract{	This paper establishes the ergodicity in  $H^\nn,\nn=\lfloor\frac{d}{2}+1\rfloor$ of the viscous scalar conservation laws on torus $\mT^d$ with  general polynomial flux  and   a degenerate noise.  The
	noise could appear in as few as several   directions. We introduce a localized framework that restricts attention to trajectories with controlled energy growth, circumventing the limitations of traditional contraction-based approaches. This  localized   method  allows for a  demonstration of  e-property and consequently proves the uniqueness of   invariant measure  under a H{\"o}rmander-type condition. Furthermore,  we characterize   the absolute continuity of the invariant measure's projections onto  any finite-dimensional subspaces under requirement on a new algebraically non-degenerate condition for the flux.}

\keywords{Stochastic conservation laws, Invariant measure, H{\"o}rmander condition}



\maketitle

	\section{Introduction and Main results}
\subsection{Introduction}
In this paper, we investigate the long  time behaviour of stochastic viscous scalar conservation laws (SVSCL) with a degenerate noise:
%
\begin{eqnarray}\label{1-1 pre}
	\left\{
	\begin{split}
		&\dif u_t+\operatorname{div} A(u_t)\dif t=\nu\Delta u_t\dif t+\dif \eta_t,\quad x\in \mathbb T^d,\,u_t(x)\in \R,
		\\ & u_t\big|_{t=0}=u_0,
	\end{split}
	\right.
\end{eqnarray}
where the viscosity coefficient $\nu>0$, and $\mT^d=[-\pi,\pi]^d$  denotes the $d$-dimensional torus. The noise $\eta_t$ is a highly degenerate $Q$-Wiener process, affecting only  on several    number of Fourier modes.
The flux  $A=(A_1,\cdots,A_d): \mathbb R\rightarrow\mathbb R^d$ is  defined such that each component $A_i(u),i=1,\cdots,d$ is   a polynomial in $u.$

Viscous conservation laws are ubiquitous in science and engineering, arising in models of fluid dynamics, traffic flow, chemical reactions, and numerous other phenomena. Understanding their long-time behavior is crucial for determining the stability and equilibrium states of the associated systems. In the context of stochastic partial differential equations (SPDEs), a fundamental aspect of long-time behavior is ergodicity. Ergodicity plays a vital role in modeling physical systems subject to noise, as it enables the prediction of long-term behavior and the characterization of macroscopic properties from microscopic dynamics.

The transition semigroup $P_t$ of \eqref{1-1 pre} is defined by $P_t \phi(u_0) = \mathbb E\phi(u_t), \forall \phi \in C_b(H^\nn)$, where  $\nn=\lfloor\frac{d}{2}+1\rfloor$.An invariant measure  of  $P_t$  is a probability measure $\mu$ on $H^\nn$  such that $P_t^*\mu = \mu$, where $P_t^*$ denotes the  dual of $P_t$.
{    The primary focus of this paper is the ergodicity of equation \eqref{1-1 pre}, specifically the existence and uniqueness of an invariant measure. While the existence of an invariant measure is well-established in our setting (see, e.g., \cite{DPZ96}), the main challenge lies in proving its uniqueness.

}

{
	The  equation  (\ref{1-1 pre}) is closely  related with stochastic Burgers equation.
	When  $d=1$ and  $A(u)=\frac{u^2}{2} $,  the  equation  (\ref{1-1 pre})
	is commonly referred to as the one-dimensional stochastic Burgers equation.
	For the inviscid($\nu=0$)  one dimensional stochastic  Burgers equation on torus $\mT$,
	E et al. \cite{EMKY-00} establish a seminal result, proving the existence, and  uniqueness of   invariant measure.
	Since the flux $A$  in \cite{EMKY-00}  is quadratic, the solution can be expressed via the Lax-Oleinik formula, which plays a key role in the establishment of ergodicity.
	Bakhtin,  Cator and  Khanin \cite{BCK-14}  generalize
	the results in  \cite{EMKY-00} to  the real line.
	For the  viscous
	($\nu>0$)  one dimensional stochastic  Burgers equation,
	Bakhtin and Li \cite{BL-19} establish  an  ergodic result on the real line,   relying on the Feynman-Kac formula and  the assumption  $A(u)=\frac{u^2}{2}$.
	Regarding the ergodicity of the general
	$d$-dimensional stochastic Burgers equation, we refer readers to \cite{IK-03}, \cite{B16}, and related works.
}

%
%
%

When concerning the general flux in scalar conservation laws (\ref{1-1 pre}), the  $L^1$ contraction is widely utilized in the existing  study of the ergodicity of stochastic scalar conservation laws.
For deterministic scalar conservation laws, this powerful $L^1$ contraction of the deterministic nonlinear semigroup is derived from the celebrated Kru\v{z}kov estimate.
$L^1$  contraction  property   ensures all trajectories don't leave each other too far    for  almost every noise realization, which is very useful in  in proving the uniqueness of the invariant measure.
%
%
%

Since the primary focus of this article is on stochastic conservation laws, we begin with a brief overview of recent developments in this field. In the inviscid case ($\nu=0$), under the assumption of a sub-quadratic growth condition on the flux $A$, Debussche and Vovelle \cite{DV15} establish pathwise convergence using a small noise argument to demonstrate ergodicity.
Moreover, they also demonstrate the existence of an invariant measure for sub-cubic fluxes and the uniqueness of the invariant measure for sub-quadratic fluxes. Due to the challenges associated with establishing tightness within the $L^1$ framework, Debussche and Vovelle do not consider sup-cubic fluxes when addressing the existence of an invariant measure. By constructing a weighted $L^1$ contraction, Dong, Zhang (Rangrang), and Zhang (Tusheng) \cite{DZZ23} establish ergodicity as well as polynomial mixing.
{
The flux  in their paper should   be strictly increasing odd functions and also  satisfy a  non-degenerate condition
\begin{align}
	\label{pp23-1}
	\sum_{j=1}^d|A_j(u)-A_j(v)|\geq C|u-v|^{1+q_0}, \forall u,v\in \mR,
\end{align} where $C>0,q_0>1$ are constants.
Therefore,
a stronger  $L^1$ contraction   property   is obtained,   please see  \cite[Theorem 4.2]{DZZ23} for details.}
When viscosity is present ($\nu>0$), Boritchev \cite{B13} establishes polynomial ergodicity for strongly convex fluxes with polynomial growth and spatially smooth noise. Martel and Reygner \cite{MR20} relax the convexity condition and the sub-quadratic growth condition on the flux $A(u)$ to allow for any polynomial growth, while also proving the uniqueness of the invariant measure for $P_t$. In both works \cite{B13, MR20}, the analysis of ergodicity is restricted to a one-dimensional spatial domain.
{  We would also like to mention the work of Dirr and Souganidis \cite{DS-05}, in which they thoroughly investigate the invariant measures of stochastic Hamilton-Jacobi equations with additive noise. }
%
%
{In all the aforementioned works, for a general flux $A(u)$ and spatial dimension $d\in \mN$, establishing ergodicity typically requires either a convexity condition on $A(u)$ or the assumption that $A(u)$ is a strictly increasing odd function satisfying (\ref{pp23-1}).
These restrictions on the flux are imposed to ensure that the contraction between solutions originating from different initial data is sufficiently strong to establish ergodicity. As a result, the existing ergodicity results via contraction--based route are {\textit{independent of the number of noise terms.}}
{  In this paper,
we focus on  the viscous case ($\nu>0$).
For a general flux and dimension, we investigate the ergodicity of stochastic scalar conservation laws (\ref{1-1 pre}) under a H\"ormander-type condition. }
In other words, our ergodicity results {\textit{depend on the number of noise terms and the interactions between the flux and the noise.}} Our findings hold for general flux $A(u)$ when $d=1$. {For $d\geq 2$,
We only require a Hormander-type condition, without needing $A(u)$  to be quadratic, convex or a strictly increasing odd function satisfying  (\ref{pp23-1}), as required in \cite{DZZ23}.
}

}




In our problem \eqref{1-1 pre}, the flux is \textit{general polynomial type}, the noise is \textit{highly degenerate} and the variable is in  \textit{high-dimensional} spatial domain.  When the flux and spatial dimension are general, the solutions cannot be expressed using the Lax-Oleinik formula or the Feynman-Kac formula, as in \cite{EMKY-00,BCK-14,BL-19}, due to the absence of a quadratic flux. Furthermore, the contractivity approaches employed in \cite{DS-05,DV15,MR20,DZZ23} encounter fundamental challenges in this setting. This is a significant reason for introducing viscosity, as the $L^2$ space becomes the most natural framework for studying ergodicity within a Markovian setting. However, pathwise contraction does not hold in the $L^2$  topology, or at the very least, such contraction in $L^2$ depends on the viscosity, as demonstrated in \cite{Mat99}. On the other hand, the independence of ergodicity from viscosity is crucial. It characterizes the external forces that ensure ergodicity across a wide range of physical scenarios, including turbulent regimes where the interplay between viscosity, energy injection, and dissipative scales plays a pivotal role. For further discussions on this topic, we refer to \cite{HM-2006,BZ-17,BZPW-19,BP22}.
Therefore,  we consider the ergodicity
of \eqref{1-1 pre}  with only several noises
in the viscous situation. For the equation   \eqref{1-1 pre} with polynomial growth  flux,  to the best of our knowledge, there is no well-posedness result in  $L^2$ space. Therefore,  we consider the equation  \eqref{1-1 pre} in space $H^\nn, \nn=\lfloor\frac{d}{2}+1\rfloor.$

In the general ergodic theory of Markov processes, the analysis often relies on the  \textit{smoothing effect} of the Markov semigroup rather than \textit{contractivity}. For instance, the celebrated Doob-Khasminskii theorem states that the strong Feller property and irreducibility of a Markov semigroup imply the uniqueness of the invariant measure. Intuitively, greater randomness leads to a smoother semigroup, which facilitates the establishment of uniqueness.
However, for the semigroup generated by solutions to SPDEs, it is often impossible to prove the strong Feller property when the noise is degenerate. Hairer and Mattingly \cite{HM-2006} introduce the concept of the asymptotic strong Feller property and utilize it to establish exponential mixing for the 2D Navier-Stokes equations on the torus, provided that the random perturbation is an additive Gaussian noise involving only a finite number of Fourier modes.

Unfortunately, the classical approach outlined in the preceding paragraph also faces significant challenges when applied to the viscous scalar conservation laws (\ref{1-1 pre}) with a general flux $A(u)$. Below, we provide a detailed explanation of these difficulties.
In the papers~\cite{HM-2006, HM-2011},
their  ideas of proof of  the  asymptotic strong Feller property is~to approximate the perturbation
$J_{0,t}\xi$ caused by the variation of the initial condition with a variation, $\cA_{0,t}v=\cD^v u_t $, of the noise by an appropriate   process $v$.
For rigorous  definitions of  $J_{0,t}\xi$ and  $\cD^v u_t$,
please see (\ref{10-1}) and  (\ref{p17-1}) below, respectively.
Denote by $\rho_t$ the  residual error between~$J_{0,t}\xi$    and~$\cA_{0,t}v$:
\begin{align*}
\rho_t=J_{0,t}\xi-\cA_{0,t}v.
\end{align*}
%
%
{ By the formula of integration by parts
in Malliavin calculus,} it holds that
\bae\label{IBP}
D_\xi  P_t \varphi(u_0) & =\mathbf{E}_{u_0}\big((D  \varphi)\left(u_t\right)J_{0,t}\xi  \big)  =\mathbf{E}_{u_0}\big((D  \varphi)\left(u_t\right) (\cD^v u_t+ \rho_t)\big) \\
& =\mathbf{E}_{u_0}\Big(\varphi\left(u_t\right) \int_0^t v(s) d W(s)\Big)+\mathbf{E}_{u_0}\big((D \varphi)\left(u_t\right) \rho_t\big) \\
& \leq\|\varphi\|_{\infty} \Big(\mathbf{E}_{u_0}\big|\int_0^t v(s) d W(s)\big|^2\Big)^{1/2}+\|D  \varphi\|_{\infty} \mathbf{E}_{u_0}\left\|\rho_t\right\|.
\eae
In the above,   the integral $\int_0^t v(s) d W(s)$ is interpreted as
the Skrohod integral in Malliavin calculus,  $\|\cdot \|$ denotes the $L^2$ norm,
$D_\xi f$ is the    Fr\'echet derivative of $f$  in the direction of $\xi$,
and $\xi\in H:=\{ u: \int u(x)  \dif x = 0\}.$
Hairer and   Mattingly \cite{HM-2006, HM-2011}  choose suitable direction $v$ and prove that
\begin{eqnarray}
\label{p17-2}
\Big(\mathbf{E}_{u_0}\big|\int_0^t v(s) d W(s)\big|^2\Big)^{1/2}\leq C(\|u_0\|),~  \mE \|\rho_t\|\leq C(\| u_0\| )e^{-\gamma t},\quad  \forall t\geq 0,
\end{eqnarray}
where $\gamma$ is a positive constant and $C$ is a local bounded function on $[0,\infty).$
(\ref{IBP}) and  (\ref{p17-2}) imply
the  following gradient estimate
\begin{eqnarray}
\label{p17-3}
\left\|D  P_{t} \varphi(u_0)\right\| \leq C(\|u_0 \|)\left(\|\varphi\|_{\infty}+e^{-\gamma t} \|D \varphi\|_{\infty}\right)
\end{eqnarray}
which is called   asymptotic strong Feller property in \cite{HM-2006, HM-2011}.
In the  above  processes, it naturally requires some integrable property of random variables $J_{s,t}\xi,J_{s,t}^{(2)}(\phi,\psi),\xi,\phi,\psi\in H, $
where  $J^{(2)}_{s,t}(\phi,\psi)$ is  the second derivative of $u_t$ with respect to initial value  $u_0$ in the directions of $\phi$ and $\psi$(see (\ref{0927-1}) for more details).
However, since we consider a general flux $A(u)$, establishing the integrability of the random variables $J_{s,t}\xi,J_{s,t}^{(2)}(\phi,\psi)$ poses a significant challenge.
In this scenario, \cite[Assumption B.3]{HM-2011} may not hold, potentially resulting in the non-integrability of $\rho_t$ rather than the desired vanishing moments as that in (\ref{p17-2}).
In particular, achieving a result analogous to (\ref{p17-3}) is currently unattainable in our setting.
In general, unbounded variations ( i.e. the non-integrability of $J_{s,t}\xi$ and $J_{s,t}^{(2)}(\phi,\psi)$ is closely related to the instability properties of the linearized problem, which can often provide crucial insights into the long-term behavior of the full nonlinear system. Specifically, the presence of instability in the linearized dynamics can lead to a bifurcation, leading to the emergence of distinct invariant measures for the nonlinear system, see \cite{HZ-2021} for an interesting example.


In this paper, we develop a localized method to address the challenges outlined in the preceding paragraph. Since the choice of the direction $v$  in (\ref{IBP}) will depend on
the Malliavin covariance matrix  $\cM_{0,t}$ of $u_t$,
it requires  some  invertibility on  $\cM_{0,t}$.
This is where the H{\"o}rmander's condition becomes essential: the Lie algebra generated by nonlinearity term  and the noises must be dense. Analysing the invertiblity of the Malliavin matrix $\cM_{0,t}$, we also characterize the absolute continuity of the unique invariant measure under a new algebraically non-degenerate condition \eqref{p0209-5} for the flux $A$.
{ The usual non-degenerate condition for the flux $A$ is:
\bae\label{nond}
\sup _{\alpha \in \mathbb{R}, \beta \in \mathbb{S}^{d-1}}\operatorname{measure}\{\xi \in \mathbb{R} ;|\alpha+\langle\beta, A^\prime(\xi)\rangle|<\varepsilon\}\leq C \epsilon^b,
\eae
where $\mathbb{S}^{d-1}$ is the unit sphere in $\mathbb R^d$, $C>0$ and $b\in(0,1].$ 	
The above  condition   is firstly addressed in \cite{DLM91} to study the regularity of the kinectic solution. It is vital for the long-time behavior for both demerministic and stochastic settings since it excludes trivial stationary solution such as $u(\alpha_1 x_1+\cdots+\alpha_d x_d)$, one can refer to \cite{DV09,CP09,DV15,GS17} for more details.
Observe that our conditions depend on the iterations between the noises and   the flux $A$ while the  usual non-degenerate condition only poses restrictions on 	flux $A$. Therefore,
in many  cases,  our condition is strictly weaker than usual non-degenerate condition.
For this, please to  see Remark \ref{rm nond}.
}

Now, we provide a brief overview of the localized method employed in this paper. Our approach is inspired by \cite{PZZ24}, in which Peng, Zhai, and Zhang establish the ergodicity for stochastic 2D Navier-Stokes equations driven by a highly degenerate pure jump L\'{e}vy noise $W_{S_t}$ where
$(W_t)_{t\geq 0}$ is a standard Brownin motion and $S_t$ is a pure jump process.
In that paper,  Peng, Zhai and Zhang   identify the  ``bad part" of the sample space $\Omega$, denoted by $\{\omega\in\Omega:\Theta>M\}$, where $\Theta$ is a random variable depending solely on $(S_t)_{t\geq 0}$.
On the ``good part" $\{\omega\in\Omega:\Theta\leq  M\}$  of the sample space, they derive a property analogous to the asymptotic strong Feller property (\ref{p17-3}).
With regard to    the ``bad part", it   has the property  $\lim_{M\rightarrow\infty}\mathbb{P}(\omega\in\Omega:\Theta>M )=0$.
Using this technique,  they prove that the semigroup generated by the solutions possesses the e-property. The e-property, combined with a form of irreducibility, implies the uniqueness of the invariant probability measure. For further details, see, for example, \cite[Theorem 1]{KSS12} and \cite[Proposition 1.10]{GL15}.
Compared to that in \cite{PZZ24},  there are  many new difficulties appeared in this paper, we only mention some of them here.
\begin{itemize}
\item[(1)] Clearly, their localized method relies heavily on the pure jump process $S_t$ which is absent in our setting.
Roughly speaking,  we also need to define a random variable
$\Theta$    to partition the sample space the sample space $\Omega$ into ``bad" and  ``good" parts.
The key challenge in our localized method lies in defining the random variable $\Theta$.
In this paper, $\Theta$ depends on the energy growth of the solution $(u_t)_{t\geq 0}$ along the Lyapunov type structures and, consequently, on the noise $\eta_t$ and initial value $u_0$.
Also,  our proof is much more complicated  since the derivatives of $\Theta$ with respect to initial value and noise are {\bf{not zero}}.
\item[(2)]
The solutions presented in this paper exhibit weaker integrability compared to those in  \cite{PZZ24}   and \cite{HM-2006}. Specifically, while the solutions in \cite{PZZ24}  possess exponential integrability at certain stopping times and those in \cite{HM-2006}  are exponentially integrable for all
$t\geq 0$, the solutions here lack exponential integrability entirely.
Crucially, the estimates of
$J_{s,t}\xi,J_{s,t}^{(2)}(\phi,\psi),\xi,\phi,\psi\in H $ etc
are ultimately governed by the behavior of the solutions. Thus, a more careful estimation of
$J_{s,t}\xi,J_{s,t}^{(2)}(\phi,\psi)$ etc
and a more careful choice of $\Theta$
are  required  here so that the bad part of  them can be controlled by  $\Theta$ and the localized method works.

\item[(3)]{
The solutions in  \cite{PZZ24}   and \cite{HM-2006} have more regularity than that in this paper.
If the initial value  $u_0\in L^2$, then the solutions in  \cite{PZZ24}   and \cite{HM-2006} will be smooth after any fixed time $t>0.$
This property plays an important role in the estimate
of the minimum eigenvalue of the Malliavin matrix.
However, with regard to stochastic  conservation
laws,  if the initial value  $u_0\in H^\nn := \{ u \in W^{\nn,2}(\mathbb T^d,\mathbb R): \int_{\mathbb{T}^{d}}u(x)  \dif x = 0\}$ where $W^{\nn,2}$ is the standard Sobolev space,
we can only prove that
the solution still stay in $H^\nn$ for all the time $t\geq 0$.
Thus, compared to the classical literature
that use Mallavin calculus to prove  a property analogous to
(\ref{p17-3}),
we  have to assume that  $u_0$ is in a more regular space $H^{\nn+5}$,	
also we  have to adjust the details of techniques  since we only have
estimates  of  the smallest eigenvalue of the Malliavin matrix
when the  initial value  $u_0$ is in $H^{\nn+5}.$}


\end{itemize}
\textbf{ In summary, compared to existing literature, the method employed in this study can handle stochastic partial differential equations with fewer integrability and  less  regularity properties.  }

Finally, we note that there is a substantial body of work on the ergodicity of SPDEs. Below, we provide a selection of references for readers interested in further exploration. For the case where the driving noise is a L\'{e}vy process, we refer to \cite{BHR16SIAM,DXZ-2014,DWX20,FHR16CMP,MR10,PZ2011,PSXZ2012,WXX17,WYZZ 2209}. For the case of multiplicative noise, i.e., noise that depends on the solution, we refer to \cite{BFMZ24,DGT20,DP24,FZ24,GS17,Od07,Od08,RZZ15}, among others. For the case of localized noise, i.e., noise that acts only on a subset of the domain, we refer to \cite{Ner24,shirikyan-asens2015,Shi2021}, and related works.

\subsection{Main results}
Recall that $\nn= \lfloor  d/2+1\rfloor $.
$u_t=u_t(x)\in \R$ satisfies the equation on the $d$-dimensional tours $\mathbb T^d$
\begin{eqnarray}\label{1-1}
\left\{
\begin{split}
&\dif u_t+\operatorname{div} A(u_t)\dif t=\nu\Delta u_t\dif t+\dif \eta_t,
\\  &u_0\in H^\nn.
\end{split}
\right.
\end{eqnarray}
In the above,  $\nu>0$ is the  viscosity,  $H^\nn := \{ u \in W^{\nn,2}(\mathbb T^d,\mathbb R): \int_{\mathbb{T}^{d}}u(x)  \dif x = 0\}$ where $W^{\nn,2}$ is the standard Sobolev space.  We assume that the noise $\eta_t$ has the following form
\begin{eqnarray}
\label{p1215-1}
\eta_t=\sum_{i\in  \cZ_0}b_ie_i(x)W_i(t),
\end{eqnarray}
where  $ \cZ_0 \subseteq  \mZ_*^d:=\mZ^d\backslash \{ \mathbf{{0}}\}$
and  $W=(W_i)_{i\in \cZ_0}   $
is a
$|\cZ_0|$-dimensional standard   Brownian motion
on a filtered probability space $(\Omega,\FF,  \{\FF_t\}_{t\geq 0}, \pP)$    satisfying
the usual conditions (see   Definition 2.25 in~\cite{KS91}).
Before introducing $e_k(x)$, we define
\begin{equation*}
\mZ_{+}^d=\{(k_1,\cdots,k_d)\in \mZ_*^d: k_{d_{min}}>0 \}, \quad d_{min}:=\min_ {0\leq j\leq d}\{j,k_j\neq 0\}
\end{equation*}
and
$\mZ_{-}^d=\mZ_*^d\backslash\mZ_{+}^d.$
For any $k=(k_1,\cdots,k_d)\in \mZ_*^d$, we set
\begin{equation*}
e_k(x)=
\begin{cases}
\sin \lag k,x\rag& \text{if } (k_1,\cdots,k_d)\in \mZ_{+}^d, \\
{  -\cos\lag k,x\rag}   & \text{if } (k_1,\cdots,k_d)\in \mZ_{-}^d.
\end{cases}
\end{equation*}
The flux $A=(A_1,\cdots,A_d): \mathbb R^d\rightarrow\mathbb R$. Set $ \Bbbk=\max_{1\leq i\leq d}\big\{  \text{deg}(A_i(u)) \big\}$, where for any polynomial $P(u)=\sum_{j=0}^\infty a_ju^j$ of $u$, $\text{deg}(P(u)):=\sup\{j\geq 0:a_j\neq 0\}.
$
We always assume $\Bbbk\geq 1$ and
\bae\label{flux A}
A_i(u)=&\sum_{j=0}^{\Bbbk}c_{i,j}u^j,\quad i=1,\cdots,d.
\eae
Define
\baee
{c}_j:=& (c_{1,j},\cdots,c_{d,j})\in \mathbb R^d, j=1,\cdots,\Bbbk,\\
A^\perp:=&\{{k}\in \mathbb Z^d,\langle {c}_j,{k} \rangle_{\mathbb R^d}=0, \forall j=1,\cdots,\Bbbk\}.
\eaee
For $\Bbbk\geq 2, $ let
\baee
\mathbb{L}:=&\{\ell\in \mZ^d: \ell=\sum_{i=1}^{\Bbbk-1 }\ell^{(i)}, \ell^{(i)}\in \cZ_0,i=1,\cdots, \Bbbk-1 \}
\eaee
and set $\mathbb{L}=\{\mathbf{{0}}\}$ for $\Bbbk=1.$
For $n\geq 1$, define the sequence of sets recursively
\begin{eqnarray*}
\cZ_{n}= \big\{\kappa+\ell\in \mZ^d:\kappa\in \cZ_{n-1},\ell\in   \mathbb{L} ,
\langle {c}_\Bbbk,\kappa+\ell \rangle_{\mathbb R^d}\neq 0 \big\}.
\end{eqnarray*}
Denote
\begin{eqnarray*}
\cZ_\infty=\cup_{n=0}^\infty \cZ_n.
\end{eqnarray*}
Obviously,  $	\cZ_\infty=\cZ_0$ for $\Bbbk=1.$
Throughout this paper, we always assume that the set
$\cZ_0$  appeared in \eqref{p1215-1} is  finite and  symmetric (i.e., $-\cZ_0=\cZ_0$).
Obviously, $\cZ_n,0\leq n\leq \infty$ is also   symmetry,  i.e., $-k\in \cZ_n$ provided that $k\in \cZ_n.$
\begin{condition}[H{\"o}rmander type condition]
\label{16-5}
$\Bbbk\geq 2$ and  the following inclusion  holds:
\begin{eqnarray*}
\cZ_\infty^c\subseteq A^\perp.
\end{eqnarray*}
\end{condition}

In this paper, we adopt the following notation.
$H^k := \{ u \in W^{k,2}(\mathbb T^d,\mathbb R): \int_{\mathbb{T}^{d}}u(x)  \dif x = 0\}$ where $W^{k,2}$ is the standard Sobolev space.
For the case $k=0$, we simply write $H^0$ as $H$ and   $\langle, \rangle$ denotes the inner product in $H.$
Let  $\tilde H $ be a subspace of $H$ generated by the basis $\{e_n: n=(n_1,\cdots,n_d)\in \cZ_\infty \}$ and
denote  by  $\tilde H^\perp $ a subspace of $H$ generated by the basis $\{e_n: n=(n_1,\cdots,n_d)\in  \mZ_*^d \setminus  \cZ_\infty \}.$
Let    $\tilde  H^k= H^k \cap \tilde  H$. We endow the space $H^k$ and $\tilde  H^k$  with the usual  homogeneous Sobolev norm~$\|\cdot \|_k$.
Actually,  for any $k\in \mN$, one has
\begin{eqnarray*}
&& H^k= \big\{x=\sum_{j\in \mZ_*^d }x_je_j\in H,~ \|x\|_k^2:=\sum_{j\in \mZ_*^d } |j|^{2k}x_j^2<\infty \big\},
\\ && \tilde  H^k= \big\{x=\sum_{j\in \cZ_\infty }x_je_j\in H,~ \|x\|_k^2:=\sum_{j\in \cZ_\infty} |j|^{2k}x_j^2<\infty \big\}.
\end{eqnarray*}
We also define
\begin{eqnarray*}
&&
(\tilde H^k)^\perp= \big\{x=\sum_{j\in \cZ_\infty^c  }x_je_j\in H,~ \|x\|_k^2:=\sum_{j\in \cZ_\infty^c } |j|^{2k}x_j^2<\infty \big\},~\forall k\in \mN.
\end{eqnarray*}
\subsubsection{The first main result}

\

{
Before stating the first main result, we will demonstrate the well-posedness of equation \eqref{1-1}. To this end, we first provide the definition of a solution to the equation
\eqref{1-1}.

\begin{definition}
For any $T,n>0$, an $H^n$-valued  $\cF$-adapted process
$\{u_t\}_{t\in [0,T]}$
is called a solution of  \eqref{1-1} on the interval $[0,T]$ if
the following conditions are satisfied,
\begin{itemize}
	\item[(a)] $u \in C([0, T], H^n), \mP$-a.s.;
	\item[(b)]
	the following equality holds for every $t \in  [0, T]$
	and  smooth function  $\phi$  on $\mT^d$ with $\int_{\mT^d} \phi(x)\dif x=0$,
	\begin{align*}
		\langle u_t, \phi\rangle = & \langle u_0,\phi\rangle  +
		\int_{0}^t \nu   \langle u_s,\Delta \phi \rangle\dif s  +\sum_{i=1}^d\int_0^t
		\langle  A_i(u_s), \partial_{x_i}\phi \rangle   \dif s
		\\ & \quad + \langle \eta(t),\phi\rangle,  \quad \mP\text{-a.s.},
	\end{align*}
	where $\langle \cdot ,\cdot \rangle$ denotes the inner product in $H.$
\end{itemize}
%
\end{definition}

\begin{remark}
In the case of $n\geq 2$,
if an  $H^n$-valued  $\cF$-adapted process
$\{u_t\}_{t\in [0,T]}$
is   a solution of  \eqref{1-1}, then it  is also  a strong solution of
(\ref{1-1}) in the sense of  partial differential equations,
i.e.,
for any $0\leq t\leq T$,
\begin{eqnarray*}
	u_t=  u_0  +\int_{0}^t (\nu \Delta u_s -\operatorname{div} A(u_s))  \dif s+\eta(t),  \quad \mP\text{-a.s.}
\end{eqnarray*}

\end{remark}

}

{
Throughout this paper, we set
\begin{equation*}
\mmm=40 \Bbbk d(d+14\Bbbk)^2, \quad \nn=\lfloor  d/2+1\rfloor.
\end{equation*}
Before we state the first main result, we give a proposition.
}

\begin{proposition}\label{wp}
{
For any { $T\geq 1$}  and $u_0 \in H^\nn$,  
there exists a unique solution $u_t\in C([0,T],H^\nn)$   of  equation \eqref{1-1}, s.t.,
\baee
& \mathbb{E}\Big[\sup _{0 \leq t \leq  T}\|u_t\|_{{ \nn}}^2\Big]\leq \|u_0\|_{{ \nn}}^2+ { C\big(T+\left\|u_0\right\|_{L^\mm}^\mm\big) }.
\eaee
}
\end{proposition}
The proof of this proposition is given in Section \ref{2-2}.
Let $P_t(u_0,\cdot)$ be  the transition probabilities of  equation (\ref{1-1}), i.e,
\begin{eqnarray*}
P_t(u_0,A)=\mP(u_t \in A\big|u_0)
\end{eqnarray*}
for every $A\subseteq \mathscr B( H^\nn)$, where $\mathscr B(X)$ consists of all Borel sets of  metric space $X$.
For every $f:H^\nn  \rightarrow \mR$ and probability measure $\mu$ on $H^\nn$, define
\begin{equation*}
P_tf (u_0)=\int_{H^\nn} f (u)P_t(u_0,\dif u), \quad  P_t^*\mu(A)=\int_{H^\nn}P_t(u_0,A)\mu(\dif u_0).
\end{equation*}

The  first  main  theorem  of    this paper is as follows.
\begin{theorem}
\label{16-8}
Consider the  stochastic viscous conservation laws \textup{(\ref{1-1})} with  noise $\eta_t$ given by \textup{(\ref{p1215-1})}.
Assume that  the set
$\cZ_0$  appeared in \eqref{p1215-1} is  finite and  symmetric \textup{(i.e., $-\cZ_0=\cZ_0$)}.
If   $\Bbbk=1$  or  the Condition \textup{\ref{16-5}}  is satisfied,  then   the followings hold.
\begin{itemize}
\item[(\romannumeral1)]
There exists a unique invariant    measure $\tilde \mu$ on $\tilde H^\nn $ i.e., $\tilde \mu$ is a unique  measure on $\tilde H^\nn$ such that $P_t^* \tilde \mu=\tilde \mu$ for every $t\geq 0.$
\item[(\romannumeral2)]
There exists a unique invariant  measure $\mu$  measure on $H^\nn $ such that $P_t^*\mu=\mu$ for every $t\geq 0.$
Moreover, the  unique invariant  measure  $\mu$ on $H^\nn$ has the form:  $\tilde \mu \otimes \delta_0$, where
$\tilde \mu$ the  unique  invariant measure   on $\tilde H^\nn $ and
$ \delta_0$ is the dirac measure  concentrated on $0\in   (\tilde H^\nn)^\perp.$
\end{itemize}
\end{theorem}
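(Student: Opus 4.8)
The plan is: first establish existence of an invariant measure, then reduce the statement on $H^\nn$ to the uniqueness of the invariant measure on $\tilde H^\nn$, and finally prove that uniqueness through a localized e-property argument together with an irreducibility (reachability) property. Existence on $H^\nn$ (and likewise on $\tilde H^\nn$) follows from the Krylov--Bogoliubov procedure once one has the uniform-in-time moment bounds $\sup_{t\ge0}\mathbf{E}_{u_0}\|u_t\|_n^2\le C(\|u_0\|_n)$ for $n\ge\nn$, which come from the dissipativity of $\nu\Delta$ and the polynomial structure of $A$ (the same a priori estimates used to prove Proposition~\ref{wp}); together with the smoothing of the heat semigroup they give tightness of $\frac1T\int_0^T P_t(u_0,\cdot)\,\dif t$ in $H^\nn$. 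For the reduction, let $\Pi,\Pi^\perp$ be the orthogonal projections of $H$ onto $\tilde H$ and $(\tilde H)^\perp$, and split $u_t=v_t+w_t$ with $v_t=\Pi u_t$, $w_t=\Pi^\perp u_t$. Since $\cZ_0\subseteq\cZ_\infty$ the noise does not act on $(\tilde H)^\perp$, and for every $k\in\cZ_\infty^c$ Condition~\ref{16-5} (and the definition of $A^\perp$) gives $\langle c_j,k\rangle_{\mathbb R^d}=0$, $j=1,\dots,\Bbbk$, so the $k$-th Fourier coefficient of $\operatorname{div}A(u_t)$ vanishes identically; hence $\dif w_t=\nu\Delta w_t\,\dif t$, i.e. $w_t=e^{\nu t\Delta}w_0\to0$ in $H^\nn$. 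Thus $\tilde H^\nn$ is invariant under the flow, any invariant measure on $H^\nn$ is of the form $\tilde\mu\otimes\delta_0$ with $P_t^*\tilde\mu=\tilde\mu$, and (ii) follows from (i). (When $\Bbbk=1$ the equation is linear, the Fourier modes decouple, $\cZ_\infty=\cZ_0$, the high modes decay and the $\cZ_0$-modes form a stable finite-dimensional Ornstein--Uhlenbeck system, so $\tilde\mu$ exists, is Gaussian and unique; from now on $\Bbbk\ge2$ and Condition~\ref{16-5} is in force.)

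The heart of the argument is the \emph{e-property} of $(P_t)$ on $\tilde H^\nn$: for every bounded Lipschitz $\varphi$, $\{P_t\varphi\}_{t\ge0}$ is equicontinuous at each $u_0$. Following the integration-by-parts scheme \eqref{IBP}, for $\xi\in\tilde H$ I would write $D_\xi P_t\varphi(u_0)=\mathbf{E}_{u_0}\big((D\varphi)(u_t)(\cA_{0,t}v+\rho_t)\big)$, $\rho_t=J_{0,t}\xi-\cA_{0,t}v$, with the Malliavin direction $v$ constructed from a (pseudo-)inverse of the Malliavin covariance matrix $\cM_{0,t}$ of $v_t$ on $\tilde H$. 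Condition~\ref{16-5} enters here in an essential way: the recursive generation of $\cZ_\infty$ from $\cZ_0$ through the interaction with the leading coefficient $c_\Bbbk$ of the flux, together with a H\"ormander/Norris-type argument, shows that $\cM_{0,t}$ is a.s. invertible on $\tilde H$, with quantitative lower bounds on events where the trajectory has controlled energy. The new ingredient, compared with \cite{HM-2006,HM-2011}, is a \emph{localizing} random variable $\Theta=\Theta(u_0,(u_t)_{t\ge0})$ measuring the energy growth of the solution, designed so that $\lim_{M\to\infty}\sup_{\|u_0\|_\nn\le R}\pP(\Theta>M)=0$ for every $R$. On the good event $\{\Theta\le M\}$ one arranges $v$ so that $\mathbf{E}_{u_0}\big(\mathbf{1}_{\{\Theta\le M\}}\big|\int_0^t v\,\dif W\big|^2\big)\le C(R,M)$ and $\mathbf{E}_{u_0}\big(\mathbf{1}_{\{\Theta\le M\}}\|\rho_t\|\big)\le C(R,M)e^{-\gamma t}$ — using that on $\{\Theta\le M\}$ the variations $J_{s,t}\xi$ and $J^{(2)}_{s,t}(\phi,\psi)$ satisfy the moment bounds that fail in general; on the bad event $\{\Theta>M\}$ the contribution is controlled, through the a priori bounds, by $\varepsilon(M)(\|\varphi\|_\infty+\|D\varphi\|_\infty)$ with $\varepsilon(M)\to0$. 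This yields a localized gradient estimate $\|DP_t\varphi(u_0)\|\le C(R,M)\big(\|\varphi\|_\infty+e^{-\gamma t}\|D\varphi\|_\infty\big)+\varepsilon(M)(\|\varphi\|_\infty+\|D\varphi\|_\infty)$; integrating along the segment from $u_0$ to $u_1$ and sending first $u_1\to u_0$ (for fixed large $M$) and then $M\to\infty$ gives $\sup_{t\ge0}|P_t\varphi(u_1)-P_t\varphi(u_0)|\to0$ as $u_1\to u_0$, i.e. the e-property.

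The remaining ingredient is a reachability property: there is a point $z_\star\in\tilde H^\nn$ — one may take $z_\star=0$, toward which the deterministic flow relaxes — such that for every $\varepsilon>0$ and every bounded $B\subseteq\tilde H^\nn$, $\inf_{u_0\in B}\liminf_{T\to\infty}\frac1T\int_0^T P_t\big(u_0,\{u:\|u-z_\star\|_\nn<\varepsilon\}\big)\,\dif t>0$. I would obtain this by combining the dissipativity of the nonlinear semigroup (on time windows where the noise increments are small the solution is driven into a small ball around $0$) with the nondegeneracy from Condition~\ref{16-5}, which makes such windows occur with positive probability uniformly over $u_0\in B$. The e-property together with this reachability property implies, by \cite[Theorem 1]{KSS12} (or \cite[Proposition 1.10]{GL15}), that $(P_t)$ has at most one invariant measure on $\tilde H^\nn$; with existence this proves (i), and the reduction of the first paragraph proves (ii).

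The main obstacle is the construction and handling of $\Theta$. In \cite{PZZ24} the analogous localizing variable depends only on the driving jump clock, hence is unaffected by the variation of the initial datum and by the Malliavin perturbation; here $\Theta$ must depend on the solution, so on $u_0$ and on the Brownian path, and its derivatives in these directions do not vanish. Controlling the extra terms these produce in \eqref{IBP}, while simultaneously coping with the fact that for a polynomial flux $J_{s,t}\xi$ and $J^{(2)}_{s,t}(\phi,\psi)$ are not globally integrable, is where the real difficulty lies; the event $\{\Theta\le M\}$ has to be chosen so that all of these quantities — including the derivatives of $\Theta$ itself — become simultaneously controllable on it.
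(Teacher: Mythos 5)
Your proposal follows essentially the same route as the paper: Krylov--Bogoliubov for existence (using the a priori moment bounds and tightness), reduction to $\tilde H^\nn$ by noting that the $(\tilde H^\nn)^\perp$-modes obey a pure heat equation and hence decay, a localized e-property on $\tilde H^\nn$ built on the integration-by-parts identity with a solution-dependent cutoff variable, and an irreducibility/reachability-of-the-origin argument; uniqueness then follows by combining the e-property with reachability via \cite[Proposition 1.10]{GL15} (the paper phrases this as: distinct invariant measures would have disjoint supports, yet each has $0$ in its support). The only slip worth flagging is that you attribute a role to Condition~\ref{16-5} in the irreducibility step, whereas in the paper Proposition~\ref{16-6} is proved without it --- the Hörmander-type hypothesis enters only in the invertibility of the Malliavin matrix and hence in the e-property.
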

%
%
First,  we demonstrate three  propositions which play key roles in the proof of Theorem \ref{16-8}, then we give a proof of this theorem.

\begin{proposition}
\label{p1218-4}
Under  the   Condition \ref{16-5},
$\tilde H^\nn $ is a  stable space for   the equation \eqref{1-1}, \textup{i.e.},  if the
initial value   $u_0\in \tilde H^\nn $, then $u_t\in \tilde H^\nn $ for all $t\geq 0.$
\end{proposition}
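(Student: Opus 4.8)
The plan is to project the weak form of \eqref{1-1} onto each individual Fourier mode $e_m$, $m\in\mZ_*^d\setminus\cZ_\infty$, and to show that it obeys a scalar linear ODE with zero initial datum, so that this mode stays identically $0$. Two preliminary remarks make the mechanism transparent. First, since $\cZ_0\subseteq\cZ_\infty$ by construction, the noise $\eta_t=\sum_{i\in\cZ_0}b_ie_iW_i(t)$ lies in $\tilde H$, hence $\langle\eta_t,e_m\rangle=0$ whenever $m\notin\cZ_\infty$. Second, $\Delta$ acts diagonally on the Fourier basis, $\Delta e_m=-|m|^2e_m$, so it never mixes $\tilde H$ with $(\tilde H^\nn)^\perp$.

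The key step is to check that the flux term contributes nothing in these directions. Fix $m\in\mZ_*^d\setminus\cZ_\infty$. By Condition \ref{16-5} we have $m\in A^\perp$, that is, $\langle c_j,m\rangle_{\mR^d}=0$ for all $j=1,\dots,\Bbbk$. Since $\partial_{x_i}e_m$ equals $\pm m_i$ times a fixed trigonometric function of $\langle m,x\rangle$, the contraction $\sum_{i=1}^d c_{i,j}\,\partial_{x_i}e_m$ is that same trigonometric function times $\langle c_j,m\rangle$, which vanishes for $j\geq1$; and for $j=0$ it has zero mean on $\mT^d$. Hence, recalling $A_i(u)=\sum_{j=0}^{\Bbbk}c_{i,j}u^j$, for any $u\in H^\nn$,
\[
\sum_{i=1}^d\big\langle A_i(u),\partial_{x_i}e_m\big\rangle=\sum_{j=0}^{\Bbbk}\Big\langle u^j,\ \sum_{i=1}^d c_{i,j}\,\partial_{x_i}e_m\Big\rangle=0,
\]
i.e.\ the nonlinearity disappears when tested against $e_m$.

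Finally I would substitute $\phi=e_m$ (a smooth, mean-zero test function on $\mT^d$) into the weak formulation appearing in the definition of a solution to \eqref{1-1}: the term $\langle u_0,e_m\rangle$ vanishes because $u_0\in\tilde H^\nn$, the noise term vanishes by the first remark, the flux term vanishes by the display, and $\langle u_s,\Delta e_m\rangle=-|m|^2\langle u_s,e_m\rangle$. Setting $g_m(t):=\langle u_t,e_m\rangle$, which is continuous in $t$ since $u\in C([0,T],H^\nn)$ $\mP$-a.s.\ by Proposition \ref{wp}, this gives $g_m(t)=-\nu|m|^2\int_0^t g_m(s)\,\dif s$ with $g_m(0)=0$, hence $g_m\equiv0$. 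Intersecting these almost sure events over the countable index set $\mZ_*^d\setminus\cZ_\infty$ yields, $\mP$-a.s., $\langle u_t,e_m\rangle=0$ for all $m\in\mZ_*^d\setminus\cZ_\infty$ and all $t\geq0$; together with $u_t\in H^\nn$ this is exactly the claim $u_t\in\tilde H^\nn$. I do not anticipate a genuine obstacle; the only point requiring a little care is the standard interchange of the countable family of test functions with the ``for all $t$'' quantifier, which is handled by continuity in $t$ and countability.
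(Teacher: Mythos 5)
Your argument is correct and is essentially the paper's own proof: both test the equation against the modes $e_m$ with $m\in\cZ_\infty^c\subseteq A^\perp$, use the identity $\sum_{i=1}^d c_{i,j}\,\partial_{x_i}e_m=\langle c_j,m\rangle e_{-m}=0$ to kill the flux term, observe the noise and the Laplacian do not couple $\tilde H^\nn$ to $(\tilde H^\nn)^\perp$, and conclude that each $\langle u_t,e_m\rangle$ solves a linear homogeneous scalar ODE from zero data. The paper phrases this as a ``sub-heat equation'' for $P_{(\tilde H^\nn)^\perp}u_t$ rather than mode by mode, and silently drops the $j=0$ constant term of the flux, which you treat explicitly via the zero mean of $\partial_{x_i}e_m$; those are only cosmetic differences.
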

\begin{proof}
Obviously, we have
\begin{equation*}
u_t=P_{\tilde H^\nn }u_t+P_{(\tilde H^\nn)^\perp }u_t,
\end{equation*}
where for any $\phi\in H^\nn$, $P_{\tilde H^\nn}\phi=\sum_{j\in \cZ_\infty}e_j\langle \phi,e_j\rangle$
and $P_{(\tilde H^\nn)^\perp}\phi=\phi-P_{\tilde H^\nn }\phi.$
By (\ref{1-1}),  we have
\begin{eqnarray}\label{sub heat}
\left\{
\begin{split}
	&\frac{\partial}{\partial t} P_{( \tilde H^\nn)^\perp}  u_t+P_{( \tilde H^\nn)^\perp} \operatorname{div} A(u_t)  =\nu P_{( \tilde H^\nn)^\perp}  \Delta u_t,
	\\  & P_{( \tilde H^\nn)^\perp}  u_0\in ( \tilde H^\nn)^\perp.
\end{split}
\right.
\end{eqnarray}
For any  $k\in \cZ_\infty^c \subseteq A^\perp $ and $f\in H^\nn$,
it holds that
\baee
&\Big| \langle \operatorname{div} A(f),e_k\rangle\Big| =
\Big| \sum_{i=1}^d\langle A_i(f), \partial_{x_i} e_k \rangle \Big|
\\
=&\Big| \sum_{i=1}^d\big\langle \sum_{j=1}^{\Bbbk}c_{i,j}f^j,  \partial_{x_i}  e_k \big\rangle\Big|=\Big|\sum_{j=1}^{\Bbbk}\big\langle f^j, \sum_{i=1}^d  c_{i,j}  \partial_{x_i}  e_k \big\rangle\Big|
\\
=&\Big| \sum_{j=1}^{\Bbbk} \langle c_j, k \rangle_{\mathbb R^d}\langle f^j,   e_{-k} \rangle\Big|=0.
\eaee
Thus,  we conclude that
\begin{eqnarray}
\label{p1216-2}
P_{( \tilde H^\nn)^\perp }\operatorname{div} A(f)=0 \text{ for all }f\in H^\nn.
\end{eqnarray}
Furthermore, we also have   $P_{( \tilde H^\nn)^\perp } u_t=0,\forall t\geq 0$ provided that $P_{( \tilde H^\nn)^\perp } u_0=0$.
This implies  that  $\tilde H^\nn$ is a stable space for the dynamic \eqref{1-1}.
The proof is complete.
\end{proof}
\begin{proposition}
{
\label{3-11}
Under  the   Condition \textup{\ref{16-5}}, the Markov semigroup $\{P_t\}_{t\geq 0}$ has the e-property on $\tilde H^{\nn+5}  $,  \textup{i.e.},
for any  $\mathfrak{R}>0,$  $u_0\in B_{\tilde  H^{\nn+5} }(\mathfrak{R}):=\{u\in \tilde  H^{\nn+5}, \|u\|_{\nn+5}\leq \mathfrak{R} \}$,  bounded and Lipschitz continuous function $f$ on $\tilde H$ and $\eps>0$, there exists a $\delta>0$ such that
\begin{eqnarray*}
	|P_tf(u_0')-P_tf(u_0)|<\eps, ~\forall t\geq 0, \forall  u_0'\in \tilde H^\nn  \text{ with }
	\|u_0'-u_0\|<\delta \text{ and } \|u_0'\|_{\nn+5}\leq \mathfrak{R},
\end{eqnarray*}
where $\lVert\cdot\rVert$ denotes the $L^{2}$-norm.
}
\end{proposition}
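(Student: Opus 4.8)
The plan is to carry out the localized Malliavin-calculus scheme sketched in the introduction, replacing the global integrability of the variation processes (which fails for a general polynomial flux) by integrability on a ``good'' set whose complement is small in probability. Fix $u_0\in\tilde H^\nn$, a bounded Lipschitz $f$ on $\tilde H$, and $\eps>0$; set $\xi=u_0'-u_0$. By Proposition \ref{p1218-4} all the solutions involved stay in $\tilde H^\nn$, so we may work there and invoke Condition \ref{16-5}, which is what makes the truncated Malliavin covariance matrix $\mathcal M_{0,t}$ of $u_t$ invertible.

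First I would introduce a cut-off functional $\Theta=\Theta(u_0,W)\ge0$ built from the trajectory $(u^{u_0}_t)_{t\ge0}$ together with its first and second variations $J_{s,t}$, $J^{(2)}_{s,t}$ and its Malliavin derivative $\mathcal A_{s,t}$, all measured against prescribed slowly growing envelopes, so that: (a) on the event $\{\Theta\le 2M\}$ each of these processes, and the inverse moments of $\mathcal M_{0,t}$ (via Condition \ref{16-5}), is bounded by a constant $C(M)$ uniformly in $t$; and (b) the a priori moment bounds for \eqref{1-1}, uniform over bounded balls of initial data, give $\sup_{\|\xi\|\le1}\mathbb P(\Theta(u_0',W)>M)\to0$ as $M\to\infty$. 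Fix $M$ with $2\|f\|_\infty\sup_{\|\xi\|\le1}\mathbb P(\Theta(u_0',W)>M)<\eps/2$, and a smooth $\chi:[0,\infty)\to[0,1]$ with $\chi\equiv1$ on $[0,1]$, $\chi\equiv0$ on $[2,\infty)$.

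Next I would handle the ``good'' part. Put $Q_tf(u_0):=\mathbb E\big[\chi(\Theta(u_0,W)/M)\,f(u^{u_0}_t)\big]$; then $|P_tf(u_0)-Q_tf(u_0)|\le\|f\|_\infty\,\mathbb P(\Theta(u_0,W)>M)$, and likewise at $u_0'$, so it remains to bound $|Q_tf(u_0')-Q_tf(u_0)|$ uniformly in $t$. For $f\in C_b^1(\tilde H)$ I would differentiate along the segment $r\mapsto u_0+r\xi$, writing $u_t=u^{u_0}_t$,
\[
D_\xi Q_tf(u_0)=\mathbb E\big[\tfrac1M\chi'(\Theta/M)(D_\xi\Theta)\,f(u_t)\big]+\mathbb E\big[\chi(\Theta/M)\,\langle Df(u_t),J_{0,t}\xi\rangle\big],
\]
and apply the integration-by-parts formula \eqref{IBP} to the second expectation, with the Hairer--Mattingly control $v=v(u_0,\xi,W)$ built from $\mathcal M_{0,t}^{-1}$, turning it into $\mathbb E[\chi(\Theta/M)f(u_t)\int_0^tv\,dW]+\mathbb E[\chi(\Theta/M)\langle Df(u_t),\rho_t\rangle]+(\text{terms from }\mathcal D^v\chi(\Theta/M))$, with $\rho_t=J_{0,t}\xi-\mathcal A_{0,t}v$. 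Since $\chi$ and $\chi'$ are supported in $\{\Theta\le 2M\}$, every factor occurring — $D_\xi\Theta$, $\mathcal D^v\Theta$, the Skorokhod integral $\int_0^tv\,dW$, $\rho_t$, $J_{0,t}\xi$ — is controlled there by $C(M)\|\xi\|$, uniformly in $t$; hence $|D_\xi Q_tf(u_0)|\le C(M,\|u_0\|)(\|f\|_\infty+\|f\|_{\Lip})$ for all $t\ge0$, and integrating in $r$ gives $|Q_tf(u_0')-Q_tf(u_0)|\le C(M,\|u_0\|)(\|f\|_\infty+\|f\|_{\Lip})\|\xi\|$. For a merely Lipschitz $f$ one first replaces it by its Lasry--Lions inf-convolution $f_\lambda\in C_b^1$, which keeps the same sup-norm and Lipschitz constant and converges to $f$ uniformly, and then lets $\lambda\to0$.

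Combining the two parts, $|P_tf(u_0')-P_tf(u_0)|\le C(M,\|u_0\|)(\|f\|_\infty+\|f\|_{\Lip})\|\xi\|+\eps/2$ for every $t\ge0$, so taking $\delta:=\min\{1,\ \eps/(2C(M,\|u_0\|)(\|f\|_\infty+\|f\|_{\Lip}))\}$ gives the e-property. The main obstacle is the good-part step, and within it the two terms produced by the derivatives of the cut-off, $\tfrac1M\chi'(\Theta/M)D_\xi\Theta$ and $\mathcal D^v\chi(\Theta/M)$: in \cite{PZZ24} the cut-off depended only on the (independent) jump clock, so these terms vanished, whereas here $\Theta$ depends on $u_0$ and on $W$ through the solution, and one must show that $D_\xi\Theta$ and $\mathcal D^v\Theta$ are controlled by $C(M)\|\xi\|$ on $\{\Theta\le2M\}$. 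This reduces to establishing, for a general polynomial flux, quantitative energy estimates for $u_t$, $J_{s,t}$, $J^{(2)}_{s,t}$ and $\mathcal A_{s,t}$ — strong enough even to make $\Theta$ well defined — together with the quantitative invertibility of $\mathcal M_{0,t}$ furnished by Condition \ref{16-5}.
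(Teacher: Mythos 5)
Your high-level strategy matches the paper's: localize the Hairer--Mattingly integration-by-parts argument on a trajectory-dependent ``good'' event, and separately control the derivatives of the cut-off, exactly the new difficulty the paper flags relative to \cite{PZZ24}. But as written the proposal has two genuine gaps, one analytic and one structural.

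The analytic gap is the assertion that on $\{\Theta\le 2M\}$ ``the inverse moments of $\mathcal M_{0,t}$ (via Condition \ref{16-5}) are bounded by $C(M)$.'' Condition \ref{16-5} only gives almost-sure strict positivity of $\langle\mathcal M_{0,t}\phi,\phi\rangle$ (Proposition \ref{1-66}); there is no deterministic lower bound on the ``good'' event, and no choice of $\Theta$ built from energy-type envelopes of $u$, $J$, $J^{(2)}$, $\mathcal A$ will make $\mathcal M_{0,t}^{-1}$ bounded there. The paper never inverts $\mathcal M_{0,t}$: it works with the $\beta$-regularization $\cR^\beta_{n,n+1}=\beta(\mathcal M_{n,n+1}+\beta\mI)^{-1}$ (always bounded by $1$) and shows, using the quantitative nondegeneracy estimate $r(\eps,\alpha,\mathfrak R,N)\to0$ of Proposition \ref{3-8}, that $\mE\|P_N\cR^\beta\|^p$ can be made small on bounded sets of data (Lemmas \ref{3-9}, \ref{p3-9}). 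The structural gap is the use of a single cut-off $\chi(\Theta/M)$. The estimate $|D_\xi Q_tf(u_0)|\le C(M,\|u_0\|)(\|f\|_\infty+\|f\|_{\Lip})$ uniformly in $t$ requires $\rho_t$ to decay, not merely stay bounded, and that decay is obtained in the paper by iterated conditional expectations: the factor $\mE[\zeta_{2k}e^{-\int_{2k-1}^{2k}\|u_r\|_{L^m}^m\dif r}\mid\cF_{2k-1}]\le\delta$ is applied at each step. This needs the per-unit-interval cut-offs $T_n^{u_0}=\prod_{i=1}^n\chi(\int_0^i\|u_r\|_{L^m}^m\dif r-\Upsilon i)$ and the linearly growing threshold $\Upsilon i$, so that the event $\{T_n^{u_0}\ne0\}$ is nested and compatible with the filtration; a single functional $\Theta$ of the whole path would not give the conditional smallness at each step, and the resulting Skorokhod-integral bound in Proposition \ref{4-3} would not close (the sums over $k$ in $L_1,\ldots,L_6$ converge precisely because of the interplay between the geometric decay of $\rho_{2k}$ and the polynomial tail bound on $\mE|T^{u_0}_{2n,k}|$ from (\ref{p11-3})). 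Finally, the derivatives of the cut-off ($J_2$, $J_{22}$, $L_3$--$L_6$) are sums over $k$ of terms involving $T^{u_0}_{n,k}$, and each must be controlled individually; the remark that they are ``controlled there by $C(M)\|\xi\|$'' hides essentially all of Section \ref{proof of 1.6}.
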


{
The proof the Proposition \ref{3-11} will be present in Section \ref{proof of 1.6}.
If the Markov semigroup $\{P_t\}_{t\geq 0}$ has the {\em e-property} on $\tilde H^{\nn+5}  $,
then  for any   $\textcolor[rgb]{0.00,0.00,1.00}{u_0\in \tilde H^{\nn}}$
$,N\in \mN$ and
any bounded and Lipschitz continuous function $f$ on $\tilde H$,
one has
\begin{eqnarray}
\label{59-1}
\inf_{\delta>0}\sup_{u_0':\|u_0'-u_0\|\leq \delta}
\limsup_{t\rightarrow \infty}
|P_tf(P_N u_0')-P_tf(P_N u_0)|=0,
\end{eqnarray}
where  ${P}_N$  denotes the orthogonal projections from $ H$ onto $ {H}_N:=\text{span}\{e_j: j\in \mZ_*^d  \text{ and } |j|\leq N\}.$
(Remark: by the definition of $\tilde H$ and $H$, ${P}_N$  also is  the orthogonal projections from $\tilde   H$ onto $\tilde  {H}_N:=\text{span}\{e_j: j\in \cZ_\infty   \text{ and } |j|\leq N\}.$)
}

\begin{proposition}(Irreducibility)
\label{16-6}
Recall  $\nn=\lfloor  d/2+1\rfloor.$
For any $\mathcal{C},\gamma>0$, there exist positive  constants  $T=T(\mathcal{C},\gamma)>0,\tilde p_0=\tilde p_0( \mathcal{C},\gamma)$ such that
\begin{eqnarray*}
P_{T}(u_0, \cB_\gamma)\geq \tilde p_0,\quad\forall u_0\in H^{\nn} \text{ with }\|u_0\|_\nn\leq \mathcal{C},
\end{eqnarray*}
where $\cB_\gamma=\{u\in H^\nn,\|u\|_\nn \leq \gamma\}.$
In the above,  $T(\mathcal{C},\gamma),\tilde p_0( \mathcal{C},\gamma)$ denote  two positive constants depending on  $\mathcal{C},\gamma$ and the data of system \eqref{1-1},  \textup{i.e.},  $\nu,d,\Bbbk,(b_j)_{j\in \cZ_0},\mathbb{U}$,
$(c_{\mathbbm{i},\mathbbm{j}})_{1\leq \mathbbm{i}\leq d,0\leq \mathbbm{j} \leq \Bbbk}.$
\end{proposition}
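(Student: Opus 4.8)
\textbf{Proof proposal for Proposition \ref{16-6} (irreducibility).}

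The plan is to establish irreducibility by a control-theoretic argument: I will steer the solution from an arbitrary $u_0$ in the ball $\|u_0\|_\nn\leq \mathcal{C}$ into the small ball $\cB_\gamma$ over a fixed time $T$, first for a smooth reference control and then showing the diffusion follows the controlled trajectory with positive probability. Since the noise \eqref{p1215-1} is only finite-dimensional and lives in $\tilde H$, but the control argument should produce a bound in the full $H^\nn$-norm, the first step is to note that by Proposition \ref{p1218-4} the component $P_{(\tilde H^\nn)^\perp}u_t$ solves the \emph{deterministic} equation \eqref{sub heat}; by standard parabolic smoothing for the viscous conservation law with smooth initial data — combined with the a priori energy estimates used to prove Proposition \ref{wp} — this component decays, so it suffices to control the (infinite-dimensional, but noise-reachable) component in $\tilde H^\nn$ and to absorb the orthogonal part into the estimate for $t$ large. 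Concretely, I would first choose $T_1=T_1(\mathcal{C},\gamma)$ so that, uniformly over $\|u_0\|_\nn\leq\mathcal{C}$, the solution of \eqref{1-1} \emph{with zero noise} satisfies $\|u_{T_1}\|_\nn\leq\gamma/2$; this uses the dissipativity of $\nu\Delta$ against the polynomial drift $\operatorname{div} A$, exactly the bound underlying well-posedness.

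The second step is the standard Stroock--Varadhan / Girsanov support argument adapted to this setting. Fix the zero control on $[0,T_1]$ and let $u^0_t$ be the corresponding deterministic trajectory, which ends in $\cB_{\gamma/2}$. The actual solution $u_t$ with initial data $u_0$ can be compared to $u^0_t$ via the Girsanov theorem: since the noise coefficients $(b_i)_{i\in\cZ_0}$ are a fixed finite collection, the Cameron--Martin shift needed to reproduce any sufficiently small $L^2([0,T_1];\R^{|\cZ_0|})$ perturbation has finite exponential moment, so the event $\{\sup_{t\le T_1}\|u_t-u^0_t\|_\nn\le\gamma/2\}$ — equivalently the event that the driving Wiener increments stay in a fixed neighborhood of $0$ — has probability bounded below by a positive constant $\tilde p_0(\mathcal{C},\gamma)$. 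The continuity-in-the-noise estimate that makes this precise (a pathwise bound $\sup_{t\le T_1}\|u_t-u^0_t\|_\nn\le F(\|W\|_{C([0,T_1])})$ with $F$ continuous and $F(0)=0$) follows from subtracting the two mild/variational formulations, using that the difference equation is again a viscous conservation law with a lower-order perturbation and that $A$ is polynomial, so the difference can be controlled by a Gronwall argument on the energy of $u_t-u^0_t-\eta_t$ together with the $H^\nn$-Sobolev embedding $H^\nn\hookrightarrow L^\infty$ (recall $\nn=\lfloor d/2+1\rfloor$). Setting $T=T_1$ then gives $P_T(u_0,\cB_\gamma)\ge\tilde p_0$.

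I expect the main obstacle to be the \emph{uniformity} of the constants over the whole ball $\{\|u_0\|_\nn\le\mathcal{C}\}$ together with the fact that the continuity-in-noise estimate must be in the strong $H^\nn$-norm rather than merely in $L^2$. In $L^2$ one has no pathwise contraction (as emphasized in the introduction, contraction in $L^2$ depends on $\nu$), so one cannot run a naive Gronwall argument on $\|u_t-u^0_t\|$ directly; instead I would bootstrap: control the $L^2$-difference first, then differentiate the difference equation up to order $\nn$, at each level using the parabolic gain from $\nu\Delta$ to dominate the polynomial nonlinearity (whose Lipschitz constant on bounded sets is controlled once $\|u_t\|_{L^\infty}$ and $\|u^0_t\|_{L^\infty}$ are bounded, which the deterministic estimate of step one and the Girsanov event together provide). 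The polynomial structure of $A$ is exactly what keeps these nonlinear terms estimable — each $A_i(u)-A_i(u^0)=\sum_j c_{i,j}(u^j-(u^0)^j)$ factors through $u-u^0$ with coefficients polynomial in $u,u^0$ — and the finiteness of $\cZ_0$ keeps the Girsanov density integrable. Once these two ingredients (uniform deterministic decay into $\cB_{\gamma/2}$, and a positive lower bound on the "small-noise tube" event in $H^\nn$) are in hand, the conclusion is immediate.
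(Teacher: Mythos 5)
Your overall strategy — run a ``small-noise tube'' argument, showing that on an event of positive probability the solution enters $\cB_\gamma$ — is the same one the paper uses, but there are two genuine gaps and one noteworthy difference in route.

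The most serious gap is regularity. Your step one needs the $\nn$-th order energy identity $\frac{\dif}{\dif t}\|u^0_t\|_\nn^2 = -2\nu\|u^0_t\|_{\nn+1}^2 - \cdots$, and your step two needs the analogous identity for $\|u_t - u^0_t - \eta_t\|_\nn^2$. Both require the solution to live in $H^{\nn+1}$, but the statement only assumes $u_0\in H^\nn$, and Proposition~\ref{wp} only gives $u\in C([0,T],H^\nn)$ — nothing in the paper asserts instantaneous parabolic smoothing into $H^{\nn+1}$ for this quasilinear equation. The paper deals with this by first proving the conclusion for $u_0\in H^{\nn+1}$ with $\|u_0\|_\nn\le\cC$ (Proposition~\ref{p24-5}), and then removing the extra regularity in the proof of Proposition~\ref{16-6} by the projection argument: split $u_T^{u_0}$ into $P_N$ and $Q_N$ parts, use the pathwise $L^1$ contraction (Lemma~\ref{path L1 contr}) plus the finite-dimensionality of $P_N$ to pass from a nearby $u_0'\in H^{\nn+1}$ to $u_0\in H^\nn$, then send $N\to\infty$. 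Your proposal contains no counterpart to this reduction, so even if your tube estimate were in hand it would only prove the statement for smoother initial data.

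The second gap concerns your step one. You invoke ``the dissipativity of $\nu\Delta$ against the polynomial drift, exactly the bound underlying well-posedness'' to claim uniform decay $\|u^0_{T_1}\|_\nn\le\gamma/2$. But the well-posedness estimates (Lemma~\ref{priori H^m}) are a priori bounds, not decay. Decay in $H^\nn$ requires a two-stage argument: first show the $L^\mmm$ norm decays (this is what the paper's differential inequality~\eqref{p1126-5} reduces to when $\zeta_t=0$, via~\eqref{28-1}), then feed that into the $H^\nn$-level inequality~\eqref{p14-2}, whose nonlinear source term is controlled precisely by $\|u_s\|_{L^\mmm}^\kappa+\|u_s\|_{L^\mmm}^\mmm$ thanks to Lemma~\ref{div reg}. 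Without making this two-stage mechanism explicit, the claim that ``dissipativity'' gives $\|u^0_{T_1}\|_\nn\le\gamma/2$ is unsupported.

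On the route itself: the paper never compares $u_t$ to the deterministic trajectory $u^0_t$. Instead it energy-estimates $v_t=u_t-\eta_t$ directly on the small-noise event $\Omega^{\gamma,\delta,T_1,T_2}$, first driving $\|v_t\|_{L^\mmm}$ down on $[0,T_1]$, then using that control to drive $\|v_t\|_\nn$ down on $[T_1,T_1+T_2]$. Your comparison approach is a different way to exploit the same tube event, but the equation for $w=u_t-u^0_t$ is \emph{not} ``again a viscous conservation law'': $A(u)-A(u^0)$ is a transport term with coefficients $\int_0^1 A'(su+(1-s)u^0)\dif s$ depending on both trajectories, and closing a Gronwall estimate for it in $H^\nn$ requires $L^\infty$ control of $u$ and $u^0$ up to $\nn$ derivatives, which is circular unless bootstrapped carefully. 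The paper's direct estimate on $v_t$ avoids all of this. Finally, the Girsanov/Cameron--Martin framing is unnecessary: since your target control is zero, you only need positivity of Wiener measure on the uniform ball $\{\sup_{t\le T}|W_t|<\delta\}$, which is what the paper uses implicitly via $\mP(\Omega^{\gamma,\delta,T_1,T_2})>0$.
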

The proof of  this  Proposition is demonstrated in Section \ref{sec irr}.

\textbf{Now we are in a position to prove Theorem \ref{16-8}.}
\begin{proof}
The proof of the existence of an invariant measure follows a similar approach to \cite[Lemma 8]{MR20}, and we provide the details in Appendix \ref{appen B}. Therefore, we focus solely on proving uniqueness here.

Before presenting the proof of uniqueness, we establish two prior moment bounds for any invariant probability measure $\tilde \mu$ on $\tilde H^\nn$:
\begin{eqnarray}
&& \label{p24-1}   \int_{\tilde H^\nn }\|u_0\|_{L^p}^p \tilde \mu(\dif u_0)\leq C_p<\infty,
~\forall p\in 2\mN,
\\ && \label{p24-7} \int_{\tilde H^\nn }\|u_0\|_{\nn}^2 \tilde \mu(\dif u_0)\leq C<\infty.
\end{eqnarray}
Using It\^o's formula, for $p \in  2\mN$, we get
\bae\label{p24-3}
&\|u_{t}\|_{L^{p}}^{p}= \|u_{0}\|_{L^{p}}^{p} + p\nu\int_{0}^{t}
\int_{\mT^d }u(x)^{p-1}\Delta u(x)\dif x\dif r\\
&+ p\sum_{k\in\cZ_{0}}b_{k}\int_{0}^{t}\langle u_{r}^{p -1}, e_{k}\rangle \dif W_{k}(r)
+ p(p - 1/2)\sum_{k\in\cZ_{0}}b_{k}^{2}\int_{0}^{t} \langle u_{r}^{p -2}, e_{k}^{2} \rangle \dif r.
\eae
By \cite[Proposition A.1]{CGV14},  for $p \in  2\mN$, it holds that
\begin{eqnarray}
\label{p24-4}
\int_{\mT^d }u(x)^{p-1}\big(-\Delta u(x)\big)\dif x\geq
C_{p}^{-1}\|u\|_{L^p}^p+\frac{1}{p}\|(-\Delta)^{1/2}u^{p/2}\|^2_{L^2}.
\end{eqnarray}
In the above,    $C_{p}\in (1,\infty)$ denotes    a positive constant that may   depend  on $p$ and $d.$
Combining the  above with (\ref{p24-3}), we arrive at that
\begin{eqnarray}
\label{p24-6}
\begin{split}
	\|u_{t}\|_{L^{p}}^{p}& \leq   \|u_{0}\|_{L^{p}}^{p}-C_p^{-1}
	\int_0^t \|u_s\|_{L^p}^p\dif s
	\\ & \quad\quad   +C_p t+p\sum_{k\in\cZ_{0}}b_{k}\int_{0}^{t}\langle u_{r}^{p -1}, e_{k}\rangle \dif W_{k}(r),
\end{split}
\end{eqnarray}
where $C_p\in (1,\infty)$ is  a positive constant depending   on $p$ and
$\nu,d,\Bbbk , \{b_k\}_{k\in \cZ_0}$ and $\mathbb{U}=|\cZ_0|.$
For any $\eps>0$,  let $B_\eps=\{u\in H^\nn: \|u\|_\nn \leq b_\eps  \}$,
where $b_\eps$ is a constant such that $\tilde \mu (B_\eps)>1-\eps.$
Then, for any $\cN,\eps>0$, (\ref{p24-6}) implies that
\begin{eqnarray*}
&& \int_{\tilde H^\nn }\big(\|u_0\|_{L^p}^p  \wedge \cN\big)  \tilde \mu(\dif u_0)=\int_{\tilde H^\nn }\mE_{u_0} \big(\|u_t\|_{L^p}^p  \wedge \cN\big)  \tilde \mu(\dif u_0)
\\ &&\leq \cN \eps+\int_{B_\eps }\mE_{u_0} \big(\|u_t\|_{L^p}^p  \wedge \cN\big)  \tilde \mu(\dif u_0)
\leq   \cN \eps+\int_{B_\eps }\mE_{u_0} \|u_t\|_{L^p}^p   \tilde \mu(\dif u_0)
\\ &&\leq   \cN \eps+\int_{B_\eps }\Big(e^{-C_p^{-1}t }\|u_{0}\|_{L^{p}}^{p} +C_p  \Big)  \tilde \mu(\dif u_0)
\\ && \leq   \cN \eps+C_p \Big(e^{-C_p^{-1}t }b_\eps^{p} +1  \Big).
\end{eqnarray*}
In the above, first letting $t\rightarrow \infty$, then letting $\eps\rightarrow 0$ and in the end letting $\cN \rightarrow \infty$,
we obtain  the desired result (\ref{p24-1}).

Now,  we give a proof of (\ref{p24-7}).
Using It\^o's formula for $ \langle u_t, (-\Delta )^{\nn-1 }u_t\rangle$,
we get
\begin{eqnarray*}
&& \|u_t\|_{\nn-1}^2 +2\nu \int_0^t\|u_s\|_{\nn}^2\dif s+2\int_0^t \langle\operatorname{div} A(u_s),  (-\Delta )^{\nn-1 }u_s  \rangle\dif s
\\&&  =2 \sum_{i\in \cZ_0}b_i \int_0^t \langle e_i, (-\Delta )^{\nn-1 }u_s\rangle\dif W_i(s) + \sum_{i\in \cZ_0} |b_i|^2  \langle
e_i, (-\Delta )^{\nn-1 }e_i  \rangle t.
\end{eqnarray*}
Thus,  by the Lemma \ref{div reg} below, for some $m=m(\nn,d,\Bbbk)\in 2\mN$, we conclude that
\begin{eqnarray}
\nonumber  && \|u_t\|_{\nn-1}^2 +2\nu \int_0^t\|u_s\|_{\nn}^2\dif s
\leq
2\int_0^t \| \operatorname{div} A(u_s)\|_{\nn-2}\| u_s\|_{\nn}\dif s
\\ \nonumber &&\quad  +2 \sum_{i\in \cZ_0}b_i \int_0^t \langle e_i, (-\Delta )^{\nn-1 }u_s\rangle\dif W_i(s) + C t
\\ \nonumber &&\leq  \nu \int_0^t\|u_s\|_{\nn}^2\dif s+C\int_0^t \|u_s\|_{L^m}^m\dif s
\\ \label{p24-8}  && \quad +2 \sum_{i\in \cZ_0}b_i \int_0^t \langle e_i, (-\Delta )^{\nn-1 }u_s\rangle\dif W_i(s) + C t.
\end{eqnarray}

For any  invariant probability measure $\tilde  \mu $
on $\tilde H^\nn$, by (\ref{p24-8}) and (\ref{p24-1}), it holds that
\begin{eqnarray*}
&& \nu t  \int_{\tilde H^\nn}  \|u_0\|_{\nn}^2 \tilde
\mu(\dif u_0) =  \nu \int_{\tilde H^\nn}  \int_0^t  \mE_{u_0}\|u_s\|_{\nn}^2~\dif s \tilde
\mu(\dif u_0)
\\ &&
\leq C \int_{\tilde H^\nn}  \int_0^t \mE_{u_0} \|u_s\|_{L^m}^m \dif s\tilde
\mu(\dif u_0)+Ct
\\ &&= Ct  \int_{\tilde H^\nn}   \|u_0\|_{L^m}^m  \tilde
\mu(\dif u_0)+Ct\leq Ct,\quad\forall t>0.
\end{eqnarray*}
The above inequality implies the desired result  (\ref{p24-7}).

Now we give a proof of   $(\romannumeral1)$ and  $(\romannumeral2)$.
First,  we give a proof of $(\romannumeral1)$.
In   the  case  $\Bbbk=1$,  one has  $\cZ_\infty=\cZ_0$. In this case,   we can assume that
$A_i(u)=a_i u,i=1,\cdots,d$ and at least one of $a_i,i=1,\cdots,d$ is not zero.
For any $k\in \cZ_0$, taking inner product with $e_k$ in (\ref{1-1}),
we get
\begin{eqnarray*}
\dif \langle u_t,e_k\rangle=-\nu |k|^2 \langle u_t,e_k\rangle\dif t
-\sum_{i=1}^d a_ik_i \langle u_t,e_{-k}\rangle\dif t +b_k\dif W_k(t).
\end{eqnarray*}
Observing  that $b_k\neq 0,\forall k\in \cZ_0$,
the process $U_t=\sum_{k\in \cZ_0}\langle u_t,e_k\rangle e_k$
has many good properties, such as  smooth density,
$U_t\in \tilde H^\nn  $ provided that $U_0\in \tilde H^\nn $
and  $\mP(U_t\in \mathcal O)>0$  for any  open set   $\mathcal O \subseteq \tilde  H^\nn.$
One can refer to   \cite[Proposition 6.5]{KS91}.
Then, the uniqueness of invariant measure on the space $\tilde H^\nn$ follows.
Now we consider the case $\Bbbk\geq 2$ and assume that Condition \ref{16-5} holds.  Assume that there are two distinct invariant probability measures ${ \tilde\mu_1}$ and ${\tilde  \mu_2}$  on $\tilde H.$
Notice that
the following set of  functions on $\tilde H^\nn$ separate all the points in $\tilde H^\nn:$
\begin{eqnarray*}
&& \Big\{ f(P_Nx): x\in \tilde H^\nn \rightarrow f(P_Nx)  \in \mR ~\big|~
N\in\mN,
f \text{ is a  bounded and
}
\\ && \quad\quad\quad\quad \text{ Lipschitz continuous function  on $\tilde H$}
\Big\}.
\end{eqnarray*}
Thus, by Proposition \ref{3-11}  and (\ref{59-1}),
also  with the help of \cite[Proposition 1.10]{GL15}, one has
\begin{eqnarray}
\label{16-10}
\text{Supp } { \tilde \mu_1} \cap  \text{Supp } { \tilde \mu_2}=\emptyset.
\end{eqnarray}
On the other hand, by (\ref{p24-7}), for every invariant measure $\tilde\mu$ on $\tilde H^\nn$, the following priori bound
\begin{eqnarray*}
\int_{\tilde H}\|u\|_n^2\tilde\mu(\dif u)\leq C,
\end{eqnarray*}
holds. Following the arguments in the proof of \cite[Corollary 4.2]{HM-2006}, and with the help of Proposition \ref{16-6}, for every invariant measure $\tilde\mu$, we have $0\in \text{Supp } \tilde\mu$.This conflicts with (\ref{16-10}).
We complete the proof of $(\romannumeral1)$.

Now, we give a proof of  $(\romannumeral2)$.
For the case  $\Bbbk=1$ and $k\in \cZ_0^c$, by direct calculations, one arrives at
\begin{eqnarray}
\label{p15-2}
\frac{1}{2}\frac{\dif \big(\langle u_t,e_{k}\rangle^2+ \langle u_t,e_{-k}\rangle^2\big)  }{\dif t}=-\nu |k|^2 \big(\langle u_t,e_{k}\rangle^2+ \langle u_t,e_{-k}\rangle^2\big).
\end{eqnarray}
For the case      Condition \ref{16-5}  holds, observe  the facts   (\ref{sub heat})--(\ref{p1216-2}).
Therefore, under our conditions,
for any initial value $u_0\in H^\nn$,
$ P_{( \tilde H^\nn)^\perp}   u_t$  will   decays   to $0$ as $t$ tends to infinity
and
$ P_{( \tilde H^\nn)^\perp}   u_t=0,\forall t\geq 0$ provided  that
$P_{( \tilde H^\nn)^\perp}  u_0=0.$
Therefore,  any   solution of (\ref{1-1})  that  starts  from $u_0\in H^\nn $ will eventually  stay in  $\tilde H^\nn$ as $t\rightarrow \infty$.
Furthemore, any    unique invariant  measure  $\mu$  of $P_t$ on $H^\nn$ must  has the form:  $\tilde \mu \otimes \delta_0$, where
$\tilde \mu$ is  the  unique invariant   measure   on $\tilde H^\nn$ and
$ \delta_0$ is the dirac measure  concentrated on $0\in  ( \tilde H^\nn)^\perp.$
Observing that  the invariant   measure   on $\tilde H^\nn$ is unique, we complete the proof of  $(\romannumeral2)$.
\end{proof}

Our Theorem \ref{16-8} covers many interesting cases. For details, please to see the following corollary and example.
For $i=1,\cdots,d$, set  $
\varsigma_i=(\varsigma_{i,1},\cdots,\varsigma_{i,d})\in \mZ_*^d$ with $\varsigma_{i,i}=1$ and $\varsigma_{i,j}=0$ for  $j\neq i$.

\begin{corollary}
\label{1217-1}
Let  $\cZ_0 = \big\{\varsigma_i, -\varsigma_i, 2\varsigma_i,-2\varsigma_i,
i=1,\cdots,d\big\}$.If one of the   following three  conditions holds, then
the  results of Theorem \textup{\ref{16-8}}    hold.
\begin{itemize}
\item[(\romannumeral1)] Assume  that  $d\geq 1, \Bbbk\geq 2$
and
\begin{eqnarray*}
	A_i(u)=c_{i,\Bbbk}  u^\Bbbk,
\end{eqnarray*}
where
$c_{i,\Bbbk},i=1,\cdots,d$ are  real numbers.

\item[(\romannumeral2)]
Assume that $d\geq 1,\Bbbk\geq 2$ and
\begin{eqnarray*}
	&&  A_i(u)=c_{i,\Bbbk}  u^{\Bbbk}+\sum_{j=0}^{\Bbbk-1}c_{i,j}u^j,\quad i=1,\cdots,d.
\end{eqnarray*}
{Furthermore, assume that $c_\Bbbk$ is rationally independent, \textup{i.e.},}
\begin{eqnarray}
	\label{pp10-4}
	\big\{k\in\mathbb Z^d:\langle c_\Bbbk,k\rangle=\sum_{i=1}^d c_{i,\Bbbk} k_i= 0 \big\}=\{ \mathbf{0}\}.
\end{eqnarray}
\item[(\romannumeral3)] Assume that  $d=1, \Bbbk\geq 1.$
	\end{itemize}
	
\end{corollary}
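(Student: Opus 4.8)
The plan is to verify, in each of the three cases, that the hypotheses of Theorem~\ref{16-8} are met, i.e.\ that either $\Bbbk=1$ or Condition~\ref{16-5} holds; the conclusion is then immediate. Case~(iii) with $\Bbbk=1$ needs nothing, since Theorem~\ref{16-8} already allows $\Bbbk=1$. In every remaining situation $\Bbbk\ge 2$, and everything is reduced to a single combinatorial statement about the sets $\cZ_n$ attached to $\cZ_0=\{\pm\varsigma_i,\pm 2\varsigma_i:1\le i\le d\}$:
\[
\cZ_\infty\ \supseteq\ V:=\bigl\{k\in\mZ_*^d:\ \langle c_\Bbbk,k\rangle_{\mR^d}\neq 0\bigr\}.
\]
Granting this Main Lemma, the corollary follows quickly: from $V\subseteq\cZ_\infty$ we get $\cZ_\infty^c\subseteq\{k:\langle c_\Bbbk,k\rangle_{\mR^d}=0\}$. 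In case~(i) one has $c_j=0$ for $1\le j\le\Bbbk-1$, so $A^\perp=\{k:\langle c_\Bbbk,k\rangle_{\mR^d}=0\}$ exactly, and Condition~\ref{16-5} holds. In case~(ii), the rational independence \eqref{pp10-4} forces $\{k\in\mZ^d:\langle c_\Bbbk,k\rangle_{\mR^d}=0\}=\{\mathbf 0\}$, hence $\cZ_\infty^c\subseteq\{\mathbf 0\}\subseteq A^\perp$; likewise in case~(iii) with $\Bbbk\ge 2$ we have $d=1$ and $c_\Bbbk=c_{1,\Bbbk}\neq0$, so again $\cZ_\infty^c\subseteq\{0\}\subseteq A^\perp$. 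So the whole matter is the Main Lemma.

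To prove the Main Lemma I would first record that $\{\pm\varsigma_i,\pm 2\varsigma_i:1\le i\le d\}\subseteq\mathbb{L}$. For $\Bbbk=2$ this is the definition $\mathbb{L}=\cZ_0$; for $\Bbbk\ge 3$ one writes each of $\pm\varsigma_i,\pm2\varsigma_i$ as a sum of exactly $\Bbbk-1$ elements of $\cZ_0$ by padding with cancelling pairs $\varsigma_j+(-\varsigma_j)$ and, if a parity correction is needed, using $\varsigma_i=2\varsigma_i+(-\varsigma_i)$. Next fix $i_0$ with $c_{i_0,\Bbbk}\neq 0$ (such $i_0$ exists because $\Bbbk=\max_i\deg A_i$) and form the graph $\Gamma$ on vertex set $V$ with $k\sim k'$ whenever $k-k'\in\{\pm\varsigma_i,\pm 2\varsigma_i\}$. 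The key geometric fact is that along any line $\{k+m\varsigma_j:m\in\mZ\}$ the map $m\mapsto\langle c_\Bbbk,k+m\varsigma_j\rangle_{\mR^d}$ is affine, and non-constant precisely when $c_{j,\Bbbk}\neq0$; hence such a line meets the forbidden hyperplane $c_\Bbbk^\perp$ in at most one lattice point, over which the weight-$2$ generator $\pm2\varsigma_j$ lets us jump. I would then show $\Gamma$ is connected by constructing, for any $k\in V$, a path to $\varsigma_{i_0}$ in three phases: (Phase~1) slide $k$ in the $\varsigma_{i_0}$-direction until its $i_0$-th coordinate equals a large integer $M$, chosen to avoid the single exceptional point on that line and also the finitely many ($\le 2^{d-1}$) values of $M$ that would place one of the partially-zeroed intermediate way-points on $c_\Bbbk^\perp$; (Phase~2) zero out the other coordinates one at a time — each sub-path keeps the $i_0$-th coordinate equal to $M\neq0$ (so it never hits the origin) and meets $c_\Bbbk^\perp$ in at most one, jumpable, point, while its endpoint lies in $V$ by the choice of $M$; (Phase~3) slide the $i_0$-th coordinate from $M$ down to $1$ along the $\varsigma_{i_0}$-axis, which stays in $V$ since $\langle c_\Bbbk,m\varsigma_{i_0}\rangle_{\mR^d}=mc_{i_0,\Bbbk}\neq0$ for $1\le m\le M$. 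Finally, since $\varsigma_{i_0}\in\cZ_0$ and every edge increment lies in $\mathbb{L}$, a path $\varsigma_{i_0}=p_0\sim p_1\sim\dots\sim p_L=k$ inside $V$ yields $p_n\in\cZ_n$ by induction on $n$ (the constraint $\langle c_\Bbbk,p_n\rangle_{\mR^d}\neq0$ is exactly $p_n\in V$), whence $k\in\cZ_\infty$ and $V\subseteq\cZ_\infty$.

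The main obstacle is precisely the connectivity of $\Gamma$: one must navigate $\mZ^d$ minus the rational hyperplane $c_\Bbbk^\perp$ and the origin, and with only the weight-$1$ generators $\pm\varsigma_i$ this set can genuinely be disconnected (for instance $c_\Bbbk=(1,-1,0,\dots,0)$ separates the two half-spaces $\langle c_\Bbbk,\cdot\rangle>0$ and $<0$); it is essential that $\cZ_0$ contains the weight-$2$ generators $\pm2\varsigma_i$ to bridge over the isolated forbidden nodes, and that the detour length $M$ be picked to keep all finitely many intermediate way-points off $c_\Bbbk^\perp$. Everything else — the inclusion $\{\pm\varsigma_i,\pm2\varsigma_i\}\subseteq\mathbb{L}$, the explicit form of $A^\perp$ in each case, and the translation of graph-connectivity into $\cZ_\infty\supseteq V$ via the recursion — is routine once the connectivity is established.
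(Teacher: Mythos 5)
Your proposal is correct and has the same overall shape as the paper's: both reduce the corollary to the single combinatorial Main Lemma $\cZ_\infty\supseteq\{k\in\mZ_*^d:\langle c_\Bbbk,k\rangle_{\mR^d}\neq 0\}$ (this is exactly Lemma~\ref{p26-5}), and then the deductions for cases (i)--(iii) are the same short arguments. Where you diverge is in the proof of that lemma. The paper proceeds by induction on the number of nonzero coordinates: having reached all admissible points supported on $\{1,\dots,\ell\}$, it activates the $(\ell+1)$-st coordinate via an explicit case analysis (according to whether $\sum_{i\le\ell}a_ik_i$ vanishes and whether $a_{\ell+1}$ equals one of finitely many bad ratios $-\sum a_ik_i/j$), inserting a $2\varsigma_{\ell+1}$ step precisely where the $\varsigma_{\ell+1}$-slide would land on the hyperplane, with a footnote to cover the degenerate subcase $a_1=\cdots=a_\ell=0$. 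You instead phrase the problem as connectivity of the graph on $V$ generated by $\{\pm\varsigma_i,\pm2\varsigma_i\}$ and route any $k\in V$ to the seed $\varsigma_{i_0}\in\cZ_0$ through a single far waypoint $M\varsigma_{i_0}$; the observation that $m\mapsto\langle c_\Bbbk,k+m\varsigma_j\rangle_{\mR^d}$ is affine and nonconstant whenever $c_{j,\Bbbk}\neq 0$ then collapses all of the paper's subcases into the uniform statement ``at most one forbidden lattice point per axis-parallel line, and the weight-$2$ generator jumps it.'' Both arguments share the same two ingredients --- the inclusion $\{\pm\varsigma_i,\pm2\varsigma_i\}\subseteq\mathbb{L}$ (your parity padding is essentially the paper's Claim) and the nondegeneracy of $c_{i_0,\Bbbk}$ --- so they are morally equivalent; your route is conceptually cleaner and makes the necessity of the $2\varsigma_i$ modes transparent, at the cost of a bit of bookkeeping to choose $M$ outside the finite bad set and to handle a starting point $k$ that already lies on the $\varsigma_{i_0}$-axis, whereas the paper's is more elementary but spreads its correctness over several subcases.
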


The proof of this corollary is put in Appendix \ref{B}.

\begin{example}
\label{31-1}
We give an interesting example here. Set \begin{equation*}
	S=\{ \sqrt{n}; n\in \mathbb N\text{ is divisible by no square number other than 1}\}\footnote{
		We say   $n$ is divisible by no square number other than $1$,
		if $\{m\in \mN: \frac{n}{m^2} \text{is an integer}\}=\{1\}.$
	}
	.
\end{equation*}
Then, any finite subset of $S$ is  rationally independent,
i.e., for any  distinct  elements   $s_1,\cdots,s_d$ in  $  S$,
we have  $\big\{k=(k_1,\cdots,k_d)\in\mathbb Z^d: \sum_{i=1}^d s_i k_i= 0 \big\}=\{ \mathbf{0}\}.$
Therefore,  for  the case of (ii) in Corollary \ref{1217-1},
if  $c_{i,\Bbbk},i=1,\cdots, d$ are distinct numbers in $S$, then   the Condition \ref{16-5} holds.

However, for many  examples,  we can't verify the  (ii) in Corollary \ref{1217-1}.
The following is an interesting  example.
Set $d=2$ and
\baee
A_1(u)&= c_{1,1}u+e u^2,\\
A_2(u)&=c_{2,1}u+\pi u^2, \\
\eaee
where   $c_{1,1}$ and $c_{2,1}$ are not all $0$.
In this case,
Wether  $
\{(k_1,k_2)\in \mZ^2: ek_1+\pi k_2=0\}=\{ \mathbf{0} \}
$
or not is unknown.
Actually,  for mathematicians,  it's still  unknown whether  $e/\pi$ is rational or not. One can refer to \cite{Lang66} for  related problems.
\end{example}

\subsubsection{The second main result}

\

Now, we state  the second main theorem of this paper.
\begin{theorem}
\label{p1218-1}
Assume that  the Condition \textup{\ref{16-5}} holds
and let $\tilde \mu$  be the   unique invariant    measure of $P_t$ on $\tilde H^\nn$.
Then the law of orthogonal projection of $\tilde  \mu$ onto any finite subspace is absolutely continuous with respect to the associated Lebesgue measure.
\end{theorem}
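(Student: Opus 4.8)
The plan is to combine the invariance of $\tilde\mu$ with a Malliavin-calculus argument of Bouleau--Hirsch type, the non-degeneracy of the Malliavin matrix being supplied by the H\"ormander-type Condition \ref{16-5}. Fix any $T>0$ and any finite-dimensional subspace $F\subseteq\tilde H^\nn$, with orthonormal basis $f_1,\dots,f_m$, orthogonal projection $P_F$ onto $F$, and coordinate map $\pi_F(u)=(\langle u,f_1\rangle,\dots,\langle u,f_m\rangle)\in\R^m$. Since $P_T^*\tilde\mu=\tilde\mu$,
\[
(P_F)_*\tilde\mu=\int_{\tilde H^\nn}\big(\mP(u_T\in\cdot\mid u_0)\circ\pi_F^{-1}\big)\,\tilde\mu(\dif u_0),
\]
and since a mixture of measures that are absolutely continuous with respect to Lebesgue measure on $\R^m$ is again absolutely continuous, it suffices to prove that for \emph{every} $u_0\in\tilde H^\nn$ the conditional law of $\pi_F(u_T)$ is absolutely continuous with respect to Lebesgue measure on $\R^m$. (For a general finite-dimensional $F\subseteq H^\nn$ one first uses that $u_T\in\tilde H^\nn$ for $\tilde\mu$-a.e.\ $u_0$, by Proposition \ref{p1218-4}, to reduce to this case.)

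Fix $u_0\in\tilde H^\nn$ and work with the Cameron--Martin space $\mathcal H=L^2([0,T];\R^{|\cZ_0|})$. On the energy-controlled events $\Omega_n=\{\sup_{s\in[0,T]}\|u_s\|_\nn\le n\}$, which increase to a set of full probability because $u\in C([0,T],H^\nn)$ a.s., the linearization $J_{s,T}$ of \eqref{1-1} is well defined and bounded, one has $\cD^k_su_T=b_kJ_{s,T}e_k$ for $s<T$, $k\in\cZ_0$, and a truncation of the polynomial nonlinearity shows $u_T\in(\mathbb D^{1,2}_{\mathrm{loc}})$, hence $\langle u_T,f_i\rangle\in\mathbb D^{1,2}_{\mathrm{loc}}$. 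The Malliavin covariance matrix of $\pi_F(u_T)$ is then
\[
\gamma_{ij}=\sum_{k\in\cZ_0}b_k^2\int_0^T\langle J_{s,T}e_k,f_i\rangle\langle J_{s,T}e_k,f_j\rangle\,\dif s=\langle\cM_{0,T}f_i,f_j\rangle ,
\]
where $\cM_{0,T}$ is the Malliavin covariance operator of $u_T$; thus for $a\in\R^m$ and $\phi=\sum_ia_if_i\in F$ one has $\langle\gamma a,a\rangle=\langle\cM_{0,T}\phi,\phi\rangle=\sum_{k\in\cZ_0}b_k^2\int_0^T\langle J_{s,T}^*\phi,e_k\rangle^2\,\dif s$. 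By the local version of the Bouleau--Hirsch criterion (which needs only $\mathbb D^{1,2}_{\mathrm{loc}}$-regularity and a.s.\ invertibility of $\gamma$, not any integrability of $\gamma^{-1}$), it is enough to show that $\cM_{0,T}$ is a.s.\ positive definite on $F$; and since $F$ ranges over all finite-dimensional subspaces of $\tilde H^\nn$, it suffices to prove that $\langle\cM_{0,T}\phi,\phi\rangle>0$ a.s.\ for each fixed $\phi\in\tilde H\setminus\{0\}$, i.e.\ that $\cM_{0,T}$ is a.s.\ non-degenerate on $\tilde H$.

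This last step is the heart of the matter. Suppose that on an event of positive probability there is $\phi\in\tilde H\setminus\{0\}$ with $\langle\cM_{0,T}\phi,\phi\rangle=0$; then $\langle J_{s,T}^*\phi,e_k\rangle=0$ for all $s\in[0,T]$ and all $k\in\cZ_0$. The process $h_s:=J_{s,T}^*\phi$ solves the backward linearized equation $\partial_sh_s+\nu\Delta h_s+\sum_{i=1}^dA_i'(u_s)\,\partial_{x_i}h_s=0$, $h_T=\phi$. I would then propagate the vanishing of the Fourier coefficients of $h$: starting from $\langle h_s,e_k\rangle\equiv0$ on $\cZ_0$, the interaction of $h$ with the top-degree part $\Bbbk c_\Bbbk u_s^{\Bbbk-1}\cdot\nabla h_s$ of the drift couples a mode $\kappa$ to the modes $\kappa+\ell$ with $\ell\in\mathbb{L}$ and $\langle c_\Bbbk,\kappa+\ell\rangle\neq0$ --- this is precisely the recursion defining $\cZ_{n}$ from $\cZ_{n-1}$ --- so that $\langle h_s,e_k\rangle\equiv0$ for every $k\in\cZ_\infty$. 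Evaluating at $s=T$ gives $\langle\phi,e_k\rangle=0$ for all $k\in\cZ_\infty$, whence $\phi\perp\tilde H$ and $\phi=0$, a contradiction. This non-degeneracy is the same analytic input needed for the integration-by-parts estimate behind Proposition \ref{3-11}, and I would establish it along the way to that proposition; combined with the previous two paragraphs it proves Theorem \ref{p1218-1}.

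The main obstacle is exactly this non-degeneracy argument. For a general polynomial flux the backward equation is non-autonomous with coefficients $A_i'(u_s)$ that are only H\"older-continuous in $s$, so one cannot simply differentiate $\langle h_s,e_k\rangle$ in time to generate the brackets; instead the propagation of vanishing modes must be extracted via a quantitative ``small martingale part plus small quadratic variation forces small drift'' estimate of Norris type, and the set of generated frequencies has to be matched step by step to the recursive construction of $\cZ_\infty$. A secondary technical point, already flagged in the introduction, is that $J_{s,T}$ and the second variation need not be globally integrable for a general polynomial flux, which is why the whole argument is run in the localized class $\mathbb D^{1,2}_{\mathrm{loc}}$ with the corresponding local absolute-continuity criterion rather than in the classical $\mathbb D^{1,2}$ framework.
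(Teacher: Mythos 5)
Your proposal is essentially correct and follows the same overall strategy as the paper: reduce via invariance $P_T^*\tilde\mu=\tilde\mu$ to showing that, for every initial datum, the finite-dimensional projection of $u_T$ has absolutely continuous law; then combine a Bouleau--Hirsch-type criterion with a.s.\ non-degeneracy of the Malliavin matrix $\cM_{0,T}$ on $\tilde H$, the latter being where Condition~\ref{16-5} enters and which the paper isolates as Proposition~\ref{1-66}. Your description of the propagation-of-vanishing-modes argument matches the paper's proof in Section~\ref{s3-1}, though the paper implements it through the polynomial/Wiener-oscillation estimate of Proposition~\ref{p10-1} (i.e.\ the Hairer--Mattingly machinery) rather than a classical Norris lemma; these serve the same function.

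The genuine difference is how you handle the regularity of $\pi_F(u_T)$. You invoke $\mathbb D^{1,2}_{\mathrm{loc}}$ with the local absolute-continuity criterion, localizing on energy-controlled events and truncating the polynomial flux, precisely because for a general polynomial flux the Jacobian $J_{s,T}$ lacks global moment bounds. The paper sidesteps localization entirely by using the pathwise $L^1$ contraction of the linearization (Lemma~\ref{L^1}): since $\|J_{r,t}Q\theta_i\|_{L^1}\le\|Q\theta_i\|_{L^1}$ almost surely, the Fourier coefficients $\langle J_{r,t}Q\theta_i,e_{k_\ell}\rangle$ are bounded by a deterministic constant, so $\tilde P_N u_t$ lies in the global space $\mathbb H^1(\Omega,\tilde P_N H)$ and the standard Bouleau--Hirsch criterion (Nualart, Theorem~2.1.2) applies directly. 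Your route is more generic and would work for equations without an $L^1$ contraction, at the cost of having to verify the localizing sequences carefully; the paper's route is cleaner but leans on a structural feature special to scalar conservation laws (and on the fact that only finitely many Fourier coefficients of the Malliavin derivative enter, which is exactly what trades the missing $L^2$ bound for the available $L^1$ bound). Both are sound.
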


Theorem \ref{p1218-1}  implies the following corollary.
\begin{corollary}
\label{pp29-1}
Assume that  the Condition \textup{\ref{16-5}} holds
and let $\mu$  be the   unique invariant    measure of $P_t$ on $H^\nn$.
Then,
the law of orthogonal projection of $ \mu$ onto any finite subspace is absolutely continuous with respect to the associated Lebesgue measure
if and only if the  following {\bf{algebraically non-degenerate condition}}  holds for the flux $A$:
\begin{eqnarray}
\label{p0209-5}
{ A^\perp\subseteq \cZ_0\cup \{0\}.}
\end{eqnarray}
\end{corollary}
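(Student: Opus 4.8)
The plan is to deduce Corollary~\ref{pp29-1} from Theorem~\ref{p1218-1} together with the product structure $\mu=\tilde\mu\otimes\delta_0$ established in Theorem~\ref{16-8}(\romannumeral2), the only new input being an elementary identity relating the index sets $A^\perp$ and $\cZ_\infty$. The first step would be to prove that, under Condition~\ref{16-5},
\begin{equation*}
A^\perp\cap\mZ_*^d\subseteq\cZ_0
\qquad\Longleftrightarrow\qquad
\cZ_\infty=\mZ_*^d
\qquad\Longleftrightarrow\qquad
(\tilde H^\nn)^\perp=\{0\}.
\end{equation*}
The last equivalence is immediate, since $(\tilde H^\nn)^\perp$ is the closed span of $\{e_k:k\in\cZ_\infty^c\}$. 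For the first one, note that for every $n\ge 1$ each $k\in\cZ_n$ has the form $k=\kappa+\ell$ with $\langle c_\Bbbk,k\rangle\ne 0$, so $\cZ_n\cap A^\perp=\emptyset$ because $A^\perp\subseteq\{k:\langle c_\Bbbk,k\rangle=0\}$; hence $A^\perp\cap\cZ_\infty=A^\perp\cap\cZ_0\subseteq\cZ_0$. Consequently, if $\cZ_\infty=\mZ_*^d$ then $A^\perp\cap\mZ_*^d=A^\perp\cap\cZ_\infty\subseteq\cZ_0$; conversely, if $A^\perp\cap\mZ_*^d\subseteq\cZ_0$, Condition~\ref{16-5} gives $\cZ_\infty^c\subseteq A^\perp\cap\mZ_*^d\subseteq\cZ_0\subseteq\cZ_\infty$, which forces $\cZ_\infty^c=\emptyset$.

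For the ``if'' direction, assume \eqref{p0209-5}, i.e.\ $A^\perp\subseteq\cZ_0$. By the equivalence above $\cZ_\infty=\mZ_*^d$, so $\tilde H^\nn=H^\nn$ and $(\tilde H^\nn)^\perp=\{0\}$, whence the unique invariant measure on $H^\nn$ is $\mu=\tilde\mu\otimes\delta_0=\tilde\mu$. The claim is then literally Theorem~\ref{p1218-1}: the law of the orthogonal projection of $\mu=\tilde\mu$ onto any finite-dimensional subspace is absolutely continuous with respect to the corresponding Lebesgue measure.

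For the ``only if'' direction I would argue by contraposition. If \eqref{p0209-5} fails, then $A^\perp\cap\mZ_*^d\not\subseteq\cZ_0$, so by the equivalence $\cZ_\infty^c\ne\emptyset$; pick $k_0\in\cZ_\infty^c$ and set $V=\lspan\{e_{k_0}\}$, a one-dimensional subspace of $H^\nn$. Since $\mu=\tilde\mu\otimes\delta_0$ with $\delta_0$ concentrated at $0\in(\tilde H^\nn)^\perp$ and $e_{k_0}\in(\tilde H^\nn)^\perp$, we have $\langle u,e_{k_0}\rangle=0$ for $\mu$-a.e.\ $u$. Hence the pushforward of $\mu$ under the orthogonal projection onto $V\cong\mR$ is the Dirac mass $\delta_0$, which is singular with respect to Lebesgue measure on $V$; so the absolute-continuity property already fails on a one-dimensional subspace, completing the contrapositive.

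Since Theorem~\ref{p1218-1} supplies all the analytic content (the Malliavin/H\"ormander argument), this corollary carries no serious obstacle: it is a bookkeeping reduction. The only points requiring mild care are reading $A^\perp\subseteq\cZ_0$ as $A^\perp\cap\mZ_*^d\subseteq\cZ_0$ (the zero mode is always excluded because of the mean-zero constraint defining $H$), and invoking Condition~\ref{16-5} in the correct direction when proving the index-set equivalence in the first step.
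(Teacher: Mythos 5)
Your proposal is correct and takes essentially the same approach as the paper. The only difference is cosmetic: you isolate the index-set equivalence $A^\perp\cap\mZ_*^d\subseteq\cZ_0\iff\cZ_\infty=\mZ_*^d$ as an explicit lemma and argue the ``only if'' direction by contraposition, whereas the paper does the same reduction inline and by contradiction; in both cases the analytic content comes from Theorem~\ref{p1218-1} and the product structure $\mu=\tilde\mu\otimes\delta_0$ from Theorem~\ref{16-8}(\romannumeral2), and the singularity is detected exactly as you describe, by projecting onto a one-dimensional space spanned by a frozen mode $e_{k_0}$ with $k_0\in\cZ_\infty^c$.
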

Now we give a proof of Corollary \ref{pp29-1} based on Theorem \ref{p1218-1}.
\begin{proof}
First, we assume that  { $A^\perp\subseteq \cZ_0\cup \{0\}.$}
Obviously, it implies that
\begin{eqnarray}
\label{p0209-1}
(A^\perp)^c  \cup \cZ_0=\mZ_*^d.
\end{eqnarray}
By  our Condition \ref{16-5} and (\ref{p0209-1}), we get $\cZ_\infty
=
\cZ_\infty \cup \cZ_0
\supseteq   (A^\perp)^c \cup \cZ_0 =\mZ_*^d.$
Furthermore,  we also  have $\tilde H^\nn=H^\nn$.
Thus, with the help of   Theorem \ref{p1218-1}, the law of orthogonal projection of $ \mu$ onto any finite subspace is absolutely continuous with respect to the associated Lebesgue measure.

Second, we assume that  the law of orthogonal projection of $ \mu$ onto any finite subspace is absolutely continuous with respect to the associated Lebesgue measure. Furthermore, we assume that (\ref{p0209-5}) does not hold.
Then,   for some $k\in \mZ_*^d$, one has
\begin{eqnarray}
\label{p0209-3}
k \in A^\perp\setminus  \cZ_0.
\end{eqnarray}
{ By    the  $(ii)$   in Theorem \ref{16-8},} we get $e_k\notin   (\tilde H^\nn)^\perp$ and eventually, one has  $k\in \cZ_\infty. $
In view of  our definition of $\cZ_n$, since $k \in A^\perp$,  we conclude that $k\notin \cZ_n$ for any $n\geq 1.$
Thus, we must have $k\in \cZ_0.$
This conflicts with  (\ref{p0209-3}).
We complete the proof of (\ref{p0209-5}).

\end{proof}

\begin{remark}\label{rm nond}
For polynomial flux $A$ of the form  \eqref{flux A}, the non-degenerate condition \eqref{nond} is equivalent to
\begin{equation*}
\sup _{\alpha \in \mathbb{R}, \beta \in \mathbb{S}^{d-1}}\operatorname{measure}\Big\{\xi \in \mathbb{R} : \big|\alpha+\sum_{j=1}^{\Bbbk}\xi^{j-1}j\langle c_j,\beta \rangle\big|<\varepsilon\Big\}\leq C \epsilon^b, ~ \text{where } C>0,  b\in (0,1].
\end{equation*}
Obviously,   the above inequality   implies
\bae\label{polynond}
A^\perp_\ast:=\{\beta\in\mathbb R^d:  \langle c_j,\beta \rangle=0,\forall j=2,\cdots, \Bbbk\}=\{\textbf{0}\}.
\eae
Recall that
\baee
A^\perp=&\{{k}\in \mathbb Z^d:\langle {c}_j,{k} \rangle_{\mathbb R^d}=0, \forall j=1,\cdots,\Bbbk\}.
\eaee
In general, $A^\perp\subsetneqq A^\perp_\ast$. Thus,  we conclude that the algebraically non-degenerate condition \eqref{p0209-5} is strictly weaker than the usual  non-degenerate condition \eqref{nond}.

{

Additionally, in many cases, the H\"ormander-type condition \ref{16-5}  and the algebraically non-degenerate condition (\ref{p0209-5})  hold, even though the standard non-degenerate condition \eqref{nond}  may fail. For instance, consider the scenario where
$\Bbbk-1<d,$ in such case, the equality (\ref{polynond})   does not hold. Nevertheless, as demonstrated in Example \ref{31-1}, even when
$\Bbbk-1<d,$   the H\"ormander-type condition \ref{16-5} and the algebraically non-degenerate condition (\ref{p0209-5}) are   still satisfied
provided that $\cZ_0 = \big\{\varsigma_i, -\varsigma_i, 2\varsigma_i,-2\varsigma_i,
i=1,\cdots,d\big\}$ and   $c_{i,\Bbbk},i=1,\cdots, d$ are distinct numbers in $S$.

%


}
\end{remark}

Before stating the proof of Theorem \ref{p1218-1}, we give a lemma first.

Considering  the equation (\ref{1-1}),
if the initial value  $u_0\in \tilde H^\nn$,
then $u_t\in \tilde H^\nn$ for any $t\geq 0.$
Assume that  $N\geq 1$ and  $k_1,k_2,\cdots,k_N$ are elements in $\cZ_\infty$, let     $\tilde P_N: \phi \in \tilde H^\nn   \mapsto \sum_{\ell =1}^N \langle
\phi,e_{k_\ell}
\rangle e_{k_\ell} $ be an
orthogonal projection.
Then, the following lemma holds.
\begin{lemma}
\label{p1218-3}
Consider the equation \eqref{1-1} with    initial value  $u_0\in \tilde H^\nn$,
then
the  law of  $\tilde P_N  u_t$   is absolutely continuous with respect to the associated Lebesgue measure.
\end{lemma}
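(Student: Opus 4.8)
The plan is to establish Lemma \ref{p1218-3} via Malliavin calculus, showing that the finite-dimensional projection $\tilde P_N u_t$ has a density by verifying that its Malliavin covariance matrix is almost surely invertible together with suitable integrability. First I would set up the Malliavin derivative of the solution: for $u_0 \in \tilde H^\nn$, Proposition \ref{p1218-4} guarantees $u_t \in \tilde H^\nn$, so the dynamics stay on the stable space generated by $\{e_k : k \in \cZ_\infty\}$ and it suffices to work there. The Malliavin derivative $\cD_s^j u_t$ (the derivative in the direction of $W_j$ at time $s$, for $j \in \cZ_0$) solves the linearized equation with initial datum $b_j e_j$ at time $s$, propagated by the Jacobian flow $J_{s,t}$; thus $\cD_s^j u_t = b_j J_{s,t} e_j$. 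The Malliavin covariance matrix of $\tilde P_N u_t$ is then
\begin{eqnarray*}
\big(\cM_t\big)_{\ell,\ell'} = \sum_{j\in\cZ_0} b_j^2 \int_0^t \langle J_{s,t} e_j, e_{k_\ell}\rangle \langle J_{s,t} e_j, e_{k_{\ell'}}\rangle \, \dif s, \qquad 1\le \ell,\ell' \le N.
\end{eqnarray*}
To conclude one shows $\cM_t$ is a.s. strictly positive definite (an $N\times N$ matrix), and that $\tilde P_N u_t$ and its Malliavin derivative have enough integrability so that the standard criterion (e.g. the Bouleau–Hirsch criterion, or Nualart's theorem that a.s. invertibility of the Malliavin matrix yields absolute continuity of the law) applies.

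The main step is the nondegeneracy of $\cM_t$, which is exactly where the Hörmander-type Condition \ref{16-5} enters. I would argue by contradiction: if $\cM_t$ were degenerate on a set of positive probability, there would be a nonzero vector $\xi = \sum_{\ell=1}^N \xi_\ell e_{k_\ell}$ with $\langle J_{s,t} e_j, \xi \rangle = 0$ for all $s \in [0,t]$ and all $j \in \cZ_0$, with positive probability. Differentiating this identity in $s$ repeatedly and using the explicit form of the linearized drift — $\dif (J_{s,t}\phi)/\dif s$ involves $\nu\Delta$ and the term $\operatorname{div}(A'(u_s)\,\cdot\,)$, whose nonlinear part produces Fourier-mode shifts by elements of $\mathbb{L}$ weighted by $\langle c_\Bbbk, \cdot\rangle$ — one generates, starting from $\cZ_0$, exactly the sets $\cZ_n$ and hence $\cZ_\infty$ in the span of the directions that must be annihilated by $\xi$. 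Because $\xi \in \tilde H = \overline{\mathrm{span}}\{e_k : k \in \cZ_\infty\}$ and the bracketing process fills out all of $\cZ_\infty$, one forces $\xi = 0$, a contradiction. Making this rigorous requires a Norris-type lemma (an iterated quantitative estimate showing that if an Itô process and its quadratic variation are both small then the martingale and bounded-variation parts are separately small), adapted to the $H^\nn$-valued setting with general polynomial flux; this is the part I expect to be most delicate, since the coefficients $A'(u_s)$ are only controlled through moment bounds on $\|u_s\|$ rather than being bounded, so the Norris-type iteration must be run on the good event where the relevant Sobolev norms of the trajectory are controlled, in the spirit of the localized method described in the introduction.

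The remaining ingredients are comparatively routine. Integrability of $\tilde P_N u_t$ in the Malliavin–Sobolev space $\mathbb{D}^{1,2}$ (indeed $\mathbb{D}^{\infty}$ on events of controlled energy) follows from moment estimates on $u_t$ and on the Jacobian $J_{s,t}$ and its higher derivatives $J^{(2)}_{s,t}$; these are obtained by standard energy estimates on the linearized and second-variation equations, using the parabolic smoothing from $\nu\Delta$ to absorb the polynomial nonlinearity, again localized to events $\{\omega : \|u_\cdot\|_{\text{relevant norm}} \le M\}$ and then letting $M \to \infty$. Finally, one assembles: on each such good event the Malliavin matrix restricted there is invertible with the needed integrability, so the conditional law of $\tilde P_N u_t$ is absolutely continuous; since the good events exhaust $\Omega$ up to a null set as $M\to\infty$, the (unconditional) law of $\tilde P_N u_t$ is absolutely continuous with respect to Lebesgue measure on $\mathbb{R}^N$, which is the claim. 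Theorem \ref{p1218-1} then follows by applying Lemma \ref{p1218-3} with $u_0$ distributed according to the invariant measure $\tilde\mu$ and $t$ fixed, since $\tilde P_N$ ranges over all finite-dimensional coordinate projections and any finite-dimensional orthogonal projection is absorbed into this family.
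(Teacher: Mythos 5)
Your proposal is substantively correct but misses the key simplification the paper uses, and re-derives work the paper has already done elsewhere. The paper's proof of Lemma \ref{p1218-3} is very short: it writes $\cD_r^i(\tilde P_N u_t) = \sum_{\ell=1}^N \langle J_{r,t}Q\theta_i, e_{k_\ell}\rangle e_{k_\ell}$ (Lemma \ref{17-1}), bounds each inner product by $\|J_{r,t}Q\theta_i\|_{L^1}\|e_{k_\ell}\|_{L^\infty}$, and then invokes the pathwise $L^1$ contraction of the Jacobian (Lemma \ref{L^1}) to get a \emph{deterministic} constant bound on $\|\cD_r^i(\tilde P_N u_t)\|$. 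This immediately gives $\tilde P_N u_t \in \mathbb{D}^{1,2}$ with no localization whatsoever: the finite-dimensionality of the projection, together with the fact that $e_{k_\ell}$ are bounded and the Jacobian contracts in $L^1$, does all the work. Your plan to run the integrability estimates on good events $\{\|u_\cdot\|\le M\}$ and exhaust $\Omega$ would still lead to $\tilde P_N u_t \in \mathbb{D}^{1,2}_{\text{loc}}$, which Nualart's criterion also accepts, so it is not wrong — but it is substantially more machinery than the problem requires, and it obscures the fact that the higher-variation estimates on $J^{(2)}$ you mention are not needed here at all. Similarly, your nondegeneracy argument (iterated differentiation of $\langle J_{s,t}e_j,\xi\rangle$, bracketing along $\cZ_n$, a Norris/oscillation lemma adapted to polynomial coefficients controlled on good events) is precisely what the paper proves as Proposition \ref{1-66}; the lemma's proof simply cites it. The paper's version of that step is organized around the Wiener-polynomial oscillation estimate of \cite[Theorem 7.1]{HM-2011} (Proposition \ref{p10-1} and Lemma \ref{p10-3}) rather than a classical Norris lemma, decomposing $u_t = f_t + \sum_{j\in\cZ_0} b_j e_j W_j(t)$ so that the unbounded nonlinear coefficients enter only through Lipschitz-in-time bounds on the non-Wiener factors (Lemma \ref{p24-10}), which sidesteps the difficulty you correctly flagged about $A'(u_s)$ not being bounded. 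In short: same strategy, but you should cite Proposition \ref{1-66} rather than re-prove it, and you should replace the localization step by the $L^1$-contraction observation, which makes the Malliavin derivative of the finite Fourier projection bounded pathwise.
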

\begin{proof}[Proof of Theorem \textup{\ref{p1218-1}}]
Since
$
\tilde P_N  u_t=\sum_{\ell =1}^N \langle
u_t,e_{k_\ell }
\rangle e_{k_\ell },
$
by Lemma \ref{17-1}, we conclude that
\begin{eqnarray*}
\cD_r^i\big(\tilde P_N  u_t\big)=\sum_{\ell =1}^N \langle
\cD_r^i u_t,e_{k_\ell}
\rangle e_{k_\ell}=\sum_{\ell =1}^N \langle
J_{r,t}Q\theta_i,e_{k_\ell}
\rangle e_{k_\ell},
\end{eqnarray*}
where   $\cD_r^i$ denotes the Malliavin derivative with respect to the $i$th component of the noise at time $r$
and $\{\theta_i\}_{i =1}^{\mathbb{U} }$ is the standard basis of $\mR^{\mathbb{U}}.$
Hence, by  Lemma \ref{L^1}, one arrives at
\begin{eqnarray*}
&&  \big\| \cD_r^i\big(\tilde P_N  u_t\big)\big\|\leq
C \sum_{\ell =1}^N \big| \langle
J_{r,t}Q\theta_i,e_{k_\ell}
\rangle \big|
\\ &&  \leq C \sum_{\ell =1}^N
\| J_{r,t}Q\theta_i\|_{L^1}\| e_{k_\ell}\|_{L^\infty}\leq C<\infty,
\end{eqnarray*}
where $C$ is a constant depending on $N,\mathbb{U}=|\cZ_0|, (k_\ell)_{\ell=1}^N$, and
$(b_j)_{j\in \cZ_0}$.
The above inequality implies that
\begin{eqnarray}
\nonumber &&  \tilde P_N  u_t \in \mathbb{H}^1(\Omega, \tilde P_N H)
:=\Big\{
X:\Omega\rightarrow  \tilde P_N H:
\mE \|X\|^2,\mE \int_0^t \|\cD_r^i X\|^2 \dif r<\infty,
\\  \label{p1218-2} && \quad\quad \quad\quad  \quad\quad  \quad\quad   \quad\quad   \quad \text{for all }  i=1,\cdots,\mathbb{U}.
\Big\}.
\end{eqnarray}
On the other hand,   by Proposition \ref{1-66}, for any $\phi \in \tilde H$ with $\phi \neq 0$, it holds that
\begin{eqnarray*}
\langle \cM_{0,t}\phi,\phi\rangle>0 \quad  a.s.
\end{eqnarray*}
Combining the above with (\ref{p1218-2}), also with the help of   \cite[Theorem 2.1.2]{nualart2006}, we obtain the desired result and complete the proof.
\end{proof}

\textbf{Now we are in a position to continue the   proof of  Theorem  \ref{p1218-1}. }
Assume that  $N\geq 1$ and  $k_1,k_2,\cdots,k_N$ are some elements in $\cZ_\infty$, let     $\tilde P_N: \phi \in \tilde H^\nn  \mapsto \sum_{\ell =1}^N \langle
\phi,e_{k_\ell}
\rangle e_{k_\ell} $ be an
orthogonal projection.
If  the law   of initial value $u_0$ is the invariant measure $\tilde \mu$, then the  law   of $u_t$ is also  $\tilde \mu.$
Thus, for any $A\subseteq P_N \tilde H^\nn$ and $t>0$,
\begin{eqnarray*}
\tilde P_N \tilde \mu(A)=\int_{\tilde H^\nn} \mP(\tilde  P_N  u_t\in A ) \dif \tilde \mu.
\end{eqnarray*}
The conclusion of Theorem  \ref{p1218-1} follows directly by combining the preceding discussion with Lemma \ref{p1218-3}.
The proof is complete.

%
%

\subsection{Organizations of this paper}

The remainder of this paper is organized as follows: Section \ref{pre} establishes the necessary mathematical preliminaries, including notation, definitions, the well-posedness of the stochastic conservation laws, and a priori estimates of the solutions. Section \ref{S:3} is devoted to proving the invertibility of the Malliavin matrix $\cM_{0,t}$. Based on a localized technique, we provide a proof of Proposition \ref{3-11} in Section \ref{proof of 1.6}, which establishes the e-property. Finally, Section \ref{sec irr} presents a proof of irreducibility. The proof of the existence of an invariant measure is included in Appendix \ref{appen B}, as it follows a similar approach to \cite[Lemma 8]{MR20}. Since the proof of Corollary \ref{1217-1} is largely independent of the other content in this paper, it is placed in Appendix \ref{B}.

\section{Preliminaries}\label{pre}
\label{S:2}

\subsection{Notation}

In this paper, we use the following notation.

\smallskip
\noindent
$\mN$ denotes the set of  positive integers. $\mZ_*^d:=\mZ^d\backslash \{ \mathbf{{0}}\}.$

\smallskip
\noindent
{  For any function $f$ on $\mT^d$ and $p>0$,
$\|f\|_{L^p}:=\big(\int_{\mT^d}|f|^p \dif x\big)^{1/p}.$
$L^\ty(H)$  is the space of bounded Borel-measurable functions $f:H\to\R$   with the norm $\|f\|_{L^\infty}=\sup_{w\in H}|f(w)|$.
}

\smallskip
\noindent
For any $N\in\mN$, let ${H}_N=\text{span}\{e_j: j\in \mZ_*^d  \text{ and } |j|\leq N\}.$
${P}_N$  denotes the orthogonal projections from $H$ onto ${H}_N$.
Define   ${Q}_Nu:=u-{P}_Nu, \forall u\in H.$

\smallskip
\noindent
For $\alpha\in \mR$ and a smooth function $u\in H$, we define the norm $\|u\|_\alpha$ by
\begin{eqnarray}\label{eq 2024 04 15 00}
\|u\|_\alpha^2=\sum_{k\in \mZ_*^d}|k|^{2\alpha}u_k^2,
\end{eqnarray}
where $u_k$ denotes the Fourier mode with wavenumber $k$.When $\alpha=0$, we also denote this norm $\|\cdot \|_\alpha$ by $\|\cdot \|$.$H^n= H^n(\mT^d, \R)\cap H$, where $H^n(\mT^2, \R)$ is the usual Sobolev space of order~$n\ge1$. We endow the space $H^n$ with the norm~$\|\cdot \|_n$.
Usually,   $\langle \cdot, \cdot \rangle$ denotes the inner product in $H.$




\smallskip
\noindent
$C_b(H)$ is the space of continuous functions. ~$C^1_b(H)$ is the space of functions~$f\in C_b(H)$ that are continuously Fr\'echet differentiable with bounded derivatives.
The Fr\'echet derivative of $f$ at  point  $w$ is denoted by $D  f (w)$
and   we usually also  write $D  f (w)\xi$ as $D _\xi f(w)$ for any $\xi\in H.$
Taken as a function of $w$, if  $g(w):=D_\xi f(w)$ is also Fr\'echet differentiable, then  for any $\zeta\in H$,
we usually denote  $D_\zeta g(w)$  by $D^2f(w)(\xi,\zeta).$

\smallskip
\noindent
$\cL(X,Y)$ is the space of bounded linear operators from Banach spaces $X$ into Banach space $Y$ endowed with the natural norm $\|\cdot\|_{\cL(X,Y)}$.
If there are no confusions, we always write  the operator  norm  $\|\cdot\|_{\cL(X,Y)}$
as $\|\cdot\|$.

\smallskip
\noindent
For any $t\geq 0$,   the filtration $\cF_{t}$ is defined by
\begin{eqnarray*}
\cF_{t}:=\sigma(W_{s} : s\leq t).
\end{eqnarray*}

\smallskip
\noindent
Throughout this paper, we set{
\begin{equation*}
\mmm=40 \Bbbk d (d+14\Bbbk)^2, \quad \nn=\lfloor  d/2+1\rfloor.
\end{equation*}
}
Without otherwise  specified statement, in this section,  we always assume that
$u=(u_t)_{t\geq 0}$ is the solution of (\ref{1-1}) with initial value $u_0\in H^\nn.$

\smallskip
\noindent
Since there are many constants appearing in the proof, we adopt  the following convention.
Without otherwise specified,  the letters $C,C_1,C_2,\cdots$ are  always  used to denote unessential constants that  may change from line to line  and  implicitly  depend on the data of the system (\ref{1-1}), i.e., $\nu,d,\Bbbk, \{b_k\}_{k\in \cZ_0},\mathbb{U}=|\cZ_0|$  and $
(c_{\mathbbm{i},\mathbbm{j}})_{1\leq \mathbbm{i}\leq d,0\leq \mathbbm{j} \leq \Bbbk}$.
Also, we usually do not explicitly indicate the dependencies on the parameters  $\nu,d,\Bbbk, \{b_k\}_{k\in \cZ_0},\mathbb{U}=|\cZ_0|$  and $
(c_{\mathbbm{i},\mathbbm{j}})_{1\leq \mathbbm{i}\leq d,0\leq \mathbbm{j} \leq \Bbbk}$ on every occasion.
\subsection{A priori estimates of the solutions}

{ In this subsection, unless otherwise specified,   we always assume that   $ \{u_t\}_{t\geq 0}\in C([0,\infty),H^\nn) $ is   a   solution of \eqref{1-1} with initial value $u_0\in H^\nn,$ i.e., for any $T>0,$  $ \{ u_t\}_{t\in [0,T]}\in C([0,T],H^\nn) $ is   a   solution of \eqref{1-1}. }
With regard to  $(u_t)_{ t\geq 0}$, we have the following a priori estimates.

Using the continuous embedding from $H^\nn$ to $L^\infty$,
and taking  similar   procedures   in \cite[Proposition 5]{MR20},
we have the following   $L^1$ contraction.{
\begin{lemma}\label{path L1 contr}
{ Let $\tau$ be a  bounded stopping time  with respect to  $\cF_t$}  and $u_t,v_t\in C([0,\tau),H^\nn)$ be two solutions of \eqref{1-1} with initial values $u_0$ and $v_0$, respectively. Then for every $0 \leq s \leq t<\tau$, almost surely, we have
\begin{equation*}
	\|u_t-v_t\|_{L^1} \leq\|u_s-v_s\|_{L^1}.
\end{equation*}
\end{lemma}

\begin{lemma}\label{L^p 2.2}
For any even number   $ p\geq 2$, there exist    constants  $\mathcal E_p>0,  C_p\in (1,\infty)$  which only  depend on $p$ and $\nu,d,\Bbbk, (b_{i})_{i\in \cZ_0},\mathbb{U}$   such that
for any ${ t\geq 1,}$ $K\geq 1$
and  $u_0\in H^\nn$, it holds that
\begin{eqnarray}
&&
\begin{split}
	\label{L-8-62}
	&  \mE \Big(\|u_{t}\|_{L^{p}}^{p} + \int_0^t \|u_r\|_{L^{p}}^{p}\dif r \Big) \leq C_p { (  \|u_0\|_{L^{p}}^{p}+t)},
\end{split}
\\ &&
\begin{split}
	&  \label{3030-1}
	\mP\Big( \|u_t\|^p_{L^p}+ \int_0^t  \|u_r\|_{L^p}^p \dif r  - \mathcal E_p  t \geq K
	\Big)
	\leq  \frac{C_p t^{49}  (t+\|u_0\|_{L^{100p}}^{100p})}  {(K+\mathcal E_p   t)^{100} }.
\end{split}
\end{eqnarray}
\end{lemma}

\begin{proof}
First, let us prove (\ref{L-8-62}) for $p\in 2\mN$.
With the help of  It\^o's formula, we obtain
\bae\label{L^p formula2}
&\|u_{t}\|_{L^{p}}^{p}\\
=& \|u_{0}\|_{L^{p}}^{p} + p\nu\int_{0}^{t}
\int_{\mT^d }u_r^{p-1}\Delta u_r\dif x\dif r\\
&+ p\sum_{k\in\cZ_{0}}b_{k}\int_{0}^{t}\langle u_{r}^{p -1}, e_{k}\rangle \dif W_{k}(r)
+ \frac12 p(p - 1)\sum_{k\in\cZ_{0}}b_{k}^{2}\int_{0}^{t} \langle u_{r}^{p -2}, e_{k}^{2} \rangle \dif r.
\eae
On the other hand, by \cite[Proposition A.1]{CGV14}, it holds that
\begin{eqnarray}
\label{28-1}
\int_{\mT^d }u(x)^{p-1}\big(-\Delta u(x)\big)\dif x\geq
C_{p}^{-1}\|u\|_{L^p}^p+\frac{1}{p}\|(-\Delta)^{1/2}u^{p/2}\|^2.
\end{eqnarray}
In the above,    $C_{p}\in (1,\infty)$ denotes    a positive constant that may   depend  on $p$ and $d.$
Combining the  above with (\ref{L^p formula2}), we arrive at that
\begin{eqnarray}
\label{pp0202-1}
\begin{split}
	& \|u_{t}\|_{L^{p}}^{p}+\int_{0}^t\|u_{r}\|_{L^{p}}^{p}\dif r
	\\ & \leq  C_{1,p}  \sum_{k\in\cZ_{0}}b_{k}\int_{0}^{t}\langle u_{r}^{p -1}, e_{k}\rangle \dif W_{k}(r)+C_{2,p} \int_{0}^t\|u_{r}\|_{L^{p-2}}^{p-2}\dif r,
\end{split}
\end{eqnarray}
where $C_{1,p},C_{2,p}$ are some  positive  constants depending on $p$ and
$\nu,d,\Bbbk, (b_{i})_{i\in \cZ_0},\mathbb{U}$,  and they may   change from line to line.  {   By  $u_0\in H^\nn$, Proposition \ref{wp} and $\|w\|_{L^p}\leq C_p\|w\|_\nn$,
the term
\begin{equation}
\label{p21-1}
C_{1,p}\sum_{k\in\cZ_{0}}b_{k}\int_{0}^{t}\langle u_{r}^{p -1}, e_{k}\rangle \dif W_{k}(r)
\end{equation}
in (\ref{pp0202-1}) is  a {{local martingale.}}
Thus, taking expectation on both sides of (\ref{pp0202-1})
,
it yields that\footnote{ Observe that $u_t\in C([0,\infty),H^\nn) .$
Define $\tau_n:=\inf\{t\geq 0, \int_0^t\|u_r\|_{L^{p-1}}^{2p-2}\dif r\geq n\}.$
Then, by (\ref{pp0202-1})--(\ref{p21-1}), for any $\cN>0$ and $n\in \mN$, one has
\begin{eqnarray*}
&& \mE \Big(\|u_{t\wedge \tau_n }\|_{L^{p}}^{p}\wedge \cN\Big)+\mE  \int_{0}^{t\wedge \tau_n} \|u_{r}\|_{L^{p}}^{p}\dif r
\leq C_{p}\mE \Big(\|u_{0}\|_{L^{p}}^{p} +\int_{0}^{t\wedge \tau_n} \|u_{r}\|_{L^{p-2}}^{p-2}\dif r\Big).
\end{eqnarray*}
In the above, first letting $n\rightarrow \infty$ and then letting $\cN\rightarrow \infty$, we get the desired result (\ref{p21-2}).
}   }
\begin{eqnarray}
\label{p21-2}
&& \mE \Big(\|u_{t}\|_{L^{p}}^{p}+\int_{0}^t\|u_{r}\|_{L^{p}}^{p}\dif r\Big)
\leq C_{p}\mE \Big(\|u_{0}\|_{L^{p}}^{p} +\int_{0}^t\| u_{r}\|_{L^{p-2}}^{p-2}\dif r\Big), \forall p\in 2\mN.
\end{eqnarray}
{ Setting $p=2$ in  the above, we get}
\begin{eqnarray}
\label{p0202-3}
&& \mE \Big(\|u_{t}\|_{L^{2}}^{2}+\int_{0}^t\|u_{r}\|_{L^{2}}^{2}\dif r\Big)
\leq C\mE \Big(\|u_{0}\|_{L^{2}}^{2} + t\Big).
\end{eqnarray}
Thus, by iterations,  it holds that
{\baee
&\mE \Big(\|u_{t}\|_{L^{p}}^{p}+\int_{0}^t\|u_{r}\|_{L^{p}}^{p}\dif r\Big)\\
\leq& C_{p}\mE \Big(\|u_{0}\|_{L^{p}}^{p}+\|u_{0}\|_{L^{p-2}}^{p-2} +\int_{0}^t\|u_{r}\|_{L^{p-2}}^{p-2}\dif r\Big)\\
\leq&  C_{p} \mE\Big(\|u_{0}\|_{L^{p}}^{p} +1+\int_{0}^t\|u_{r}\|_{L^2}^{2}\dif r\Big)\\
\leq&  C_{p} \Big(\|u_{0}\|_{L^{p}}^{p}+t),\quad \forall t\geq 1.
\eaee
}

Now, let us     prove \eqref{3030-1}.
By \eqref{pp0202-1}  and Young's inequality, we arrive at
\begin{eqnarray*}
\begin{split}
	& \|u_{t}\|_{L^{p}}^{p}+\int_{0}^t\|u_{r}\|_{L^{p}}^{p}\dif r
	\leq  C_{1,p}  \sum_{k\in\cZ_{0}}b_{k}\int_{0}^{t}\langle u_{r}^{p -1}, e_{k}\rangle \dif W_{k}(r)+C_{2,p} t.
\end{split}
\end{eqnarray*}
By  the above inequality,  H\"older's inequality, Burkholder-Davis-Gundy's inequality  and (\ref{L-8-62}), for any $\mathcal E>0, t\geq 1$ and $K\geq 1$ ,  one arrives at that
\begin{eqnarray*}
&&  \mP\Big( \|u_t\|^p_{L^p}+ \int_0^t  \|u_r\|_{L^p}^p \dif r  - \mathcal (\mathcal E+C_{2,p})  t \geq K
\Big)
\\ &&\leq  \mP\Big(C_{1,p}  \sum_{k\in\cZ_{0}}b_{k}\int_{0}^{t}\langle u_{r}^{p -1}, e_{k}\rangle \dif W_{k}(r)\geq \mathcal E   t+K  \Big)
\\ &&\leq \frac{ C_p \mE \Big[  \big(\sum_{k\in\cZ_{0}}b_{k}\int_{0}^{t}\langle u_{r}^{p -1}, e_{k}\rangle \dif W_{k}(r) \big)^{100}   \Big]}{(\mathcal E    t+K)^{100}  }
\\ &&\leq \frac{C_p \sum_{k\in \cZ_0}\mE  \Big( \int_0^t \langle u_{r}^{p -1}, e_{k}\rangle^2  \dif r\Big)^{50} }{(K+\mathcal E  t)^{100} }
\leq \frac{C_p \mE  \Big( \int_0^t \| u_{r}\|_{2p-2}^{2p-2} \dif r\Big)^{50} }{(K+\mathcal E  t)^{100} }
\\ && \leq \frac{C_p \mE  \Big[ \big(\int_0^t 1^{50/49}\dif r\big)^{49}
	\big(\int_0^t \| u_{r}\|_{2p-2}^{50(2p-2)} \dif r\big)\Big]   }{(K+\mathcal E  t)^{100} }
\\ &&\leq  \frac{C_p t^{49} \mE   \int_0^t \| u_{r}\|_{50(2p-2)}^{50(2p-2)} \dif r    }{(K+\mathcal E  t)^{16} }
\\ && \leq  \frac{C_p t^{49}  (t+\|u_0\|_{L^{100p-100}}^{100p-100})   }{(K+\mathcal E  t)^{100} }\leq  \frac{C_p t^{49}  (t+\|u_0\|_{L^{100p}}^{100p})   }{(K+\mathcal E  t)^{100} }.
\end{eqnarray*}
Setting $\mathcal E=C_{2,p}$ in the above, it yields the desired result (\ref{3030-1}) for $\mathcal E_p=2C_{2,p}$. The proof is complete.

\end{proof}

}

\begin{lemma}\label{div reg}
For any  $n\geq  \nn= \lfloor d/2+1\rfloor$,
there exist $m_n>\kappa_n>0$ depending on
$n,d,\Bbbk$ such that the following
\begin{align}\label{div}
\| \operatorname{div}A(u) \|_{n-2}^2\leq \eps \|u\|_{n}^2+C_{\eps,n} (\|u\|_{L^{m_n}}^{\kappa_n}+\|u\|_{L^{m_n}}^{m_n} )
\end{align}
holds for any  $\eps>0$ and $u\in H^n$,
where   $C_{\eps,n}>0$ is a constant  depending  on $\epsilon, n, d,\Bbbk
$ and $(c_{\mathbbm{i},\mathbbm{j}})_{1\leq \mathbbm{i}\leq d,0\leq \mathbbm{j} \leq \Bbbk}.$
Furthermore, one  can  also assume that
{\begin{eqnarray}
\label{14-1}
  m_n\leq  16 d  n \Bbbk+4n^2.
\end{eqnarray}}
\end{lemma}
\begin{proof}
{ For the case $n=1$, (\ref{div})--(\ref{14-1}) hold obviously and we omit the details.}
Therefore, we always assume $n\geq \max\{\nn,2\}$.

By direct calculation, we conclude that
\begin{align}
\nonumber &\| \operatorname{div}A(u) \|_{n-2}^2
\\ 	\nonumber
\leq&  \sum_{j=1}^d\| A_j(u) \|_{n-1}^2
\\		\nonumber
\leq &  C  \sum_{j=1}^\Bbbk	\sum_{\alpha=(\alpha_1,\cdots,\alpha_{n-1}) \in \mathcal{G}_{n-1}  } \|\partial_{x_{\alpha_1}} \cdots \partial_{x_{\alpha_{n-1}}} u^j\|^2
\\ 	\nonumber
\leq& C  \sum_{j=1}^\Bbbk
\sum_{a=(a_1,\cdots,a_{n-1}) \in \mathcal{H}_j  }  \int_{\mT^d }\prod_{k=1}^{n-1}  |\nabla^k u|^{2a_k}\cdot |u |^{2j-2\sum_{k=1}^{n-1}  a_k }\dif x
\\  \label{B3}
:=& C  \sum_{j=1}^\Bbbk
\sum_{a=(a_1,\cdots,a_{n-1}) \in \mathcal{H}_j  }  I_{j,a} ,
\end{align}
where for any $k,j\geq 1$
\begin{eqnarray*}
&& \mathcal G_{k}:=\{\alpha=(\alpha_1,\cdots,\alpha_k): 1\leq \alpha_i \leq d ,\alpha_i\in \mN \},\\
&&\mathcal{H}_j=\{ a=(a_1,\cdots,a_{n-1}): 0\leq a_k\leq {n-1},\forall 1\leq k\leq {n-1}, \sum_{k=1}^{n-1} ka_k={n-1},\sum_{k=1}^{n-1} a_k\leq j  \}.
\end{eqnarray*}

For any
{ $1\leq j\leq\Bbbk, a=(a_1,\cdots,a_{n-1}) \in \mathcal{H}_j$,} we will give  an estimate of $I_{j,a}$ in the next. By the definition of $\mathcal{H}_j$, at least one of { $a_k,1\leq k\leq  n-1$} is positive.  If $a_k\geq 1$, set
\begin{eqnarray*}
p_k=\frac{n}{ka_k}.
\end{eqnarray*}
If $a_k= 0$,
set
\begin{eqnarray*}
p_k=\infty.
\end{eqnarray*}
In this case, we  adopt the notation $0\cdot \infty=0$,  $r^0=1, \forall r\in \mR $ and
$\|f\|_{L^0}=1$  for any  function $f$ on $\mT^d.$

Observe that
\begin{equation*}
\sum_{k=1}^{n-1}\frac{1}{p_k}+\frac1n=\sum_{k=1}^{n-1}\frac{ka_k}{n}
+\frac1n=\frac{n-1}{n}+\frac1n=1.
\end{equation*}
Thus, by H\"older's inequality, 	we have
\bae\label{I_ja}
I_{j,a}=&  \int_{\mT^d }\prod_{k=1}^{n-1}  |\nabla^k u|^{2a_k}\cdot |u |^{2j-2\sum_{k=1}^{n-1} a_k }\dif x\\
\leq &  C
\prod_{k=1}^{n-1} \big\|\nabla ^k u\big\|_{L^{2a_kp_k}}^{2a_k}
\cdot \big\|  |u |^{j-\sum_{k=1}^{n-1} a_k } \big\|_{L^{2n}}^2.
\eae
Set $q=16dn\sum_{k=1}^{n-1}a_k\geq 16dn>1$
{  and
\begin{eqnarray*}
	\lambda_k= 1-\frac{q(2 n-d)}{q(2 n-d)+2 d } \cdot \frac{n-k}{n}\in (\frac{k}{n},1),~ 1\leq k\leq n-1.
	\end{eqnarray*}}
	With the help of  $p_k=\frac{ n}{a_k k},n\geq \frac{d+1}{2}$ and $q>0$,
	for any $1\leq k\leq n-1$ with $a_k>0$, it holds that
	$$
	\begin{aligned}
& \frac{2a_k p_k(kq+d)-dq}{[q(2n-d)+2d]a_k p_k} \\
&= \frac{2 k q+2 d}{q(2 n-d)+2 d}-\frac{ d q}{q(2 n-d)+2 d}
\cdot  \frac{k}{ n} \\
&= \frac{2 k q+2 d-q \frac{k d}{n}}{q(2 n-d)+2 d} \\
&= 1-\frac{q(2 n-d)}{q(2 n-d)+2 d }\cdot  \frac{n-k}{n}
\\ & =\lambda_k.
\end{aligned}
$$
Thus,  by Gagliardo-Nirenberg's  inequality,
we get
\bae\label{gn}
\|\nabla ^k u\|_{L^{2a_kp_k}}\leq C \|  u\|_n ^{\lambda_k}\| u\|^{1-\lambda_k}_{L^q}, \quad \forall 1\leq k\leq n-1 \text{ with } a_k>0.
\eae
%

Substituting \eqref{gn} into \eqref{I_ja}, one gets
\bae\label{Ija final}
I_{j,a} \leq & C
\prod_{k=1}^{n-1}
\|u \|_{n}^{2a_k \lambda_k}
\| u\|_{L^q}^{2a_k(1-\lambda_k)}\cdot \|  |u |^{j-\sum_{k=1}^{n-1} a_k } \|_{L^{2n}}^2
\\ \leq &  C\|u\|_{n}^{\sum_{k=1}^{n-1} 2a_k \lambda_k }
\| u\|_{L^q}^{\sum_{k=1}^{n-1 }2a_k(1-\lambda_k)}\cdot \|  |u |^{j-\sum_{k=1}^{n-1} a_k } \|_{L^{2n}}^2.
\eae
By direct calculations, $q=16dn\sum_{k=1}^{n-1}a_k\geq 16dn$ and $2n-d\geq 1,$  we see that
\baee
\bar\lambda:= &\sum_{k=1}^{n-1}2 a_k \lambda_k \\
= & \sum_{k=1}^{n-1}2 a_k\Big(1-\frac{q(2 n-d)}{q(2 n-d)+2 d } \cdot \frac{n-k}{n}\Big)
\\
=&  \sum_{k=1}^{n-1}2 a_k\Big(\frac{2d}{q(2 n-d)+2 d }+
\frac{q(2 n-d)k}{q(2 n-d)n+2 dn } \Big)
\\
=&  \frac{4d\sum_{k=1}^{n-1} a_k}{q(2 n-d)+2 d }+
\frac{2q(2 n-d)\sum_{k=1}^{n-1}k a_k}{q(2 n-d)n+2 dn }
\\
=&  \frac{4d\sum_{k=1}^{n-1} a_k}{q(2 n-d)+2 d }+
\frac{2q(2 n-d)(n-1)}{q(2 n-d)n+2 dn }
\\
< & \frac{4d\sum_{k=1}^{n-1} a_k}{( 16dn\sum_{k=1}^{n-1}a_k)\cdot  (2 n-d) }+
\frac{2(n-1)}{n}
<2-\frac{1}{n}.
\eaee
Substituting $\bar\lambda= \sum_{k=1}^{n-1}2 a_k \lambda_k $  into \eqref{Ija final},
also with the help of  $\bar\lambda\in (0,2)$
and Young's  inequality, for any $\eps>0$,
one has
\bae
\label{14-2}
I_{j,a} \leq  &\eps \|u\|_{n}^{2}+C_{\eps,n}
\Big( \| u\|_{L^q}^{2 \sum_{k=1}^{n-1}ka_k-\bar\lambda}\cdot \|  |u |^{j-\sum_{k=1}^{n-1} a_k } \|_{L^{2n}}^2\Big)^{2/(2-\bar\lambda)}.
\eae
Combining the above with (\ref{B3}), also noticing
that
\begin{equation*}2\sum_{k=1}^{n-1}ka_k= 2(n-1)\geq 2>\bar \lambda
\text{ and }j\geq \sum_{k=1}^{n-1} a_k, \quad \forall  a\in \mathcal{H}_j,
\end{equation*}
the proof of   (\ref{div})    is complete.	


Observe  that,   in   (\ref{14-2}),  it has  $1\leq j\leq \Bbbk,$
$q=16dn\sum_{k=1}^{n-1}a_k\in [16dn,16d n \Bbbk] $, $2-\bar{\lambda}\geq \frac{1}{n}$
and $\sum_{k=1}^{n-1}ka_k=n-1.$
So after some simple calculations, one arrives at  (\ref{14-1}).

\end{proof}

\begin{lemma}\label{priori H^m}
{
For any { $T\geq 1$} and $n\geq \nn$, let $(u_t)_{ t\geq 0}\in C([0,T], H^n )$ be a   solution of \eqref{1-1}. Then,
there exists a    $m\in \big(0, 16dn\Bbbk+4n^2\big)$  depending on  $n,d,\Bbbk$
such that
\bae\label{proiri H^m regu}
& \mathbb{E}\Big[ \sup _{0 \leq t \leq T}\|u_t\|_{{ n-1}}^2+\nu\int_0^T \|u_s\|_{{ n}}^2\dif s \Big]\leq \|u_0\|_{{ n-1}}^2+ { C\big(T+\left\|u_0\right\|_{L^m}^m\big) },
\eae
where $C$ is a constant depending on $n,\nu,d,\Bbbk, (b_{i})_{i\in \cZ_0},\mathbb{U}$ and $
(c_{\mathbbm{i},\mathbbm{j}})_{1\leq \mathbbm{i}\leq d,0\leq \mathbbm{j} \leq \Bbbk}$.
}
\end{lemma}
\begin{proof}
With the help of   Proposition \ref{wp},  using  the It\^{o}'s
formula  for  $\| u_t\|_{ n-1}^2$, we  get
\bae\label{H^m}
&\frac12\dif  \langle (-\Delta )^{( n-1)/2} u_t, (-\Delta )^{( n-1)/2} u_t   \rangle\\
=&  -\nu \big \langle  (-\Delta )^{(n+1)/2} u_t, (-\Delta )^{(n-1)/2} u_t  \big \rangle\dif t
-\big\langle (-\Delta )^{(n-1)/2}\text{div} A(u_t ), (-\Delta )^{(n-1)/2} u_t  \big \rangle\dif t\\
&+\sum_{i\in \cZ_{0}}b_i^2 \|e_i\|_{n-1}^2\dif t+d M_n(t)\\
=&  -\nu\|u_t\|_{n}^2\dif t
-\big\langle (-\Delta )^{(n-2)/2}\text{div} A(u_t ), (-\Delta )^{n/2} u_t   \big\rangle\dif t	\\
&+\frac{1}{2}\sum_{i\in \cZ_{0}}b_i^2 \|e_i\|_{n-1}^2\dif t+d M_n(t)\\
\leq&  -\nu\|u_t\|_{n}^2\dif t
-\| \text{div} A(u_t )\|_{n-2} \|  u_t \|_{n}\dif t	+\frac{1}{2}\sum_{i\in \cZ_{0}}b_i^2 \|e_i\|_{n-1}^2\dif t+d M_n(t),
\eae
where
\begin{equation*}
M_n(t):= \sum_{i\in \cZ_0}b_i\int_0^t \big\langle  (-\Delta )^{(n-1)/2} e_i  , (-\Delta )^{(n-1)/2} u_s   \big\rangle \dif W_i(s).
\end{equation*}
Integrating  (\ref{H^m}) from $0$ to $t$,  with the help of
 Lemma \ref{div reg}, for any $t\in [0,T]$, we   obtain
\bae\label{pre H^m}
\frac12 \| u_t\|_{n-1}^2+\frac12\nu\int_{0}^t\|u_s\|_{n}^2\dif s\leq \frac12 \| u_0\|_{n-1}^2+ C \int_{0}^t\|u_s\|_{L^m}^m\dif s+C t+\sup_{s\in [0,t]}M_n(s),
\eae
where $m\in \big(0, 16dn\Bbbk+4n^2\big)$    is a constant depending on  $n,d,\Bbbk.$
With the help of  Lemma \ref{L^p 2.2}, one has
\begin{eqnarray*}
&&   \mathbb E \sup_{t\in [0,T]}M_n(t)^2\leq C \mathbb E\int_0^T \sum_{i\in \cZ_{0}}  b_i^2\|e_i\|_{2n-2}^2\|u_t\|^2\dif t
\\ && \leq C \mathbb E \int_0^T \|u_t\|^2 \dif t\leq C (T+\|u_0\|^2).
\end{eqnarray*}
Thus, by Lemma \ref{L^p 2.2},
(\ref{pre H^m}) implies
the desired estimate \eqref{proiri H^m regu}.
The proof is complete.
\end{proof}


{
\begin{lemma}
\label{p17-4}
Recall  that $\mmm=40 \Bbbk d (d+14\Bbbk)^2$.
There exists a $\kappa_0$ only depending on $\nu,d,\Bbbk, (b_{i})_{i\in \cZ_0},\mathbb{U}$ such that
for any $\kappa\in (0,\kappa_0],$
$u_0\in H^{\nn+5}$ and $n\in \mN,$
it holds that
\begin{eqnarray}
	\label{74-1}
	\begin{split}
		& \mE \Big[ \exp\big\{\kappa \sum_{i=1}^n  \|u_i\|_{\nn+5}^2-K_\kappa\int_0^n \|u_s\|_{L^\mm}^\mm\dif s-K_\kappa n\big\} \Big]
		\\ & \leq  \exp\{a  \|u_0\|_{\nn+5}^2
		\},
	\end{split}
\end{eqnarray}
where $K_\kappa,a$
are some constants depending on
$\kappa$ and  $\nu,d,\Bbbk, (b_{i})_{i\in \cZ_0},\mathbb{U}$.
%
\end{lemma}
}
{
\begin{proof}
{Obviously, there exists  a  $\alpha=\alpha(\nu,d,\Bbbk, (b_{i})_{i\in \cZ_0},\mathbb{U})$ such  that
\begin{align}
	\label{p11p11-1}
	\frac12\nu\|u_t\|_{\nn+6}^2\geq
	\alpha \sum_{i\in \cZ_0} b_i^2\big\langle  (-\Delta )^{\nn+5} e_i  ,  u_t   \big\rangle^2.
\end{align}
At the beginning, we demonstrate the following  claim.

\textbf{Claim}.  For any  $\kappa<\frac12 \alpha,$ there exists a $\cC_\kappa>0$ such that
\begin{eqnarray}\label{induc expo}
\begin{split}
&\mE    \exp\Big\{ \kappa  \|u_t\|^2_{\nn+5}-\cC_\kappa \int_{0}^t \|u_s\|_{L^\mm}^\mm \dif s-\cC_\kappa \Big\}
\\ & \leq \exp\{ e^{- \nu  t} \kappa  \|u_{0}\|_{\nn+5}^2\},~~\forall u_0\in H^{\nn+5} \text{ and }t\geq 0.
\end{split}
\end{eqnarray}
In the first, we prove the
above Claim  for any $u_0\in H^{\nn+6}.$
 Using  the It\^{o}'s
formula  for  $\| u_t\|_{ n+5}^2$
through  similar arguments as that in   \eqref{H^m} and with the help of  Lemma \ref{div reg}, one arrives at   that
\bae\label{hn ito}
\dif \|u_t\|_{\nn+5}^2 \leq&  -\frac{3}{2}\nu\|u_t\|_{\nn+6}^2\dif t
+C (1+ \|  u_t \|_{L^\mm}^\mm)\dif t	+\sum_{i\in \cZ_{0}}b_i^2 \|e_i\|_{\nn+5}^2\dif t+2d M(t)\\
\leq&  -\frac{3}{2}\nu\|u_t\|_{\nn+6}^2\dif t	+C  \|  u_t \|_{L^\mm}^\mm\dif t+C\dif t+2d M_t\\
\eae
with
\begin{equation*}
	M_t:=(-1)^{\nn+5} \sum_{i\in \cZ_0}b_i\int_0^t \big\langle  (-\Delta )^{\nn+5} e_i  ,  u_s   \big\rangle \dif W_i(s).
\end{equation*}
%
%
%
Using (\ref{p11p11-1})--\eqref{hn ito} and the fact that $\|u_t\|_{\nn+6}\geq \|u_t\|_{\nn+5} $, we have
\bae\label{228}
\|u_t\|^2_{\nn+5}\leq& e^{-\nu t}\|u_{0}\|^2_{\nn+5}+C\int_{0}^t e^{-\nu (t-s)} \|u_s\|_{L^\mm}^\mm\dif s+C+ 2\int_{0}^t e^{-\nu (t-s)}\dif N_s,
\eae
where
$$N_s:=-\frac{\alpha}{2}
\int_0^s  \sum_{i\in \cZ_0} b_i^2\big\langle  (-\Delta )^{\nn+5} e_i  ,  u_r   \big\rangle^2 \dif r
+M_s.$$
From \cite[Lemma A.1]{Mat02},  we conclude that
\begin{equation}\label{OU expo mart}
	\begin{aligned}
		&\mP\Big\{
		\|u_t\|_{\nn+5}^2-
		e^{-\nu t}\|u_{0}\|^2_{\nn+5}-C\int_{0}^t e^{-\nu (t-s)} \|u_s\|_{L^\mm}^\mm\dif s-C\geq \frac{2K}{\alpha}
		\Big\}\\
		=&\mP\Big\{ \int_0^t e^{-\nu(t-s)}\dif N_s\geq \frac{K}{\alpha}  \Big\}\leq e^{-K}, ~~\forall K\geq 0.
	\end{aligned}
\end{equation}
Note now that if a random variable $X$ satisfies $\mP(X \geq  K) \leq \frac{1}{K^2}$
for all $K \geq 0,$
then $EX \leq  2.$
Thus, for any $\kappa\leq \frac{\alpha}{2}$, by (\ref{OU expo mart}), one has
\begin{eqnarray*}
	\mE \exp\{\kappa \|u_t\|_{\nn+5}^2-\kappa C \int_{0}^t e^{-\nu (t-s)} \|u_s\|_{L^\mm}^\mm\dif s  \}
	\leq C_\kappa \exp\{\kappa  e^{-\nu t}\|u_0\|^2_{\nn+5}\}.
\end{eqnarray*}
The above   implies that  Claim   (\ref{induc expo}) holds for any $u_0\in H^{\nn+6}.$

Now, we prove  the
Claim  (\ref{induc expo})  for  $u_0\in H^{\nn+5}.$
For any $u_0'\in H^{\nn+6}$,$ N\in \mN$
and $\kappa<\frac12 \alpha,$
since
(\ref{induc expo}) holds  for any   $u_0'\in H^{\nn+6},$
one has
\bae\label{pre exp hn}
&\mE \Big[ \exp\Big\{\kappa \|P_N u_t^{u_0'}\|_{\nn+5}^2 -
\cC_\kappa\int_0^t \|u_s^{u_0'}\|_{L^\mm}^\mm\dif s-\cC_\kappa \Big\}\Big]\leq  \exp\{e^{- \nu  t} \kappa   \|u_0'\|_{\nn+5}^2\}.
\eae
Noticing  the facts  that
\begin{eqnarray*}
	&& \Big|\|u_s^{u_0}\|_{L^\mm}^\mm-\|u_s^{u_0'}\|_{L^\mm}^\mm\Big|\leq C \|u_s^{u_0}-u_s^{u_0'}\|_{L^1} (\|u_s^{u_0}\|_{\nn}^{\mm-1}+\|u_s^{u_0'}\|_{\nn}^{\mm-1}+1),
	\\ &&
	\|P_N u_t^{u_0}-P_N u_t^{u_0'}\|_{\nn+5}
	\leq C_N \|u_t^{u_0}-u_t^{u_0'}\|_{L^1},
\end{eqnarray*}
letting    $u_0'\in H^{\nn+6}$ and $u_0'\rightarrow u_0$ in $H^{\nn+5}$ in \eqref{pre exp hn}, also  with the hlep of  Fatou's lemma and Lemma \ref{path L1 contr},  we  get
\baee
&\mE \Big[ \exp\Big\{\kappa \|P_N u_t^{u_0}\|_{\nn+5}^2 -\cC_\kappa\int_0^t \|u_s^{u_0}\|_{L^\mm}^\mm\dif s-\cC_\kappa \Big\}\Big]
\\ & \leq \exp\{\kappa e^{-\nu t}  \|u_0\|_{\nn+5}^2\}, ~\forall u_0\in H^{\nn+5}.
\eaee
In the above, letting $N\rightarrow \infty,$ we obtain the desired result (\ref{induc expo}).

In the end, we demonstrate a proof of (\ref{74-1}).
Set $c=\sum_{n=0}^\infty e^{-\nu n}.$
For any $\kappa\leq \frac{\alpha }{2c}$,
by  (\ref{induc expo}), one has
\begin{eqnarray*}
	& & \mE \Big[  \exp\Big\{ \kappa \sum_{i=1}^n \|u_i\|^2_{\nn+5}-\cC_{c\kappa}  \int_{0}^n \|u_s\|_{L^\mm}^\mm \dif s-\cC_{c\kappa } n  \Big\} \Big]
	\\ && =\mE \Big[  \exp\Big\{ \kappa \sum_{i=1}^n \|u_i\|^2_{\nn+5}-\cC_{c\kappa}  \int_{0}^n \|u_s\|_{L^\mm}^\mm \dif s-\cC_{c\kappa}  n  \Big\}\mid \mathscr F_{n-1}  \Big]
	\\ && \leq  \mE  \exp\Big\{ \kappa \sum_{i=1}^{n-1} \|u_i\|^2_{\nn+5}+\kappa e^{-\nu}\|u_{n-1}\|^2-\cC_{c\kappa}  \int_{0}^{n-1}  \|u_s\|_{L^\mm}^\mm \dif s-\cC_{c\kappa} (n-1)   \Big\}
\end{eqnarray*}
Applying this procedure repeatedly, one sees that  (\ref{74-1}) holds with $a=c \kappa$ and  $K_\kappa=\cC_{c\kappa}.$  

}

\end{proof}
}

{
}

		\subsection{ Well-posedness of SVSCL and Markov property of the semigroup $P_t$ }
		\label{2-2}

		Recall that
		$A(u)=\big(A_1(u),\cdots,A_d(u)\big)$ and  $
		A_i(u)=\sum_{j=0}^{\Bbbk}c_{i,j}u^j,i=1,\cdots,d.
		$
		
		We first define the local solution of \eqref{1-1}.
		For $k\in\mathbb N$ and $1\leq i\leq d$, let  $A_i^{(k)} \in C_c^{\infty}(\mathbb{R}^d,\mathbb R)$ be a function such that
		\begin{equation*}
			A_i^{(k)}(x)\Big|_{[-k, k]}=A_i(x).
		\end{equation*}
		For   $n\geq \nn=\lfloor  d/2+1\rfloor$, by \cite[Theorem 2.1]{H13}, with regard to the following equation
		\begin{eqnarray}\label{smooth trun}
			\left\{\begin{aligned}
				&\dif u^{(k)}+\operatorname{div} A^{(k)}\big(u^{(k)}\big) \dif t=\nu \Delta u^{(k)} \dif t+\dif \eta_t,\\
				& u_0^{(k)}=u_0 \in H^n,
			\end{aligned}\right.
		\end{eqnarray}
		there is a unique  solution $\{u^{(k)}_t\}_{t\geq0}\in C([0, \infty), H^n).$
		By the Sobolev embedding theorem, we have the continuous inclusion
		\bae\label{L inf incl}
		H^n\rightarrow L^\infty(\mathbb T^d).
		\eae
		For $|x|\leq l\wedge k$, it holds that
		\begin{align}  \label{sl}
			A_i^{(k)}(x)=  A_i^{(l)}(x)=A_i(x), ~i=1,\cdots,d. \end{align}
		Define the  following stopping times
		\begin{equation*}
			\tau^l_{u_0}=\inf \big\{t>0,\big\|u_t^{(l)}\big\|_{L^{\infty}}>l\big\}, \quad \forall l\in \mN.
		\end{equation*}
		By \eqref{sl},
		for every $0\leq t \leq \tau^k_{u_0} \wedge \tau^l_{u_0}$
		and  smooth function  $\phi$  on $\mT^d$ with $\int_{\mT^d} \phi(x)\dif x=0$,
		we have
		\baee
		\langle u_t^{(k)},\phi\rangle= &  \langle u_0,\phi\rangle   +
		\int_{0}^t \Big(\nu \langle   u_s^{(k)},\Delta \phi\rangle  +\sum_{i=1}^d \langle A^{(k)}(u_s^{(k)}),\partial_{x_i}\phi \rangle\Big)  \dif s+\langle \eta(t),\phi\rangle\\
		= &   \langle u_0,\phi\rangle   +
		\int_{0}^t \Big(\nu \langle   u_s^{(k)},\Delta \phi\rangle  +\sum_{i=1}^d \langle A^{(l)}(u_s^{(k)}),\partial_{x_i}\phi \rangle\Big)  \dif s+\langle \eta(t),\phi\rangle.
		\eaee
		By the uniqueness of the  solution of \eqref{smooth trun}, we conclude that
		\begin{eqnarray}
			\label{pp25-2}
			u_t^{(k)}=u_t^{(l)}, \quad \text {for } 0\leq t \leq \tau^k_{u_0} \wedge \tau^l_{u_0}.
		\end{eqnarray}
		For any $k<l$, assume that  $\tau_{u_0}^k> \tau_{u_0}^l$. First, by the definition of $\tau_{u_0}^l$, it holds that
		\begin{eqnarray}
			\label{pp25-1}
			\sup_{s\in [0,\tau_{u_0}^l]}\|u_s^{(l)}\|_{L^\infty}\geq l.
		\end{eqnarray}
		Secondly, by the definition of  $\tau^k_{u_0}$, also with the help of   $\tau^k_{u_0}> \tau^l_{u_0}\geq 0$, we get
		\begin{eqnarray*}
			\sup_{s\in [0,\tau^l_{u_0}]}\|u_s^{(k)}\|_{L^\infty}\leq  \sup_{s\in [0,\tau^k_{u_0}]}\|u_s^{(k)}\|_{L^\infty}\leq k.
		\end{eqnarray*}
		By  (\ref{pp25-2}) and  the assumption $\tau^k_{u_0}>\tau^l_{u_0}$,   the above inequality conflicts with (\ref{pp25-1}).
		Thus, for any   $k<l$, one has    $\tau^k_{u_0}\leq  \tau^l_{u_0}$.
		Define
		\begin{eqnarray}\label{maximal sol}
			\tau_{u_0}:=\sup_{k\in \mathbb N}\tau^k_{u_0}
		\end{eqnarray}
		and
		\begin{eqnarray}\label{local sol}
			u_t:=u_t^{(k)}, \quad 0\leq  t<\tau^k_{u_0}.
		\end{eqnarray}
		Thus,  for the equation (\ref{1-1}), we define a local solution  $u \in C([0, \tau_{u_0}), H^n)$   by the way above.
		
		For $u_0 \in H^\nn$, we have the following lemma for the corresponding local solution.
		\begin{lemma}\label{local nth dir}
			For any { $T\geq 1$}, let $(u_t)_{ t\geq 0}\in C([0,\tau_{u_0}\wedge T), H^\nn )$ be the local solution of \eqref{1-1}. One has
			\baee\label{223}
			& \mathbb{E}\Big[\sup _{0 \leq t \leq \tau_{u_0}\wedge T}\|u_t\|_{{ \nn}}^2\Big]\leq \|u_0\|_{{ \nn}}^2+ { C\big(T+\left\|u_0\right\|_{L^\mmm}^\mmm\big) }.
			\eaee
		\end{lemma}
		\begin{proof}
			{
				First, for $u_0^\prime\in H^{\nn+1}$ and $t\in [0,\tau_{u_0'}\wedge T).$ Using similar arguments as that in the proof of   Lemma \ref{priori H^m} and noticing the expression of $\mmm$, we get
				\bae\label{226n}
				& \mathbb{E}\Big[\sup _{0 \leq t \leq \tau_{u_0'}\wedge T}\|u_t^{u_0'}\|_{{ \nn}}^2\Big]\leq \|u_0'\|_{{ \nn}}^2+ { C\big(T+\left\|u_0'\right\|_{L^\mmm}^\mmm\big) }.
				\eae


				Secondly, for any $u_0\in H^{\nn},u_0'\in H^{\nn+1},$ $t\in[ 0, \tau_{u_0}\wedge \tau_{u_0\prime}\wedge T)$ and $N\in \mN$, we have
				\bae\label{l1 hn}
				&\|P_Nu_t^{u_0}\|_{\nn}^2\\
				\leq &  \|P_Nu_t^{u_0}-P_N u_t^{u_0'} \|_{\nn}^2+\|P_Nu_t^{u_0'}\|_{\nn}^2\\
				\leq&  C \sum_{|k|\leq N} \langle \partial_x^\nn (u_t^{u_0}-u_t^{u_0'}),e_k\rangle^2+\|u_t^{u_0'}\|_{\nn}^2\\
				\leq&  C_N  \sum_{|k|\leq N} \|  u_t^{u_0}-u_t^{u_0'}\|_{L^1}^2\|e_k\|_{L^\infty}^2+\|u_t^{u_0'}\|_{\nn}^2\\
				\leq& C_N \|u_0-u_0'\|^2_{L^1}+\|u_0'\|_{{ \nn}}^2+  C\int_0^t \|u_s^{u_0'}\|_{L^\mmm}^\mmm\dif s+ C t
				\\   &+ \sum_{i\in \cZ_0}|b_i| \sup_{t\in [0,T]}\big|\int_0^t \langle e_i, (-\Delta )^{\nn-1 }u_s^{u_0'}\rangle\dif W_i(s)\big| .\\
				\eae
				Where the last line above used  Lemma \ref{path L1 contr} and \eqref{pre H^m}. Substituting the estimates in Lemma \ref{priori H^m} into the inequality above, we conclude that
				\bae\label{new 232}
				&\mathbb E \Big[\sup_{t\in [0,\tau_{u_0}\wedge \tau_{u_0\prime}\wedge T)} \|P_Nu_t^{u_0}\|_{\nn}^2\Big]\\
				\leq& C_N \|u_0-u_0'\|^2_{L^1}+\|u_0'\|_{{ \nn}}^2+ { C\big(T+\left\|u_0'\right\|_{L^\mmm}^\mmm\big) }, \\
				\eae
				for any $N\in \mN$.

				Next, we will prove
				\begin{equation*}
					\mP(\tau_{u_0^\prime}=\infty)=1, \quad \forall u_0^\prime\in H^{\nn+1}.
				\end{equation*}
				For any $M>0$ and $k\in \mN$, using \eqref{226n},  we have
				\begin{eqnarray*}
					&& \mP(\tau_{u_0^\prime}<M)\leq  \mP(\tau^k_{u_0^\prime}<M )
					\\ && \leq \mP\Big(\sup_{s\in [0,\tau^k_{u_0^\prime})} \|u_s^{(k)}\|_{L^\infty}\geq k\text{ and }\tau^k_{u_0^\prime}<M\Big)
					\\ && \leq \frac{1}{k^2}  \mE \Big(\sup_{s\in [0,\tau^k_{u_0^\prime}\wedge M)} \|u_s^{(k)}\|_{L^\infty}^2 \Big)
					\\ && \leq \frac{C}{k^2}\big(\|u_0'\|_{{ \nn}}^2+ { C\big(M+\left\|u_0'\right\|_{L^\mmm}^\mmm\big) }\big).
				\end{eqnarray*}
				Letting $k\rightarrow \infty$ in the above, we conclude that
				\begin{equation*}
					\mP(\tau_{u_0^\prime}<M)=0.
				\end{equation*}
				Since $M>0$ is arbitrary,  we get $\mP(\tau_{u_0^\prime}=\infty)=1$ and $u_t^{u_0^\prime}\in C([0,\infty),H^{\nn})$ for $u_0^\prime \in H^{\nn+1}.$

				In \eqref{new 232},	
letting  $u_0'\in H^{\nn+1}$ and $u_0' \rightarrow u_0$ in $H^\nn$,
also with the help of $\mP(\tau_{u_0^\prime}=\infty)=1$,  we get
				\baee
				\mathbb E \Big[\sup_{t\in [0,\tau_{u_0}\wedge T)} \|P_Nu_t^{u_0}\|_{\nn}^2\Big] \leq \|u_0\|_{{ \nn}}^2+ { C\big(T+\left\|u_0\right\|_{L^\mmm}^\mmm\big) }.
				\eaee
				Letting $N\rightarrow \infty$ in the above and using monotone convergence theorem, we get
				\begin{eqnarray*}
					&& 	\mathbb E \Big[ \lim_{N\rightarrow \infty}\sup_{t\in [0,\tau_{u_0}\wedge T)} \|P_Nu_t^{u_0}\|_{\nn}^2\Big]
					=\mathbb E \Big[ \sup_N \sup_{t\in [0,\tau_{u_0}\wedge T)} \|P_Nu_t^{u_0}\|_{\nn}^2\Big]
					\\ && =\mathbb E \Big[  \sup_{t\in [0,\tau_{u_0}\wedge T)}\sup_N  \|P_Nu_t^{u_0}\|_{\nn}^2\Big]
					\\ && =\mathbb E \Big[  \sup_{t\in [0,\tau_{u_0}\wedge T)}  \|u_t^{u_0}\|_{\nn}^2\Big] \leq \|u_0\|_{{ \nn}}^2+ { C\big(T+\left\|u_0\right\|_{L^\mmm}^\mmm\big) }.
				\end{eqnarray*}
			{The last  inequality  in the above  is exactly the desired  result of  this lemma.
The proof is complete.
				}

			}
		\end{proof}

		\textbf{Now we are in a position to prove Proposition \ref{wp}. }

		\begin{proof}
			For $u_0 \in H^\nn$, the key is to prove that the local solution is global, i.e.,
			\begin{equation*}
				\mP(\tau_{u_0}=\infty)=1,
			\end{equation*}
			For any $M>0$ and $k\in \mN$, using Lemma \ref{local nth dir}, we have
			\begin{eqnarray*}
				&& \mP(\tau_{u_0}<M)\leq  \mP(\tau_{u_0}^k<M )
				\\ && \leq \mP(\sup_{s\in [0,\tau_{u_0}^k]} \|u_s^{(k)}\|_{L^\infty}\geq k\text{ and }\tau_{u_0}^k<M)
				\\  && \leq \mP(\sup_{s\in [0,\tau_{u_0}\wedge M]} \|  u_s \|_{L^\infty}\geq k  )
				\leq k^{-2} \mE \Big[\sup_{s\in [0,\tau_{u_0}\wedge M]} \|  u_s \|^2_{L^\infty} \Big]
				\\ && \leq C k^{-2} \mE \Big[\sup_{s\in [0,\tau_{u_0}\wedge M]} \|  u_s \|^2_{\nn} \Big]
				 \leq C k^{-2}\Big( \|u_0\|_{{ \nn}}^2+ { C\big(T+\left\|u_0\right\|_{L^\mmm}^\mmm\big) } \Big).
			\end{eqnarray*}
			Letting $k\rightarrow \infty$ in the above,  we conclude that
			\begin{equation*}
				\mP(\tau_{u_0}<M)=0.
			\end{equation*}
			Since $M>0$ is arbitrary,  we get $\mP(\tau_{u_0}=\infty)=1.$ The uniqueness of the solution is followed from Lemma \ref{path L1 contr}. The proof is completed.
		\end{proof}

		From Proposition \ref{wp}, we can define the transition semigroup $\left(P_t\right)_{t \geq 0}$ on $C_b\left(H^\nn\right)$ as following
		
		\begin{equation*}
			P_t \varphi\left(u_0\right):=\mathbb{E}_{u_0}[\varphi(u_t)], \quad t \geq 0, \quad u_0 \in H^\nn,
		\end{equation*}
		where $\mathbb{E}_{u_0}$ means that  $u_t$ starts from $u_0\in H^\nn. $
		The following proposition  can be proved by similar  arguments  in \cite[Corollary 1]{MR20}.
		\begin{proposition}
			The family $(P_t)_{t \geq 0}$ is a Feller semigroup and the process $(u_t)_{t \geq 0}$ is a strong Markov process in $H^\nn$.
		\end{proposition}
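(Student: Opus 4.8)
The statement bundles three assertions, and I would treat them in increasing order of difficulty. First, $P_0$ is the identity, and the semigroup law $P_{s+t}=P_sP_t$ together with the ordinary Markov property of $(u_t)$ will follow from the pathwise uniqueness in Proposition~\ref{wp} and the fact that \eqref{1-1} has time‑independent coefficients and is driven by an additive noise with independent increments: for fixed $s\ge0$ the shifted noise $W^s_\cdot:=W_{s+\cdot}-W_s$ is a Brownian motion with the same law as $W$ and independent of $\cF_s$, and by uniqueness $(u_{s+t})_{t\ge0}$ is the solution of \eqref{1-1} started from the $\cF_s$‑measurable datum $u_s$ and driven by $W^s$; a routine freezing/conditioning argument then gives $\mathbb E[\varphi(u_{s+t})\mid\cF_s]=(P_t\varphi)(u_s)$ for every $\varphi\in C_b(H^\nn)$, which yields both the Markov property and, upon taking expectations, the semigroup identity. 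The only analytically substantive point is the Feller property $P_t\bigl(C_b(H^\nn)\bigr)\subseteq C_b(H^\nn)$, i.e.\ continuity of $u_0\mapsto\mathbb E_{u_0}\varphi(u_t)$ on $H^\nn$; I expect this to be the main obstacle, since it is the only step using the specific structure of the equation.

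For the Feller property, the key observation is that, because the noise is additive and common to two solutions, the difference $w:=u^{v_0}-u^{u_0}$ of the solutions issued from $v_0,u_0\in H^\nn$ solves the \emph{deterministic} (pathwise) equation
\[
\partial_t w+\operatorname{div}\bigl(A(u^{v_0})-A(u^{u_0})\bigr)=\nu\Delta w,\qquad w|_{t=0}=v_0-u_0 .
\]
Writing $A(u^{v_0})-A(u^{u_0})=w\int_0^1A'(u^{u_0}+\tau w)\,d\tau$ and pairing with $(-\Delta)^\nn w$, the top‑order term is absorbed into the dissipation $\nu\|w\|_{\nn+1}^2$, while the remaining polynomial term is controlled, using $H^\nn\hookrightarrow L^\infty(\mT^d)$ (recall $\nn=\lfloor d/2+1\rfloor>d/2$) and the same Moser/Gagliardo–Nirenberg product bounds already exploited in the proof of Lemma~\ref{div reg}, by $\tfrac{\nu}{2}\|w\|_{\nn+1}^2+C(M)\|w\|_\nn^2$, where $M$ bounds $\sup_{t\in[0,T]}\bigl(\|u^{u_0}_t\|_\nn+\|u^{v_0}_t\|_\nn\bigr)$. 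A standard continuation argument removes the apparent circularity (on the interval where $\sup_{r\in[0,s]}\|w_r\|_\nn\le1$ one has $M\le\sup_{t\in[0,T]}\|u^{u_0}_t\|_\nn+1$, and once $\|v_0-u_0\|_\nn$ is small the resulting Gronwall bound keeps $\|w\|_\nn$ below $1$ on all of $[0,T]$), giving, for $\mP$‑a.e.\ $\omega$ and every $v_0$ close enough to $u_0$,
\[
\sup_{t\in[0,T]}\bigl\|u^{v_0}_t(\omega)-u^{u_0}_t(\omega)\bigr\|_\nn\le C(\omega)\,\|v_0-u_0\|_\nn .
\]
Hence $u^{v_0}_t\to u^{u_0}_t$ in $H^\nn$ $\mP$‑a.s.\ as $v_0\to u_0$, so $\varphi(u^{v_0}_t)\to\varphi(u^{u_0}_t)$ a.s., and dominated convergence (using $|\varphi|\le\|\varphi\|_{L^\infty}$) yields $P_t\varphi(v_0)\to P_t\varphi(u_0)$; the same argument with $t\downarrow0$ and the path continuity from Proposition~\ref{wp} gives $P_t\varphi(u_0)\to\varphi(u_0)$. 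Thus $(P_t)_{t\ge0}$ is a Feller (Markovian) semigroup on $C_b(H^\nn)$.

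Finally, for the strong Markov property: by the above, $(u_t)_{t\ge0}$ is a time‑homogeneous Markov process in $H^\nn$ whose transition semigroup $(P_t)$ is Feller, and by Proposition~\ref{wp} its paths lie a.s.\ in $C([0,T],H^\nn)$, so in particular $t\mapsto P_t\varphi(u_0)$ is right‑continuous for every $\varphi\in C_b(H^\nn)$. The strong Markov property then follows from the classical argument: approximate a bounded stopping time from above by a decreasing sequence of stopping times taking countably many values, apply the ordinary Markov property at each of them, and pass to the limit using path continuity and the Feller property (see, e.g., \cite{DPZ96}); this is also the route of \cite[Corollary~1]{MR20}. The genuinely delicate ingredient throughout is the $H^\nn$ continuous‑dependence estimate of the second paragraph, which, however, is just a deterministic parabolic computation once the two solution trajectories have been frozen.
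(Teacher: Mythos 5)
Your proof is correct, and since the paper disposes of this proposition with a one-line reference to \cite[Corollary 1]{MR20} rather than a written argument, your write-up supplies exactly the kind of detail the paper leaves implicit. The three-part structure (Markov/semigroup from pathwise uniqueness and the independent-increment, additive structure of the noise; Feller from a pathwise $H^\nn$ continuous-dependence estimate obtained by subtracting the two equations, testing against $(-\Delta)^\nn w$, absorbing the top order into $\nu\|w\|_{\nn+1}^2$, and running a bootstrap on the interval where $\|w\|_\nn\le 1$; strong Markov from Feller plus $C([0,T],H^\nn)$ path continuity via the usual dyadic approximation of stopping times) is precisely the standard route, and it is consistent with the route in MR20 (which treats $d=1$). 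The estimate $\|B_iw\|_\nn\le C\|B_i\|_\nn\|w\|_\nn$ needed after the integration by parts is exactly the Banach-algebra property of $H^\nn$ for $\nn>d/2$, matching the tools already assembled in Lemma~\ref{div reg}. Two cosmetic points: in the continuation step, if $\|w\|_\nn\le 1$ then $\sup(\|u^{u_0}\|_\nn+\|u^{v_0}\|_\nn)\le 2\sup\|u^{u_0}\|_\nn+1$ rather than $\sup\|u^{u_0}\|_\nn+1$, a harmless constant slip; and it is worth stating explicitly that the threshold $\delta(\omega)$ below which the pathwise Lipschitz bound is valid is allowed to depend on $\omega$, which suffices for the a.s. convergence feeding into dominated convergence, as you in fact use. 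Neither affects correctness.
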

		
		Take $\varsigma \in \mathcal{P}(H^\nn)$, the dual operator $(P_t^*)_{t \geq 0}$ of $(P_t)_{t \geq 0}$ is defined  by
		\begin{equation*}
			P_t^* \varsigma(O):=\int_{H^\nn} \mathbb{P}_{u_0}(u_t \in \Gamma)  \varsigma(\dif u_0), \quad t \geq 0, \quad O \in \mathcal{B}(H^\nn),
		\end{equation*}
		and the empirical measure of  $(u_t)_{t\geq 0}$ is denoted   by
		\begin{equation*}
			R_T^* \varsigma(O):=\frac{1}{T} \int_0^T P_t^* \varsigma(O) \dif t.
		\end{equation*}

		\subsection{Elements of Malliavin calculus}
		\label{pp24}
		{  Let $\mathbb{U}=|\cZ_0|$ and denote the canonical basis of $\mR^{\mathbb{U}}$ by $\{ \vartheta_{j}\}_{  {j} \in \cZ_0}.$
			We define the linear operator $Q: \mR^{\mathbb{U}}\rightarrow H$ in the following way: for any $z=\sum_{{j} \in \cZ_0}z_{j}\vartheta_{j} \in \mR^{\mathbb{U}}$,
			\begin{eqnarray}
				\label{pp28-1}
				Qz=\sum_{{j} \in \cZ_0 }   b_{{j} } z_{j} e_{{j}}.
		\end{eqnarray}}
		{Without otherwise specified statement, in this section,  we always assume that
			$u=(u_t)_{t\geq 0}$ is the solution of (\ref{1-1}) with initial value $u_0\in H^\nn$.}
		For any $0\le s\le t$ and $\xi\in  H$, let $J_{s,t}\xi$ be the solution of the linearised problem:
		\begin{eqnarray}\label{10-1}
			\left\{
			\begin{split}
				&\partial_t  J_{s,t}\xi + \operatorname{div}A^\prime(u,J_{s,t}\xi)= \nu \Delta J_{s,t}\xi,
				\\  &J_{s,s}\xi=\xi,
			\end{split}
			\right.
		\end{eqnarray}
		where $A^\prime(u,v):= (  A_1^\prime (u) v,\cdots,A_d^\prime (u) v)$.
		For $u\in \mathbb R$, define \begin{equation*}F(u):=-\sum_{i=1}^d\partial_{x_i}A_i(u)=-\sum_{i=1}^dA_i'(u)\frac{\partial u}{\partial x_i}= -\operatorname{div}A(u).\end{equation*}
		For any $0\le t\le T$ and $\xi\in  H$, let~$K_{t,T}$  be the   adjoint of $J_{t,T}.$
		Then,
		\begin{eqnarray*}
			D F(u)v= -\operatorname{div} D A(u)v=-\operatorname{div}  A'(u)v
		\end{eqnarray*}
		and  $\varrho_t:=K_{t,T}\phi$ satisfies the following equation:
		\begin{eqnarray}
			\label{p0203-1}
			\partial_t \varrho_t=-\nu \Delta \varrho_t-(D F(u))^*\varrho_t,
		\end{eqnarray}
		where $ (D F(u))^*$ is the adjoint of $D  F(u)$, i.e, $\langle (D F(u))^*v,w \rangle=\langle v, D F(u)w \rangle.$
		Denote $J^{(2)}_{s,t}(\phi,\psi)$  by the second derivative of $u_t$ with respect to initial value  $u_0$ in the directions of $\phi$ and $\psi$. Then
		\begin{eqnarray}
			\label{0927-1}
			\left\{
			\begin{split}
				&\partial_t J^{(2)}_{s,t}(\phi,\psi)+ \operatorname{div}A^\prime\big(u,J^{(2)}_{s,t}(\phi,\psi)\big)=  \nu \Delta J^{(2)}_{s,t}(\phi,\psi)
				\\ &- \operatorname{div}A^{\prime\prime}\left(u,J_{s,t}
				\phi   J_{s,t}\psi   \right)\quad \text{ for } t>s,
				\\ &J^{(2)}_{s,s}(\phi,\psi)=0,
			\end{split}
			\right.
		\end{eqnarray}
		where $A^{\prime\prime}(u,v):= (  A_1^{\prime\prime} (u) v,\cdots,A_d^{\prime\prime} (u) v)$.
		For the  well-posedness of  equations  \eqref{10-1} and \eqref{0927-1}, one can refer to  \cite{LSU68}.
		
		For any $t>0$ and $v\in L^2([0,t];\R^{\mathbb{U}})$, where $\mathbb{U}=|\cZ_0|$, the Malliavin derivative of~$u_t$ in the direction~$v$ is defined by
		\begin{equation*}
			\cD^v u_t:=\lim_{\eps \to  0}\frac{1}{\e}
			\Big(\Phi(t,u_0,W+\eps \int_0^\cdot v\dd  s)-\Phi(t,u_0,W)\Big), \quad
		\end{equation*}
		where the limit holds almost surely.
		Then, $\cD^v u_s$ satisfies the following equation:
		\begin{eqnarray}
			\label{p17-1}
			\dif \cD^v u_s+ \operatorname{div}A^\prime(u_s ,\cD^v u_s)  = \nu \Delta \cD^v u_s\dif  s +
			Q  \dif \int_0^{s} v_r\dif r,\quad \forall s\in [0,t].
		\end{eqnarray}
		By the Riesz representation theorem, there is a linear operator \baee  \cD:L^2(\Omega,   H)\to L^2(\Omega; L^2([0,t ];\R^{\mathbb{U}})\otimes  H)\eaee such that
		\begin{equation}\label{2.3}
			\cD^v u_t  =\lag    \cD u,v \rag_{L^2([0,t];\R^{\mathbb{U}})},~\forall v\in L^2([0,t];\R^{\mathbb{U}}) .
		\end{equation}
		Actually, we have the following lemma.
		{  \begin{lemma}
				\label{17-1}
				For  any     $v\in L^2([0,t];\R^{\mathbb{U}})$,
				we have
				\begin{align*}
					\cD^v u_t&=\int_0^{t}  J_{r,t}Q v_r \dif r.
				\end{align*}
				Hence,   we also have
				\begin{eqnarray*}
					\cD_r^iu_t=J_{r,t}  Q \theta_i, ~\forall r\in [0,t], i=1,\cdots,\mathbb{U},
				\end{eqnarray*}
				where  the linearization  $J_{r,t}\xi$ is the solution of  \eqref{10-1},
				$Q$ is given by \eqref{pp28-1}, and  $\{\theta_i\}_{i=1}^\mathbb{U}$ is the standard basis of $\mR^{\mathbb{U}}.$
				Here and below, we adopt the notation $\cD_r^iF:=(\cD F)^i(r)$,
				that is
				$\cD_r^i$ denotes the $i$th component of $\cD F$ evaluated at time $r.$
				%
		\end{lemma}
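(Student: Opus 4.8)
The plan is to identify $\cD^v u_t$ via the variation-of-constants (Duhamel) formula for the linear inhomogeneous parabolic equation it solves. Recall from \eqref{p17-1} that, for $v\in L^2([0,t];\R^{\mathbb{U}})$, the process $s\mapsto\cD^v u_s$ satisfies
\begin{equation*}
\partial_s(\cD^v u_s)+\operatorname{div}A'(u_s,\cD^v u_s)=\nu\Delta\,\cD^v u_s+Qv_s,\qquad \cD^v u_0=0,
\end{equation*}
a linear equation in the unknown, driven by the same operator $w\mapsto\operatorname{div}A'(u_\cdot,w)$ that appears in the linearized equation \eqref{10-1}, with inhomogeneous term $Qv_s$ lying, for a.e.\ $s$, in the finite-dimensional space spanned by the smooth functions $\{e_j\}_{j\in\cZ_0}$. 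The candidate solution is therefore $\Psi_t:=\int_0^t J_{r,t}(Qv_r)\,\dif r$. First I would check that $\Psi$ is well defined and that $t\mapsto\Psi_t$ solves the same problem \eqref{p17-1} with the same zero initial datum; since \eqref{p17-1} is linear and its well-posedness is covered by \cite{LSU68}, uniqueness then yields $\cD^v u_t=\Psi_t$, which is the first identity.

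To verify that $\Psi$ solves \eqref{p17-1}, I would differentiate under the integral sign, using $J_{t,t}=\Id$ and the equation \eqref{10-1} satisfied by $\tau\mapsto J_{r,\tau}(Qv_r)$ for each fixed $r$, so that at $\tau=t$ one has $\partial_t\big(J_{r,t}(Qv_r)\big)=\nu\Delta J_{r,t}(Qv_r)-\operatorname{div}A'(u_t,J_{r,t}(Qv_r))$. This gives
\begin{equation*}
\partial_t\Psi_t=Qv_t+\int_0^t\big(\nu\Delta J_{r,t}(Qv_r)-\operatorname{div}A'(u_t,J_{r,t}(Qv_r))\big)\,\dif r=Qv_t+\nu\Delta\Psi_t-\operatorname{div}A'(u_t,\Psi_t),
\end{equation*}
where the last equality uses the linearity of $\Delta$ and of $w\mapsto\operatorname{div}A'(u_t,w)$ to pull these operators out of the integral, and $\Psi_0=0$ is clear. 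The step requiring the most care — and the main technical obstacle — is the rigorous justification of interchanging $\partial_t$, $\Delta$ and $\operatorname{div}A'(u_t,\cdot)$ with $\int_0^t(\cdot)\,\dif r$ when $v$ is merely $L^2$ in time; I would handle this by first treating smooth $v$, where the manipulations are classical, and then passing to the limit using the continuity of $v\mapsto\cD^v u_t$ and of $v\mapsto\int_0^t J_{r,t}(Qv_r)\,\dif r$ in the relevant topologies, invoking the parabolic estimates for \eqref{10-1} from \cite{LSU68} together with the smoothness of the finitely many Fourier modes in the range of $Q$.

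For the ``hence'' part, I would combine the first identity with the defining relation \eqref{2.3}. Writing $v_r=\sum_{i=1}^{\mathbb{U}}v_r^i\theta_i$ and using linearity of $Q$ and $J_{r,t}$, the first identity reads $\cD^v u_t=\int_0^t\sum_{i=1}^{\mathbb{U}}v_r^i\,J_{r,t}(Q\theta_i)\,\dif r$, while \eqref{2.3} gives $\cD^v u_t=\int_0^t\sum_{i=1}^{\mathbb{U}}v_r^i\,\cD_r^i u_t\,\dif r$. Since these agree for every $v\in L^2([0,t];\R^{\mathbb{U}})$, testing against $v$ concentrated near an arbitrary time forces $\cD_r^i u_t=J_{r,t}(Q\theta_i)$ for a.e.\ $r\in[0,t]$ and each $i$; the continuity of $r\mapsto J_{r,t}(Q\theta_i)$, which again follows from parabolic regularity for \eqref{10-1}, upgrades this to every $r\in[0,t]$.
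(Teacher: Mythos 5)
The paper states Lemma \ref{17-1} without giving a proof --- it is presented as a standard consequence of the chain rule for the Malliavin derivative combined with the variation-of-constants representation of solutions to the linearized equation \eqref{10-1}. Your proposal correctly reconstructs exactly the argument the authors are implicitly relying on: identify $\cD^v u_\cdot$ as the unique solution of the linear inhomogeneous parabolic problem \eqref{p17-1}, verify via Duhamel's formula (differentiating $\Psi_t=\int_0^t J_{r,t}(Qv_r)\,\dif r$ under the integral, using $J_{t,t}=\Id$ and the fact that each $J_{r,\cdot}(Qv_r)$ solves the homogeneous problem \eqref{10-1}) that $\Psi$ solves the same problem with the same zero initial datum, and conclude by uniqueness. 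Your passage from the first identity to the pointwise formula $\cD_r^i u_t=J_{r,t}Q\theta_i$ via the Riesz pairing \eqref{2.3} and localization in $r$ is also the standard step, and your remark that the a.e.\ identity upgrades to all $r$ by continuity of $r\mapsto J_{r,t}Q\theta_i$ (from parabolic regularity) is the right way to close it. You are appropriately careful about the one technical point --- justifying the interchange of $\partial_t$, $\Delta$, and $\operatorname{div}A'(u_t,\cdot)$ with the $r$-integral for merely $L^2$ directions $v$ --- and your proposed strategy (smooth $v$ first, then density and continuity of $v\mapsto\cD^v u_t$ and $v\mapsto\int_0^t J_{r,t}Qv_r\,\dif r$, with the range of $Q$ spanned by finitely many smooth modes $e_j$) is the correct way to handle it. In short, your proof is correct and is what the authors intended; the only difference is that they chose not to spell it out.
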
}

		For any $s\leq t$,   define  the linear operator
		$\cA_{s,t}v: L^2([s,t];\mR^{\mathbb{U}})\rightarrow H  $
		by
		\begin{eqnarray}
			\label{15-1}
			\cA_{s,t}v:=\int_{s}^{t}
			J_{r,t}Q v_r  \dif r, v\in L^2([s,t];\mR^{\mathbb{U}}).
		\end{eqnarray}
		For any $s < t$, let
		$\cA_{s,t}^*: H\rightarrow  L^2([s,t];\mR^{\mathbb{U}})$ be the adjoint of $\cA_{s,t}$  defined in the above. We observe that
		%
		%
		\begin{eqnarray*}
			(\cA_{s,t}^*\phi)(r)=Q^* J_{r,t}^*\phi=Q^*K_{r,t}\phi,
		\end{eqnarray*}
		where
		$Q^*: H  \rightarrow  \mathbb R^{\mathbb{U}} $  is the adjoint of $Q$ defined in (\ref{pp28-1}).
		The Malliavin matrix  $\cM_{s,t}:H\rightarrow H $ is defined by
		\begin{eqnarray}
			\label{200-3}
			\cM_{s,t}\phi &:=& \cA_{s,t}\cA_{s,t}^*\phi.
		\end{eqnarray}
		By direct calculations, we have
		\begin{eqnarray}
			\label{200-1}
			\langle \cM_{s,t}\phi,\phi\rangle
			=\sum_{j\in \cZ_0 }\int_s^t \langle K_{r,t}\phi, e_j\rangle^2 \dif r.
		\end{eqnarray}
		Recall that
		$
		\mmm=40 \Bbbk d(d+14\Bbbk)^2.
		$
		Now we list some estimates with regard to $J_{s,t}\xi,J_{s,t}^{(2)}(\phi,\psi)$ etc for
		$,\xi,\phi,\psi\in H$.
		\begin{lemma}
			\label{expo J}
			{    With  probability $1$, the following
				\begin{eqnarray}\label{87-10}
					\|J_{s,t}\xi\|^2+ \int_{s}^t\|J_{s,r}\xi\|_1^2\dif r
					\leq
					C \|\xi\|^2 \exp\Big\{C \int_s^t  ( \| u_r\|^{\mmm}_{L^\mmm}+1)  \dif r \Big\}
				\end{eqnarray}
				holds for any  $\xi\in H$ and $0\leq s\leq t$,
				where the constant  $C$    depends on $\nu,d,\Bbbk,\mathbb{U}$
				and   $
				(c_{\mathbbm{i},\mathbbm{j}})_{1\leq \mathbbm{i}\leq d,0\leq \mathbbm{j} \leq \Bbbk}.$
			}
		\end{lemma}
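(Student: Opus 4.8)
The plan is to establish \eqref{87-10} by a pathwise energy estimate for the trajectory $v_r:=J_{s,r}\xi$, $r\in[s,t]$, which, for each fixed realization of $u$, solves the deterministic linear parabolic equation
$$\partial_r v_r+\operatorname{div}\bigl(A'(u_r)v_r\bigr)=\nu\Delta v_r,\qquad v_s=\xi,$$
where $A'(u_r)v_r=\bigl(A_1'(u_r)v_r,\dots,A_d'(u_r)v_r\bigr)$. Well-posedness and the regularity needed to justify the computations below are available from \cite{LSU68}; for a general $\xi\in H$ one first argues with smooth $\xi$ and passes to the limit using the bound itself. Since $\xi$ has zero spatial mean and the equation preserves the mean, $v_r\in H$ for every $r$, so $\|v_r\|_1=\|\nabla v_r\|_{L^2}$ and the multiplicative Gagliardo--Nirenberg inequalities for mean-zero functions on $\mT^d$ are at our disposal.

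First I would test the equation against $v_r$ and integrate by parts on $\mT^d$, which gives
$$\tfrac12\tfrac{\dif}{\dif r}\|v_r\|^2=-\nu\|v_r\|_1^2+\sum_{i=1}^d\langle A_i'(u_r)v_r,\partial_{x_i}v_r\rangle.$$
Since each $A_i$ is a polynomial of degree at most $\Bbbk$, one has $|A_i'(z)|\le C(1+|z|^{\Bbbk-1})$, so by the Cauchy--Schwarz and Young inequalities
$$\Bigl|\sum_{i=1}^d\langle A_i'(u_r)v_r,\partial_{x_i}v_r\rangle\Bigr|\le C\int_{\mT^d}(1+|u_r|^{\Bbbk-1})|v_r|\,|\nabla v_r|\,\dif x\le\tfrac{\nu}{2}\|v_r\|_1^2+C\int_{\mT^d}(1+|u_r|^{2\Bbbk-2})|v_r|^2\,\dif x.$$

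The crux is to bound $\int_{\mT^d}|u_r|^{2\Bbbk-2}|v_r|^2\,\dif x$. I would apply H\"older's inequality with a conjugate pair $(p,p')$, then Gagliardo--Nirenberg and Young:
$$\int_{\mT^d}|u_r|^{2\Bbbk-2}|v_r|^2\,\dif x\le\|u_r\|_{L^{(2\Bbbk-2)p}}^{2\Bbbk-2}\|v_r\|_{L^{2p'}}^2\le C\|u_r\|_{L^{(2\Bbbk-2)p}}^{2\Bbbk-2}\|v_r\|_1^{2\theta}\|v_r\|^{2(1-\theta)}\le\tfrac{\nu}{4}\|v_r\|_1^2+C\|u_r\|_{L^{(2\Bbbk-2)p}}^{(2\Bbbk-2)/(1-\theta)}\|v_r\|^2,$$
where $\theta\in(0,1)$ is the Gagliardo--Nirenberg exponent for the embedding into $L^{2p'}$, legitimate provided $2p'<2d/(d-2)$ when $d\ge3$ (no restriction for $d\le2$). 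The point is that $(p,p')$ can be chosen so that both exponents $(2\Bbbk-2)p$ and $(2\Bbbk-2)/(1-\theta)$ are at most $m=40\Bbbk d(d+2\Bbbk)$; for instance $p=d$, $p'=d/(d-1)$, $\theta=\tfrac12$ when $d\ge3$ yields the exponents $2d(\Bbbk-1)$ and $4(\Bbbk-1)$, both far below $m$. Since $\mT^d$ has finite measure, $L^m\hookrightarrow L^q$ for $q\le m$, hence $\|u_r\|_{L^{(2\Bbbk-2)p}}^{(2\Bbbk-2)/(1-\theta)}\le C\bigl(1+\|u_r\|_{L^m}^m\bigr)$.

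Combining these estimates absorbs the $\|v_r\|_1^2$ contributions into the left side and leaves
$$\tfrac{\dif}{\dif r}\|v_r\|^2+\tfrac{\nu}{2}\|v_r\|_1^2\le C\bigl(1+\|u_r\|_{L^m}^m\bigr)\|v_r\|^2.$$
Gr\"onwall's inequality then gives $\|v_t\|^2\le\|\xi\|^2\exp\bigl\{C\int_s^t(1+\|u_r\|_{L^m}^m)\,\dif r\bigr\}$, and feeding this back into the integrated differential inequality, using $x\le e^x$ to absorb the extra polynomial factor, produces the stated estimate for $\|v_t\|^2+\int_s^t\|v_r\|_1^2\,\dif r$. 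The main obstacle is the crux step above: arranging the H\"older/Gagliardo--Nirenberg/Young bookkeeping so that the $L^q$-exponents appearing on $u$ remain below $m$ while the viscous dissipation still absorbs all the gradient terms. Because $m$ is taken so large in the paper there is ample room, but this is where the care is needed, and it is also where the dimensional split $d\le2$ versus $d\ge3$ enters.
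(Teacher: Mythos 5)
Your proposal is correct and follows essentially the same line as the paper's own proof: an $L^2$-energy identity for $v_r=J_{s,r}\xi$, a Gagliardo--Nirenberg interpolation on $v_r$ to split the trilinear term, Young's inequality to absorb $\|v_r\|_1^2$ into the viscous dissipation, and Gr\"onwall to close. The only cosmetic difference is bookkeeping in the trilinear term: you separate off $\|\nabla v_r\|$ by Cauchy--Schwarz first and then apply H\"older and Gagliardo--Nirenberg to $\int(1+|u_r|^{2\Bbbk-2})|v_r|^2$, whereas the paper applies a three-factor H\"older inequality $\|A_i'(u_r)\|_{L^{2p/(p-2)}}\|v_r\|_{L^p}\|v_r\|_1$ and interpolates $\|v_r\|_{L^p}$; both lead to the same kind of $\bigl(1+\|u_r\|_{L^m}^m\bigr)\|v_r\|^2$ bound because $m=40\Bbbk d(d+2\Bbbk)$ comfortably dominates the exponents that arise.
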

		
		\begin{proof}
			By direct calculations, we get
			\begin{eqnarray}
				\label{pp22-1}
				\partial_t \|J_{s,t}\xi\|^2=-2\nu \|J_{s,t}\xi\|_1^2
				-2\left\langle  \operatorname{div}A^\prime(u_t,J_{s,t}\xi),J_{s,t}\xi \right\rangle.
			\end{eqnarray}
			Set \begin{eqnarray*}
				p =
				\left\{
				\begin{split}
					&  4, \quad  \quad \quad  d=1,2,
					\\
					& \frac{2d-1}{d-2}, \quad d\geq 3.
				\end{split}
				\right.
			\end{eqnarray*}
			For the second term on  the right side of (\ref{pp22-1}), using H\"older's inequality, one arrives at that
			\baee
			&\big|-2 \left\langle  \operatorname{div}A^\prime(u_t,J_{s,t}\xi),J_{s,t}\xi \right\rangle\big|\\
			\leq &2 \sum_{i=1}^d \big| \left\langle \partial_{x_i}\left(  A_i^\prime(u_t)J_{s,t}\xi \right)  ,J_{s,t}\xi \right\rangle\big| \\
			=&2 \sum_{i=1}^d \big|\left\langle   A_i^\prime(u_t)J_{s,t}\xi ,\partial_{x_i}J_{s,t}\xi \right\rangle\big| \\
			\leq& 2 \sum_{i=1}^d \| A_i^{\prime}(u_t)\|_{L^\frac{2p}{p-2}} \| J_{s,t}\xi \|_{L^p}\| J_{s,t}\xi \|_1 \\
			:=& I.
			\eaee

			For the case  $d=1,2$, by  Gagliardo-Nirenberg's ineqaulity, we have
			\begin{align}\label{d geq 3 I}
				& \|J_{s,t}\xi \|_{L^4 } \leq
				C\| J_{s,t}\xi\|_1^{\lambda}\| J_{s,t}\xi\|^{1-\lambda},
			\end{align}
			where $\lambda=\frac{d}{4}.$
			Thus, for $d=1,2$, it holds that
			\begin{align}
				\nonumber   I \leq &
				C\sum_{i=1}^d \|  A_i^\prime(u)\|_{L^4}\|J_{s,t}\xi \|_{1 }^{1+\lambda}
				\|J_{s,t}\xi\|^{1-\lambda}
				\\  \nonumber  \leq& C \sum_{i=1}^d  \|  A_i^\prime(u_t)\|_{L^4}^{2/(1-\lambda)} \|J_{s,t}\xi\|^{2}+\frac12 \nu \|J_{s,t}\xi \|_{1 }^2
				\\
				\label{pp22-2}
				\leq & C( \| u_t\|^{\mmm}_{L^\mmm}+1) \| J_{s,t}\xi \|^{2}  +\frac12\nu\| J_{s,t}\xi \|_1^2.
			\end{align}
			For the case $d\geq 3$,  we have
			\begin{align}
				\nonumber 	I\leq&C  \sum_{i=1}^d \| A_i^{\prime}(u_t)\|_{L^\frac{2p}{p-2}} \| J_{s,t}\xi \|_{L^p}\|J_{s,t}\xi \|_1
				\\   \nonumber
				\leq& C ( \| u_t\|^{\Bbbk}_{L^\frac{2\Bbbk p}{p-2}} +1)\| J_{s,t}\xi \|_{L^p}\| J_{s,t}\xi \|_1 \\  \nonumber
				\leq& C  ( \| u_t\|^{\Bbbk}_{L^\frac{2\Bbbk p}{p-2}} +1) \| J_{s,t}\xi \|^{1-\lambda_{p,d}} \| J_{s,t}\xi \|_1^{1+ \lambda_{p,d}}\\ \label{pp22-3}
				\leq & C( \| u_t\|^{\mmm}_{L^\mmm}+1) \| J_{s,t}\xi \|^{2}  +\frac12\nu\| J_{s,t}\xi \|_1^2,
			\end{align}
			where we have used the Gagliardo-Nirenberg inequality
			\begin{equation*}
				\|J_{s,t}\xi\|_{L^p}\leq C \|J_{s,t}\xi \|^{1-\lambda_{p,d}} \| J_{s,t}\xi \|_{1}^ {\lambda_{p,d}}, ~
				\lambda_{p,d}=\frac{d(p-2)}{2p}=\frac{3d}{2(2d-1)}.
			\end{equation*}
			
			With the help of (\ref{pp22-2}) and (\ref{pp22-3}),  for $d\geq 1$, we obtain
			\begin{align*}
				\partial_t  \| J_{s,t}\xi\|^{2}  \leq& -\frac12\nu \|J_{s,t}\xi\|_1^{2}+ C  ( \| u_t\|^{\mmm}_{L^\mmm}+1) \|J_{s,t} \xi \|^{2}.
			\end{align*}
			Thus, using the Gr\"ownwall ineuqlity,  the proof of (\ref{87-10}) is complete.
		\end{proof}
		By similar arguments to those  in \cite[Proposition 5]{MR20}, we have the $L^1$ contraction of the Jacobian.
		\begin{lemma}[$L^1$ contraction of $J$]\label{L^1}
			With probability one,  we have
			\baee
			\left\| J_{s, t}\xi\right\|_{L^1}\leq \|\xi\|_{L^1}, ~\forall \xi\in H \text{ and }0\leq s\leq t.
			\eaee
		\end{lemma}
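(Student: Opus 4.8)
The plan is to reduce the statement to the $L^{1}$-contraction argument already used for Lemma~\ref{path L1 contr} (that is, \cite[Proposition~5]{MR20}). Fix $\xi\in H$ and $s\ge0$ and set $w_{t}:=J_{s,t}\xi$. By \eqref{10-1}, $w_{t}$ solves the linear transport--diffusion equation
\begin{equation*}
\partial_{t}w_{t}+\operatorname{div}\bigl(A^{\prime}(u_{t})\,w_{t}\bigr)=\nu\Delta w_{t},\qquad w_{s}=\xi,
\end{equation*}
which has exactly the same structure as the equation satisfied by the difference $u_{t}-v_{t}$ of two solutions of \eqref{1-1}, where the frozen coefficient $A^{\prime}(u_{t})$ plays the role of $\int_{0}^{1}A^{\prime}(v_{t}+\theta(u_{t}-v_{t}))\,\dif\theta$. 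Hence the contraction follows by repeating the sign argument used there.

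Concretely, I would introduce the smooth convex approximation $\Phi_{\eps}(r)=\sqrt{r^{2}+\eps^{2}}-\eps$ of $|r|$, so that $\Phi_{\eps}\uparrow|\cdot|$ as $\eps\downarrow0$, $|\Phi_{\eps}^{\prime}|\le1$, $\Phi_{\eps}^{\prime\prime}\ge0$, and set $\psi_{\eps}(r):=\int_{0}^{r}\rho\,\Phi_{\eps}^{\prime\prime}(\rho)\,\dif\rho$; a direct computation gives $\psi_{\eps}(r)=\eps-\eps^{2}(r^{2}+\eps^{2})^{-1/2}$, so that $0\le\psi_{\eps}\le\eps$ and $\psi_{\eps}^{\prime}(r)=r\,\Phi_{\eps}^{\prime\prime}(r)$. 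Testing the equation for $w_{t}$ against $\Phi_{\eps}^{\prime}(w_{t})$ and integrating by parts on $\mT^{d}$ yields
\begin{align*}
\frac{\dif}{\dif t}\int_{\mT^{d}}\Phi_{\eps}(w_{t})\,\dif x
&=-\nu\int_{\mT^{d}}\Phi_{\eps}^{\prime\prime}(w_{t})\,|\nabla w_{t}|^{2}\,\dif x-\int_{\mT^{d}}\psi_{\eps}(w_{t})\,\operatorname{div}A^{\prime}(u_{t})\,\dif x\\
&\le\eps\,\bigl\|\operatorname{div}A^{\prime}(u_{t})\bigr\|_{L^{1}},
\end{align*}
the first term being $\le0$ by convexity of $\Phi_{\eps}$ and the second estimated via $0\le\psi_{\eps}\le\eps$. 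Integrating from $s$ to $t$ and letting $\eps\downarrow0$ (monotone convergence) gives $\|w_{t}\|_{L^{1}}\le\|\xi\|_{L^{1}}$, provided $\int_{s}^{t}\|\operatorname{div}A^{\prime}(u_{r})\|_{L^{1}}\,\dif r<\infty$. This last fact holds pathwise: $\operatorname{div}A^{\prime}(u_{r})=\sum_{i=1}^{d}A_{i}^{\prime\prime}(u_{r})\partial_{x_{i}}u_{r}$, each $A_{i}^{\prime\prime}$ is a polynomial, $u\in C([0,T],H^{\nn})$ a.s.\ by Proposition~\ref{wp}, $H^{\nn}\hookrightarrow L^{\infty}$, and $\|\partial_{x_{i}}u_{r}\|_{L^{1}}\le C\|u_{r}\|_{1}\le C\|u_{r}\|_{\nn}$, so the integrand is continuous (hence bounded) on $[0,T]$. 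The estimate is then upgraded to hold a.s.\ simultaneously for all $\xi\in H$ and $0\le s\le t$ by taking a countable dense set of times and of $\xi\in H$ and using linearity together with the continuity of $(s,t,\xi)\mapsto J_{s,t}\xi$ (Lemma~\ref{expo J}) and the embedding $H\hookrightarrow L^{1}(\mT^{d})$.

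The only genuine difficulty is technical: the test-function computation presupposes enough spatial regularity of $w_{t}$ and of the coefficient $A^{\prime}(u_{t})$ to license the integrations by parts, whereas a priori $u_{t}\in H^{\nn}$ and $\xi\in H$ only. As in \cite{MR20}, this is handled by a regularization argument --- approximate $u$ and $\xi$ by smooth data (or invoke the parabolic smoothing of \eqref{1-1} and \eqref{10-1} for $t>s$), run the estimate for the regularized linear problems where every manipulation is licit, and pass to the limit using the continuous dependence of $J$ on the data. Alternatively, the contraction can be obtained by duality: $K_{s,t}$, the $L^{2}$-adjoint of $J_{s,t}$, solves a backward equation of the form \eqref{p0203-1}, whose generator $-\nu\Delta-(DF(u))^{*}=-\nu\Delta-A^{\prime}(u)\cdot\nabla$ carries no zeroth-order term, so the parabolic maximum principle gives $\|K_{s,t}\phi\|_{L^{\infty}}\le\|\phi\|_{L^{\infty}}$; hence $\|J_{s,t}\|_{L^{1}\to L^{1}}=\|K_{s,t}\|_{L^{\infty}\to L^{\infty}}\le1$, which is precisely the claim.
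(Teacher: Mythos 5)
Your primary argument is correct and essentially the same as the paper's proof of Lemma~\ref{L^1}: approximate $|\cdot|$ by a smooth convex function, run the energy estimate on the linearised equation, and pass to the limit. The concrete execution differs slightly. The paper uses $|f|_\eta=\int_0^f\operatorname{sign}_\eta(z)\,\dif z$ (so $|\cdot|_\eta''=\frac{1}{\eta}\mathbf{1}_{|\cdot|\le\eta}$), integrates by parts once, keeps the commutator term $\int A_i'(u_r)(J_{s,r}\xi)\,\partial_{x_i}(J_{s,r}\xi)\,\frac{1}{\eta}\mathbf{1}_{|J_{s,r}\xi|\le\eta}$, bounds it pointwise by the integrable majorant $C\bigl(1+\sup_{r}\|u_r\|_{L^\infty}^{\Bbbk-1}\bigr)|\nabla J_{s,r}\xi|$ (integrability via Lemma~\ref{expo J}), and sends $\eta\to0$ by dominated convergence. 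You choose $\Phi_\eps(r)=\sqrt{r^2+\eps^2}-\eps$ and perform a \emph{second} integration by parts so that all the $\eps$-dependence sits in the factor $\psi_\eps(w)\in[0,\eps]$ tested against $\operatorname{div}A'(u_t)$; you then let $\eps\downarrow0$ without any dominated-convergence step. The extra IBP costs one more derivative of $u$ through $\operatorname{div}A'(u)=\sum_iA_i''(u)\partial_{x_i}u$, and you correctly observe this is harmless because $u\in C([0,T],H^\nn)$ a.s.\ with $\nn\ge1$, so $\|\operatorname{div}A'(u_r)\|_{L^1}$ is bounded pathwise on $[s,t]$; the paper's variant has the mild advantage of never differentiating $A'(u)$, which would matter if one only had $u\in L^\infty$. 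Both are legitimate here. Your closing duality argument --- $(DF(u))^*=A'(u)\cdot\nabla$ is first order with no zeroth-order term, hence the maximum principle gives $\|K_{s,t}\|_{L^\infty\to L^\infty}\le1$ and so $\|J_{s,t}\|_{L^1\to L^1}\le1$ --- is a genuinely different and cleaner route than the paper's; it is valid at the regularity level guaranteed by Proposition~\ref{wp}.
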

		\begin{proof}
			For $\eta>0$, we define a continuous approximation of the sign function as follow
			\begin{equation*}
				\operatorname{sign}_\eta(z):= \begin{cases}\frac{z}{\eta}, & z \in[-\eta, \eta], \\ 1, & z \geq \eta,\\ -1, & z \leq \eta.\end{cases}
			\end{equation*}
			{  	Then we have the {  following}   continuously differentiable approximation of the absolute value:}
			\begin{equation*}
				|f|_\eta:=\int_0^f \operatorname{sign}_\eta(z) \dif z, \quad \forall  f \in \mathbb{R} .
			\end{equation*}
			For $0 \leq s \leq t$, we have
			\bae\label{l11}
			& \int_{\mathbb{T}^d}|J_{s,t}\xi|_\eta \dif x-\int_{\mathbb{T}^d}|\xi|_\eta \dif x=\int_{\mathbb{T}^d} \int_s^t \frac{\dif}{\dif r}|J_{s,r}\xi|_\eta \dif r \dif x \\
			& =\int_{\mathbb{T}^d} \int_s^t \frac{\dif}{\dif r}J_{s,r}\xi \operatorname{sign}_\eta\big(J_{s,r}\xi\big) \dif r \dif x \\
			& =\int_s^t \int_{\mathbb{T}^d}\sum_{i=1}^d\left(A_i^\prime(u_r)J_{s,r}\xi-\nu \partial_{x_i}J_{s,r}\xi\right) \partial_{x_i}\operatorname{sign}_\eta(J_{s,r}\xi) \dif x \dif r\\
			& =\sum_{i=1}^d\int_s^t \int_{\mathbb{T}^d}\big(A_i^\prime(u_r)J_{s,r}\xi-\nu \partial_{x_i}J_{s,r}\xi\big) \partial_{x_i}J_{s,r}\xi \frac{1}{\eta} \mathbf{1}_{|J_{s,r}\xi| \leq \eta} \dif x \dif r \\
			& \leq \sum_{i=1}^d \int_s^t \int_{\mathbb{T}^d}A_i^\prime(u_r)J_{s,r}\xi \partial_{x_i}J_{s,r}\xi \frac{1}{\eta} \mathbf{1}_{|J_{s,r}\xi| \leq \eta} \dif x \dif r.
			\eae
			By Proposition \ref{wp}, $u_r$ belong to $C\left([s, t], L^\infty(\mathbb{T}^d)\right)$ almost surely. Then we have
			\baee
			&A_i^\prime(u_r)J_{s,r}\xi \partial_{x_i}J_{s,r}\xi \frac{1}{\eta} \mathbf{1}_{|J_{s,r}\xi| \leq \eta}
			\leq  C\Big(1+\sup _{r \in[s, t]}\|u_r\|_{L^{\infty}}^{\Bbbk-1}\Big) \left|\nabla J_{s,r}\xi\right|.
			\eaee
			Thus we conclude that
			\baee
			&\int_s^t \int_{\mathbb{T}^d} \left|\nabla J_{s,r}\xi\right|\dif x \dif r \leq C \int_s^t\left\| J_{s,r}\xi\right\|_1 \dif r \leq C (t-s)^\frac12\Big( \int_s^t\left\| J_{s,r}\xi\right\|_1^2 \dif r\Big)^\frac12 \\
			\leq&  C(t-s)^\frac12 \|\xi\| \exp\Big\{C \int_s^t( \| u\|^{\mmm}_{L^\mmm}+1) \dif r \Big\} <\infty,\quad \mathbb P\text{-a.s.},
			\eaee
			where we used Lemma \ref{expo J} in the last line. By the dominated convergence theorem, one has
			$$
			\begin{aligned}
				& \lim _{\eta \rightarrow 0} \int_s^t \int_{\mathbb{T}^d}A_i^\prime(u_r)J_{s,r}\xi \partial_{x_i}J_{s,r}\xi \frac{1}{\eta} \mathbf{1}_{|J_{s,r}\xi| \leq \eta}\dif x \dif r \\
				& \quad=\int_s^t \int_{\mathbb{T}^d} \lim _{\eta \rightarrow 0}A_i^\prime(u_r)J_{s,r}\xi \partial_{x_i}J_{s,r}\xi \frac{1}{\eta} \mathbf{1}_{|J_{s,r}\xi| \leq \eta} \dif x \dif r=0 .
			\end{aligned}
			$$

			Noticing that $|\cdot|_\eta$ increases to $|\cdot|$ as $\eta$ decreases, by the monotone convergence theorem and   \eqref{l11}, one has
			\baee
			&\|J_{s,t}\xi\|_{L^1}=\lim _{\eta \rightarrow 0} \int_{\mathbb{T}^d}|J_{s,t}\xi|_\eta \dif x \leq \lim _{\eta \rightarrow 0} \int_{\mathbb{T}^d}|\xi|_\eta \dif x=\|\xi\|_{L^1}.
			\eaee
			We complete the proof.
		\end{proof}
		
		\begin{lemma}
			\label{15-4}
			{
				With probability one, it holds that
				\begin{eqnarray}\label{integrable J L^2}
					\begin{split}
						&\|J_{s,t}\xi\|^2+ \int_{s}^t\|J_{s,r}\xi\|_1^2\dif r
						\\ & \leq C \|\xi\|^2+  C\|\xi\|^2 \int_{s}^t( \| u_r\|^{\mmm}_{L^\mmm}+1) \dif r,
						~\forall \xi\in H \text{ and }0\leq s\leq t,
					\end{split}
				\end{eqnarray}
				where the constant  $C$    depends on $\nu,d,\Bbbk, (b_{i})_{i\in \cZ_0},\mathbb{U}$
				and   $
				(c_{\mathbbm{i},\mathbbm{j}})_{1\leq \mathbbm{i}\leq d,0\leq \mathbbm{j} \leq \Bbbk}.$
			}
		\end{lemma}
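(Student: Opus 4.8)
The plan is to re-run the energy estimate from the proof of Lemma~\ref{expo J}, but to handle the nonlinear term with the help of the $L^1$-contraction of the Jacobian (Lemma~\ref{L^1}); this is exactly what upgrades the Gr\"onwall-type (hence exponential) bound \eqref{87-10} to the linear bound \eqref{integrable J L^2}. Arguing pathwise, I would start from the identity \eqref{pp22-1}, namely
\begin{equation*}
\partial_t\|J_{s,t}\xi\|^2=-2\nu\|J_{s,t}\xi\|_1^2-2\big\langle\operatorname{div}A'(u_t,J_{s,t}\xi),J_{s,t}\xi\big\rangle,
\end{equation*}
and, after integrating by parts in the last term and applying H\"older's inequality exactly as in Lemma~\ref{expo J}, bound the nonlinear term by $C\sum_{i=1}^d\|A_i'(u_t)\|_{L^{q}}\,\|J_{s,t}\xi\|_{L^p}\,\|J_{s,t}\xi\|_1$, using the same Hölder pair $(p,q)$ as there (with the familiar case split into $d=1,2$ and $d\ge 3$).

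The one new ingredient is to interpolate the middle factor against $L^1$ rather than $L^2$. Since $H^1\hookrightarrow L^p$ for the relevant $p$ with room to spare, there is $\theta=\theta(d,p)\in(0,1)$ with $\|f\|_{L^p}\le C\|f\|_1^{\theta}\|f\|_{L^1}^{1-\theta}$ (Gagliardo--Nirenberg on $\mT^d$), and Lemma~\ref{L^1} together with $|\mT^d|<\infty$ gives $\|J_{s,t}\xi\|_{L^1}\le\|\xi\|_{L^1}\le C\|\xi\|$. Hence the nonlinear term is at most $C\sum_i\|A_i'(u_t)\|_{L^q}\,\|J_{s,t}\xi\|_1^{1+\theta}\,\|\xi\|^{1-\theta}$, and because $1+\theta<2$, Young's inequality absorbs $\|J_{s,t}\xi\|_1^{1+\theta}$ into $\nu\|J_{s,t}\xi\|_1^2$ at the cost of a term $C\|A_i'(u_t)\|_{L^q}^{2/(1-\theta)}\|\xi\|^2$. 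Since each $A_i$ is a polynomial of degree at most $\Bbbk$, this last quantity is bounded by $C(1+\|u_t\|_{L^m}^m)\|\xi\|^2$ for the (large) value $m=40\Bbbk d(d+2\Bbbk)$. This yields
\begin{equation*}
\partial_t\|J_{s,t}\xi\|^2+\nu\|J_{s,t}\xi\|_1^2\le C(1+\|u_t\|_{L^m}^m)\|\xi\|^2,
\end{equation*}
and now integrating directly from $s$ to $t$ (no Gr\"onwall, which is the whole point) and using $\|J_{s,s}\xi\|^2=\|\xi\|^2$ gives \eqref{integrable J L^2}. Finally, since the right-hand side is a deterministic functional of the a.s.\ continuous path $r\mapsto u_r\in L^m$, the estimate holds on one full-measure event simultaneously for all $\xi\in H$ and $0\le s\le t$.

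The argument has no genuine analytic obstacle; the work is in the bookkeeping. One must check that the interpolation exponent satisfies $\theta<1$ in every dimension, so that $\|J_{s,t}\xi\|_1^{1+\theta}$ is genuinely absorbable into the dissipation (for $d\ge 3$ with $p=\tfrac{2d-1}{d-2}$ one gets $\theta=\tfrac{2d(d+1)}{(2d-1)(d+2)}<1$, and the cases $d=1,2$ are immediate from $H^1\hookrightarrow L^\infty$, resp.\ $H^1\hookrightarrow L^4$), and that the exponent $2(\Bbbk-1)/(1-\theta)$ arising after Young's inequality stays $\le m$, so that $\|A_i'(u_t)\|_{L^q}^{2/(1-\theta)}\le C(1+\|u_t\|_{L^m}^m)$. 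Both are easily verified from the explicit values of $p$, $q$ and $m$ already fixed in the paper, so the only real step is replacing the $L^2$ Gagliardo--Nirenberg interpolation of Lemma~\ref{expo J} by its $L^1$ counterpart and invoking Lemma~\ref{L^1}.
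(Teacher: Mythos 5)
Your proposal is correct and follows the paper's proof essentially verbatim: the paper likewise starts from \eqref{pp22-1}, applies H\"older, replaces the $L^2$-interpolation of Lemma~\ref{expo J} by the $L^1$-interpolation $\|J_{s,t}\xi\|_{L^p}\le C\|J_{s,t}\xi\|_{L^1}^{1-\lambda}\|\nabla J_{s,t}\xi\|^{\lambda}$, invokes Lemma~\ref{L^1} to bound $\|J_{s,t}\xi\|_{L^1}\le\|\xi\|_{L^1}\le C\|\xi\|$, absorbs via Young's inequality, and integrates directly without Gr\"onwall. The only cosmetic difference is the H\"older exponent: the paper uses the single choice $p=\tfrac{2d+1}{d}\in(2,3]$ for all $d$ rather than carrying over the $d=1,2$ versus $d\ge3$ case split of Lemma~\ref{expo J}, but this does not change the argument.
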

		
		\begin{proof}
			Set $p=\frac{2d+1}{d}\in (2,3]$ and $\lambda=\lambda_{p,d}=\frac{2d(p-1)}{p(2+d)}=\frac{2d(d+1)}{(2d+1)(d+2)}\in (1/3,1-\frac{1}{4d})$.
			Observe that
			\begin{eqnarray}
				\nonumber  & &
				\big| \left\langle  \operatorname{div}A^\prime(u,J_{s,t}\xi),J_{s,t}\xi \right\rangle\big| \\
				\nonumber && =\big| \sum_{i=1}^d \left\langle \partial_{x_i}\left(  A_i^\prime(u)J_{s,t}\xi \right)  ,J_{s,t}\xi \right\rangle\big| =\big| \sum_{i=1}^d \left\langle   A_i^\prime(u)J_{s,t}\xi ,\partial_{x_i}J_{s,t}\xi \right\rangle\big|
				\\
				\nonumber && \leq \sum_{i=1}^d \| A_i^{\prime}(u)\|_{L^\frac{2p}{p-2}} \| J_{s,t}\xi \|_{L^p}\| \nabla J_{s,t}\xi \|
				\leq C \sum_{i=1}^d (1+\| u\|^{\Bbbk-1}_{L^\frac{2(\Bbbk-1)p}{p-2}}) \| J_{s,t}\xi \|_{L^p}\| \nabla J_{s,t}\xi \|\\
				\nonumber  && \leq  C \sum_{i=1}^d (1+\| u\|^{\Bbbk-1}_{L^\frac{2(\Bbbk-1)p}{p-2}}) \| J_{s,t}\xi \|_{L^1}^{1-\lambda} \| \nabla J_{s,t}\xi \|^{1+ \lambda}\\
				\nonumber  && \leq    C \sum_{i=1}^d (1+\| u\|^{2(\Bbbk-1)/(1-\lambda)}_{L^\frac{2(\Bbbk-1)p}{p-2}}) \| \xi \|^{2}  +\frac12\nu\| \nabla J_{s,t}\xi \| ^2
				\\ \label{p1107-1}  && \leq    C  (1+\| u\|^{\mmm}_{L^\mmm}) \| \xi \|^{2}  +\frac12\nu\| \nabla J_{s,t}\xi \| ^2.
			\end{eqnarray}
			In the above,  for  the second inequality,   we have used the Gagliardo-Nirenberg's inequality
			$
			\|J_{s,t}\xi\|_{L^p}\leq C \|J_{s,t}\xi\|_{L^1}^{1-\lambda} \| J_{s,t}\xi \|_{1}^ {\lambda}\leq C \|J_{s,t}\xi \|_{L^1}^{1-\lambda} \|\nabla J_{s,t}\xi\|^ {\lambda};
			$
			for  the  third inequality, we have used Lemma \ref{L^1} and Young's inequality;
			and  in the last inequality, we have used the fact
			\begin{equation*}
				\mmm \geq \max\{\frac{2(\Bbbk-1)p}{p-2},\frac{2(\Bbbk-1)}{(1-\lambda)}\}.
			\end{equation*}
			Then using the chain rule for   $\frac{\dif  \| J_{s,t}\xi\|^2   }{\dif t }$,
			also  with the help of (\ref{p1107-1}),  we get  the desired result (\ref{integrable J L^2}).
		\end{proof}
		
		\begin{lemma}\label{L^p J}
			With probability one,   the following  pathwise estimate
			\bae
			&\|J_{s,t}\xi\|_{L^8}
			\leq   C \|\xi\|_{L^8}  \exp\Big\{C  \int_s^t ( \| u_r \|^{\mmm}_{L^{\mmm}}+1 )  \dif r \Big\}
			\eae
			holds for any  $0\leq s\leq t   \text{ and  } \xi \in H \text{ with  }\int_{\mT^d}\xi(x)\dif x=0$,
			where $C$  is a constant   depending on  $\nu,d,\Bbbk,(b_j)_{j\in \cZ_0},\mathbb{U}$
			and $
			(c_{\mathbbm{i},\mathbbm{j}})_{1\leq \mathbbm{i}\leq d,0\leq \mathbbm{j} \leq \Bbbk}$.
			
		\end{lemma}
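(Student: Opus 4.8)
The plan is to run an $L^8$ energy estimate for the linearization, entirely pathwise: the equation \eqref{10-1} for $w_t:=J_{s,t}\xi$ carries no stochastic integral, only the random coefficient $A_i'(u_t)=\sum_{j=1}^{\Bbbk}jc_{i,j}u_t^{j-1}$, which is a polynomial of degree $\Bbbk-1$ in $u_t$. First I would note that, since $u_0\in H^\nn$ and $H^\nn\hookrightarrow L^\infty(\mathbb T^d)$, the coefficients $A_i'(u_t)$ are a.s.\ bounded in space--time on $[s,t]$, so by the parabolic theory of \cite{LSU68} the solution $w_t$ is regular enough to justify the computations below (if one prefers, one runs the argument first for a smooth compactly supported truncation of $A$, exactly as in the proof of Proposition \ref{wp}, and passes to the limit). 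Then, differentiating in time and integrating by parts on $\mathbb T^d$,
\[
\frac{\dif}{\dif t}\|w_t\|_{L^8}^8=8\int_{\mathbb T^d}w_t^7\big(\nu\Delta w_t-\operatorname{div}A'(u_t,w_t)\big)\dif x=-\frac{7\nu}{2}\|\nabla(w_t^4)\|^2+14\sum_{i=1}^d\int_{\mathbb T^d}A_i'(u_t)\,w_t^4\,\partial_{x_i}(w_t^4)\dif x,
\]
where I have used the identities $w^6|\nabla w|^2=\tfrac1{16}|\nabla(w^4)|^2$ and $w^7\partial_{x_i}w=\tfrac14w^4\partial_{x_i}(w^4)$, valid irrespective of the sign of $w$ since $t\mapsto t^4$ is smooth.

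Next I would control the nonlinear term. By Hölder's inequality with exponents $(p_0,p_1,2)$, $\tfrac1{p_0}+\tfrac1{p_1}=\tfrac12$, then $\|A_i'(u_t)\|_{L^{p_0}}\le C(1+\|u_t\|_{L^{(\Bbbk-1)p_0}}^{\Bbbk-1})$, and the Gagliardo--Nirenberg inequality $\|w^4\|_{L^{p_1}}\le C\|\nabla(w^4)\|^{\theta}\|w^4\|_{L^2}^{1-\theta}+C\|w^4\|_{L^2}$ with $\theta=d(\tfrac12-\tfrac1{p_1})$, the nonlinear term is bounded by
\[
C\big(1+\|u_t\|_{L^{(\Bbbk-1)p_0}}^{\Bbbk-1}\big)\Big(\|\nabla(w_t^4)\|^{1+\theta}\|w_t^4\|_{L^2}^{1-\theta}+\|w_t^4\|_{L^2}\,\|\nabla(w_t^4)\|\Big).
\]
I would take $p_0=2d$ (so $p_1=\tfrac{2d}{d-1}$ for $d\ge2$, $p_1=\infty$ for $d=1$, which for $d\ge3$ is below the critical Sobolev exponent $\tfrac{2d}{d-2}$), giving $\theta=\tfrac12<1$ in every dimension. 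Applying Young's inequality to absorb the $\|\nabla(w_t^4)\|^{1+\theta}$ and $\|\nabla(w_t^4)\|$ factors into a small multiple of the dissipation $\tfrac{7\nu}{2}\|\nabla(w_t^4)\|^2$, and using $\|w_t^4\|_{L^2}^2=\|w_t\|_{L^8}^8$, I obtain
\[
\frac{\dif}{\dif t}\|w_t\|_{L^8}^8\le-\frac{\nu}{2}\|\nabla(w_t^4)\|^2+C\big(1+\|u_t\|_{L^{2d(\Bbbk-1)}}^{4(\Bbbk-1)}+\|u_t\|_{L^{2d(\Bbbk-1)}}^{2(\Bbbk-1)}\big)\|w_t\|_{L^8}^8.
\]
Since $m=40\Bbbk d(d+2\Bbbk)$ exceeds both $2d(\Bbbk-1)$ and $4(\Bbbk-1)$, the bracket is $\le C(1+\|u_t\|_{L^m}^m)$, so $\tfrac{\dif}{\dif t}\|w_t\|_{L^8}^8\le C(1+\|u_t\|_{L^m}^m)\|w_t\|_{L^8}^8$; Grönwall's lemma with $w_s=\xi$, followed by taking eighth roots and renaming the constant, yields the claimed bound.

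The argument is structurally identical to that of Lemma \ref{expo J}, only carried out at the $L^8$ rather than $L^2$ level, so the only real point requiring care is the choice of the Hölder/Gagliardo--Nirenberg exponents: they must be arranged so that $\theta<1$ (needed for Young's inequality against the dissipation) while the resulting powers of $\|u_t\|_{L^\cdot}$ stay within the budget $m$, and the subcritical constraint on $p_1$ for $d\ge3$ must be checked separately from $d\le2$. The zero-mean hypothesis on $\xi$ is used only to ensure $\xi\in H$ and hence $w_t\in H$; it plays no further role, since the $\|w^4\|_{L^2}$ term in the Gagliardo--Nirenberg inequality already accommodates the (necessarily nonzero) mean of $w_t^4$.
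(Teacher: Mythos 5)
Your proof is correct and follows essentially the same route as the paper's: an $L^8$ energy estimate for $w_t=J_{s,t}\xi$, rewriting the nonlinear contribution as an integral of $A_i'(u_t)$ against $w_t^4\,\partial_{x_i}(w_t^4)$, applying H\"older and Gagliardo--Nirenberg to $f=w_t^4$, absorbing the gradient factor into the dissipation by Young's inequality, and closing with Gr\"onwall. The only differences are cosmetic: the paper fixes $q=\frac{2d+1}{d}$ (so the interpolation exponent is $\lambda=\frac{d}{4d+2}$) and invokes the bound $\int w^{p-1}(-\Delta w)\ge C_p^{-1}\|w\|_{L^p}^p+\frac1p\|\nabla(w^{p/2})\|^2$ from \cite{CGV14}, while you choose $p_0=2d$, $p_1=\tfrac{2d}{d-1}$ (giving $\theta=\tfrac12$) and compute the dissipation identity directly. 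Both choices keep the interpolation exponent strictly below $1$, keep $p_1$ subcritical for $d\ge3$, and stay within the polynomial budget $m$. In fact you are marginally more careful on one point: you retain the inhomogeneous $+C\|w^4\|_{L^2}$ term in Gagliardo--Nirenberg, which is technically required because $w_t^4$ need not have zero mean, whereas the paper writes $\|f\|_{L^q}\le C\|f\|^{1-\lambda}\|f\|_1^\lambda$ directly; this is a harmless omission on the paper's side and your extra term is absorbed the same way after Young's inequality.
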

		\begin{proof}
			Set $q=\frac{2d+1}{d}\in (2,3]$ and $\lambda=\lambda_{q,d}=\frac{d(q-2)}{2q}=\frac{d}{4d+2}\in [\frac{1}{6},\frac{1}{4})$.   Observe that
			\baee
			& \Big| \big \langle  \operatorname{div}A^\prime(u,J_{s,t}\xi),(J_{s,t}\xi)^{7}  \big\rangle\Big| \\
			=&\Big| \sum_{i=1}^d \big \langle \partial_{x_i}\left(  A_i^\prime(u)J_{s,t}\xi \right)  , (J_{s,t}\xi)^7  \big \rangle\Big|
			\\
			=&7 \Big| \sum_{i=1}^d \big\langle   A_i^\prime(u), (J_{s,t}\xi)^6  \partial_{x_i}J_{s,t}\xi \big\rangle\Big|
			\\
			\leq&C \sum_{i=1}^d  \| A_i^\prime(u) \|_{L^{\frac{2q}{q-2}}} \big\|(J_{s,t}\xi)^4 \big\|_{L^q} \big\|(J_{s,t}\xi)^{3} \partial_{x_i}J_{s,t}\xi\big\|
			\\
			\leq&C \sum_{i=1}^d (1+ \| u \|^{\Bbbk-1}_{L^{\frac{2q(\Bbbk-1)}{q-2}}}) \|f\|_{L^q} \|\nabla f\|,
			\\
			\eaee
			where $f(x):=(J_{s,t}\xi)^{4}(x)$.
			By  the Gagliardo-Nirenberg's inequality
			\begin{equation*}
				\|f\|_{L^q}\leq C \|f\|^{1-\lambda} \| f\|_{1}^ {\lambda} \leq C\|f\|^{1-\lambda} \|\nabla f\|^ {\lambda}
			\end{equation*}
			and   in view of   $m\geq 6d \Bbbk$, we get
			\begin{eqnarray}
				\nonumber & &\Big| \big\langle  \operatorname{div}A^\prime(u,J_{s,t}\xi),(J_{s,t}\xi)^{7} \big\rangle\Big| \\
				\nonumber  & & \leq C\sum_{i=1}^d  (1+\| u \|^{\Bbbk-1}_{L^{\frac{2q(\Bbbk-1)}{q-2}}}) \|f\|^{1-\lambda} \|\nabla f\|^{1+\lambda} \\
				\nonumber & & \leq C\big(1+\sum_{i=1}^d  \| u \|^{\Bbbk-1}_{L^{\frac{2q(\Bbbk-1)}{q-2}}}\big)^{\frac{2}{1-\lambda}}\|f\|^{2}+\frac12\nu \|\nabla f\|^{2} \\
				\label{p1107-2}
				& &  \leq C ( \| u \|^{\mmm}_{L^{\mmm}}+1 )\|(J_{s,t}\xi)^4 \|^{2}+\frac{\nu }{32}  \big\|(J_{s,t}\xi)^4 \big\|_1^{2}.
			\end{eqnarray}

			With the help of    \cite[Proposition A.1]{CGV14}, it holds that
			\begin{eqnarray}
				\label{p28-1}
				\begin{split}
					& \int_{\mT^d }w(x)^{p-1}\big(-\Delta w(x)\big)\dif x
					\geq
					C_{p}^{-1}\|w\|_{L^p}^p+\frac{1}{p}\|(-\Delta)^{1/2}w^{p/2}\|^2,
				\end{split}
			\end{eqnarray}
			where $w(x)=J_{s,t}\xi(x),p=8$ and $C_p\in (1,\infty)$ is a constant depending on $p,d.$
			Finally, using It\^o's formula for $\|J_{s,t}\xi\|_{L^8}^8$,  also with the help of (\ref{p28-1}) and (\ref{p1107-2}),  we conclude that
			\baee
			&\|J_{s,t}\xi\|^8_{L^8}+\int_s^t \big\|(J_{s,r}\xi)^{4}\big\|_1^{2} \dif r \leq C \int_s^t ( \| u_r \|^{\mmm}_{L^{\mmm}}+1 )\|(J_{s,r}\xi)^4 \|^{2} \dif r, ~\forall 0\leq s\leq  t.
			\eaee
			The above inequality implies the desired result. The proof is completed.

		\end{proof}
		
		Recall that $\mmm=40 \Bbbk d(d+14\Bbbk)^2.$
		\begin{lemma}\label{2.18 heissan}
			Almost surely, we have
			\begin{eqnarray}\label{87-12}
				\begin{split}
					&
					\|J_{s,t}^{(2)}(\phi,\psi)\|^2
					\\ & \leq C
					\|\phi\|_{L^8}^2 \|\psi\|^2
					\exp\Big\{C\int_s^t (\|u_r\|_{L^\mmm}^\mmm+1)\dif r \Big\},
					~\forall \phi,\psi\in H \text{ and } 0\leq s\leq t,
				\end{split}
			\end{eqnarray}
			where $C$ is a constant depending on  $\nu,d,\Bbbk, (b_{i})_{i\in \cZ_0},\mathbb{U}$  and $
			(c_{\mathbbm{i},\mathbbm{j}})_{1\leq \mathbbm{i}\leq d,0\leq \mathbbm{j} \leq \Bbbk}$.
		\end{lemma}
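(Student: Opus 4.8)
The plan is to run a pathwise $L^2$-energy estimate directly on equation~\eqref{0927-1}, which, for a fixed noise realization, is a linear parabolic equation for $w_t:=J^{(2)}_{s,t}(\phi,\psi)$ with zero initial datum at time $s$ and source term $-\operatorname{div}A''(u_t,J_{s,t}\phi\cdot J_{s,t}\psi)$, and then to close it with Gr\"onwall's inequality. The exponential factor in~\eqref{87-12} will be produced by the Gr\"onwall kernel coming from the ``transport'' term $\operatorname{div}A'(u_t,w_t)$, whereas the prefactor $\|\phi\|_{L^8}^2\|\psi\|^2$ will be produced by integrating the source in time, controlling the factor $J_{s,\cdot}\phi$ in $L^8$ by Lemma~\ref{L^p J} and the factor $J_{s,\cdot}\psi$ in $L^2$ (pointwise in time) and in $L^2_tH^1_x$ by Lemmas~\ref{expo J} and~\ref{15-4}.

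Concretely, writing $P_t:=J_{s,t}\phi$ and $R_t:=J_{s,t}\psi$ and using the chain rule together with~\eqref{0927-1},
\[
\tfrac12\tfrac{\dif}{\dif t}\|w_t\|^2+\nu\|w_t\|_1^2=-\big\langle \operatorname{div}A'(u_t,w_t),w_t\big\rangle-\big\langle \operatorname{div}A''(u_t,P_tR_t),w_t\big\rangle .
\]
The first term on the right is treated exactly as in the proof of Lemma~\ref{15-4}: integrate by parts, apply H\"older's and the Gagliardo--Nirenberg inequalities, and then Young's inequality, to get $|\langle \operatorname{div}A'(u_t,w_t),w_t\rangle|\le \tfrac{\nu}{4}\|w_t\|_1^2+C(1+\|u_t\|_{L^m}^m)\|w_t\|^2$; note that here there is no $L^1$-contraction available for $w$, so one keeps $\|w_t\|^2$ on the right, the dependence on $\phi,\psi$ being carried entirely by the source term. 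For the source term, integrate by parts and apply H\"older with a triple $(a_1,a_2,a_3)$ satisfying $a_1^{-1}+a_2^{-1}+a_3^{-1}=\tfrac12$, chosen so that $2\le a_2\le 8$ and $a_3$ lies in the range for which the $L^2$--$H^1$ Gagliardo--Nirenberg interpolation holds, giving
\[
\big|\langle \operatorname{div}A''(u_t,P_tR_t),w_t\rangle\big|\le \tfrac{\nu}{4}\|w_t\|_1^2+C\|A_i''(u_t)\|_{L^{a_1}}^2\,\|P_t\|_{L^{a_2}}^2\,\|R_t\|_{L^{a_3}}^2 ,
\]
where $\|A_i''(u_t)\|_{L^{a_1}}^2\le C(1+\|u_t\|_{L^m}^m)$ since $A_i''$ is a polynomial of degree at most $\Bbbk-2$ and $m=40\Bbbk d(d+2\Bbbk)$ is large. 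By Lemma~\ref{L^p J} (applicable because $\phi\in H$ has zero mean) combined with Lemma~\ref{expo J} and interpolation, $\|P_t\|_{L^{a_2}}^2\le C\|\phi\|_{L^8}^2\exp\{C\int_s^t(\|u_r\|_{L^m}^m+1)\dif r\}$; and by Gagliardo--Nirenberg $\|R_t\|_{L^{a_3}}\le C\|R_t\|^{1-\theta}\|R_t\|_1^{\theta}$ for some $\theta\in(0,1)$, where $\|R_t\|$ and $\int_s^t\|R_r\|_1^2\dif r$ are bounded, via Lemmas~\ref{expo J} and~\ref{15-4}, by $C\|\psi\|^2\exp\{C\int_s^t(\cdots)\dif r\}$ and $C\|\psi\|^2\big(1+\int_s^t(\cdots)\dif r\big)$ respectively.

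Summing these estimates yields $\tfrac{\dif}{\dif t}\|w_t\|^2\le C(1+\|u_t\|_{L^m}^m)\|w_t\|^2+g(t)$ with $g(t):=C(1+\|u_t\|_{L^m}^m)\|P_t\|_{L^{a_2}}^2\|R_t\|_{L^{a_3}}^2$; since $w_s=0$, Gr\"onwall's inequality gives $\|w_t\|^2\le\big(\int_s^t g(r)\dif r\big)\exp\{C\int_s^t(1+\|u_r\|_{L^m}^m)\dif r\}$. Inserting the bounds for $\|P_r\|_{L^{a_2}}$ and $\|R_r\|_{L^{a_3}}$, using Young's inequality in the form $\|R_r\|_1^{2\theta}\le 1+\|R_r\|_1^2$ to reduce the surviving $H^1$-factor to the time integral controlled by Lemma~\ref{15-4}, and enlarging $C$, every remaining factor has the exponential form appearing in~\eqref{87-12}, and the claimed bound follows. (All manipulations are performed pathwise, for $\mathbb{P}$-almost every $\omega$, using that $u\in C([0,T],H^\nn)$ a.s.\ by Proposition~\ref{wp}.) The step I expect to be the main obstacle is the bookkeeping for the source term: one must choose the H\"older triple $(a_1,a_2,a_3)$ so that simultaneously (i) $J_{s,t}\phi$ is placed in an $L^{a_2}$ with $a_2\le 8$, so that the ready-made estimate of Lemma~\ref{L^p J} applies after interpolation; (ii) $J_{s,t}\psi$ is placed in an $L^{a_3}$ compatible with the $L^2$--$H^1$ Gagliardo--Nirenberg interpolation (the most delicate point in higher dimensions); and (iii) the leftover power of $u$ in $\|A_i''(u)\|_{L^{a_1}}$ does not exceed $m$. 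All three can be arranged because $m=40\Bbbk d(d+2\Bbbk)$ is taken very large; once the exponents are fixed, the remainder is a routine energy estimate followed by Gr\"onwall's inequality.
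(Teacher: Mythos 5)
Your proposal follows essentially the same route as the paper: a pathwise $L^2$-energy estimate on the second-variation equation, with the transport term handled as in Lemma~\ref{15-4}, and the source term split by H\"older into $A''(u)$, $J_{s,t}\phi$ (placed in $L^8$ via Lemma~\ref{L^p J}), and $J_{s,t}\psi$ (placed in an intermediate $L^{a_3}$ via $L^2$--$H^1$ Gagliardo--Nirenberg and Lemma~\ref{15-4}), followed by Gr\"onwall. The paper specializes your generic triple to $(a_1,a_2,a_3)=(8,8,4)$ rather than leaving the exponents free, but the decomposition, the role of each auxiliary lemma, and the bookkeeping are the same.
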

		\begin{proof}
		Multiplying $J^{(2)}_{s,t}(\phi,\psi)$ on both sides of \eqref{0927-1} and integrating on $\mathbb T^d$, one has
			\bae\label{hessian J12}
			&\frac12 \frac{\dif}{\dif t}\big\|J^{(2)}_{s,t}(\phi,\psi)\big\|^2+\nu\big\|\nabla J^{(2)}_{s,t}(\phi,\psi)\big\|^2\\
			\leq&\Big| \Big\langle  \operatorname{div}A^\prime\big(u,J^{(2)}_{s,t}(\phi,\psi)\big),J^{(2)}_{s,t}(\phi,\psi) \Big\rangle+  \Big\langle\operatorname{div}A^{\prime\prime}\big(u,J_{s,t}\phi   J_{s,t}\psi\big),  J^{(2)}_{s,t}(\phi,\psi) \Big\rangle\Big| \\
			=&\Big| \sum_{i=1}^d \left\langle \partial_{x_i}\big[  A_i^\prime(u)J^{(2)}_{s,t}(\phi,\psi)+A_i^{\prime\prime}(u)J_{s,t}\phi   J_{s,t}\psi \big]  ,J^{(2)}_{s,t}(\phi,\psi) \right\rangle\Big| \\
			\leq &\Big| \sum_{i=1}^d \left\langle A_i^\prime(u)J^{(2)}_{s,t}(\phi,\psi),\partial_{x_i} J^{(2)}_{s,t}(\phi,\psi) \right\rangle
			\Big|+\Big|\sum_{i=1}^d \left\langle A_i^{\prime\prime}(u)J_{s,t}\phi J_{s,t}\psi   ,\partial_{x_i} J^{(2)}_{s,t}(\phi,\psi) \right\rangle\Big|
			\\
			:=& J_1+J_2.
			\eae
			Set  $p=\frac{2(d+2)}{d},q=d+2$ and $\lambda=\frac{d(p-2)}{2p}=\frac{4d}{4d+8}$.
			For the term $J_1$, we have
			\bae
			J_1=& \Big| \sum_{i=1}^d \int_{\mT^d} A_i^{\prime}(u) J^{(2)}_{s,t}(\phi,\psi)\partial_{x_i} J^{(2)}_{s,t}(\phi,\psi) \dif x
			\Big|
			\\
			\leq&C  (1+\left\|u\right\|_{L^{(\Bbbk-1)q}}^{\Bbbk-1}) \big\|J^{(2)}_{s,t}(\phi,\psi)\big\|_{L^{p}}\big\|\nabla J^{(2)}_{s,t}(\phi,\psi)\big\|\\
			\leq&C(1+  \left\|u\right\|_{L^{(\Bbbk-1)q}}^{\Bbbk-1}) \big\|J^{(2)}_{s,t}(\phi,\psi)\big\|^{1-\lambda}\big\|\nabla J^{(2)}_{s,t}(\phi,\psi)\big\|^{1+\lambda}\\
			\eae
			where the last inequality  follows  from  Gagliardo-Nirenberg's  inequality. Then, by  Young's inequality, we have
			\bae\label{hessian J1}
			J_1 \leq&C\big( 1+\left\|u\right\|_{L^{(\Bbbk-1)q}}^{\frac{2(\Bbbk-1)}{1-\lambda}}\big) \big\|J^{(2)}_{s,t}(\phi,\psi)\big\|^{2}+\frac12\nu \big\|\nabla J^{(2)}_{s,t}(\phi,\psi)\big\|^{2}\\
			\leq&C \big(1+\left\|u\right\|_{L^{(\Bbbk-1)q}}^{2(\Bbbk-1)/(1-\lambda)} \big) \big\|J^{(2)}_{s,t}(\phi,\psi)\big\|^{2}+\frac12\nu \big\|\nabla J^{(2)}_{s,t}(\phi,\psi)\big\|^{2}.
			\eae
			
			Now consider the term $J_2$. By H\"older's inequality, one arrives at that
			\bae
			&J_2 \\
			\leq& \sum_{i=1}^d \left\|  A_i^{\prime\prime}(u) \right\|_{L^{8}}\left\| J_{s,t}\phi \right\|_{L^8} \left\|J_{s,t}\psi  \right\|_{L^4}  \big\| \nabla J^{(2)}_{s,t}(\phi,\psi)\big\|\\
			\leq& C (1+\left\| u \right\|_{L^{8(\Bbbk-2)}}^{\Bbbk-2})\left\| J_{s,t}\phi \right\|^2_{L^8} \left\|J_{s,t}\psi\right\|^2_{L^4}+\frac12 \nu  \big\| \nabla J^{(2)}_{s,t}(\phi,\psi)\big\|^2.\\
			\eae
			
			Combining the above estimates of $J_1,J_2$ with (\ref{hessian J12}), we conclude that
			\begin{align*}
				\frac{\dif  \| J^{(2)}_{s,t}(\phi,\psi)\|^2}{\dif  t} \leq &  C \big(1+ \left\|u_t \right\|_{L^{(\Bbbk-1)q}}^{2(\Bbbk-1)/(1-\lambda)} \big) \big\|J^{(2)}_{s,t}(\phi,\psi)\big\|^{2}
				\\ &+C (1+\left\| u \right\|_{L^{8(\Bbbk-2)}}^{\Bbbk-2}) \left\| J_{s,t}\phi \right\|^2_{L^8} \left\|J_{s,t}\psi  \right\|^2_{L^4}.
			\end{align*}
			Furthermore, in view of $\mmm=40 \Bbbk d(d+14\Bbbk)^2$  and Lemmas   \ref{15-4},  \ref{L^p J}, the above inequality implies the desired result. The proof is complete.

		\end{proof}

		Recall that $P_N$ is the orthogonal projection from $H$ into $H_N=\text{span}\{e_j; j\in Z_{\ast}^d, |j|\leq N\}$ and $Q_N=I-P_N$.
		For any $N\in \mN,t\geq 0$ and $\xi\in H$,  denote $\xi_t^h:=Q_NJ_{0,t}\xi,
		\xi_t^\iota :=P_NJ_{0,t}\xi$ and $\xi_t:=J_{0,t}\xi$.
		\begin{lemma}
			\label{48-1}
			With probability one,  for  any $\xi\in H$,  $t\in [0,1]$ and $N\geq 1$, one has
			\begin{eqnarray}
				\label{10-2}
				\begin{split}
					& \|\xi_t^h\|^2\leq  e^{-\nu N^2 t}
					\|Q_N\xi\|^2
					+\frac{C \|\xi \|^2 \exp\Big\{C \int_0^t \|u_r\|_{L^\mmm}^\mmm \dif r\Big\} }{N},
				\end{split}
			\end{eqnarray}
			where $C$  is a constant     depending on  $\nu,d,\Bbbk,(b_j)_{j\in \cZ_0},\mathbb{U}$
			and $(c_{\mathbbm{i},\mathbbm{j}})_{1\leq \mathbbm{i}\leq d,0\leq \mathbbm{j} \leq \Bbbk}$.
		\end{lemma}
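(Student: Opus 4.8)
The plan is to run an energy estimate directly on the high--frequency part $\xi_t^h=Q_NJ_{0,t}\xi$ of the linearization \eqref{10-1}, using the spectral gap of $-\Delta$ on the range of $Q_N$ to create the factor $e^{-\nu N^2t}$, and then to dispose of the resulting Duhamel remainder by means of the a priori bounds of Lemmas \ref{expo J} and \ref{15-4}.

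Applying $Q_N$ to \eqref{10-1} gives $\partial_t\xi_t^h=\nu\Delta\xi_t^h-Q_N\operatorname{div}A'(u_t,\xi_t)$ with $\xi_0^h=Q_N\xi$, where $\xi_t:=J_{0,t}\xi$. Taking the $L^2$ inner product with $\xi_t^h$ and integrating by parts on $\mT^d$,
\[
\tfrac12\tfrac{\dif}{\dif t}\|\xi_t^h\|^2
=-\nu\|\xi_t^h\|_1^2-\sum_{i=1}^d\big\langle A_i'(u_t)\xi_t,\partial_{x_i}\xi_t^h\big\rangle
\le-\nu\|\xi_t^h\|_1^2+G_t\,\|\xi_t^h\|_1,
\]
where $G_t:=\sum_{i=1}^d\|A_i'(u_t)\xi_t\|$. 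By Young's inequality together with the elementary bound $\|\xi_t^h\|_1^2\ge N^2\|\xi_t^h\|^2$ (valid because $\xi_t^h=Q_N\xi_t$ only involves Fourier modes $|j|>N$) one gets $\frac{\dif}{\dif t}\|\xi_t^h\|^2\le-\nu N^2\|\xi_t^h\|^2+\nu^{-1}G_t^2$, so Grönwall's inequality yields
\[
\|\xi_t^h\|^2\le e^{-\nu N^2t}\|Q_N\xi\|^2+\frac1\nu\int_0^t e^{-\nu N^2(t-s)}G_s^2\,\dif s .
\]
The first term is exactly the leading term in \eqref{10-2}, so the whole matter reduces to showing that the integral is $\le C\,N^{-1}\|\xi\|^2\exp\{C\int_0^t\|u_r\|_{L^m}^m\dif r\}$.

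For this last step I would estimate $G_s^2$ by Hölder's inequality and Gagliardo--Nirenberg. Since $|A_i'(u_s)|\le C(1+|u_s|^{\Bbbk-1})$, taking $p=4d$, $q=\tfrac{4d}{2d-1}$ with $p^{-1}+q^{-1}=\tfrac12$, and the interpolation $\|\xi_s\|_{L^q}\le C\|\xi_s\|^{3/4}\|\xi_s\|_1^{1/4}$, one obtains $G_s^2\le C\big(1+\|u_s\|_{L^{(\Bbbk-1)p}}^{2(\Bbbk-1)}\big)\|\xi_s\|^{3/2}\|\xi_s\|_1^{1/2}$. By Lemma \ref{expo J} the factor $\|\xi_s\|^{3/2}$ is bounded pathwise by $C\|\xi\|^{3/2}\exp\{C\int_0^t(\|u_r\|_{L^m}^m+1)\dif r\}$ and so can be pulled out of the $\dif s$--integral. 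On the remaining integral $\int_0^t e^{-\nu N^2(t-s)}\big(1+\|u_s\|_{L^{(\Bbbk-1)p}}^{2(\Bbbk-1)}\big)\|\xi_s\|_1^{1/2}\dif s$ I would apply Hölder in time with exponents $(2,4,4)$: the first factor $\big(\int_0^t e^{-2\nu N^2(t-s)}\dif s\big)^{1/2}\le(2\nu N^2)^{-1/2}$ supplies the crucial power $N^{-1}$; the second factor is bounded by $C\big(1+\int_0^t\|u_s\|_{L^m}^m\dif s\big)$ because $(\Bbbk-1)p\le m$ and $8(\Bbbk-1)\le m$ (both automatic since $m=40\Bbbk d(d+2\Bbbk)$ is large); and the third factor $\big(\int_0^t\|\xi_s\|_1^2\dif s\big)^{1/4}$ is bounded by $C\|\xi\|^{1/2}\big(1+\int_0^t\|u_r\|_{L^m}^m\dif r\big)^{1/4}$ by Lemma \ref{15-4}. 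Multiplying the pieces back, absorbing the resulting polynomial in $\int_0^t\|u_r\|_{L^m}^m\dif r$ into the exponential, and using $t\le1$ to drop the $+1$ in the exponent, gives \eqref{10-2}. (When $\Bbbk=1$ one simply has $G_s\le C\|\xi_s\|$ and the estimate is immediate from $\int_0^t e^{-\nu N^2(t-s)}\dif s\le(\nu N^2)^{-1}$ and Lemma \ref{expo J}.)

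The main obstacle is precisely extracting the power $N^{-1}$ from the Duhamel integral \emph{without} a pathwise supremum bound on $\|u_s\|_{L^r}$: the crude estimate $\int_0^t e^{-\nu N^2(t-s)}G_s^2\dif s\le(\nu N^2)^{-1}\sup_{s\le t}G_s^2$ is useless here, since only the time--integrated quantities $\int_0^t\|u_r\|_{L^m}^m\dif r$ and $\int_0^t\|\xi_r\|_1^2\dif r$ are known to be under control of the required exponential/polynomial form. The time--Hölder splitting above is designed exactly so that the sharply decaying kernel $e^{-2\nu N^2(t-s)}$, which is responsible for the gain, gets paired with these integrable (but not uniformly bounded) factors; a secondary, purely bookkeeping, point is that all the interpolation exponents must stay below the threshold $m=40\Bbbk d(d+2\Bbbk)$, which is exactly why $m$ was chosen so generously.
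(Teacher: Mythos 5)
Your proof is correct and follows essentially the same route as the paper: an energy estimate on $\xi_t^h=Q_N J_{0,t}\xi$, absorption of the nonlinear coupling term via Young's inequality and the spectral gap $\|\xi_t^h\|_1^2\ge N^2\|\xi_t^h\|^2$, a Duhamel representation, and then a H\"older-in-time split of the remainder so that the exponential kernel $e^{-2\nu N^2(t-s)}$ supplies the factor $N^{-1}$ while the other factors are controlled pathwise by Lemmas \ref{15-4} and \ref{expo J}. The paper makes a slightly different bookkeeping choice (it keeps $\|\xi_s\|_{L^p}$ with $p=\tfrac{8d}{4d-1}$ so the Gagliardo--Nirenberg exponent on $\|\xi_s\|_1$ is $1/8$, and then uses a four-way time-H\"older split $(1/2,1/4,1/8,1/8)$), whereas you place $\|A_i'(u_s)\|$ in $L^{4d}$ and use a three-way split $(1/2,1/4,1/4)$; both are admissible since the exponents stay comfortably below $m=40\Bbbk d(d+2\Bbbk)$, and the difference is purely cosmetic.
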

		
		\begin{proof}
			By direct calculations, it holds that
			\begin{eqnarray}
				\label{p111-1}
				\frac{\dif  \|\xi_t^h\|^2}{\dif t}\leq  -2\nu \|\xi_t^h\|_1^2
				{  + 2\sum_{i=1}^d \Big|  \big\langle \partial_{x_i} \big(A_i'(u_t)\xi_t\big), \xi_t^h\big \rangle\Big|.}
			\end{eqnarray}
			Set $p=\frac{8d}{4d-1}\in (2,\infty)$.By Gagliardo-Nirenberg's  inequality, it holds that
			\begin{eqnarray*}
				\|w\|_{L^p}\leq C\|w\|_1^{1/8} \|w\|^{7/8}.
			\end{eqnarray*}
			Therefore, by H\"older's inequality,  we have
			\begin{eqnarray}
				\label{p05-1}
				\begin{split}
					& \big|\big\langle \partial_{x_i} \big(A_i'(u_t)\xi_t\big), \xi_t^h\big \rangle\big|
					= \big|\big\langle  A_i'(u_t)\xi_t, \partial_{x_i}(\xi_t^h) \big \rangle\big|
					\\ &\leq \| A_i'(u_t)\|_{L^{2p/(p-2)}}\| \xi_t\|_{L^p}\| \xi_t^h\|_1
					\\   &  \leq C\| A_i'(u_t)\|_{L^{2p/(p-2)}} \|\xi_t \|_1^{1/8} \|\xi_t\|^{7/8} \| \xi_t^h\|_1
					\\ &\leq   \frac{\nu}{4}\| \xi_t^h\|_1^2+
					C \| A_i'(u_t)\|_{L^{2p/(p-2)}}^2  \|\xi_t \|_1^{1/4}  \|\xi_t\|^{7/4}.
				\end{split}
			\end{eqnarray}
			Combining the above estimate with  (\ref{p111-1}),
			also with the help of Lemma \ref{15-4}, for any $t\in [0,1]$, one arrives at that
			
			\
			
			\begin{eqnarray*}
				&&  \|\xi_t^h\|^2
				\\ && \leq  e^{-\nu N^2 t}
				\|Q_N\xi\|^2
				+\sum_{i=1}^d \int_0^t \exp\{-\nu N^2(t-s)\}\| A_i'(u_s)\|_{L^{2p/(p-2)}}^2  \|\xi_s \|_1^{1/4}  \|\xi_s\|^{7/4} \dif s
				\\ &&\leq   e^{-\nu N^2 t}
				\|Q_N\xi\|^2+
				C \sum_{i=1}^d \Big(\int_0^t \exp\{-2 \nu N^2(t-s)\}
				\dif s \Big)^{1/2}
				\\ &&\quad\quad \times
				\Big(\int_0^t \| A_i'(u_s)\|_{L^{2p/(p-2)}}^8    \dif s \Big)^{1/4}
				\Big(\int_0^t \|\xi_s \|_1^2     \dif s \Big)^{1/8} t^{1/8}
				\sup_{s\in [0,t]}\|\xi_s\|^{7/4}
				\\ && \leq e^{-\nu N^2 t}
				\|Q_N\xi\|^2
				+\frac{C \|\xi \|^2  \exp\Big\{C \int_0^t \|u_r\|_{L^\mmm}^\mmm \dif r\Big\} }{N}.
			\end{eqnarray*}
			The proof is complete.

		\end{proof}

		\begin{lemma}
			\label{1340-2}
			With probability $1$,
			for any $t\in [0,1]$ and   $\xi\in H$ with $\xi_0^\iota=0$,  we have
			\begin{eqnarray}
				\label{16-1}
				\begin{split}
					& \|\xi_t^\iota\|^2\leq  C\exp\Big\{C\sum_{i=1}^d  \int_0^t \| A_i'(u_s)\|_{L^{2p/(p-2)}}^{16/7} \dif s  \Big\}
					\\ & \quad \quad \quad\quad  \times \Big( \sum_{i=1}^d \int_0^t
					\| A_i'(u_s)\|_{L^{2p/(p-2)}}^2  \|\xi_s^h  \|_1^{1/4} \|\xi_s^h \|^{7 /4} \dif s\Big),
				\end{split}
			\end{eqnarray}
			where $p=\frac{8d}{4d-1}$,  $C$  is  a   constant   depending on
			$\nu,d , \{b_j\}_{j\in \cZ_0},\mathbb{U}$
			and $
			(c_{\mathbbm{i},\mathbbm{j}})_{1\leq \mathbbm{i}\leq d,0\leq \mathbbm{j} \leq \Bbbk}$. Combining the above  inequality with Lemma  \textup{\ref{48-1}}, for any $\xi\in H$ and $N\geq 1$,
			we have
			\begin{eqnarray}
				\label{16-2}
				\begin{split}
					& \|J_{0,1}Q_N \xi\|^2
					\leq
					\frac{C\|\xi\|^2}{N^{1/2}}
					\exp\Big\{C \int_0^1
					\|u_s\|_{L^\mmm}^\mmm d  s  \Big\}.
				\end{split}
			\end{eqnarray}
		\end{lemma}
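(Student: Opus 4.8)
The plan is to run a low-frequency energy estimate for $\xi_t^\iota=P_NJ_{0,t}\xi$, treating the high-frequency part $\xi_t^h=Q_NJ_{0,t}\xi$ as an external forcing, and then feed in the decay estimate of Lemma \ref{48-1}. Applying $P_N$ to the linearised equation \eqref{10-1} (with $s=0$) and using that $P_N$ commutes with $\Delta$, one gets $\partial_t\xi_t^\iota=\nu\Delta\xi_t^\iota-P_N\operatorname{div}A'(u_t,\xi_t)$; since $P_N$ is self-adjoint, $P_N\xi_t^\iota=\xi_t^\iota$, and $\langle\operatorname{div}A'(u_t,\xi_t),\xi_t^\iota\rangle=-\sum_i\langle A_i'(u_t)\xi_t,\partial_{x_i}\xi_t^\iota\rangle$ after one integration by parts, this gives
\[
\tfrac12\tfrac{\dif}{\dif t}\|\xi_t^\iota\|^2=-\nu\|\xi_t^\iota\|_1^2+\sum_{i=1}^d\big\langle A_i'(u_t)\xi_t,\partial_{x_i}\xi_t^\iota\big\rangle .
\]
Writing $\xi_t=\xi_t^\iota+\xi_t^h$ splits the last sum into a self-interaction term and a cross term, which I would estimate separately.

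For the self-interaction term $\sum_i\langle A_i'(u_t)\xi_t^\iota,\partial_{x_i}\xi_t^\iota\rangle$, H\"older's inequality bounds it by $\sum_i\|A_i'(u_t)\|_{L^{2p/(p-2)}}\|\xi_t^\iota\|_{L^p}\|\xi_t^\iota\|_1$; then the Gagliardo--Nirenberg inequality $\|w\|_{L^p}\le C\|w\|_1^{1/8}\|w\|^{7/8}$, valid precisely for $p=8d/(4d-1)$, followed by Young's inequality with exponents $16/9$ and $16/7$, absorbs part of $\|\xi_t^\iota\|_1^2$ and leaves $C\sum_i\|A_i'(u_t)\|_{L^{2p/(p-2)}}^{16/7}\|\xi_t^\iota\|^2$ --- this is the source of the exponent $16/7$. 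For the cross term $\sum_i\langle A_i'(u_t)\xi_t^h,\partial_{x_i}\xi_t^\iota\rangle$, the same H\"older step, Gagliardo--Nirenberg applied to $\xi_t^h$, and Young's inequality absorb a further part of $\|\xi_t^\iota\|_1^2$ and leave $C\sum_i\|A_i'(u_t)\|_{L^{2p/(p-2)}}^2\|\xi_t^h\|_1^{1/4}\|\xi_t^h\|^{7/4}$. Collecting the estimates and discarding the remaining negative multiple of $\|\xi_t^\iota\|_1^2$ gives the differential inequality $\tfrac{\dif}{\dif t}\|\xi_t^\iota\|^2\le C\big(\sum_i\|A_i'(u_t)\|_{L^{2p/(p-2)}}^{16/7}\big)\|\xi_t^\iota\|^2+C\sum_i\|A_i'(u_t)\|_{L^{2p/(p-2)}}^2\|\xi_t^h\|_1^{1/4}\|\xi_t^h\|^{7/4}$, and since $\xi_0^\iota=0$ Gronwall's lemma yields exactly \eqref{16-1}.

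To obtain \eqref{16-2} I would apply \eqref{16-1} with the initial datum $Q_N\xi$ in place of $\xi$ (so $(Q_N\xi)_0^\iota=P_NQ_N\xi=0$ holds automatically and $\xi_s^h$ starts from $Q_N\xi$) and substitute the bound of Lemma \ref{48-1}, namely $\|\xi_s^h\|^2\le e^{-\nu N^2s}\|Q_N\xi\|^2+CN^{-1}\|\xi\|^2E$ with $E:=\exp\{C\int_0^1\|u_r\|_{L^m}^m\dif r\}$. Raising to the power $7/8$ splits $\|\xi_s^h\|^{7/4}$ into an $N^{-7/8}E^{7/8}\|\xi\|^{7/4}$ piece and an $e^{-7\nu N^2s/8}\|Q_N\xi\|^{7/4}$ piece. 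Inside the time integral of \eqref{16-1} the first piece is controlled by H\"older with exponents $(8/7,8)$ against $\int_0^1\|A_i'(u_s)\|_{L^{2p/(p-2)}}^{16/7}\dif s$ and $\int_0^1\|\xi_s^h\|_1^2\dif s$ --- the latter bounded via Lemma \ref{15-4} together with $\|\xi_s^h\|_1\le\|\xi_s\|_1$ --- producing a factor $N^{-7/8}\le N^{-1/2}$; the second piece is controlled by H\"older with exponents $(4,8/5,8)$ applied respectively to $e^{-7\nu N^2s/8}$, $\|A_i'(u_s)\|_{L^{2p/(p-2)}}^2$ and $\|\xi_s^h\|_1^{1/4}$, where the $L^4([0,1])$-norm of $e^{-7\nu N^2s/8}$ is $O(N^{-1/2})$ and supplies the gain. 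All the resulting time integrals of powers of $\|A_i'(u_s)\|_{L^{2p/(p-2)}}$, as well as the exponential prefactor $\exp\{C\sum_i\int_0^1\|A_i'(u_s)\|_{L^{2p/(p-2)}}^{16/7}\dif s\}$ from \eqref{16-1}, are dominated by $\exp\{C\int_0^1\|u_s\|_{L^m}^m\dif s\}$, because $A_i'$ has degree $\le\Bbbk-1$ and $m=40\Bbbk d(d+2\Bbbk)$ is large enough for the relevant $L^r$-norms of $u_s$ on $\mathbb T^d$ to interpolate into $\|u_s\|_{L^m}$. Combining the resulting bound on $\|\xi_1^\iota\|^2$ with the direct estimate $\|\xi_1^h\|^2\le e^{-\nu N^2}\|\xi\|^2+CN^{-1}\|\xi\|^2E\le CN^{-1/2}\|\xi\|^2E$ from Lemma \ref{48-1} and $\|J_{0,1}Q_N\xi\|^2\le2\|\xi_1^\iota\|^2+2\|\xi_1^h\|^2$ yields \eqref{16-2}.

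The main obstacle is entirely in this last step: the polynomial gain $N^{-1/2}$ has to be extracted from the only two sources available --- the factor $e^{-\nu N^2s}$ and the $N^{-1}$ remainder of Lemma \ref{48-1} --- while simultaneously every power of $\|A_i'(u_s)\|_{L^{2p/(p-2)}}$ occurring must stay integrable against $\int_0^1\|u_s\|_{L^m}^m\dif s$. This competition is what pins down the particular choice of H\"older exponents above and is the one place where the largeness of $m$ is genuinely used.
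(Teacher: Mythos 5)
Your proof is correct and essentially reproduces the paper's argument: the same low-frequency energy estimate with Gagliardo--Nirenberg and Young followed by Gr\"onwall (using $\xi_0^\iota=0$) yields \eqref{16-1}, and for \eqref{16-2} the same ingredients (Lemmas \ref{48-1}, \ref{15-4} and a H\"older-in-time inequality) extract the $N^{-1/2}$ gain. The only cosmetic difference is the order of operations in the second half: you insert Lemma \ref{48-1} into $\|\xi_s^h\|^{7/4}$, split, and apply two separate H\"older inequalities with exponents $(4,8/5,8)$ and $(8/7,8)$, whereas the paper applies a single H\"older with exponents $(16/7,8,16/7)$ to the integrand of \eqref{16-1} and then bounds the resulting factor $\int_0^1\|Q_NJ_{0,s}\tilde\xi\|^4\,\dif s$ via Lemma \ref{48-1} --- both variants close and both in fact yield the slightly better decay $N^{-7/8}$, which the paper weakens to $N^{-1/2}$ in the stated bound.
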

		\begin{proof}
			First, we give a proof of (\ref{16-1}). By direct calculations, it holds that
			\begin{eqnarray}
				\label{p111-1}
				\frac{\dif  \|\xi_t^l\|^2}{\dif t}\leq  -2\nu \|\xi_t^l\|_1^2
				{  + 2\sum_{i=1}^d \Big|  \big\langle \partial_{x_i} \big(A_i'(u_t)\xi_t\big), \xi_t^l\big \rangle\Big|.}
			\end{eqnarray}
			With similar arguments as that  in (\ref{p05-1}), by Young's inequality,  we conclude that
			\begin{eqnarray*}
				&& \big|\big\langle \partial_{x_i} \big(A_i'(u_t)\xi_t^\iota \big), \xi^\iota_t \big \rangle\big|\leq
				C\| A_i'(u_t)\|_{L^{2p/(p-2)}} \| \xi^\iota_t  \|_{L^p} \| \partial_{x_i}  \xi^\iota_t   \|
				\\ &&\leq
				C\| A_i'(u_t)\|_{L^{2p/(p-2)}}  \|\xi_t^\iota \|^{7/8} \| \xi_t^\iota \|_1^{9/8}
				\\ &&\leq  \frac{\nu}{4}\| \xi_t^\iota \|_1^{2}
				+C\| A_i'(u_t)\|_{L^{2p/(p-2)}}^{16/7}   \|\xi_t^\iota \|^2
			\end{eqnarray*}
			and
			\begin{eqnarray*}
				&& \big|\big\langle \partial_{x_i} \big(A_i'(u_t)\xi_t^h \big), \xi_t^\iota \big \rangle\big|
				= \big|\big\langle A_i'(u_t)\xi_t^h, \partial_{x_i} \big(\xi_t^\iota\big) \big \rangle\big|
				\\ &&
				\leq \| A_i'(u_t)\|_{L^{2p/(p-2)}}\| \xi_t^h \|_{L^p}  \| \xi_t^\iota \|_1
				\\  && \leq  C\| A_i'(u_t)\|_{L^{2p/(p-2)}} \|\xi_t^h  \|_1^{1/8} \|\xi_t^h \|^{7/8} \|\xi_t^\iota \|_1
				\\ &&\leq \frac{\nu}{4}\| \xi_t^\iota \|_1^{2}
				+ C\| A_i'(u_t)\|_{L^{2p/(p-2)}}^2  \|\xi_t^h  \|_1^{1/4} \|\xi_t^h \|^{7/4}.
			\end{eqnarray*}
			Applying the chain rule  to
			$\|\xi_t^\iota\|^2$,
			by the above two  estimates, we arrive at
			\begin{eqnarray*}
				\|\xi_t^\iota\|^2\leq C\exp\Big\{C\sum_{i=1}^d  \int_0^t \| A_i'(u_s)\|_{L^{2p/(p-2)}}^{16/7} \dif s  \Big\}\Big(\sum_{i=1}^d\int_0^t
				\| A_i'(u_s)\|_{L^{2p/(p-2)}}^2  \|\xi_s^h  \|_1^{1/4} \|\xi_s^h \|^{7 /4} \dif s\Big).
			\end{eqnarray*}
			
			Now, we give a proof of (\ref{16-2}).
			Let $\tilde \xi=Q_N \xi.$
			By  {  Lemma \ref{15-4},  Lemma  \ref{48-1} }and (\ref{16-1}), for any $t\in [0,1]$, we get
			\begin{eqnarray*}
				&& \|P_N J_{0,t} \tilde  \xi \|^2
				\\ && \leq C\exp\Big\{C \sum_{i=1}^d \int_0^t \| A_i'(u_s)\|_{L^{2p/(p-2)}}^{16/7}\dif s  \Big\}\\
				&&\quad\quad\quad\quad \times \int_0^t  \sum_{i=1}^d
				\| A_i'(u_s)\|_{L^{2p/(p-2)}}^2  \|Q_N J_{0,s} \tilde \xi  \|_1^{1/4} \| Q_N J_{0,s} \tilde \xi  \|^{7/4} \dif s
				\\ && \leq C\exp\Big\{C  \int_0^t \| u_s\|_{L^{\mmm}}^\mmm \dif  s  \Big\}
				\Big(\sum_{i=1}^d \int_0^t
				\| A_i'(u_s)\|_{L^{2p/(p-2)}}^{32/7} \dif s  \Big)^{7/16}  \Big(\int_0^t \|Q_N J_{0,s} \tilde \xi  \|_1^2 \dif s \Big)^{1/8}
				\\ && \quad\quad\quad\quad \times
				\Big(\int_0^t \| Q_N J_{0,s} \tilde \xi  \|^4 \dif s\Big)^{7/16}
				\\ &&\leq C\| \tilde \xi \|^2 \exp\Big\{C \int_0^t \| u_s\|_{L^{\mmm}}^\mmm \dif  s  \Big\}
				\\ && \quad\quad\quad\quad  \times
				\Bigg(\int_0^t {  \exp\{-2\nu N^2 s\}}
				\dif s
				+  \frac{C    \exp\Big\{C \int_0^t \|u_r\|_{L^\mmm}^\mmm \dif r\Big\} }{N^2 }
				\Bigg)^{7/16}.
			\end{eqnarray*}				Setting   $t=1$ in the above, we get
			\begin{eqnarray*}
				\|P_N J_{0,1}Q_N\xi \|^2
				\leq  \frac{C\|\xi\|^2 \exp\Big\{C \int_0^1\|u_r\|_{L^\mmm}^\mmm  \dif r  \Big\}
				}{N^{1/2}}.
			\end{eqnarray*}
			Combing the above inequality with Lemma \ref{48-1}, we get the desired result (\ref{16-2}).
			
		\end{proof}

		Using the similar arguments as that in   \cite[Section~4.8]{HM-2006}  or   \cite[Lemma~A.6]{FGRT15}, we have the following lemma.
		\begin{lemma}
			\label{L:2.2}
			There is a  constant $C=C(\nu,d,\Bbbk,\{b_j\}_{j\in \cZ_0},\mathbb{U})>0$ such that  for any
			$0\le s<t$ and  $\beta>0$, we have
			\begin{gather}
				\label{2.7}  \|\cA_{s,t}\|^2_{\cL(L^2([s,t];\R^{\mathbb{U}}),
					{H})} \le  C \int_{s}^{t}\|J_{r,t}\|_{\cL(H,{H})}^2\dif r,
				\\   \|\cA_{s,t}^*(\cM_{s,t}+\beta\I)^{-1/2}\|_{\cL({H},
					L^2([s,t];\R^{\mathbb{U}}))} \le  1,\label{2.8}
				\\
				\|(\cM_{s,t}+\beta\I)^{-1/2}\cA_{s,t}\|_{\cL(L^2([s,t];\R^{\mathbb{U}}),
					{H})} \le  1,\label{2.9}
				\\  \|(\cM_{s,t}+\beta\I)^{-1/2}\|_{\cL({H},{H})}
				\le  \beta^{-1/2}, \label{2.10}
				\\  \|(\cM_{s,t}+\beta\I)^{-1}\|_{\cL({H},{H})}
				\le  \beta^{-1}.\label{2.11}
			\end{gather}
		\end{lemma}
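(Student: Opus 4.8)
The plan is to obtain all five bounds from the single structural identity $\cM_{s,t}=\cA_{s,t}\cA_{s,t}^*$ together with the positivity of $\cM_{s,t}$ and elementary functional calculus, exactly along the lines of \cite[Section~4.8]{HM-2006} and \cite[Lemma~A.6]{FGRT15}. Throughout I would write $\cA=\cA_{s,t}$, $\cA^*=\cA_{s,t}^*$, $\cM=\cM_{s,t}=\cA\cA^*$, and set $c_0:=\|Q\|_{\cL(\R^{\mathbb U},H)}$, a constant depending only on $(b_j)_{j\in\cZ_0}$.

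First I would prove \eqref{2.7}, which also serves to guarantee that $\cA$ is a bounded operator (so that $\cA^*$ and $\cM$ make sense) on a set of full measure. For $v\in L^2([s,t];\R^{\mathbb U})$, the triangle inequality for the Bochner integral and then Cauchy--Schwarz in $r$ give
\begin{equation*}
\|\cA v\|=\Big\|\int_{s}^{t}J_{r,t}Qv_r\dif r\Big\|\le c_0\int_{s}^{t}\|J_{r,t}\|_{\cL(H,H)}\,|v_r|\dif r\le c_0\Big(\int_{s}^{t}\|J_{r,t}\|_{\cL(H,H)}^2\dif r\Big)^{1/2}\|v\|_{L^2([s,t];\R^{\mathbb U})},
\end{equation*}
so squaring yields \eqref{2.7} with $C=c_0^2$; by Lemma \ref{expo J} the integral on the right is a.s.\ finite, so on a full-measure event $\cM=\cA\cA^*$ is bounded, self-adjoint and nonnegative (indeed $\langle\cM\phi,\phi\rangle=\|\cA^*\phi\|^2\ge0$), and the functional calculus for $\cM$ and for $(\cM+\beta\I)^{\pm1/2}$ is available for every $\beta>0$.

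Next, for \eqref{2.8} I would fix $\beta>0$ and $\phi\in H$ and compute, using $\cA\cA^*=\cM$ and self-adjointness of $(\cM+\beta\I)^{-1/2}$,
\begin{equation*}
\|\cA^*(\cM+\beta\I)^{-1/2}\phi\|^2=\big\langle\cM(\cM+\beta\I)^{-1}\phi,\phi\big\rangle\le\|\phi\|^2,
\end{equation*}
the final inequality being the spectral bound $0\le\cM(\cM+\beta\I)^{-1}\le\I$ coming from $0\le\lambda/(\lambda+\beta)\le1$ on $\sigma(\cM)\subseteq[0,\infty)$; this is \eqref{2.8}. Since the operator in \eqref{2.9} is the Hilbert-space adjoint of the one in \eqref{2.8}, it has the same norm, giving \eqref{2.9}. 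Finally \eqref{2.10} and \eqref{2.11} are immediate from $\cM\ge0$, which forces $\cM+\beta\I\ge\beta\I$, hence $\sigma\big((\cM+\beta\I)^{-1/2}\big)\subseteq(0,\beta^{-1/2}]$ and $\sigma\big((\cM+\beta\I)^{-1}\big)\subseteq(0,\beta^{-1}]$; equivalently, \eqref{2.11} is \eqref{2.10} squared.

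I do not expect a real obstacle in this lemma. The only point deserving care is the measurability/boundedness bookkeeping: one must check that \eqref{2.7} together with the a priori bound of Lemma \ref{expo J} places $\cA_{s,t}$ in $\cL(L^2([s,t];\R^{\mathbb U}),H)$ almost surely, so that $\cA_{s,t}^*$, the Malliavin matrix $\cM_{s,t}$, and the regularized inverses $(\cM_{s,t}+\beta\I)^{\pm1/2}$ are all well-defined random operators, and to keep in mind that \eqref{2.7} is a pathwise (random) inequality while \eqref{2.8}--\eqref{2.11} hold surely once this boundedness is granted.
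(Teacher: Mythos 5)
Your proof is correct and is essentially the same standard argument found in the references the paper cites for this lemma (\cite[Section~4.8]{HM-2006}, \cite[Lemma~A.6]{FGRT15}); the paper itself does not write out a proof but simply invokes those sources. Your derivation of \eqref{2.7} via Bochner/Cauchy--Schwarz, \eqref{2.8} via the identity $\|\cA^*(\cM+\beta\I)^{-1/2}\phi\|^2=\langle\cM(\cM+\beta\I)^{-1}\phi,\phi\rangle$ and the spectral bound $\lambda/(\lambda+\beta)\le1$, \eqref{2.9} by adjoint duality, and \eqref{2.10}--\eqref{2.11} from $\cM\ge0$ is exactly the intended route.
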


		\section{The invertibility of the Malliavin matrix $\cM_{0,t}.$}\label{sec invert}
		\label{S:3}

		For $T>0$,  recall that  $\{u_r\}_{ r\in [0,T]} $ is  the solution of   equation (\ref{1-1}) with initial value $u_0\in \tilde H^\nn$ and   $
		\langle \cM_{0,T}\phi, \phi  \rangle
		=\sum_{i\in \cZ_0}\int_0^T b_i^2 \langle J_{r,T}e_i,\phi\rangle^2\dif r, \forall\phi\in \tilde H
		$.  The aim of this section is to prove the following two propositions.

		\begin{proposition}
			\label{1-66}
			Under the Condition \textup{\ref{16-5}}, for any $T>0, \alpha\in (0,1],N\in \mN$ and  $u_0\in \tilde H^{\nn+5}, $   one has
			\begin{eqnarray}
				\label{1-2}
				\mP\Big(\inf_{\phi\in \cS_{\alpha,N}}\langle
				\cM_{0,T}\phi,\phi\rangle=0 \Big)=0,
			\end{eqnarray}
			where $\cS_{\alpha,N}:=\{\phi\in \tilde H:\|P_N\phi\|\geq \alpha, \|\phi\|=1\}.$
		\end{proposition}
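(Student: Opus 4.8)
The plan is to argue by contradiction, adapting the Hörmander-bracket technique of \cite{HM-2006, HM-2011} to the transport structure of the linearisation of \eqref{1-1}. Suppose $\mP\big(\inf_{\phi\in\cS_{\alpha,N}}\langle\cM_{0,T}\phi,\phi\rangle=0\big)>0$ and work on this event. Pick a minimizing sequence $\phi_n\in\cS_{\alpha,N}$; since $\{\phi_n\}$ lies in the unit ball of the Hilbert space $\tilde H$, after extracting a subsequence we have $\phi_n\rightharpoonup\phi^*$ weakly. Because $P_N$ has finite rank, $P_N\phi_n\to P_N\phi^*$ in norm, whence $\|P_N\phi^*\|\ge\alpha$ and in particular $\phi^*\neq0$. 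Recalling $\langle\cM_{0,T}\phi,\phi\rangle=\|\cA_{0,T}^*\phi\|_{L^2([0,T];\R^{\mathbb{U}})}^2$ by \eqref{200-3} and that $\cA_{0,T}^*$ is a.s.\ a bounded operator (Lemma \ref{L:2.2}), the weak continuity of bounded operators gives $\cA_{0,T}^*\phi_n\rightharpoonup\cA_{0,T}^*\phi^*$, and weak lower semicontinuity of the norm yields $\cA_{0,T}^*\phi^*=0$, i.e. $\langle\cM_{0,T}\phi^*,\phi^*\rangle=0$. Thus the event $\{\inf_{\cS_{\alpha,N}}\langle\cM_{0,T}\phi,\phi\rangle=0\}$ is contained, up to a null set, in $\{\exists\,\phi\in\tilde H\setminus\{0\}:\langle\cM_{0,T}\phi,\phi\rangle=0\}$, so it suffices to prove this last event has probability zero.

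For the core step, fix $\omega$ outside a null set and assume $\phi\in\tilde H\setminus\{0\}$ with $\langle\cM_{0,T}\phi,\phi\rangle=0$. By \eqref{200-1} and $b_i\neq0$ this forces $\langle K_{r,T}\phi,e_i\rangle=0$ for a.e.\ $r\in[0,T]$ and every $i\in\cZ_0$; since $u_0\in\tilde H^{\nn+5}$, the solution $u$ is regular enough that the backward equation \eqref{p0203-1} admits a solution $r\mapsto K_{r,T}\phi$ which is smooth on $[0,T)$ (parabolic smoothing), so these identities hold for all $r\in[0,T)$. The plan is to prove by induction on $n$ that $\langle K_{r,T}\phi,e_k\rangle=0$ for all $k\in\cZ_n$ and $r\in[0,T)$. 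In the step $\cZ_{n-1}\rightsquigarrow\cZ_n$ one differentiates the identities for $k\in\cZ_{n-1}$ in $r$ and inserts \eqref{p0203-1}: the Laplacian produces $\nu|k|^2\langle K_{r,T}\phi,e_k\rangle$, which vanishes, while the transport term $\langle A'(u_r)\cdot\nabla K_{r,T}\phi,e_k\rangle$ is expanded in powers of $u_r$. The leading power $u_r^{\Bbbk-1}$, combined with the time-irregular component of $u_r$ (the noise $\sum_{i\in\cZ_0}b_ie_iW_i$, whose $(\Bbbk-1)$-fold products reach exactly the set $\mathbb{L}$), generates the modes $k+\ell$ with $\ell=\sum_{i=1}^{\Bbbk-1}\ell^{(i)}\in\mathbb{L}$; the coefficient of the new mode $e_{k+\ell}$ is a nonzero multiple of $\langle c_\Bbbk,k+\ell\rangle$, which is precisely the quantity required not to vanish in the definition of $\cZ_n$, hence may be divided out, while every other term either carries a factor $\langle K_{r,T}\phi,e_\kappa\rangle$ with $\kappa\in\cZ_{n-1}$ or sits at a strictly lower generation count. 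Letting $r\uparrow T$ gives $\langle\phi,e_k\rangle=0$ for all $k\in\cZ_\infty$; as $\phi\in\tilde H=\overline{\operatorname{span}}\{e_k:k\in\cZ_\infty\}$, this forces $\phi=0$, contradicting $\|P_N\phi^*\|\ge\alpha>0$. Together with the first paragraph this proves \eqref{1-2}.

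The main obstacle is the inductive step just described. Since $r\mapsto u_r$ is only Hölder, not $C^1$, in time, the vanishing relations cannot simply be differentiated repeatedly; each stage must instead be arranged so that a single classical $r$-derivative of $\langle K_{r,T}\phi,e_k\rangle$, together with a careful bookkeeping of which Fourier harmonics appear in $\langle A'(u_r)\cdot\nabla K_{r,T}\phi,e_k\rangle$, already isolates the new modes $e_{k+\ell}$. Checking that the product $e_{\ell^{(1)}}\cdots e_{\ell^{(\Bbbk-1)}}e_k$ (a sum of trigonometric functions) does not annihilate $e_{k+\ell}$, that the factor $\langle c_\Bbbk,k+\ell\rangle$ can genuinely be divided out, and that all remaining lower-order-in-$u_r$ contributions are absorbed by the induction hypothesis, is the technical heart; the need to iterate this while keeping every quantity well defined (enough spatial regularity of $K_{r,T}\phi$ and $u_r$, enough derivatives of the flux) is what forces the extra regularity assumption $u_0\in\tilde H^{\nn+5}$.
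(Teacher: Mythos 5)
Your overall plan is the same as the paper's: reduce to a single nonzero $\phi$ on which the quadratic form vanishes, then run a Hörmander-bracket induction along the sets $\cZ_n$ using the transport structure of the adjoint linearisation. Your weak-compactness argument in the first paragraph is correct and usefully justifies passing from $\inf_{\cS_{\alpha,N}}\langle\cM_{0,T}\phi,\phi\rangle=0$ to the existence of a nonzero $\phi^*$ with $\langle\cM_{0,T}\phi^*,\phi^*\rangle=0$, a point the paper glosses over.

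The genuine gap is in the inductive step. After differentiating $\langle K_{r,T}\phi,e_k\rangle=0$ in $r$ and inserting the adjoint equation \eqref{p0203-1}, you arrive at an identity of the form $0=\langle\varrho_r,DF(u_r)e_k\rangle$ on an interval of $r$. Substituting $u_r=f_r+\sum_{j\in\cZ_0}b_je_jW_j(r)$ turns this into the vanishing, for all $r$, of a \emph{Wiener polynomial} $Z_A(r)=\sum_{\alpha}A_\alpha(r)W_\alpha(r)$ whose coefficients $A_\alpha$ are random, Lipschitz-in-time processes. Your phrases ``generates the modes $e_{k+\ell}$'' and ``may be divided out'' presuppose that one can conclude from $Z_A\equiv 0$ that every $A_\alpha\equiv 0$, but this is not an algebraic fact: it is a genuinely probabilistic statement, obtained in the paper from \cite[Theorem~7.1]{HM-2011} via Proposition~\ref{p10-1} and Lemma~\ref{p10-3}, and it only applies once one has verified that the $A_\alpha$ are almost surely Lipschitz (the paper's Lemma~\ref{p24-10}, which is where the hypothesis $u_0\in\tilde H^{\nn+5}$ is actually spent). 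Your proposal never invokes such a separation result, so the step from the differentiated identity to the individual relations $\langle\varrho_r,\sum_i c_i\partial_{x_i}(\prod_j e_j^{\alpha_j}\cdot e_k)\rangle=0$ (the paper's \eqref{p26-2}) is unjustified, and without it the induction does not close. As a secondary remark, the paper in fact establishes the stronger statement \eqref{86-1}, restricting the time integral to $[T/2,T]$; this refinement is used elsewhere and would be lost in your version.
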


		However, this proposition is insufficient for our proof of Proposition \ref{3-11} and a stronger version is required. Before presenting this stronger version, we introduce some necessary notation.
		{   For $ \alpha\in (0,1], u_0\in \tilde  H^{\nn+5},N\in \mN,\mathfrak{R}>0$ and $\eps>0$, let\footnote{Note that $\cM_{0,t}$ is the Malliavin matrix of $u_t$ which is the solution of equation (\ref{1-1})  at time $t$ with  initial value is $u_0.$
				Therefore, $\cM_{0,1}$ also depends on $u_0$. }
			\begin{eqnarray}
				\label{1-5}
				X^{u_0,\alpha,N}=\inf_{ \phi \in \cS_{\alpha,N}}
				\langle \cM_{0,1}\phi,\phi\rangle.
			\end{eqnarray}
			and denote
			\begin{eqnarray}
				&&
				\label{p25-2}
				r(\eps,\alpha, \mathfrak{R},N):=\sup_{{u_0\in H^{\nn+5}:
						\|u_0\|_{\nn+5}  <  \mathfrak{R}}}\mP(  X^{u_0,\alpha,N}<\eps).
			\end{eqnarray}
		} 	
		
		{
			\begin{proposition}
				\label{3-8}
				For any $\eps>0, \alpha \in (0,1], \mathfrak{R}>0$ and $N\in \mN$,
				regarding    $ r(\eps,\alpha,\mathfrak{R},N)$  as  a function   of $\eps$,
				we have
				\begin{eqnarray}
					\label{p25-4}
					\lim_{\eps\rightarrow 0}r(\eps,\alpha,\mathfrak{R},N)=0.
				\end{eqnarray}
			\end{proposition}
		}
		
		This section is organized as follows.
		In subsection \ref{s3-1}, we give a proof of Proposition \ref{1-66}
		and in   subsection \ref{s3-2},  we give a proof of Proposition \ref{3-8}.
		
		\subsection{Proof of Proposition \ref{1-66}}
		\label{s3-1}
		
		Under the Condition \ref{16-5},  in this subsection,  we will prove the following stronger result than
		Proposition \ref{1-66}  for later use:
		\begin{eqnarray}
			\label{86-1}
			\mP\Big(\omega: \inf_{\phi\in \cS_{\alpha,N}}\sum_{i\in \cZ_0}b_i^2\int_{T/2}^T \langle K_{r,T}\phi, e_i\rangle^2 \dif r =0 \Big)=0.
		\end{eqnarray}
		
		First, we list a proposition and two lemmas. Then, we  conclude a proof of  (\ref{86-1}).
		The following proposition is taken from \cite[Theorem 7.1]{HM-2011}.
		\begin{proposition}
			\label{p10-1}
			Let $\{W_k(t)\}_{k=1}^{\mathbb{U}}$
			be a family of i.i.d. standard Wiener processes on interval $[0,T]$ and, for
			every multi-index $\alpha= (\alpha_1,\cdots,\alpha_\mathbb{U})$, define $W_\alpha = W_{1}^{\alpha_1}\cdots W_{\mathbb{U}}^{\alpha_{\mathbb{U}}}$  with the convention
			that $W_\alpha = 1$ if $\alpha = \emptyset$. Let furthermore $A_\alpha$ be a family of (not necessarily adapted)
			stochastic processes with the property that there exists $n \geq 0$ such that $A_\alpha  = 0$
			whenever ${  |\alpha|:=\sum_{i=1}^\mathbb U \alpha_i>n}$ and set $Z_A(t) =
			\sum_\alpha A_\alpha(t)W_\alpha(t),t\in [0,T]$.
			Then, there exists a family of events $Oscm_W^{n,\eps},\eps\in (0,1)$ depending only on
			$n,\eps,T$  and  $W=\{W_k(t),t\in [0,T]\}_{k=1}^{\mathbb{U}}$  such that the followings hold:
			
			(1) On the event $(Oscm_W^{n,\eps})^c$, it has
			\baee
			\|Z_{A}\|_{L^\infty}\leq \eps
			\Rightarrow
			\Bigg\{
			\begin{split}
				\sup_{\alpha}\| A_\alpha \|_{L^\infty}\leq \eps^{3^{-n}},
				& \\
				\text{ or }  \sup_{\alpha}\| A_\alpha \|_{Lip}\geq \eps^{-3^{-(n+1)}},&
			\end{split}
			\eaee
			where $\|A_\alpha\|_{L^\infty}=\sup_{t\in [0,T]}|A_\alpha(t)|$ and   $\| A_\alpha \|_{Lip}:=\sup_{s\neq t, s,t\in [0,T]}\frac{|A_\alpha(t)-A_\alpha(s)|}{|t-s|}.$

			(2) {  $\mP(Oscm_W^{n,\eps} )\leq C_{p,n,T}\eps^p,\forall \eps \in (0,1)$ and $p>0.$}
		\end{proposition}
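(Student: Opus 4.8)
This is a purely probabilistic statement about Wiener paths --- it is \cite[Theorem~7.1]{HM-2011} --- and the natural route is induction on $n$. For $n=0$ the only admissible multi-index is $\alpha=\emptyset$, so $Z_A=A_\emptyset$ and $\|Z_A\|_{L^\infty}\le\eps$ already forces $\|A_\emptyset\|_{L^\infty}\le\eps=\eps^{3^{-0}}$; one takes $Oscm_W^{0,\eps}=\emptyset$. For the inductive step assume the assertion for $n-1$ (for every $T$, every $p$, and every family of processes), fix a family $\{A_\alpha\}_{|\alpha|\le n}$ with $\|Z_A\|_{L^\infty}\le\eps$, and assume $\sup_\alpha\|A_\alpha\|_{Lip}<\eps^{-3^{-(n+1)}}$, since otherwise the second alternative of part~(1) holds and nothing is to prove.

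First I would localize. Restrict to the event $\{\|W\|_{C([0,T];\mR^{\mathbb{U}})}\le R\}$ with $R=\eps^{-\kappa}$, $\kappa>0$ small; its complement has probability $\le Ce^{-c\eps^{-2\kappa}}$, which is super-polynomially small. Partition $[0,T]$ into $O(\eps^{-\theta})$ subintervals $I_j=[s_j,s_j+\delta]$ of length $\delta=\eps^{\theta}$ for a small exponent $\theta=\theta(n)>3^{-(n+1)}$ to be fixed at the end. On each $I_j$ the Lipschitz bound makes every $A_\alpha$ constant up to an error $\eps^{\theta-3^{-(n+1)}}$, so $\|Z_A\|_{L^\infty}\le\eps$ becomes the assertion that the \emph{frozen} polynomial $y\mapsto\sum_{|\alpha|\le n}A_\alpha(s_j)y^{\alpha}$ is bounded by $\eps+CR^{n}\eps^{\theta-3^{-(n+1)}}\le C\eps^{\beta}$, with $\beta=\beta(\theta,\kappa,n)>0$, along the Brownian curve $\{W(t):t\in I_j\}$.

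The heart of the argument is to convert ``a normalized degree-$\le n$ polynomial stays uniformly $\eps^{\beta}$-small along the Brownian curve on $I_j$'' into ``its coefficients are small'', off a small-probability set. Writing $\phi$ for the vector of monomials of degree $\le n$ and $M_j=\int_{I_j}\phi(W)\phi(W)^{\top}\dif t$, one has $\|A(s_j)\|^{2}\le\delta\,\eps^{2\beta}/\lambda_{\min}(M_j)$; Brownian scaling together with the Taylor-shift identity $\phi(z+y)=T_{z}\phi(y)$, $\|T_{z}^{\pm1}\|\le C_n(1+|z|)^{n}$, yields on $\{\|W\|_{L^\infty}\le R\}$ that $\lambda_{\min}(M_j)\ge C_n^{-1}R^{-2n}\delta^{\,n+1}\lambda_{\min}(M_0^{(j)})$, where $M_0^{(j)}\stackrel{d}{=}M_0:=\int_0^1\phi(B)\phi(B)^{\top}\dif t$ for a standard $\mathbb{U}$-dimensional Brownian motion $B$. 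The crucial probabilistic input is the negative-moment bound $\mP(\lambda_{\min}(M_0)<\mu)\le C_q\mu^{q}$ valid for every $q>0$: it follows from the Brownian small-ball estimate $\mP(\sup_{[0,1]}|B|<\rho)\le Ce^{-c/\rho^{2}}$ (so the path cannot hug the zero set of such a polynomial for a macroscopic fraction of $[0,1]$), combined with a covering argument over the coefficient sphere and a polynomial anti-concentration (Carbery--Wright-type) bound. Because the union bound over the $O(\eps^{-\theta})$ intervals costs only a factor $\eps^{-\theta}$ while the per-interval bound is super-polynomially --- hence with room to spare --- small, the resulting exceptional probability is $\le C_p\eps^{p}$. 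This controls the coefficients of degree $<n$; the degree-$n$ part $\sum_{|\alpha|=n}A_\alpha W_\alpha$ requires extra care (on a short interval it is suppressed by the factor $\delta^{n/2}$ and barely affects the observable), which is precisely where the induction and the geometric exponents enter --- once it is shown to be $\le C\eps^{\gamma}$ in $L^\infty$ it is subtracted off.

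After subtraction $Z_A-\sum_{|\alpha|=n}A_\alpha W_\alpha=\sum_{|\alpha|\le n-1}A_\alpha W_\alpha$ has $L^\infty$-norm $\le C\eps^{\gamma}$, so the induction hypothesis at level $n-1$ applies with $\eps$ replaced by $\eps^{\gamma}$; choosing $\theta,\kappa$ and the intermediate exponents so that every exponent produced along the way dominates $3^{-n}$ and the Lipschitz threshold emerges as $\eps^{-3^{-(n+1)}}$ closes the induction and gives the first alternative of part~(1). One then sets $Oscm_W^{n,\eps}$ to be the union of $\{\|W\|_{L^\infty}>R\}$, the Gram-matrix degeneracy events above, and a rescaled copy of $Oscm_W^{n-1,\eps^{\gamma}}$; each summand has probability $\le C_{p,n,T}\eps^{p}$, hence so does the union, which is part~(2). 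I expect the main obstacle to be exactly the clash of scales in the third paragraph: on a short interval one can freeze the coefficients but the top-degree monomials are nearly invisible to the observable, whereas on a long interval they are visible but the coefficients can no longer be treated as constant. Reconciling these --- which is what the induction on $n$ and the accompanying geometric cascade of exponents (hence the shape $3^{-n}$) are engineered to accomplish --- together with the uniform-in-subinterval negative-moment estimate for $\lambda_{\min}(M_0)$, is the real content; the remaining bookkeeping of exponents is routine.
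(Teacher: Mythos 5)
The paper does not prove this proposition; it cites it verbatim as \cite[Theorem~7.1]{HM-2011}. So the comparison is between your sketch and the original Hairer--Mattingly argument, and there is a genuine gap in your sketch exactly at the place you yourself flag as ``the clash of scales.''

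The Gram-matrix route on short intervals cannot deliver the bound on the top-degree coefficients, and your proposed resolution is circular. Concretely: with $\delta=\eps^{\theta}$, $R=\eps^{-\kappa}$, and freezing error $R^{n}\eps^{\theta-3^{-(n+1)}}$, the least eigenvalue of $M_j$ scales like $\delta^{n+1}R^{-2n}\lambda_{\min}(M_0^{(j)})$ because the diagonal scaling $D_{\sqrt\delta}$ kills the top-degree monomials by $\delta^{n/2}$. The resulting coefficient bound is $\|A(s_j)\|^{2}\lesssim\delta^{-n}R^{2n}\eps^{-q}(\eps+R^{n}\eps^{\theta-3^{-(n+1)}})^{2}$, and demanding that this be $\le\eps^{2\cdot3^{-n}}$ forces $(2-n)\theta\ge 2\cdot3^{-n}+2\cdot3^{-(n+1)}+\text{(positive loss terms)}$, which is impossible for $n\ge2$. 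You acknowledge this tension, but the subtraction step you propose --- remove $\sum_{|\alpha|=n}A_\alpha W_\alpha$ and recurse at level $n-1$ --- presupposes that the degree-$n$ part has already been shown small, and that is precisely what the Gram matrix cannot give you on the short intervals where freezing is valid. Saying ``the induction on $n$ is engineered to accomplish this'' is not an argument; as sketched, the induction never gets off the ground at the top degree.

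The actual Hairer--Mattingly proof is structurally different and avoids the Gram matrix altogether. It is a Norris-lemma-type argument built on increments and quadratic variation: if $\|Z_A\|_{L^\infty}\le\eps$, then off a small-probability set, for each Brownian component $W_i$ the ``partial derivative'' polynomial $\partial_{W_i}Z_A=\sum_\alpha \alpha_i A_\alpha W_{\alpha-e_i}$ --- a Wiener polynomial of degree $n-1$ --- satisfies $\|\partial_{W_i}Z_A\|_{L^\infty}\le\eps^{1/3}$ (this is where the Brownian wiggle estimate and the Lipschitz threshold enter, and it is exactly where one unit of degree is shed per step). Iterating this $n$ times produces the exponent $\eps^{3^{-n}}$, and the constant coefficient $A_\emptyset$ is then bounded directly from $Z_A$ and the already-controlled higher terms. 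The $3^{-n}$ in the statement is not the byproduct of a cascade of small exponents $\theta,\kappa,q$ to be tuned at the end, but the literal cube-root loss incurred once per degree; your sketch does not contain the mechanism that produces a cube root, and without it the inductive reduction cannot be made consistent.
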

		The following lemma is a direct result of Proposition \ref{p10-1} after some simple arguments.
		\begin{lemma}
			\label{p10-3}
			Let $\{W_k(t),t\in [0,T]\}_{k=1}^{\mathbb{U}}$ be a family of i.i.d. standard Wiener processes and, for
			every multi-index $\alpha= (\alpha_1,\cdots,\alpha_d)$, define $W_\alpha  = W_{1}^{\alpha_1}\cdots W_{\mathbb{U}}^{\alpha_{\mathbb{U}}} $ with the convention that $W_\alpha = 1$ if $\alpha = \emptyset$. Let furthermore $A_\alpha$ be a family of (not necessarily adapted) stochastic processes with the property that there exists $n \geq 0$ such that $A_\alpha = 0$ whenever $|\alpha|>n$ and set $Z_A(t) =\sum_\alpha A_\alpha(t)W_\alpha(t)$. Then, there exists a $\tilde \Omega $ with $\mP(\tilde \Omega )=1$ only depending on $n,T$ and $\{W_k\}_{k=1}^{\mathbb{U}}$, such that the implication
			\begin{align*}
				\|Z_{A}\|_{L^\infty} =0 \text{ and }\sup_{\alpha}\| A_\alpha \|_{Lip}<\infty
				\Rightarrow
				\sup_{\alpha}\| A_\alpha \|_{L^\infty}=0
			\end{align*}
			holds on $\omega\in \tilde \Omega$.
		\end{lemma}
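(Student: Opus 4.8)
The plan is to deduce Lemma~\ref{p10-3} from Proposition~\ref{p10-1} by a routine Borel--Cantelli argument along the sequence of scales $\eps=1/m$. Fix the integer $n\geq 0$ and the horizon $T$, and apply Proposition~\ref{p10-1} with, say, $p=2$: this produces, for each $m\in\mN$, an event $Oscm_W^{n,1/m}$ depending only on $n,T$ and $W=\{W_k\}_{k=1}^{\mathbb{U}}$ with $\mP(Oscm_W^{n,1/m})\leq C_{2,n,T}\,m^{-2}$. Since $\sum_{m\geq 1}m^{-2}<\infty$, the Borel--Cantelli lemma gives $\mP\big(\limsup_{m\to\infty}Oscm_W^{n,1/m}\big)=0$, so I would take
\[
\tilde\Omega:=\Big(\limsup_{m\to\infty}Oscm_W^{n,1/m}\Big)^c
=\bigcup_{M\geq 1}\bigcap_{m\geq M}\big(Oscm_W^{n,1/m}\big)^c ,
\]
which has full probability and, being a countable combination of the $Oscm_W^{n,1/m}$'s, depends only on $n,T$ and $W$, as required.

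Next I would verify the implication pointwise on $\tilde\Omega$. Fix $\omega\in\tilde\Omega$ and suppose $\|Z_{A}\|_{L^\infty}=0$ and $L:=\sup_\alpha\| A_\alpha \|_{Lip}<\infty$, but, arguing by contradiction, $c:=\sup_\alpha\| A_\alpha \|_{L^\infty}>0$. By definition of $\tilde\Omega$ there is $M=M(\omega)$ with $\omega\in(Oscm_W^{n,1/m})^c$ for all $m\geq M$. Because $3^{-n}>0$ and $3^{-(n+1)}>0$, I can choose $m\geq M$ large enough that simultaneously $(1/m)^{3^{-n}}<c$ and $m^{3^{-(n+1)}}>L$. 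Since $\|Z_{A}\|_{L^\infty}=0\leq 1/m$, part~(1) of Proposition~\ref{p10-1}, applied on $(Oscm_W^{n,1/m})^c$ with $\eps=1/m$, forces either $\sup_\alpha\| A_\alpha \|_{L^\infty}\leq (1/m)^{3^{-n}}<c$ or $\sup_\alpha\| A_\alpha \|_{Lip}\geq m^{3^{-(n+1)}}>L$; both contradict the definitions of $c$ and $L$. Hence $c=0$, i.e. $\sup_\alpha\| A_\alpha \|_{L^\infty}=0$, which is exactly the asserted implication on $\tilde\Omega$.

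The only point requiring a little care — and really the sole idea in the proof — is that the exceptional set must be treated as a $\limsup$ rather than as a single oscillation event. A fixed $\omega$ violating the conclusion carries its own constants $c(\omega)>0$ and $L(\omega)<\infty$, and the dichotomy in Proposition~\ref{p10-1}(1) then places it in $Oscm_W^{n,1/m}$ not for one scale but for \emph{all} sufficiently large $m$ (namely those with $(1/m)^{3^{-n}}<c(\omega)$ and $m^{3^{-(n+1)}}>L(\omega)$); this is what lets one enclose the bad set in $\limsup_{m\to\infty}Oscm_W^{n,1/m}$ and then annihilate it using the polynomial bound in Proposition~\ref{p10-1}(2). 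Beyond this observation, no estimates are needed and the rest is bookkeeping.
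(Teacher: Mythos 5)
Your proof is correct and follows essentially the same approach as the paper: both isolate the negligible set by running Proposition~\ref{p10-1} along the scales $\eps=1/m$, observe that a fixed $\omega$ violating the conclusion (with $c(\omega)>0$ and $L(\omega)<\infty$) must sit in $Oscm_W^{n,1/m}$ for \emph{every} sufficiently large $m$, and conclude via the polynomial decay of $\mP(Oscm_W^{n,\eps})$. The only cosmetic difference is that you use Borel--Cantelli (hence $p=2$) to kill $\limsup_m Oscm_W^{n,1/m}$, whereas the paper avoids summability entirely by noting directly that $\mP\big(\bigcap_{k\geq k_M} Oscm_W^{n,1/k}\big)\leq C_{p,n,T}k^{-p}\to 0$ as $k\to\infty$; both yield the same conclusion and the paper's null set $\Omega^0$ is in fact a subset of your $\limsup$.
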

		\begin{proof}
			For any  $M\in \mN$, obviously, there exists a $k_M\in \mN$ such that
			\begin{eqnarray}
				\label{pp03-1}
				\big(\frac{1}{k_M}\big)^{-3^{-(n+1)}}>M,\quad  \big(\frac{1}{k_M}\big)^{3^{-n}}<\frac{1}{M}.
			\end{eqnarray}
			Let $ \Omega^0:= \bigcup_{M=1}^\infty \bigcap_{k\geq k_M} Oscm_W^{n,\frac{1}{k}}$,
			where   $Oscm_W^{n,\eps},\eps>0$ are events  depending only on
			$n,\eps,T$ and    $W=\{W_k(t),t\in [0,T]\}_{k=1}^{\mathbb{U}}$ that are given by Proposition   \ref{p10-1}.
			For any $M\in \mN$  and $p>0$, using Proposition \ref{p10-1}, it holds that
			\begin{eqnarray*}
				\mP\big(\bigcap_{k\geq k_M} Oscm_W^{n,\frac{1}{k}} \big )\leq C_{p,n,T}
				\big( \frac{1}{k}\big)^p, \forall k\geq k_M.
			\end{eqnarray*}
			Thus, letting $k\rightarrow \infty$ in the above, we conclude that $ \mP\big(\bigcap_{k\geq k_M} Oscm_W^{n,\frac{1}{k}} \big )=0$. Furthermore, it also has $\mP\big(\Omega^0 )=0$.

			Assume that  $A_\alpha$ is  a family of  stochastic processes with the property that there exists $n \geq 0$ such that $A_\alpha = 0$ whenever $|\alpha|>n$ and  $Z_A(t) =\sum_\alpha A_\alpha(t)W_\alpha(t)$.
			For any   $M\in \mN$, denote
			\begin{eqnarray*}
				\cA_M:=\big\{\omega\in \Omega:  \|Z_{A}\|_{L^\infty}(\omega)=0, ~\sup_{\alpha}\| A_\alpha \|_{Lip}(\omega)< M  \text{ and }   \sup_{\alpha}\| A_\alpha \|_{L^\infty}> \frac{1}{M}\big\}.
			\end{eqnarray*}
			Assume that
			$\omega \in \cA_M.$
			Then, for any $k\geq k_M$, by (\ref{pp03-1}), it holds that
			\begin{eqnarray*}
				\sup_{\alpha}\| A_\alpha \|_{Lip}(\omega)< \big(\frac{1}{k}\big)^{-3^{-(n+1)}} \text{ and }
				\sup_{\alpha}\| A_\alpha \|_{L^\infty}>\big(\frac{1}{k}\big)^{3^{-n}}.
			\end{eqnarray*}
			Therefore,    with the help of  Proposition \ref{p10-1}, we conclude that
			\begin{eqnarray*}
				\cA_M  \subseteq   \bigcap_{k\geq k_M} Oscm_W^{n,\frac{1}{k}},
			\end{eqnarray*}
			which implies  $\cup_{M=1}^\infty  \cA_M  \subseteq \Omega^0 $.

			Setting   $\tilde \Omega=\Omega \setminus  \Omega^0$, we complete the proof by    $\mP\big(\Omega^0 )=0$ and the fact  $\cup_{M=1}^\infty  \cA_M  \subseteq \Omega^0   $.
		\end{proof}
		
		{ \begin{lemma}
				\label{p24-10}
				Denote $f_t=f_t(x) =u_{0}-\int_{0}^t\operatorname{div} A(u_s)\dif s+\int_{0}^t \nu\Delta u_s\dif s.$
				For any $u_0\in \tilde H^{\nn+5}, T>0, 1\leq i\leq d, \phi\in \tilde  H,\lambda \in \mN \cup\{0\}$ and  smooth function $g$ on $\mT^d$, the following function:
				\begin{eqnarray*}
					t\in [0,T]\rightarrow \big\langle K_{t,T}\phi,\partial_{x_i}\big(f_t^\lambda (x)
					g (x) \big)
					\big\rangle
				\end{eqnarray*}
				is differentiable, $\mP$-a.s. Furthermore, we have
				\begin{eqnarray*}
					\mP\Big(\sup_{\phi \in \tilde H: \|\phi\|\leq 1}\sup_{t\in [0,T]}\Big|\frac{\dif }{\dif t}\big\langle K_{t,T}\phi,\partial_{x_i}\big(f_t^\lambda (x)
					g (x) \big)
					\big\rangle \Big|<\infty\Big)=1.
				\end{eqnarray*}
		\end{lemma}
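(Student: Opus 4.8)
The key structural fact is that, because $u$ solves \eqref{1-1}, one has $f_t=u_t-\eta(t)$; since $\cZ_0$ is finite, $\eta(t)=\sum_{i\in\cZ_0}b_ie_iW_i(t)$ lies in $C([0,T],H^k)$ for every $k$, $\mP$-a.s., so $f$ carries no martingale part and is as regular in space as $u$. First I would apply Proposition \ref{wp} with $n=\nn+5$ (together with the $L^\infty$-uniqueness used in its proof) to obtain that the solution started from $u_0\in\tilde H^{\nn+5}$ satisfies $u,f\in C([0,T],H^{\nn+5})$, $\mP$-a.s. Since the map $s\mapsto\nu\Delta u_s-\operatorname{div}A(u_s)$ is then continuous with values in $H^{\nn+3}$ --- using that $H^{\nn+3}$ is a Banach algebra ($\nn+3>d/2$) and that each $A_i$ is polynomial --- it follows that $f\in C^1([0,T],H^{\nn+3})$ with $\frac{\dif}{\dif t}f_t=\nu\Delta u_t-\operatorname{div}A(u_t)$.

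Next I would record the regularity of the two factors in the pairing. Writing $\psi_t:=\partial_{x_i}(f_t^\lambda g)$, the chain rule together with the algebra property of $H^{\nn+3}$ gives $\psi\in C([0,T],H^{\nn+4})\cap C^1([0,T],H^{\nn+2})$ with $\partial_t\psi_t=\lambda\,\partial_{x_i}\big(f_t^{\lambda-1}(\nu\Delta u_t-\operatorname{div}A(u_t))g\big)$. For the other factor, $\varrho_t:=K_{t,T}\phi$ solves the backward equation \eqref{p0203-1}; its well-posedness gives $\varrho\in C([0,T],H)$, and reading \eqref{p0203-1} as an identity in $H^{-2}$ shows $\varrho\in C^1([0,T],H^{-2})$ with $\partial_t\varrho_t=-\nu\Delta\varrho_t-(DF(u_t))^*\varrho_t$, where $(DF(u_t))^*\varrho_t=\sum_{j=1}^dA_j'(u_t)\partial_{x_j}\varrho_t$ and $A_j'(u_t)\in H^{\nn+5}$ acts as a pointwise multiplier on $H^{-1}$.

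The differentiability of $t\mapsto\langle K_{t,T}\phi,\psi_t\rangle$ then follows from the product rule for the continuous bilinear pairing $H^{-2}\times H^2\to\mR$, applied to $\varrho\in C^1([0,T],H^{-2})$ and $\psi\in C^1([0,T],H^2)$ (note $\nn+2\geq2$); integrating by parts the two distributional terms, this yields
\begin{equation*}
\frac{\dif}{\dif t}\big\langle K_{t,T}\phi,\partial_{x_i}(f_t^\lambda g)\big\rangle=-\nu\langle\varrho_t,\Delta\psi_t\rangle+\big\langle\varrho_t,\operatorname{div}(A'(u_t)\psi_t)\big\rangle+\langle\varrho_t,\partial_t\psi_t\rangle,
\end{equation*}
and all three inner products are genuine $L^2$ pairings because $\Delta\psi_t$, $\operatorname{div}(A'(u_t)\psi_t)$ and $\partial_t\psi_t$ lie in $L^2$ --- this is precisely where the Sobolev gain $u_0\in\tilde H^{\nn+5}$ is consumed.

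Finally, for the uniform estimate I would bound $\big|\frac{\dif}{\dif t}\langle K_{t,T}\phi,\psi_t\rangle\big|\leq\|\varrho_t\|\big(\nu\|\Delta\psi_t\|+\|\operatorname{div}(A'(u_t)\psi_t)\|+\|\partial_t\psi_t\|\big)$, use $\|K_{t,T}\phi\|\leq\|K_{t,T}\|_{\cL(H,H)}\|\phi\|=\|J_{t,T}\|_{\cL(H,H)}\|\phi\|$ together with Lemma \ref{15-4} and the embedding $C([0,T],H^{\nn+5})\hookrightarrow C([0,T],L^m)$ to get $\sup_{t\in[0,T]}\|J_{t,T}\|_{\cL(H,H)}<\infty$ $\mP$-a.s., and observe that $\sup_{t\in[0,T]}\big(\|\Delta\psi_t\|+\|\operatorname{div}(A'(u_t)\psi_t)\|+\|\partial_t\psi_t\|\big)<\infty$ $\mP$-a.s. since these are continuous $L^2$-valued functions of $t$ on the compact $[0,T]$; taking the supremum over $\|\phi\|\leq1$ completes the proof. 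The only genuinely delicate step is organizing the pairing so that the product rule is rigorous --- reconciling the smoothing of $\psi_t$ with the derivative loss in $\partial_t\varrho_t$ --- and this is exactly what the $H^{-2}$--$H^2$ duality above accomplishes; everything else is bookkeeping of Sobolev indices and routine a priori bounds.
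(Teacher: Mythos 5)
Your proof is correct and follows essentially the same route as the paper: differentiate the pairing $\langle K_{t,T}\phi,\partial_{x_i}(f_t^\lambda g)\rangle$, use the backward equation (\ref{p0203-1}) to express $\partial_t\varrho_t$, move the operator $-\nu\Delta-(DF(u_t))^*$ onto the smooth factor, and bound $\|\varrho_t\|$ via $\|K_{t,T}\|_{\cL(H,H)}=\|J_{t,T}\|_{\cL(H,H)}$ and Lemma \ref{15-4}, with the extra Sobolev regularity from $u_0\in\tilde H^{\nn+5}$ absorbing the loss of two derivatives. The main cosmetic difference is that you handle all $\lambda\geq0$ uniformly via the Banach-algebra structure of $H^{\nn+3}$ and qualitative continuity-on-a-compact arguments, whereas the paper treats $\lambda=0$ separately and tracks explicit $L^\infty$ Sobolev bounds (\ref{p0203-5})--(\ref{p0203-7}) in terms of $\sup_s\|u_s\|_{\nn+5}$.
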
}
		\begin{proof}
			
			Since  $F(u)= -\text{div} A(u)$, the following:
			\begin{eqnarray}
				\label{pp0203-8}
				\begin{split}
					&  \|  DF(u)v\|
					\\ &  \leq  C(1+\|u\|_{\nn+1}^\Bbbk)\Big(
					\int_{\mT^d}|v(x)|^2\dif x
					+\sum_{i=1}^d \int_{\mT^d}|\partial_{x_i}v(x)|^2\dif x
					\Big)^{1/2}
					,
				\end{split}
			\end{eqnarray}
			holds for any $u\in \tilde H $ and  function $v$ on $\mT^d.$
			Assume $\|\phi\|\leq1$ and denote $K_{t,T}\phi$ by $\varrho_t.$

			For the case $\lambda=0$,
			by \eqref{L inf incl}, (\ref{p0203-1}), (\ref{pp0203-8}) and Lemma  \ref{15-4}, we have
			\begin{eqnarray*}
				&&  \Big| \partial_t \big\langle \varrho_t,\partial_{x_i}
				g (x) \big)
				\big\rangle\Big|
				=
				\Big|  \big\langle
				\nu \Delta \varrho_t+(DF(u_t))^*\varrho_t,
				\partial_{x_i}
				g (x) \big\rangle\Big|
				\\ && \leq
				C_g \|\varrho_t\|
				(1+\|u_t\|_{\nn+1}^\Bbbk)
				\\  && \leq
				{ C_{T,g} \sup_{s\in [0,T]}(1+\|u_s\|_{\nn}^\mmm)}
				\cdot (1+\|u_t\|_{\nn+1}^\Bbbk), ~\forall t\in [0,T],
			\end{eqnarray*}
			where  $C_g$ is a constant depending on $g$,
			$\nu,d,\Bbbk, \{b_k\}_{k\in \cZ_0},\mathbb{U}$,
			$(c_{\mathbbm{i},\mathbbm{j}})_{1\leq \mathbbm{i}\leq d,0\leq \mathbbm{j} \leq \Bbbk}$
			and
			$C_{T,g}$ is a constant depending on $T,g$,
			$\nu,d,\Bbbk, \{b_k\}_{k\in \cZ_0},\mathbb{U}$,
			$(c_{\mathbbm{i},\mathbbm{j}})_{1\leq \mathbbm{i}\leq d,0\leq \mathbbm{j} \leq \Bbbk}.$
			In this case, the proof of this lemma is completed by the above inequality and Proposition \ref{wp}.
			So, we always assume that $\lambda\geq 1.$
			
			Before we give a proof of this lemma for
			$\lambda\geq 1$,   we demonstrate two     estimates  on $f_t.$
			With the help of \eqref{L inf incl} and Lemma \ref{div reg},
			for some $\mm_1=\mm_1(\nn,d,\Bbbk)\geq  1$, one has
			\begin{eqnarray}
				\nonumber  &&  \|\partial_{x_i}(f_t^\lambda g)\|_2+\|\partial_{x_i}(f_t^\lambda g)\|_{1}
				+ \|\partial_{x_i}(f_t^\lambda g)\|
				\\ \nonumber  &&\leq   C_{\lambda,g} (1+\|f_t\|_{L^\infty}^\lambda)\Big(1+\sum_{i,j,k=1}^d\big(\|\partial_{x_i}f_t \|_{L^\infty}^3 +\|\partial_{x_i}\partial_{x_j}f_t\|_{L^\infty}^3 + \|\partial_{x_i}\partial_{x_j}\partial_{x_k}f_t\|_{L^\infty}^3\big)\Big)
				\\  \nonumber &&\leq   C_{\lambda,g} \big(1+\|f_t\|_{\nn+3}\big)^{\lambda+3}
				\\ \nonumber &&\leq    C_{T,\lambda,g}
				\big(1+\sup_{s\in [0,T]}\|u_s\|_{\nn+5}+\sup_{s\in [0,T]} \|\text{div}A(u_s) \|_{\nn+3}\big)^{\lambda+3}
				\\  \nonumber &&\leq   C_{T, \lambda,g}
				\sup_{s\in [0,T]}(1+\|u_s\|_{\nn+5}+\|u_s\|_{L^\infty}^{\mm_1})^{\lambda+3}
				\\  \label{p0203-5}&&\leq  C_{T,\lambda,g}
				\sup_{s\in [0,T]}(1+\|u_s\|_{\nn+5})^{\mm_1(\lambda+3)},
				~\forall t\in [0,T]\text{ and }1\leq i\leq d,
			\end{eqnarray}
			where  $C_{T,\lambda,g}$ is a constant depending on $T,\lambda,g$ and $\nu,d,\Bbbk, \{b_k\}_{k\in \cZ_0},\mathbb{U}$,
			$(c_{\mathbbm{i},\mathbbm{j}})_{1\leq \mathbbm{i}\leq d,0\leq \mathbbm{j} \leq \Bbbk}$.
			By Lemma \ref{div reg}, \eqref{L inf incl}  and  the definition of $f_t$,  for some $\mm_2=\mm_2(\nn,d,\Bbbk)>1$  one gets
			\begin{eqnarray}
				\nonumber  &&  \|f_t\|_{L^\infty}+\|\partial_t f_t\|_{L^\infty}
				+\|\partial_{x_i} f_t\|_{L^\infty}
				+\|\partial_t \partial_{x_i}f_t\|_{L^\infty}
				\\  \nonumber &&\leq  C_{T}
				\sup_{s\in [0,T]}\big( \| \operatorname{div} A(u_s)\|_{L^\infty}+\| \Delta u_s\|_{L^\infty}
				+ \|\partial_{x_i}   \operatorname{div} A(u_s)\|_{L^\infty}+\|\partial_{x_i}   \Delta u_s\|_{L^\infty}
				\big)
				\\  \nonumber && \leq  C_{T}
				\sup_{s\in [0,T]}\big( \| \operatorname{div} A(u_s)\|_{\nn }+\| u_s\|_{\nn+2}+  \| \operatorname{div} A(u_s)\|_{\nn+1 }+\| u_s\|_{\nn+3}   \big)
				\\  \nonumber && \leq  C_{T}
				\sup_{s\in [0,T]}\big( 1+ \|u_s\|_{\nn+3 }+\|u_s\|_{L^\infty}^{\mm_2} \big)
				\\  \label{p0203-7} && \leq  C_{T}
				\sup_{s\in [0,T]}\big( 1+ \|u_s\|_{\nn+3 } \big)^{\mm_2 }, \quad \forall t\in [0,T],1\leq i\leq d.
			\end{eqnarray}

			Now, we will give an estimate of $\big| \big\langle  \partial_t\varrho_t,  \partial_{x_i}\big(f_t^\lambda
			g \big)
			\big\rangle\big|.$
			by
			(\ref{pp0203-8})--(\ref{p0203-5})   and  Lemma \ref{15-4},  for any
			for any $t\in [0,T]$,  we have
			\begin{eqnarray}
				\nonumber   && \Big| \big\langle \partial_t \varrho_t,\partial_{x_i}\big(f_t^\lambda(x)
				g(x) \big)
				\big\rangle\Big|
				=\Big| \big\langle  \nu \Delta \varrho_t+(DF(u_t))^*\varrho_t,
				\partial_{x_i}\big(f_t^\lambda
				g  \big)
				\big\rangle \Big|
				\\   \nonumber &&\leq  C \|\varrho_t \|\cdot \|\partial_{x_i}\big(f_t^\lambda
				g\big) \|_2+\Big| \big\langle \varrho_t,
				DF(u_t)\big(\partial_{x_i}(f_t^\lambda
				g )\big)
				\big\rangle \Big|
				\\  \nonumber &&\leq   C \|\varrho_t \| \|\partial_{x_i}\big(f_t^\lambda
				g \big) \|_2
				+ C \|\varrho_t \| (1+\|u_t\|_{\nn+1}^\Bbbk)\big(\|\partial_{x_i} (f_t^\lambda g)\|+ \|\partial_{x_i} (f_t^\lambda g)\|_1\big)
				\\   \nonumber && \leq  C_{T,\lambda,g} \sup_{s\in [0,T]}(1+\|u_s\|_{L^\mmm}^\mmm)
				\cdot  (1+\|u_t\|_{\nn+1}^\Bbbk) \cdot \sup_{s\in [0,T]}(1+\|u_s\|_{\nn+5})^{\mm_1(\lambda+3)}
				\\  \label{p0203-2}&&\leq   C_{T,g,\lambda}
				\sup_{s\in [0,T]}(1+\|u_s\|_{\nn+5}))^{\mmm+\Bbbk+\mm_1(\lambda+3)}, \quad \forall t\in [0,T],
			\end{eqnarray}
			where  $C_{T,g,\lambda}$ is a constant depending on $T,g,\lambda$ and $\nu,d,\Bbbk, \{b_k\}_{k\in \cZ_0},\mathbb{U}$,
			$(c_{\mathbbm{i},\mathbbm{j}})_{1\leq \mathbbm{i}\leq d,0\leq \mathbbm{j} \leq \Bbbk}$.

			Now we give an estimate of   $ \big\langle  \varrho_t,\partial_t  \partial_{x_i}\big(f_t^\lambda
			g \big)
			\big\rangle. $
			By  (\ref{p0203-7}),
			Lemma \ref{15-4} and Lemma \ref{div reg},
			we arrive at
			{ \begin{eqnarray*}
					&& \Big| \big\langle  \varrho_t,\partial_t  \partial_{x_i}\big(f_t^\lambda
					g \big)
					\big\rangle\Big|
					\\ && \leq  C_{g} \sup_{s\in [0,T]}\|\varrho_s\| \big( \|\partial_s \partial_{x_i} f_s^\lambda\|_{L^\infty}+ \|\partial_s f_s^\lambda\|_{L^\infty} \big)
					\\ && \leq  C_{\lambda,g} \sup_{s\in [0,T]}\|\varrho_s\|
					(1+\|f_s\|_{L^\infty}^\lambda)(\|\partial_{x_i}f_s\|_{L^\infty}
					+\|\partial_{s}f_s\|_{L^\infty}+\|\partial_s \partial_{x_i}f_s\|_{L^\infty}+\|\partial_s f_s \cdot \partial_{x_i}f_s  \|_{L^\infty} )
					\\ && \leq  C_{T,\lambda,g} \sup_{s\in [0,T]}(1+\|u_s\|_{L^\mmm}^\mmm)
					\cdot  \sup_{s\in [0,T]}\big( 1+ \|u_s\|_{\nn+3 } \big)^{\mm_2(\lambda+2) }
					\\ &&\leq  C_{T,\lambda,g}  \sup_{s\in [0,T]}\big( 1+ \|u_s\|_{\nn+3 } \big)^{\mmm+\mm_2(\lambda+2) }.
			\end{eqnarray*}}

			The proof is completed by combining the above inequality, (\ref{p0203-2}), and Proposition \ref{wp}.

		\end{proof}
		\textbf{Now we are in a position to demonstrate a proof  of   Proposition \ref{1-66}.} By direct calculations, we have
		\begin{eqnarray*}
			\langle \cM_{0,T}\phi,\phi\rangle=\sum_{j\in \cZ_0} \int_0^T \langle K_{t,T}\phi,e_j   \rangle^2 \dif t=\sum_{j\in \cZ_0} \int_0^T \langle \phi,J_{t,T}e_j   \rangle^2 \dif t.
		\end{eqnarray*}
		Let   $\tilde \Omega$  be  set that
		is given by Lemma  \ref{p10-3}. Then, $\mP( \tilde \Omega)=1$.
		Assume that
		\begin{eqnarray*}
			\omega\in \big\{\omega: \inf_{\phi\in \cS_{\alpha,N}}\langle
			\cM_{0,T}\phi,\phi\rangle=0\big\} \cap \tilde \Omega.
		\end{eqnarray*}
		Then,  for some $\phi\in \tilde H$ with
		\begin{eqnarray}
			\label{1-3}
			\|P_N\phi\|\geq \alpha,
		\end{eqnarray}
		one has
		\begin{eqnarray*}
			\langle K_{t,T}\phi,e_j   \rangle(\omega)=0,\quad \forall t\in [T/2,T].
		\end{eqnarray*}
		Assume that  we have proved
		\begin{eqnarray}
			\label{p10-2}
			\langle K_{t,T}\phi, e_k  \rangle(\omega)=0, \quad \forall t\in [T/2,T]
			\text{ and } k\in \cZ_{n-1}.
		\end{eqnarray}
		In the following, we will prove that
		\begin{eqnarray*}
			\langle K_{t,T}\phi, e_k \rangle(\omega)=0,\quad \forall t\in [T/2,T]
			\text{ and }  k \in \cZ_{n}.
		\end{eqnarray*}
		Recall that   $F(u)= -\operatorname{div} A(u)$
		and
		$\varrho_t=K_{t,T}\phi$ satisfies the following equation:
		\begin{eqnarray*}
			\partial_t \varrho_t=-\nu \Delta \varrho_t-(DF(u))^*\varrho_t,
		\end{eqnarray*}
		where $(DF(u))^*$ is the adjoint of $DF(u)$, i.e, $\langle (DF(u))^*v,w \rangle=\langle v, DF(u)w \rangle.$
		In view of
		\begin{eqnarray*}
			DF(u)v= -\text{div} DA(u)v=-\text{div} A'(u)v,
		\end{eqnarray*}
		we take derivative with respect to $t$ in (\ref{p10-2})  and get
		\begin{eqnarray*}
			\langle -\nu \Delta \varrho_t-(DF(u_t))^*\varrho_t, e_k\rangle=0, \quad \forall  t\in [T/2,T] \text{ and }k\in \cZ_{n-1},
		\end{eqnarray*}
		i.e.,
		\begin{eqnarray*}
			\langle \varrho_t, DF(u_t)e_k\rangle=0,\quad \forall t\in [T/2,T]\text{ and }k\in \cZ_{n-1}.
		\end{eqnarray*}
		Thus, for any $t\in [T/2,T]$ and $k\in \cZ_{n-1}$, we have
		\begin{eqnarray}
			\label{p26-1}
			\Big\langle \varrho_t, \sum_{i=1}^d\partial_{x_i}\big( \sum_{j=1}^{\Bbbk -1}j \cdot  c_{i,j}u^{j-1}e_k+c_i\Bbbk  u^{\Bbbk -1} e_k \big)\Big\rangle=0.
		\end{eqnarray}
		Let
		\begin{eqnarray*}
			\cA:=\big\{(\alpha_j)_{j\in \cZ_0}:\alpha_j\geq 0,\forall j\in \cZ_0 \text{ and }\sum_{j\in \cZ_0}\alpha_j=\Bbbk-1\big\}
		\end{eqnarray*}
		and  $f_t =u_{0}-\int_{0}^t\operatorname{div} A(u_s)\dif s+\int_{0}^t \nu\Delta u_s\dif s.$
		Substituting $u_t=f_t+\sum_{j\in \cZ_0}b_je_jW_j(t),t\in [0,T]$ into the  equation (\ref{p26-1}),  also with the help of Lemma \ref{p24-10}, one arrives at
		\baee
		0=& A_0(t)+\sum_{|\alpha|\leq \Bbbk-2 }A_\alpha (t)W_\alpha(t)+
		\Big\langle \varrho_t, \sum_{i=1}^d\partial_{x_i}\Big(   c_{i} \Bbbk  \cdot\big(  \sum_{j\in \cZ_0}b_je_jW_j(t) \big)^{\Bbbk-1}e_k \Big) \Big\rangle
		\\ =& A_0(t)+\sum_{|\alpha|\leq \Bbbk-2 }A_\alpha (t)W_\alpha(t)
		\\   &+
		\sum_{i=1}^d
		\sum_{\alpha=(\alpha_j)_{j\in \cZ_0}\in \cA}\prod_{j\in \cZ_0} \Bigg[ \binom{\Bbbk-1}{\alpha_j}
		\Big\langle \varrho_t,\partial_{x_i}\Big( c_i\Bbbk \cdot  \big( b_{j}^{\alpha_j}e_j^{\alpha_j}\big) e_k \Big)\Big\rangle
		W_j^{\alpha_j}(t)\Bigg]
		\\ =& A_0(t)+\sum_{|\alpha|\leq \Bbbk-1 }A_\alpha (t)W_\alpha(t)
		\\ &+
		\sum_{\alpha=(\alpha_j)_{j\in \cZ_0}\in \cA}\prod_{j\in \cZ_0} \binom{\Bbbk-1}{\alpha_j}
		\cdot
		\Big\langle \varrho_t,\sum_{i=1}^d c_i  \Bbbk  \partial_{x_i}\Big( \prod_{j\in \cZ_0}\big( b_{j}^{\alpha_j}e_j^{\alpha_j}\big) e_k \Big)\Big\rangle
		\cdot \prod_{j\in \cZ_0}W_j^{\alpha_j}(t),
		\eaee
		{  where $W_\alpha(t)=\prod_{j\in \cZ_0}W_j(t)^{\alpha_j}$,
			$|\alpha|=\sum_{j\in \cZ_0}\alpha_j $ and
			$A_0(t),A_\alpha(t)$ are some  processes such that for all $|\alpha|\leq \Bbbk-1$,}
		\begin{eqnarray*}
			\mP\Big(\sup_{\phi \in \tilde H :\|\phi\|\leq 1}\sup_{t\in [0,T]}\Big| \frac{\dif }{\dif t }A_0(t)\Big|+ \sup_{\phi \in \tilde H :\|\phi\|\leq 1} \sup_{t\in [0,T]}\Big| \frac{\dif }{\dif t }A_\alpha(t)\Big|<\infty\Big)=1.
		\end{eqnarray*}
		Therefore, by Lemma \ref{p10-3} and Lemma \ref{p24-10}, for any $\alpha=(\alpha_j)_{j\in \cZ_0}\in \cA$, we obtain
		\begin{eqnarray}
			\label{p26-2}
			\Big\langle \varrho_t,\sum_{i=1}^d  c_{i}   \partial_{x_i}\Big( \big(\prod_{j\in \cZ_0}e_j^{\alpha_j}\big) \cdot e_k \Big)\Big\rangle=0,~\forall t\in [T/2,T]\text{ and }k\in \cZ_{n-1}.
		\end{eqnarray}
		Observe that $\sin(\ell \cdot x),\cos(\ell \cdot x),\ell \in \mathbb{L}$ can be written as
		linear combinations
		of  elements in the following set:
		\begin{eqnarray*}
			\Big\{ \prod_{j\in \cZ_0} e_j^{\alpha_j}:
			j\in \cZ_0,\alpha_j\geq 0, \sum_{j\in \cZ_0} \alpha_j=\Bbbk-1
			\Big\}.
		\end{eqnarray*}
		Consequently, by (\ref{p26-2}) and the symmetry of the set $\cZ_{n-1}$(
		i.e.,  $k \in\cZ_{n-1}$ implies $-k \in\cZ_{n-1}$),  the followings:
		\begin{eqnarray*}
			&& \Big\langle \varrho_t,\sum_{i=1}^d  c_{i}   \partial_{x_i} \big(\sin(\ell \cdot x)\cdot \sin(k \cdot x) \big)\Big\rangle=0,
			\\ && \Big\langle \varrho_t,\sum_{i=1}^d  c_{i}   \partial_{x_i} \big(\sin(\ell \cdot x)\cdot \cos(k \cdot x) \big)\Big\rangle=0,
			\\ && \Big\langle \varrho_t,\sum_{i=1}^d  c_{i}   \partial_{x_i} \big(\cos(\ell \cdot x)\cdot \sin(k \cdot x) \big)\Big\rangle=0,
			\\ && \Big\langle \varrho_t,\sum_{i=1}^d  c_{i}   \partial_{x_i} \big(\cos(\ell \cdot x)\cdot \cos(k \cdot x) \big)\Big\rangle=0.
		\end{eqnarray*}
		hold for any  $t\in [T/2,T], k\in \cZ_{n-1}$ and $\ell \in \mathbb{L}.$
		By the above equalities,  for any  $t\in [T/2,T], k\in \cZ_{n-1}$ and $\ell \in \mathbb{L}$,  one arrives at
		\begin{eqnarray*}
			&&  \sum_{i=1}^d c_i(k_i+\ell_i)\Big \langle \varrho_t,\sin\big(k\cdot x+\ell\cdot x\big)\Big \rangle
			= -\Big\langle \varrho_t,\sum_{i=1}^d c_i\partial_{x_i}\cos\big(k\cdot x+\ell\cdot x\big) \Big \rangle
			\\ &&=   \Big\langle \varrho_t,\sum_{i=1}^d c_i\partial_{x_i}
			\big( \sin(k\cdot x) \sin  (\ell \cdot x) -\cos(k\cdot x)   \cos(\ell \cdot x)  \big)  \Big \rangle=0.
		\end{eqnarray*}
		With similar arguments,  for any  $t\in [T/2,T], k\in \cZ_{n-1}$ and $\ell \in \mathbb{L}$ ,  we also have
		\begin{eqnarray*}
			&& \sum_{i=1}^d c_i(k_i+\ell_i) \Big \langle \varrho_t,\sin\big(k\cdot x+\ell\cdot x\big)\Big \rangle=0.
		\end{eqnarray*}
		Thus, for   any $ t\in [T/2,T], k\in \cZ_{n-1}$ and $\ell\in \mathbb{L}$ such that
		$\sum_{i=1}^dc_i(k_i+\ell_i)\neq 0$,
		\begin{eqnarray*}
			\Big \langle \varrho_t, \sin\big(k\cdot x+\ell\cdot x\big)\Big\rangle =  \Big \langle \varrho_t, \cos\big(k\cdot x+\ell\cdot x\big)\Big\rangle =0.
		\end{eqnarray*}
		By the symmetry of $\cZ_{n-1},\mathbb{L}$ and the definition of $\cZ_n$,  the above implies
		\begin{eqnarray*}
			\langle \varrho_t,e_{k} \rangle=0,\quad \forall k\in \cZ_n \text{ and }t\in [T/2,T].
		\end{eqnarray*}
		By the above arguments, we arrive at
		\begin{eqnarray*}
			&& \langle \varrho_t,e_k\rangle=0,~\forall t\in [T/2,T] \text{ and } k\in \cZ_{n-1}
			\\ &&  \Rightarrow \langle \varrho_t,e_k\rangle=0,~\forall t\in [T/2,T]\text{ and }  k\in \cZ_{n}.
		\end{eqnarray*}
		By the definition of $\tilde H^\nn$, we conclude that
		\begin{eqnarray*}
			\langle \varrho_t,e_k \rangle
			= 0,\quad
			\forall t\in [T/2,T] \text{ and }  k  \in \cup_{n=0}^\infty \cZ_n
		\end{eqnarray*}
		and
		\begin{eqnarray*}
			\langle \varrho_t,\phi  \rangle
			= 0,\quad
			\forall t\in [T/2,T] \text{ and } \phi \in \tilde H.
		\end{eqnarray*}
		Setting  $t=T$ in the above,  one sees that
		$
		\phi=0,
		$
		which contradicts with (\ref{1-3}).

		\subsection{ 	{Proof of Proposition \ref{3-8}}   }
		\label{s3-2}

		Assume that  the  (\ref{p25-4}) were  wrong,  then   there exist  sequences   $\{u_0^{(k)} \}_{k\geq1} \subseteq  \{
		w\in  \tilde  H^{\nn+5}:  \|w\|_{\nn+5}<\mathfrak{R}
		\}, \{\eps_k\}_{k\geq1}\subseteq (0,1)$  and a positive number $\delta_0$  such that
		\begin{eqnarray}
			\label{D-1}
			\lim_{k\rightarrow \infty} \mP(  X^{u_0^{(k)},\alpha,N}<\eps_k)\geq \delta_0>0  \text{ and  } \lim_{k\rightarrow \infty}\eps_k=0.
		\end{eqnarray}
		Our  strategy    is to find something
		contradicts  with (\ref{D-1}).

		Since $\tilde H^{\nn+5}$ is a Hilbert  space, there exists a subsequence $\{u_0^{(n_k)},k\geq 1\}$ of  $\{u_0^{(k)},k\geq 1\}$ and an element  $u_0^{(0)}\in \tilde H^{\nn+5}$
		such that
		$u_0^{(n_k)}$
		converges weakly to  $u_0^{(0)}$
		in $\tilde H^{\nn+5}.$
		Therefore,  with regard to  $u_0^{(0)}$, it holds that
		\begin{eqnarray*}
			\|u_0^{(0)}\|_{\nn+5}\leq \liminf_{k\rightarrow \infty}\|u_0^{(k)}\|_{\nn+5}\leq \mathfrak{R}.
		\end{eqnarray*}
		
		For the convenience of writing,   we still denote this subsequence $\{u_0^{(n_k)},k\geq 1\}$   by $\{u_0^{(k)},k\geq 1\}.$
		Considering the equation (\ref{1-1}), when  $u_t|_{t=0}=u_0^{(k)}(k\geq 0)$,
		we denote its solution by $u_t^{(k)}$. For $k\in \mN \cup\{0\},\xi\in \tilde H$ and $s\in [0,\infty)$, let    $ J_{s,t}^{(k)}\xi, t\geq s$ be the solution to  the following equation:
		\begin{eqnarray*}
			\left\{
			\begin{split}
				&  \partial_t   J_{s,t}^{(k)}\xi-   \nu \Delta J_{s,t}^{(k)} \xi +\sum_{i=1}^d \partial_{x_i}\big(A_i'(u_t^{(k)})J_{s,t}^{(k)} \xi  \big) =0,
				\\ &   J_{s,s}^{(k)} \xi = \xi.
			\end{split}
			\right.
		\end{eqnarray*}
		
		As before,  $C$  denotes   a constant depending
		$\nu,d,\Bbbk, \{b_k\}_{k\in \cZ_0},\mathbb{U},
		(c_{\mathbbm{i},\mathbbm{j}})_{1\leq \mathbbm{i}\leq d,0\leq \mathbbm{j} \leq \Bbbk}.$
		$C_{\mathfrak{R}}$  denotes  a constant depending on $\mathfrak{R}$
		and   $\nu,d,\Bbbk, \{b_k\}_{k\in \cZ_0},\mathbb{U},(c_{\mathbbm{i},\mathbbm{j}})_{1\leq \mathbbm{i}\leq d,0\leq \mathbbm{j} \leq \Bbbk}$.
		The values of these  constants may change from line to line. Recall that
		$
		\mmm=40 \Bbbk d(d+14\Bbbk)^2.
		$
		\begin{lemma}
			\label{p06-2}
			With probability one,  for any $0\leq s\leq t\leq 1,\xi\in \tilde H$ and $k\in \mN$, one has
			\begin{eqnarray*}
				&& \| J_{s,t}^{(k)}\xi-J_{s,t}^{(0)}\xi\|^2
				\\ && \leq C\exp\Big\{
				C \int_s^t \|u_r^{(0)}\|_{L^\mmm}^\mmm+\|u_r^{(k)}\|_{L^\mmm}^\mmm  \dif r
				\Big\}
				\Big(\int_s^t  \|u_r^{(k)}-u_r^{(0)}\|_{L^1} \dif r \Big)^{\frac{1}{8d}}\|\xi\|^2.
			\end{eqnarray*}
		\end{lemma}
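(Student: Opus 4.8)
If $\Bbbk=1$ each $A_i'$ is constant, so $w_t:=J^{(k)}_{s,t}\xi-J^{(0)}_{s,t}\xi$ solves the homogeneous linearised equation with $w_s=0$; hence $w_t\equiv0$ and there is nothing to prove. Assume $\Bbbk\ge2$. Subtracting the equations for $J^{(k)}_{s,t}\xi$ and $J^{(0)}_{s,t}\xi$, the difference $w_t$, which has zero spatial mean, solves
\begin{eqnarray*}
\partial_t w_t=\nu\Delta w_t-\sum_{i=1}^d\partial_{x_i}\big(A_i'(u_t^{(k)})w_t\big)-\sum_{i=1}^d\partial_{x_i}\big((A_i'(u_t^{(k)})-A_i'(u_t^{(0)}))J^{(0)}_{s,t}\xi\big),\qquad w_s=0.
\end{eqnarray*}
Testing with $w_t$ and integrating by parts gives
\begin{eqnarray*}
\tfrac12\tfrac{\dif}{\dif t}\|w_t\|^2=-\nu\|w_t\|_1^2+\sum_{i=1}^d\big\langle A_i'(u_t^{(k)})w_t,\partial_{x_i}w_t\big\rangle+\sum_{i=1}^d\big\langle (A_i'(u_t^{(k)})-A_i'(u_t^{(0)}))J^{(0)}_{s,t}\xi,\partial_{x_i}w_t\big\rangle.
\end{eqnarray*}
The first sum is precisely the quantity estimated in the proof of Lemma \ref{expo J} (with $J_{s,t}\xi$ there replaced by $w_t$): by H\"older's inequality, a Gagliardo--Nirenberg interpolation of $\|w_t\|_{L^p}$ between $\|w_t\|_1$ and $\|w_t\|$, and Young's inequality, it is bounded by $\tfrac{\nu}{4}\|w_t\|_1^2+C(\|u_t^{(k)}\|_{L^m}^m+1)\|w_t\|^2$; this term will feed into the Gr\"onwall factor.

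\noindent\textbf{The source term.} Write $A_i'(a)-A_i'(b)=(a-b)R_i(a,b)$ with $R_i$ a polynomial of total degree $\le\Bbbk-2$, so that $|A_i'(u_t^{(k)})-A_i'(u_t^{(0)})|\le C|u_t^{(k)}-u_t^{(0)}|\,(1+|u_t^{(k)}|+|u_t^{(0)}|)^{\Bbbk-2}$. The last sum in the energy identity is bounded by $\tfrac{\nu}{4}\|w_t\|_1^2+C\sum_i\|(A_i'(u_t^{(k)})-A_i'(u_t^{(0)}))J^{(0)}_{s,t}\xi\|^2$, and a three--factor H\"older inequality bounds each $L^2$--norm by $C\,\|u_t^{(k)}-u_t^{(0)}\|_{L^{q_1}}^2\,(1+\|u_t^{(k)}\|_{L^{q_2}}+\|u_t^{(0)}\|_{L^{q_2}})^{2(\Bbbk-2)}\,\|J^{(0)}_{s,t}\xi\|_{L^{q_3}}^2$ for exponents making H\"older applicable. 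Now interpolate $\|u_t^{(k)}-u_t^{(0)}\|_{L^{q_1}}\le\|u_t^{(k)}-u_t^{(0)}\|_{L^1}^{\theta}\|u_t^{(k)}-u_t^{(0)}\|_{L^m}^{1-\theta}$ with $\tfrac{1}{q_1}=\theta+\tfrac{1-\theta}{m}$, and absorb $\|u_t^{(k)}-u_t^{(0)}\|_{L^m}^{2(1-\theta)}$ together with the middle factor into a polynomial of degree at most $2\Bbbk-2$ in $\|u_t^{(k)}\|_{L^m},\|u_t^{(0)}\|_{L^m}$; and apply Gagliardo--Nirenberg $\|J^{(0)}_{s,t}\xi\|_{L^{q_3}}\le C\|J^{(0)}_{s,t}\xi\|_1^{\mu}\|J^{(0)}_{s,t}\xi\|^{1-\mu}$ with $q_3$ subcritical so that $\mu<1$. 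All the exponents can be chosen so that $2\theta=\tfrac{1}{8d}$ while $\mu$ stays bounded away from $1$ and $2\theta+\mu<1$; in dimension $d\ge3$ the Sobolev constraint $q_3<2d/(d-2)$ is exactly what makes the $d$--dependent exponent $\tfrac{1}{8d}$, rather than a universal one, appear. The large constant $m=40\Bbbk d(d+2\Bbbk)$ is chosen, as in the proofs of Lemmas \ref{expo J} and \ref{15-4}, so that every Lebesgue exponent and every power of an $L^q$--norm of $u^{(k)},u^{(0)}$ produced above stays $\le m$ and is therefore dominated by $\|u^{(k)}_t\|_{L^m}^m+\|u^{(0)}_t\|_{L^m}^m+1$.

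\noindent\textbf{Gr\"onwall and conclusion.} Collecting the two estimates, on $[s,t]\subseteq[0,1]$,
\begin{eqnarray*}
\tfrac{\dif}{\dif t}\|w_t\|^2\le C(\|u_t^{(k)}\|_{L^m}^m+1)\|w_t\|^2+C\,\|u_t^{(k)}-u_t^{(0)}\|_{L^1}^{2\theta}\,G_t\,\|J^{(0)}_{s,t}\xi\|_1^{2\mu}\,\|J^{(0)}_{s,t}\xi\|^{2(1-\mu)},
\end{eqnarray*}
with $G_t:=1+\|u_t^{(k)}\|_{L^m}^{2\Bbbk-2}+\|u_t^{(0)}\|_{L^m}^{2\Bbbk-2}$. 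Since $w_s=0$, Gr\"onwall's inequality and $t-s\le1$ give $\|w_t\|^2\le C\,e^{C\int_s^t(\|u_r^{(k)}\|_{L^m}^m+1)\dif r}\int_s^t(\cdots)\dif r$. By Lemma \ref{15-4} the factor $\|J^{(0)}_{s,r}\xi\|^{2(1-\mu)}$ is $\le C\|\xi\|^{2(1-\mu)}e^{C\int_s^t\|u_\tau^{(0)}\|_{L^m}^m\dif\tau}$, independent of $r$, so it can be pulled out; H\"older's inequality in $r$ with exponents $(2\theta)^{-1},\mu^{-1},p_3:=(1-2\theta-\mu)^{-1}$ then bounds the remaining integral by
\begin{eqnarray*}
\Big(\int_s^t\|u_r^{(k)}-u_r^{(0)}\|_{L^1}\dif r\Big)^{2\theta}\Big(\int_s^t\|J^{(0)}_{s,r}\xi\|_1^2\dif r\Big)^{\mu}\Big(\int_s^t G_r^{p_3}\dif r\Big)^{1-2\theta-\mu}.
\end{eqnarray*}
By Lemma \ref{15-4} the second factor is $\le C\|\xi\|^{2\mu}e^{C\int_s^t\|u_\tau^{(0)}\|_{L^m}^m\dif\tau}$, so the powers of $\|\xi\|$ combine to $\|\xi\|^2$; and since $(2\Bbbk-2)p_3\le m$ one has $G_r^{p_3}\le C(1+\|u_r^{(k)}\|_{L^m}^m+\|u_r^{(0)}\|_{L^m}^m)$, whence the third factor is $\le C\,e^{C\int_s^t(\|u_r^{(k)}\|_{L^m}^m+\|u_r^{(0)}\|_{L^m}^m)\dif r}$. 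Collecting the exponentials and recalling $2\theta=\tfrac{1}{8d}$ yields the asserted inequality; all the pathwise estimates used hold simultaneously for every $s\le t\le1$, $\xi\in\tilde H$ and $k$ on a single event of full probability, namely the intersection of the full--measure events of Lemmas \ref{expo J} and \ref{15-4}. The one genuine difficulty is precisely this simultaneous calibration of the exponents $q_1,q_2,q_3,\theta,\mu$: keeping the $L^1$--exponent equal to $\tfrac{1}{8d}$, keeping $\mu<1$ (hence $q_3$ subcritical when $d\ge3$) and $2\theta+\mu<1$, and ensuring that no Lebesgue exponent or power of an $L^q$--norm of $u^{(k)},u^{(0)}$ arising along the way exceeds $m$, so that only the $L^m$--norms of $u^{(k)}$ and $u^{(0)}$ survive in the final bound.
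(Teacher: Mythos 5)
Your proposal is correct in outline and follows the same basic skeleton as the paper: write the PDE for $w_t=J^{(k)}_{s,t}\xi-J^{(0)}_{s,t}\xi$, test with $w_t$, split into a transport term (Gr\"onwall factor) and a source term (yielding the $L^1$-difference), absorb gradients into the dissipation, then Gr\"onwall and a three-way H\"older in $r$. Your decomposition keeps $A_i'(u^{(k)})w_t$ as transport and $(A_i'(u^{(k)})-A_i'(u^{(0)}))J^{(0)}$ as source; the paper uses the mirror split with $A_i'(u^{(0)})w_t$ and $(A_i'(u^{(k)})-A_i'(u^{(0)}))J^{(k)}$. That difference is cosmetic.

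Where you diverge more substantively is in how the exponent $\tfrac{1}{8d}$ is produced. The paper fixes $p=\tfrac{8d}{4d-1}$, so that Gagliardo--Nirenberg gives the clean $\|w\|_{L^p}\le C\|w\|_1^{1/8}\|w\|^{7/8}$, it keeps the gradient pairing $\langle\cdots,\partial_{x_i}w_t\rangle$ and sends $\tfrac14$ of the gradient into dissipation, and it extracts the $L^1$-difference from $\|A_i'(u^{(k)})-A_i'(u^{(0)})\|_{L^{2p/(p-2)}}$ directly by a Cauchy--Schwarz step (eq.\ (\ref{p1203-2})), so that $\tfrac{1}{8d}=\tfrac{p-2}{2p}$ drops out with no free parameters. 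You instead first Young the source into an $L^2$-norm-squared, then do a three-factor H\"older with free $q_1,q_2,q_3$, and obtain the $L^1$-factor by interpolating $\|u^{(k)}-u^{(0)}\|_{L^{q_1}}$ between $L^1$ and $L^m$ with a tunable $\theta$ set to $\tfrac{1}{16d}$. Both reach the same bound, but the paper's fixed choice is cleaner and avoids any calibration of a five-parameter family.

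The one thing you should actually verify rather than assert is the compatibility of the constraints $2\theta=\tfrac1{8d}$, $\mu=d(\tfrac12-\tfrac{1}{q_3})<1$, $2\theta+\mu<1$, the Sobolev restriction $q_3<\tfrac{2d}{d-2}$ when $d\ge3$, $\tfrac1{q_1}+\tfrac1{q_2}+\tfrac1{q_3}=\tfrac12$, and $(2\Bbbk-2)p_3\le m$ where $p_3=(1-2\theta-\mu)^{-1}$. It does work out (for $d\ge3$ one needs $\tfrac1{q_3}\in(\tfrac{d-2}{2d},\tfrac12-\tfrac1{16d})$ and then choose $q_3$ away from critical so $p_3$ stays bounded; for $d=1,2$ take e.g.\ $q_3=4$), but as written this is precisely the step you flag as "the one genuine difficulty" without carrying it out, and it is dimension-dependent. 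The paper's $p=\tfrac{8d}{4d-1}$ makes this check unnecessary, which is the main advantage of its route.
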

		\begin{proof}
			By direct calculations, we have
			\begin{eqnarray}
				\nonumber &&\frac{\dif \| J_{s,t}^{(k)}\xi-J_{s,t}^{(0)}\xi\|^2 }{\dif t}\\
				\nonumber &&  = -2\nu \|J_{s,t}^{(k)}\xi-J_{s,t}^{(0)}\xi\|_1^2
				\\   \nonumber & &\quad\quad {  - 2\sum_{i=1}^d  \Big\langle J_{s,t}^{(k)}\xi-J_{s,t}^{(0)}\xi,
					\partial_{x_i}\big(A_i'(u_t^{(k)})J_{s,t}^{(k)} \xi  \big)- \partial_{x_i}\big(A_i'(u_t^{(0)})J_{s,t}^{(0)} \xi  \big)   \Big  \rangle}
				\\  \nonumber &&  =  -2\nu \|J_{s,t}^{(k)}\xi-J_{s,t}^{(0)}\xi\|_1^2
				\\   \nonumber &&\quad\quad -  \sum_{i=1}^d2 \Big\langle J_{s,t}^{(k)}\xi-J_{s,t}^{(0)}\xi,
				\partial_{x_i}\big(A_i'(u_t^{(k)})J_{s,t}^{(k)} \xi  \big)-\partial_{x_i}\big(A_i'(u_t^{(0)})J_{s,t}^{(k)} \xi  \big)   \Big  \rangle
				\\   \nonumber &&\quad\quad - \sum_{i=1}^d  2 \Big\langle J_{s,t}^{(k)}\xi-J_{s,t}^{(0)}\xi,
				\partial_{x_i}\big(A_i'(u_t^{(0)})J_{s,t}^{(k)} \xi  \big)- \partial_{x_i}\big(A_i'(u_t^{(0)})J_{s,t}^{(0)} \xi  \big)   \Big  \rangle
				\\ \label{p05-2}  && :=I_1+\sum_{i=1}^d I_{2,i}+\sum_{i=1}^d I_{3,i}.
			\end{eqnarray}
			
			%
			Let $p=\frac{8d}{4d-1}\in (2,3)$.By Gagliardo-Nirenberg's inequality, it holds that
			\begin{eqnarray}
				\label{p1203-3}
				\|w\|_{L^p}\leq C\|w\|_1^{1/8}\|w\|^{7/8},\forall w\in H.
			\end{eqnarray}
			Thus, by H\"older's inequality, one arrives at
			\begin{align*}
				|I_{2,i}|   \leq &     C \|J_{s,t}^{(k)}\xi-J_{s,t}^{(0)}\xi\|_1
				\|A_i'(u_t^{(k)})-A_i'(u_t^{(0)})\|_{L^{2p/(p-2)}}
				\|J_{s,t}^{(k)}\xi\|_{L^p}
				\\  \leq &     C  \|J_{s,t}^{(k)}\xi-J_{s,t}^{(0)}\xi\|_1
				\|A_i'(u_t^{(k)})-A_i'(u_t^{(0)})\|_{L^{2p/(p-2)}}
				\|J_{s,t}^{(k)}\xi\|_{1}^{1/8} \|J_{s,t}^{(k)}\xi\|^{7/8}
				\\   \leq &    \frac{\nu}{4} \|J_{s,t}^{(k)}\xi-J_{s,t}^{(0)}\xi\|_1^2+
				C \|A_i'(u_t^{(k)})-A_i'(u_t^{(0)})\|_{L^{2p/(p-2)}}^2
				\|J_{s,t}^{(k)}\xi\|_{1}^{1/4} \|J_{s,t}^{(k)}\xi\|^{7/4}
			\end{align*}
			and
			\begin{align*}
				|I_{3,i}|  \leq &
				C \|J_{s,t}^{(k)}\xi-J_{s,t}^{(0)}\xi\|_1
				\|A_i'(u_t^{(0)})\|_{L^{2p/(p-2)}}
				\|J_{s,t}^{(k)}\xi-J_{s,t}^{(0)}\xi\|_{L^p}
				\\  \leq &    C \|J_{s,t}^{(k)}\xi-J_{s,t}^{(0)}\xi\|_1^{9/8}
				\|A_i'(u_t^{(0)})\|_{L^{2p/(p-2)}}
				\|J_{s,t}^{(k)}\xi-J_{s,t}^{(0)}\xi\|_{}^{7/8}
				\\  \leq &    \frac{\nu}{4}  \|J_{s,t}^{(k)}\xi-J_{s,t}^{(0)}\xi\|_1^{2}
				+ \|A_i'(u_t^{(0)})\|_{L^{2p/(p-2)}}^{16/7}
				\|J_{s,t}^{(k)}\xi-J_{s,t}^{(0)}\xi\|_{}^{2}.
			\end{align*}
			Combining the estimates of $I_{2,i},I_{3,i},i=1,\cdots,d$ with (\ref{p05-2}), and invoking H\"older's inequality again
			, we derive
			\begin{eqnarray}
				\nonumber && \| J_{s,t}^{(k)}\xi-J_{s,t}^{(0)}\xi\|^2
				\\ \nonumber && \leq C\exp\Big\{
				C \int_s^t \|u_r^{(0)}\|_{L^\mmm}^\mmm+\|u_r^{(k)}\|_{L^\mmm}^\mmm  \dif r
				\Big\}
				\big(\sum_{i=1}^d \int_s^t  \|A_i'(u_r^{(k)})-A_i'(u_r^{(0)})\|_{L^{2p/(p-2)}}^4 \dif r \big)^{1/2}
				\\  \label{p06-1} &&\quad\quad\quad\quad \times \Big(\int_s^t \|J_{s,r}^{(k)}\xi\|_1\dif r \Big)^{1/4}\Big(\int_s^t \|J_{s,r}^{(k)}\xi\|^7  \dif r \Big)^{1/4}.
			\end{eqnarray}
			In view of Cauchy-Schwarz's inequality,  it holds that 
			\begin{eqnarray}
				\nonumber &&  \|A_i'(u_r^{(k)})-A_i'(u_r^{(0)})\|_{L^{2p/(p-2)}}^{4 }
				=\Big(\int_{\mT^d}\big(A_i'(u_r^{(k)})-A_i'(u_r^{(0)})\big)^{2p/(p-2)}\dif x\Big)^{(2p-4)/p}
				\\ \label{p1203-2} &&\leq C  \Big(\int_{\mT^d}|u_r^{(k)}-u_r^{(0)}|^{1/2}
				(1+|u_r^{(k)}|^{n}+|u_r^{(0)}|^{n})
				\dif x\Big)^{(2p-4)/p}
				\\  \nonumber && \leq C  \|u_r^{(k)}-u_{r}^{(0)} \|_{L^1}^{(p-2)/p }\Big(\int_{\mT^d} (1+|u_r^{(k)}|^{2n} +|u_r^{(0)}|^{2n})\dif x\Big)^{(p-2)/p},
			\end{eqnarray}
			where $n=\max\{\frac{2p(\Bbbk-1)}{p-2}-\frac{1}{2},0\}.$
			The H\"older's inequality further bounds the difference by
			\begin{eqnarray*}
				&& \int_s^t \|A_i'(u_r^{(k)})-A_i'(u_r^{(0)})\|_{L^{2p/(p-2)}}^{4 }\dif r
				\\ &&\leq C \Big(\int_s^t  \|u_r^{(k)}-u_{r}^{(0)} \|_{L^1}^{(p-2)/p} \big(1+\|u_r^{(k)}\|_{L^{2n}}^{2n(p-2)/p}
				+
				\|u_r^{(0)}\|_{L^{2n}}^{2n(p-2)/p }\big)   \dif r\Big)
				\\ &&\leq    C \Big(\int_s^t  \|u_r^{(k)}-u_{r}^{(0)} \|_{L^1} \dif r\Big)^{(p-2)/p}\Big(\int_s^t (1+\|u_r^{(k)}\|_{L^{2n}}^{n(p-2)}
				+
				\|u_r^{(0)}\|_{L^{2n}}^{n(p-2)})   \dif r\Big)^{2/p}.
			\end{eqnarray*}
			Combining the above inequality with (\ref{p06-1}), in view of
			$\mmm=40 \Bbbk d(d+14\Bbbk)^2$ and Lemma \ref{15-4},
			one arrives at that
			\begin{eqnarray*}
				&& \| J_{s,t}^{(k)}\xi-J_{s,t}^{(0)}\xi\|^2
				\\ && \leq C\exp\Big\{
				C \int_s^t \|u_r^{(0)}\|_{L^\mmm}^\mmm+\|u_r^{(k)}\|_{L^\mmm}^\mmm  \dif r
				\Big\}
				\Big(\int_s^t  \|u_r^{(k)}-u_r^{(0)}\|_{L^1} \dif r \Big)^{\frac{p-2}{2p} }
				\\ && \quad\quad\quad\quad \times\Big( \int_s^t (1+\|u_r^{(k)}\|_{L^{2n}}^{(n(p-2)}
				+
				\|u_r^{(0)}\|_{L^{2n}}^{n (p-2)}) \dif r\Big)^{1/p}\|\xi\|^2
				\\ && \leq C\exp\Big\{
				C \int_s^t \big(\|u_r^{(0)}\|_{L^\mmm}^\mmm+\|u_r^{(k)}\|_{L^\mmm}^\mmm\big)  \dif r
				\Big\}
				\Big(\int_s^t  \|u_r^{(k)}-u_r^{(0)}\|_{L^1} \dif r \Big)^{\frac{1}{8d}}\|\xi\|^2.
			\end{eqnarray*}
			The proof is complete.
		\end{proof}

		\begin{lemma}
			\label{E-5}
			For any $t\in [0,1]$, $k\in \mN$ and $M\geq \max\{|j|,   j\in \cZ_0\}$,  one has
			\begin{eqnarray*}
				&& \|Q_M u_t^{(k)} \|^2\leq  e^{-\nu M^2t }
				\|Q_Mu_0^{(k)}\|^2
				+\frac{C (\int_0^t \|u_s^{(k)}\|_{L^\mmm}^{\mm}\dif s +1) }{M }
			\end{eqnarray*}
			and
			\begin{eqnarray}
				\label{p1203-1}
				\int_0^t \|Q_M u_r^{(k)} \|_1^2\dif r\leq C\Big (\int_0^t \|u_s^{(k)}\|_{L^{\mm}}^{{\mm}}\dif s +1\Big).
			\end{eqnarray}
		\end{lemma}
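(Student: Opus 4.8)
The key observation is that $M\geq\max\{|j|:j\in\cZ_0\}$ forces $Q_M e_j=0$ for every noise direction $j\in\cZ_0$, hence $Q_M\eta_t=0$; since $u^{(k)}$ is a strong solution of \eqref{1-1}, the high-frequency part $w_t:=Q_M u_t^{(k)}$ therefore solves, pathwise and without any stochastic term,
\[
\partial_t w_t=\nu\Delta w_t-Q_M\operatorname{div}A(u_t^{(k)}),\qquad w_0=Q_M u_0^{(k)}.
\]
This is exactly the structure exploited in Lemma~\ref{48-1}, the inhomogeneous term $Q_M\operatorname{div}A(u_t^{(k)})$ now playing the role of the forcing generated by the low modes; the only genuinely new point is that this forcing has to be controlled through $\|u_t^{(k)}\|_{L^m}$ rather than through a linearised variable.

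First I would write the energy identity $\frac{\dif}{\dif t}\|w_t\|^2=-2\nu\|w_t\|_1^2-2\langle Q_M\operatorname{div}A(u_t^{(k)}),w_t\rangle$ and integrate by parts in the last term, using that $Q_M$ is self-adjoint and idempotent, to obtain $\langle Q_M\operatorname{div}A(u_t^{(k)}),w_t\rangle=-\sum_{i=1}^d\langle A_i(u_t^{(k)}),\partial_{x_i}w_t\rangle$. Bounding each summand by $\|A_i(u_t^{(k)})\|\,\|w_t\|_1$ and using Young's inequality to absorb a fraction of $\nu\|w_t\|_1^2$ leaves
\[
\frac{\dif}{\dif t}\|w_t\|^2\leq-\nu\|w_t\|_1^2+h(t),\qquad h(t):=C\sum_{i=1}^d\|A_i(u_t^{(k)})\|^2\leq C\big(1+\|u_t^{(k)}\|_{L^{2\Bbbk}}^{2\Bbbk}\big),
\]
the last bound coming from the polynomial form of the flux. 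The second assertion \eqref{p1203-1} is then immediate: integrating from $0$ to $t$ gives $\nu\int_0^t\|w_s\|_1^2\dif s\leq\|Q_M u_0^{(k)}\|^2+\int_0^t h(s)\dif s$; since $2\Bbbk<m$, H\"older's inequality on $\mT^d$ yields $\|u_s^{(k)}\|_{L^{2\Bbbk}}^{2\Bbbk}\leq C(1+\|u_s^{(k)}\|_{L^m}^m)$, while $\|Q_M u_0^{(k)}\|\leq\|u_0^{(k)}\|\leq C\|u_0^{(k)}\|_{L^m}$ is bounded thanks to the standing assumption $\|u_0^{(k)}\|_{L^m}<\mathfrak{R}$ (so the constant in this part is strictly a $C_{\mathfrak{R}}$).

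For the decay estimate I would instead invoke $\|w_t\|_1^2\geq M^2\|w_t\|^2$, which gives $\frac{\dif}{\dif t}\|w_t\|^2\leq-\nu M^2\|w_t\|^2+h(t)$, so that Gr\"onwall's lemma yields
\[
\|w_t\|^2\leq e^{-\nu M^2 t}\|Q_M u_0^{(k)}\|^2+\int_0^t e^{-\nu M^2(t-s)}h(s)\dif s.
\]
The one delicate point --- and the main obstacle --- is to extract the factor $1/M$ from this convolution while keeping the bound linear in the ``master quantity'' $\|u^{(k)}\|_{L^m}^m$; I would do this by Cauchy--Schwarz in time,
\[
\int_0^t e^{-\nu M^2(t-s)}h(s)\dif s\leq\Big(\int_0^t e^{-2\nu M^2(t-s)}\dif s\Big)^{1/2}\Big(\int_0^t h(s)^2\dif s\Big)^{1/2}\leq\frac{C}{M}\Big(\int_0^t h(s)^2\dif s\Big)^{1/2}.
\]
Since $h(s)^2\leq C(1+\|u_s^{(k)}\|_{L^{4\Bbbk}}^{4\Bbbk})$ and $4\Bbbk<m$, one gets $\int_0^t h(s)^2\dif s\leq C\int_0^t(1+\|u_s^{(k)}\|_{L^m}^m)\dif s\leq C\big(1+\int_0^t\|u_s^{(k)}\|_{L^m}^m\dif s\big)$ because $t\leq 1$, and finally $\sqrt{1+X}\leq 1+X$. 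It is precisely because Cauchy--Schwarz forces a time-$L^2$ norm of $h$ that the exponent $m=40\Bbbk d(d+2\Bbbk)$ must be taken this large: every polynomial power of $\|u^{(k)}\|$ produced along the way still has to be dominated by $\|u^{(k)}\|_{L^m}^m$. Combining the last two displays gives the first claimed bound.
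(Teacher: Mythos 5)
Your proof matches the paper's argument step for step: energy estimate for $Q_Mu_t^{(k)}$, integration by parts plus Young's inequality to absorb half the dissipation, the Poincar\'e inequality on the high modes for the pointwise decay, and Cauchy--Schwarz in time on the Duhamel convolution to pull out the $1/M$ factor (the paper carries a slightly lossier intermediate exponent $4\Bbbk$ where you have $2\Bbbk$, which is immaterial since both sit far below $m$). Your explicit remark that the constant in \eqref{p1203-1} silently uses $\|Q_Mu_0^{(k)}\|^2\leq C\|u_0^{(k)}\|_{L^m}^2\leq C_{\mathfrak{R}}$ is correct and is left implicit in the paper; it is harmless in context because the lemma is only invoked under the subsection's standing hypothesis $\|u_0^{(k)}\|_{L^m}<\mathfrak{R}$, and the constants downstream in Lemma~\ref{24-1} are already written as $C_{\mathfrak{R}}$.
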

		
		\begin{proof}
			By direct calculations,
			one has
			\begin{eqnarray*}
				&& \sum_{i=1}^d \big| \big\langle  \partial_{x_i}A_i(u_t^{(k)}),Q_M u_t^{(k)}\big\rangle\big| = \sum_{i=1}^d \big| \big\langle  A_i(u_t^{(k)}),\partial_{x_i}\big(Q_M u_t^{(k)}\big) \big\rangle\big|
				\\ &&  \leq
				C(1+\|u_t^{(k)}\|_{L^{2\Bbbk}}^{2\Bbbk})  \|Q_M u_t^{(k)}\|_1
				\\ &&\leq  \frac{\nu}{4}\|Q_M u_t^{(k)}\|_1^2+ C (1+\|u_t^{(k)}\|_{L^{4\Bbbk}}^{4\Bbbk}),
			\end{eqnarray*}
			The above inequality  implies
			\begin{eqnarray*}
				\frac{\dif}{\dif t}  \|Q_M u_t^{(k)}\|^2\leq -\nu \|Q_M u_t^{(k)}\|_1^2
				+C (1+\|u_t^{(k)}\|_{L^{4\Bbbk}}^{4\Bbbk}).
			\end{eqnarray*}
			Therefore, in view of $m\geq {8\Bbbk}$ and  $\|Q_M u_t^{(k)}\|_1^2\geq M^2 \|Q_M u_t^{(k)}\|^2$,
			one arrives at  (\ref{p1203-1}) and{
				\begin{eqnarray*}
					&& \|Q_Mu_t^{(k)}\|^2\leq  e^{-\nu M^2t }
					\|Q_Mu_0^{(k)}\|^2
					+C \int_0^t e^{-\nu M^2(t-s )}  (1+\|u_s^{(k)}\|_{L^{4\Bbbk}}^{{4\Bbbk}})\dif s
					\\ &&\leq   e^{-\nu M^2t }
					\|Q_Mu_0^{(k)}\|^2
					+C \Big(\int_0^t e^{-2\nu M^2(t-s )} \dif s \Big)^{1/2}
					\Big(\int_0^t (1+\|u_s^{(k)}\|_{L^{4\Bbbk}}^{{8\Bbbk}})\dif s \Big)^{1/2}
					\\ &&\leq  e^{-\nu M^2t }
					\|Q_Mu_0^{(k)}\|^2
					+\frac{C \big(\int_0^t \|u_s^{(k)}\|_{L^\mmm}^{\mmm}\dif s +1\big) }{M }.
			\end{eqnarray*}}
			The proof is complete.
		\end{proof}

		\begin{lemma}
			\label{24-1}
			For any $t\in [0,1]$ and $k,M\in \mN$,
			one has
			\begin{eqnarray*}
				&& \|P_M u_t^{(k)}-P_M u_t^{(0)}\|^2
				\\ &&  \leq
				C \exp\Big\{C \int_0^t \big(\|u_r^{(k)}\|_{L^\mmm}^\mmm+ \|u_r^{(0)}\|_{L^\mmm}^\mmm\big)\dif r \Big\}
				\Big[\|P_M u_0^{(k)}-P_M u_0^{(0)}\|^2 +\frac{1}{\sqrt{M}}   \Big].
			\end{eqnarray*}
		\end{lemma}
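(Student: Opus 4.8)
The plan is to close a Grönwall inequality for the low--frequency energy $\|P_Mv_t\|^2$, where $v_t:=u_t^{(k)}-u_t^{(0)}$. Since both solutions are driven by the same noise, $v_t$ solves the random equation $\partial_tv_t+\operatorname{div}\big(A(u_t^{(k)})-A(u_t^{(0)})\big)=\nu\Delta v_t$ with $v_0=u_0^{(k)}-u_0^{(0)}$. Write $A_i(u_t^{(k)})-A_i(u_t^{(0)})=B_i^tv_t$ with $B_i^t:=\int_0^1A_i'\big(u_t^{(0)}+sv_t\big)\,\dif s$; from $|A_i'(w)|\le C(1+|w|^{\Bbbk-1})$ and $|u_t^{(0)}+sv_t|\le 2|u_t^{(0)}|+|u_t^{(k)}|$ one gets the pointwise bound $|B_i^t|\le C(1+|u_t^{(k)}|^{\Bbbk-1}+|u_t^{(0)}|^{\Bbbk-1})$, so that, since $m=40\Bbbk d(d+2\Bbbk)$ dominates $8d(\Bbbk-1)$, one has $\|B_i^t\|_{L^{8d}}^{r}\le C\big(1+\|u_t^{(k)}\|_{L^m}^m+\|u_t^{(0)}\|_{L^m}^m\big)$ for every $r$ with $r(\Bbbk-1)\le m$ (in particular for $r\in\{2,\tfrac{16}{7},\tfrac{32}{7}\}$).

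Projecting the $v$--equation by $P_M$ and testing against $P_Mv_t$ gives
\[
\tfrac{\dif}{\dif t}\|P_Mv_t\|^2=-2\nu\|P_Mv_t\|_1^2+2\sum_{i=1}^d\big\langle P_M(B_i^tv_t),\partial_{x_i}P_Mv_t\big\rangle .
\]
Split $v_t=P_Mv_t+Q_Mv_t$ inside the last sum. For the low--low interaction, Hölder, the Gagliardo--Nirenberg inequality $\|w\|_{L^{8d/(4d-1)}}\le C\|w\|_1^{1/8}\|w\|^{7/8}$ and Young's inequality (exactly as in the proof of Lemma~\ref{48-1}) yield $\big|\langle P_M(B_i^tP_Mv_t),\partial_{x_i}P_Mv_t\rangle\big|\le\tfrac{\nu}{4d}\|P_Mv_t\|_1^2+C\|B_i^t\|_{L^{8d}}^{16/7}\|P_Mv_t\|^2$; crucially the derivative is absorbed by the dissipation, so \emph{no power of $M$} is produced here. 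The same manipulation applied to the low--high coupling gives $\big|\langle P_M(B_i^tQ_Mv_t),\partial_{x_i}P_Mv_t\rangle\big|\le\tfrac{\nu}{4d}\|P_Mv_t\|_1^2+C\|B_i^t\|_{L^{8d}}^{2}\|Q_Mv_t\|_1^{1/4}\|Q_Mv_t\|^{7/4}$. Summing over $i$ and discarding $-\nu\|P_Mv_t\|_1^2\le0$, we obtain $\tfrac{\dif}{\dif t}\|P_Mv_t\|^2\le g(t)\|P_Mv_t\|^2+h(t)$ with $g(t)\le C(1+\|u_t^{(k)}\|_{L^m}^m+\|u_t^{(0)}\|_{L^m}^m)$ and $h(t):=C\sum_i\|B_i^t\|_{L^{8d}}^2\|Q_Mv_t\|_1^{1/4}\|Q_Mv_t\|^{7/4}\ge0$. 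Grönwall gives $\|P_Mv_t\|^2\le e^{\int_0^tg}\big(\|P_Mv_0\|^2+\int_0^th\big)$, and since $t\le1$, $\int_0^tg\le C+C\int_0^t(\|u_s^{(k)}\|_{L^m}^m+\|u_s^{(0)}\|_{L^m}^m)\,\dif s$.

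It remains to bound $\int_0^th$, and this is where the smallness of the high modes of $u^{(k)},u^{(0)}$ from Lemma~\ref{E-5} enters. Writing $h(s)/C=\sum_i\big(\|B_i^s\|_{L^{8d}}^{16/7}\|Q_Mv_s\|^2\big)^{7/8}\big(\|Q_Mv_s\|_1^2\big)^{1/8}$ and applying Hölder in time with exponents $\tfrac87$ and $8$,
\[
\int_0^th(s)\,\dif s\le C\sum_{i=1}^d\Big(\int_0^t\|B_i^s\|_{L^{8d}}^{16/7}\|Q_Mv_s\|^2\,\dif s\Big)^{7/8}\Big(\int_0^t\|Q_Mv_s\|_1^2\,\dif s\Big)^{1/8}.
\]
By \eqref{p1203-1} applied to $u^{(k)}$ and $u^{(0)}$, the last factor is $\le C(1+\int_0^t(\|u_s^{(k)}\|_{L^m}^m+\|u_s^{(0)}\|_{L^m}^m)\,\dif s)^{1/8}\le C\exp\{C\int_0^t(\cdots)\}$. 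For the first factor, bound $\|Q_Mv_s\|^2\le2\|Q_Mu_s^{(k)}\|^2+2\|Q_Mu_s^{(0)}\|^2$ and insert the pointwise estimate $\|Q_Mu_s^{(j)}\|^2\le e^{-\nu M^2s}\|Q_Mu_0^{(j)}\|^2+\tfrac{C}{M}\big(\int_0^s\|u_r^{(j)}\|_{L^m}^m\,\dif r+1\big)$ from Lemma~\ref{E-5}: the second summand contributes $\le\tfrac{C}{M}\exp\{C\int_0^t(\cdots)\}$ after bounding $\int_0^t\|B_i^s\|_{L^{8d}}^{16/7}\,\dif s$, while for the first summand Hölder in time (exponents $2,2$) gives $\int_0^t\|B_i^s\|_{L^{8d}}^{16/7}e^{-\nu M^2s}\,\dif s\le\big(\int_0^t\|B_i^s\|_{L^{8d}}^{32/7}\,\dif s\big)^{1/2}\big(\int_0^te^{-2\nu M^2s}\,\dif s\big)^{1/2}\le\tfrac{C}{M}\exp\{C\int_0^t(\cdots)\}$, and $\|Q_Mu_0^{(j)}\|^2\le\|u_0^{(j)}\|^2\le C_{\mathfrak R}$ (recall $\|u_0^{(j)}\|_{L^m}<\mathfrak R$, so the constant here is allowed to depend on $\mathfrak R$). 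Hence the first factor is $\le\tfrac{C_{\mathfrak R}}{M}\exp\{C\int_0^t(\cdots)\}$, and after raising to the power $\tfrac78$ and multiplying, $\int_0^th(s)\,\dif s\le C_{\mathfrak R}M^{-7/8}\exp\{C\int_0^t(\cdots)\}\le C_{\mathfrak R}M^{-1/2}\exp\{C\int_0^t(\|u_s^{(k)}\|_{L^m}^m+\|u_s^{(0)}\|_{L^m}^m)\,\dif s\}$. (For the finitely many $M<\max_{j\in\cZ_0}|j|$, where Lemma~\ref{E-5} does not directly apply, one argues the same way with the crude bound $\|Q_Mv_s\|\le\|v_s\|\le C(1+\|u_s^{(k)}\|_{L^m}^m+\|u_s^{(0)}\|_{L^m}^m)^{1/2}$, since then $1\le(\max_{j\in\cZ_0}|j|)^{1/2}M^{-1/2}$.) Plugging these two bounds into the Grönwall estimate yields the claim. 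The main obstacle is precisely this last step: one must keep the decay factor $e^{-\nu M^2s}$ rather than discarding it, so as to turn the non-vanishing initial high-frequency mass $\|Q_Mu_0^{(j)}\|$ into a genuine negative power of $M$, while keeping every time integral of the polynomial nonlinearity under a single exponential of $\int_0^t(\|u_s^{(k)}\|_{L^m}^m+\|u_s^{(0)}\|_{L^m}^m)\,\dif s$.
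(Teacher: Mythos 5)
Your proof matches the paper's proof step for step: project the difference onto $P_M$, split the nonlinearity into a low-frequency piece absorbed by the viscous dissipation and a high-frequency piece, apply the Gagliardo--Nirenberg inequality with $p=8d/(4d-1)$ and Gr\"onwall, and finally control the resulting forcing integral via Lemma~\ref{E-5}, retaining the $e^{-\nu M^2 s}$ decay so that the $O(\mathfrak{R})$ initial high-frequency mass $\|Q_M u_0^{(j)}\|$ is converted into a negative power of $M$. The explicit factorisation $A_i(u^{(k)})-A_i(u^{(0)})=B_i^t v_t$ is a cosmetic repackaging of the paper's direct mean-value bound $\|u^{(k)}-u^{(0)}\|_{L^p}\cdot\big\|1+|u^{(k)}|^{\Bbbk}+|u^{(0)}|^{\Bbbk}\big\|_{L^{2p/(p-2)}}$, and your final Hölder exponents ($\tfrac78,\tfrac18$ versus the paper's $\tfrac18,\tfrac14,\tfrac58$) differ only inessentially.
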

		\begin{proof}
			One easily sees that
			\begin{eqnarray}
				\label{E-3}
				\begin{split}
					& \dif  \|P_M u_t^{(k)}-P_M u_t^{(0)}\|^2 = - 2\nu \|P_M u_t^{(k)}-P_M u_t^{(0)}\|_1^2\dif t
					\\ &\quad  {  - 2 \sum_{i=1}^d   \langle P_M u_t^{(k)}-P_M u_t^{(0)},
						\partial_{x_i}A_i(u_t^{(k)})- \partial_{x_i}A_i(u_t^{(0)})     \rangle\dif t.}
				\end{split}
			\end{eqnarray}
			By direct calculations,  for  $p=\frac{8d}{4d-1}\in (2,3)$ and $n=\frac{2p}{p-2}\Bbbk=8 d \Bbbk$,  one has
			\begin{eqnarray*}
				&& \sum_{i=1}^d   \big| \langle P_M u_t^{(k)}-P_M u_t^{(0)},
				\partial_{x_i}A_i(u_t^{(k)})- \partial_{x_i}A_i(u_t^{(0)})     \rangle\big|
				\\ &&= \sum_{i=1}^d  \big| \langle \partial_{x_i} \big(P_M u_t^{(k)}\big)-
				\partial_{x_i} \big(P_M u_t^{(0)}\big),
				A_i(u_t^{(k)})- A_i(u_t^{(0)})     \rangle\big|
				\\ &&\leq
				C \sum_{i=1}^d  \|P_M u_t^{(k)}-P_M u_t^{(0)}\|_1
				\|u_t^{(k)}- u_t^{(0)} \|_{L^p}
				\cdot \big\|1+|u_t^{(k)}|^{\Bbbk}+|u_t^{(0)}|^{\Bbbk}\big\|_{L^{2p/(p-2)}}
				\\
				&&\leq C \sum_{i=1}^d \|P_M u_t^{(k)}-P_M u_t^{(0)}\|_1
				\|P_Mu_t^{(k)}-P_M u_t^{(0)} \|_{L^p}
				\cdot (1+\|u_t^{(k)}\|_{L^n}^n+\|u_t^{(0)}\|_{L^n}^n)
				\\ && \quad + C\sum_{i=1}^d  \|P_M u_t^{(k)}-P_M u_t^{(0)}\|_1
				\|Q_Mu_t^{(k)}- Q_Mu_t^{(0)} \|_{L^p}
				\cdot(1+\|u_t^{(k)}\|_{L^n}^n+\|u_t^{(0)}\|_{L^n}^n)
				\\ &&=\sum_{i=1}^d  I_{1,i}+\sum_{i=1}^d I_{2,i}.
			\end{eqnarray*}
			By (\ref{p1203-3}), for any $1\leq i\leq d$,  it holds that
			\begin{eqnarray*}
				&& I_{1,i} \leq  C \|P_M u_t^{(k)}-P_M u_t^{(0)}\|_1^{9/8}
				\|P_Mu_t^{(k)}-P_M u_t^{(0)} \|^{7/8}
				\cdot(1+\|u_t^{(k)}\|_{L^n}^n+\|u_t^{(0)}\|_{L^n}^n)
				\\ &&\leq  \frac{\nu}{4}\|P_M u_t^{(k)}-P_M u_t^{(0)}\|_1^{2}
				\\ && \quad\quad+ \|P_Mu_t^{(k)}-P_M u_t^{(0)} \|^{2}
				\cdot  (1+\|u_t^{(k)}\|_{L^n}^n+\|u_t^{(0)}\|_{L^n}^n)^{16/7}.
			\end{eqnarray*}
			and
			\begin{eqnarray*}
				&& I_{2,i} \leq C\|P_M u_t^{(k)}-P_M u_t^{(0)}\|_1
				\|Q_Mu_t^{(k)}- Q_Mu_t^{(0)} \|_{1}^{1/8}\|Q_Mu_t^{(k)}- Q_Mu_t^{(0)} \|^{7/8}
				\\ &&  \times
				(1+\|u_t^{(k)}\|_{L^n}^n+\|u_t^{(0)}\|_{L^n}^n)
				\\ && \leq \frac{\nu }{4}\|P_M u_t^{(k)}-P_M u_t^{(0)}\|_1^2
				+C \|Q_Mu_t^{(k)}- Q_Mu_t^{(0)} \|_{1}^{1/4}\|Q_Mu_t^{(k)}- Q_Mu_t^{(0)} \|^{7/4}
				\\ && \quad\quad \times
				(1+\|u_t^{(k)}\|_{L^n}^n+\|u_t^{(0)}\|_{L^n}^n)^2.
			\end{eqnarray*}
			Therefore,  by Lemma \ref{E-5} and the fact  $\mmm=40 \Bbbk d(d+14\Bbbk)^2$,  for any $t\in [0,1]$,  one gets
			\begin{eqnarray*}
				&&  \|P_M u_t^{(k)}-P_M u_t^{(0)}\|^2
				{  \exp\Big\{-C \int_0^t \big(\|u_r^{(k)}\|_{L^n}^n+ \|u_r^{(0)}\|_{L^n}^n+1\big)^{16/7}\dif r \Big\}}
				\\ && \leq   \int_0^t \|Q_Mu_s^{(k)}- Q_Mu_s^{(0)} \|_1^{1/4}\|Q_Mu_s^{(k)}- Q_Mu_s^{(0)} \|^{7/4}
				(\|u_s^{(k)}\|_{L^n}^n+ \|u_s^{(0)}\|_{L^n}^n+1)^3  \dif s
				\\ && \quad\quad+ \|P_M u_0^{(k)}-P_M u_0^{(0)}\|^2
				\\ && \leq \Big(\int_0^t \|Q_Mu_s^{(k)}- Q_Mu_s^{(0)} \|_1^2\dif s \Big)^{1/8}
				\Big( \int_0^t \|Q_Mu_s^{(k)}- Q_Mu_s^{(0)} \|^7\dif s \Big)^{1/4}
				\\ && \quad \quad  \times  \big(\int_0^t  (\|u_s^{(k)}\|_{L^n}^n+ \|u_s^{(0)}\|_{L^n}^n+1)^{24/5} \dif s\big)^{5/8}
				+ \|P_M u_0^{(k)}-P_M u_0^{(0)}\|^2
				\\ &&\leq C_\mathfrak{R}  \exp\Big\{C \int_0^t \big(\|u_r^{(k)}\|_{L^\mmm}^\mmm+ \|u_r^{(0)}\|_{L^\mmm}^\mmm\Big)\dif r
				\big\}\Big( \int_0^t (\|Q_Mu_s^{(k)}\|^7+\|Q_Mu_s^{(0)} \|^7)\dif s \Big)^{1/4}
				\\ && \quad
				\quad +\|P_M u_0^{(k)}-P_M u_0^{(0)}\|^2
				\\ &&\leq C_\mathfrak{R}  \exp\Big\{C \int_0^t \big(\|u_r^{(k)}\|_{L^\mmm}^\mmm+ \|u_r^{(0)}\|_{L^\mmm}^\mmm\big)\dif r
				\Big\}   \Big[\frac{1}{\sqrt{M}}+\|P_M u_0^{(k)}-P_M u_0^{(0)}\|^2\Big].
			\end{eqnarray*}
			The above inequality yields  the desired result and the proof is complete.
		\end{proof}
		
			\textbf{Now we are in a position to complete the   proof of  (\ref{p25-4})  in Proposition  \ref{3-8}.}
			With the help of Lemma   \ref{p06-2},  for any  $ r\in [\frac{1}{2},1]$ and  { $\phi \in \tilde H$} with $\|\phi\|=1$, we have
			\begin{eqnarray}
				\nonumber  && \|J_{r,1}^{(k)}\phi-J_{r,1}^{(0)}\phi \|^2 \leq
				C e^{C \int_{0}^1
					(\|u_s^{(k)}\|_{L^\mm}^{\mm}
					+
					\|u_s^{(0)}\|_{L^\mm}^{\mm})\dif r} \cdot \sup_{t \in [\frac{1}{2},1]}\|u_t^{(k)}-u_t^{(0)}\|^{\frac{1}{8d}}.
				\\ \label{p06-3}
			\end{eqnarray}
			With the help of Lemmas \ref{E-5} and \ref{24-1}, for any $t\in [1/2,1]$, one gets
			\begin{eqnarray*}
				\|u_t^{(k)}-u_t^{(0)}\|^2\leq
				C_\mathfrak{R}  \exp\Big\{C \int_0^1 \big(\|u_s^{(k)}\|_{L^\mm}^\mm+ \|u_s^{(0)}\|_{L^\mm}^\mm\big)\dif r \Big\}
				\Big[\|P_M u_0^{(k)}-P_M u_0^{(0)}\|^2 +\frac{1}{\sqrt{M}}   \Big].
			\end{eqnarray*}
			Therefore, for any $j\in \cZ_0$ and $r\in [1/2,1]$, we have
			\begin{eqnarray}
				\label{p06-4}
				\begin{split}
					& \|J_{r,1}^{(k)}e_j-J_{r,1}^{(0)}e_j \|^2\leq   C_\mathfrak{R}  \exp\Big\{C \int_0^1 \big(\|u_s^{(k)}\|_{L^\mm}^\mm+ \|u_s^{(0)}\|_{L^\mm}^\mm\big)\dif r \Big\}
					\\
					& \quad\quad\quad\quad \times
					\Big(\|P_M u_0^{(k)}-P_M u_0^{(0)}\|^{\frac{1}{8d}} +M^{-\frac{1}{32d}}    \Big).
				\end{split}
			\end{eqnarray}
			Note that
			\bae\label{p1203-4}
			\langle \phi,
			&J_{r,1}^{(k)}  e_j\rangle^2
			=\Big(\langle \phi,
			J_{r,1}^{(0)} e_j\rangle  +\langle \phi,
			J_{r,1}^{(k)} e_j-J_{r,1}^{(0)}e_j \rangle\Big)^2
			\\  \geq& \frac{1}{2}\langle \phi,
			J_{r,1}^{(0)} e_j\rangle ^2-3\langle \phi,
			J_{r,1}^{(k)} e_j-J_{r,1}^{(0)}e_j \rangle^2
			\\    \geq&
			\frac{1}{2}\langle \phi,
			J_{r,1}^{(0)} e_j\rangle^2-3\| J_{r,1}^{(k)}e_j-J_{r,1}^{(0)}e_j\|^2,
			~{  \forall \phi  \in \tilde H \text{ with }\|\phi\|\leq 1.}
			\eae
			Also recall that $K_{r,t}$ is the adjoint of $J_{r,t}$.
			It follows from the above inequality, (\ref{p06-4}) and (\ref{p1203-4})  that
			\begin{eqnarray*}
				&& \mP\Big(\inf_{\phi\in \cS_{\alpha,N}}\sum_{j\in \cZ_0}\int_{0}^1 \langle K_{r,1}^{(k)}\phi,
				e_j\rangle^2\dif r<\eps_k\Big)
				=\mP\Big(\inf_{\phi\in \cS_{\alpha,N}}\sum_{j\in \cZ_0}\int_{0}^1 \langle \phi,
				J_{r,1}^{(k)} e_j\rangle^2\dif r<\eps_k\Big)
				\\ &&\leq \mP\Big(\frac{1}{2}\inf_{\phi\in \cS_{\alpha,N} }\sum_{j\in \cZ_0}\int_{1/2}^1 \langle \phi,
				J_{r,1}^{(0)} e_j\rangle^2\dif r<\eps_k
				+3 \sum_{j\in \cZ_0}  \sup_{r\in [1/2,1]}\|J_{r,1}^{(k)}e_j-J_{r,1}^{(0)}e_j\|^2  \Big)
				\\ &&\leq
				\mP\Bigg(\frac{1}{2}\inf_{\phi\in \cS_{\alpha,N} }\sum_{j\in \cZ_0}\int_{1/2}^1 \langle \phi,
				J_{r,1}^{(0)} e_j\rangle^2\dif r<\eps_k
				\\ && \quad\quad  + C_\mathfrak{R} \exp\Big\{C \int_0^1 \big(\|u_r^{(k)}\|_{L^\mm}^\mm+ \|u_r^{(0)}\|_{L^\mm}^\mm\big)\dif r \Big\}
				\big(\|P_M u_0^{(k)}-P_M u_0^{(0)}\|^{\frac{1}{8d}} +M^{-\frac{1}{32d}}    \big)    \Bigg).
			\end{eqnarray*}
			Therefore, for any $k\geq 1, M\in \mN, \mathcal{K}>0$, we deduce that
			\begin{eqnarray}
				\nonumber  && \mP\Big(\inf_{\phi\in \cS_{\alpha,N}}\sum_{j\in \cZ_0}\int_{0}^1 \langle K_{r,1}^{(k)}\phi,
				e_j\rangle^2\dif r<\eps_k\Big)
				\\  \nonumber  &&\leq
				\mP\Big(\inf_{\phi\in \cS_{\alpha,N}}\sum_{j\in \cZ_0} \int_{1/2}^1 \langle K_{r,1}^{(0)}\phi,
				e_j\rangle^2\dif r<2 \eps_k+
				C_\mathfrak{R} e^{C\cK }   \big(\|P_M u_0^{(k)}-P_M u_0^{(0)}\|^{\frac{1}{8d}} +M^{-\frac{1}{32d}}    \big)   \Big)
				\\  \nonumber   && \quad \quad\quad\quad \quad \quad\quad\quad  +\mP\Big( \int_0^1 \big(\|u_r^{(k)}\|_{L^\mm}^\mm+ \|u_r^{(0)}\|_{L^\mm}^\mm\big)\dif r >\cK  \Big)
				\\ \nonumber &&
				\leq
				\mP\Big(\inf_{\phi\in \cS_{\alpha,N}}\sum_{j\in \cZ_0} \int_{1/2}^1 \langle K_{r,1}^{(0)}\phi,
				e_j\rangle^2\dif r<2 \eps_k+
				C_\mathfrak{R} e^{C\cK }   \big(\|P_M u_0^{(k)}-P_M u_0^{(0)}\|^{\frac{1}{8d}} +M^{-\frac{1}{32d}}    \big) \Big)
				\\  \label{2032}    && \quad \quad\quad\quad \quad \quad\quad\quad  +\frac{\sup_{k\in \mN}\mE
					\int_0^1 \big(\|u_r^{(k)}\|_{L^\mm}^\mm+ \|u_r^{(0)}\|_{L^\mm}^\mm\big)\dif r
				}{\cK }.
			\end{eqnarray}
			Letting $k\rightarrow \infty$ in (\ref{2032}), by (\ref{D-1}) and Lemma \ref{L^p 2.2},  one sees that
			\begin{eqnarray}
				\label{B-1}
				&& \delta_0\leq \mP\Big(\inf_{\phi\in\cS_{\alpha,N}}\sum_{j\in \cZ_0} \int_{1/2}^1 \langle K_{r,1}^{(0)}\phi,
				e_j\rangle^2\dif r \leq
				\frac{C_\mathfrak{R} e^{C \cK}   }{M^{1/(32d)}}\Big)  +\frac{C_\mathfrak{R}    }{\mathcal{K}}.
			\end{eqnarray}
			In (\ref{B-1}),   first letting $M\rightarrow \infty$ and then letting $\mathcal{K} \rightarrow \infty$,
			we conclude that
			\begin{eqnarray}
				\label{88-7}
				\delta_0 \leq \mP\Big(\inf_{\phi\in\cS_{\alpha,N}}\sum_{j\in \cZ_0} \int_{1/2}^1 \langle
				K_{r,1}^{(0)}\phi,
				e_j\rangle^2\dif r=0\Big).
			\end{eqnarray}
			On the other hand,  (\ref{86-1}) implies that
			\begin{eqnarray*}
				&& \mP \Big(\inf_{\phi\in \cS_{\alpha,N}}\sum_{j\in \cZ_0} \int_{1/2}^1 \langle
				K_{r,1}^{(0)}\phi,
				e_j\rangle^2\dif r=0 \Big)
				=0.
			\end{eqnarray*}
			It conflicts with (\ref{88-7}) and the proof is  complete.

			\section{Proof of Proposition \ref{3-11}}\label{proof of 1.6}
			
				The proof of Proposition \ref{3-11} is established using a localized method. Below, we provide a detailed explanation of its main ideas.
				Let $\chi:\mR\rightarrow [0,1]$ be a smooth function such that
				\begin{eqnarray}
					\label{p1110-3}
					&& \chi(x)=
					\left\{
					\begin{split}
						&  1,  \quad x\in  (-\infty, -4],
						\\
						&  0, \quad x\geq -2,
						\\ &
						\in (0,1),  \quad x\in [-4,-2]
					\end{split}
					\right.
					\text{ and }  | \chi'(x)|\leq 1,\quad \forall x\in \mR.
				\end{eqnarray}
				Recall that
				$ \mmm=40 \Bbbk d(d+14\Bbbk)^2$.  For any $\Upsilon >0,u_0\in \tilde H^{\nn+5}$ and $n\in \mN$,
				we define
				\begin{equation*}
					T_{n}^{u_0} := \Pi_{i=1}^n   \chi \Big(\int_0^i\|u_r^{u_0}\|_{L^\mmm}^\mmm\dif r-\Upsilon i\Big).
				\end{equation*}
				{   Define $ B_{  H^{\nn+5}}(\mathfrak {R}):=\{u\in \tilde H^\nn:  \|u\|_{\nn+5}< \mathfrak {R}\}$ for any $\mathfrak {R}>0.$}
				By (\ref{3030-1}),
				there exists a $\mathcal E_\mmm>0$ such that
				\begin{eqnarray}
					\label{p0202-1}
					\begin{split}
						& \mP\Big( \|u_t\|^\mmm_{L^\mmm}+ \int_0^t  \|u_r\|_{L^\mmm}^\mmm \dif r  - \mathcal E_\mmm  t \geq K
						\Big)
						\\ & \leq \frac{C_\mmm t^{49}  (t+\|u_0\|_{L^{100\mmm}}^{100\mmm})   }{(K+\mathcal E_\mmm  t)^{100} }
						\\ & \leq \frac{C_{\mathcal R}  t^{50}  }{(K+\mathcal E_\mmm  t)^{100} },\quad \forall t\geq 1,K\geq 1\text{ and  }u_0\in B_{ H^{\nn+5}}(\mathfrak {R}),
					\end{split}
				\end{eqnarray}
				where $C_{\mathcal R}$ is a constant depending on $\mathcal R,m,\nu,d,\Bbbk, \{b_k\}_{k\in \cZ_0},\mathbb{U},(c_{\mathbbm{i},\mathbbm{j}})_{1\leq \mathbbm{i}\leq d,0\leq \mathbbm{j} \leq \Bbbk}$ and it eventually depends  on
				$\mathcal R,\nu,d,\Bbbk, \{b_k\}_{k\in \cZ_0},\mathbb{U},(c_{\mathbbm{i},\mathbbm{j}})_{1\leq \mathbbm{i}\leq d,0\leq \mathbbm{j} \leq \Bbbk}.$

				For any  $u_0,u_0'\in B_{ H^{\nn+5}}(\mathfrak {R})$ and $f\in C_b^1(\tilde H)$,
				observe that
				\begin{eqnarray*}
					\mE  f(u_n^{u_0})- \mE f\big(u_n^{u_0^\prime}\big)=I_1+I_2+I_3,
				\end{eqnarray*}
				where  $(u_t^{u_0})_{ t\geq 0}$ is   the  solution of \eqref{1-1} with initial value $u_0$ and
				\begin{align*}
					I_1 :=&  \mE \big[ f(u_n^{u_0})-f(u_n^{u_0})T_{n}^{u_0}\big],
					\\
					I_2:=& \mE \big[ f(u_n^{u_0}) T_n^{u_0}\big] -\mE \big[ f\big(u_n^{u_0^\prime}\big) T_n^{u_0'}\big],
					\\
					I_3 :=&  \mE \big[ f\big(u_n^{u_0^\prime}\big)T_n^{u_0'}-f\big(u_n^{u_0^\prime}\big)\big].
				\end{align*}
				
				For any $\aleph \geq 3 $ and  $\Upsilon=(\aleph+1)\mathcal E_\mmm+5$,   by (\ref{p0202-1}), it holds that
				\bae\label{smallness error local vs global}
				I_1\leq & \|f\|_{L^\infty} \sum_{i=1}^n\mP\Big( \int_0^i\|u_r^{u_0}\|_{L^\mmm}^\mmm\dif r-\Upsilon i\geq -4 \Big)
				\\ \leq  &  \|f\|_{L^\infty} \sum_{i=1}^n\mP\Big( \int_0^i\|u_r^{u_0}\|_{L^\mmm}^\mmm\dif r-\mathcal{E}_\mmm  i\geq \Upsilon i-\mathcal{E}_\mmm  i-4 \Big)
				\\ \leq &   \|f\|_{L^\infty} \sum_{i=1}^n
				\frac{C_{\mathcal R}  i^{50}  }{(\Upsilon i-\mathcal E_\mmm-4+\mathcal E_\mmm  i)^{100} }
				\\ \leq  &
				\|f\|_{L^\infty} \sum_{i=1}^n
				\frac{C_{\mathcal R}  i^{50}  }{(\aleph \mathcal E_\mmm i +\mathcal E_\mmm  i)^{100} }
				= \|f\|_{L^\infty} \frac{C_{\mathcal  R}}{( \aleph  \mathcal E_\mmm  +\mathcal E_\mmm)^{100} }\sum_{i=1}^n
				\frac{1 }{ i^{50}}.
				\eae
				For any $\eps>0$,	take  $\aleph=\aleph(\eps)\geq 3$  be  big enough
				and  let  $\Upsilon=(\aleph+1)\mathcal E_\mmm+5$. Then,  we  arrive at
				\bae\label{I1 grad}
				I_1\leq \frac{\eps}{3}.
				\eae
				With similar arguments,  we also have
				\bae\label{I3 grad}
				I_3\leq \frac{\eps}{3}.
				\eae
				The main difficulty lies in the    estimate of   $I_2.$

				In order to estimate $I_2$,
				we need to  give a  gradient estimate of $K_nf(u_0)$, where
				\begin{equation*}
					K_nf(u_0):=\mE \big[ f(u_n^{u_0}) T_n^{u_0}\big]=\mE \big[ f(u_n^{u_0}) \Pi_{i=1}^n \chi\big(\int_0^i\|u_r^{u_0}\|_{L^\mmm}^\mmm\dif r-\Upsilon  i \big)\big].\end{equation*}
				For any $\xi\in \tilde  H$, observe that
				\bae\label{Kn}
				D_{\xi}K_nf(u_0)
				=J_1 + J_2,
				\eae
				where for $1\leq k\leq n$
				\begin{align*}
					J_1 =&  \mE \left[ (D f)(u_n^{u_0})J_{0,n}\xi \cdot T_n^{u_0}\right],
					\\  J_2 =&  \sum_{k=1}^n  \mE \Big[ f(u_n^{u_0})
					\Pi_{i=1,i\neq k }^n \chi \Big(\int_0^i\|u_r^{u_0}\|_{L^\mmm}^\mmm\dif r-\Upsilon i\Big)
					\\ & \quad  \times \chi'\Big(\int_0^k\|u_r^{u_0}\|_{L^\mmm}^\mmm\dif r-\Upsilon k\Big)\mmm \int_0^k \left\langle (u^{u_0}_r)^{\mmm-1},  J_{0,r}\xi  \right\rangle \dif r \Big]\\
					=& \mmm\sum_{k=1}^n  \mE \Big( f(u_n^{u_0})  T^{u_0}_{n,k}
					\int_0^k \big\langle (u^{u_0}_r)^{\mmm-1},  J_{0,r}\xi  \big\rangle \dif r \Big).
				\end{align*}
				{ In the above,}
				\begin{eqnarray*}
					T^{u_0}_{n,k}:=\Pi_{i=1,i\neq k }^n \chi \Big(\int_0^i\|u_r^{u_0}\|_{L^\mmm}^\mmm\dif r-\Upsilon i\Big)
					\cdot  \chi'\Big(\int_0^k\|u_r^{u_0}\|_{L^\mmm}^\mmm\dif r-\Upsilon k\Big).
				\end{eqnarray*}
				Similar to   the papers~\cite{HM-2006, HM-2011},
				we  approximate the perturbation
				$J_{0,t}\xi$ caused by the variation of the initial condition with a variation, $\cA_{0,t}v=\cD^v w_t $, of the noise by an appropriate   process $v$. Denote by $\rho_t$ the  residual error between~$J_{0,t}\xi$    and~$\cA_{0,t}v$:
				\begin{align*}
					\rho_t=J_{0,t}\xi-\cA_{0,t}v.
				\end{align*}
				Then, it holds that
				\bae\label{div J1}
				J_1 =&
				\mE \left[ (D f)(u_n^{u_0}) A_{0, n}v \cdot  T_n^{u_0}\right] + \mE \left[ (D f)(u_n^{u_0}) \rho_{n}\cdot   T_n^{u_0}\right]
				\\ :=& J_{11} + J_{12},
				\eae
				and
				\bae\label{div J2}
				J_2 =&
				\mmm\sum_{k=1}^n  \mE \Big( f(u_n^{u_0})  T^{u_0}_{n,k}
				\int_0^k \big\langle ( u^{u_0}_r)^{\mmm-1},  A_{0,r}v  \big\rangle \dif r \Big)\\
				&+\mmm\sum_{k=1}^n  \mE \Big( f(u_n^{u_0})  T^{u_0}_{n,k}
				\int_0^k \big\langle ( u^{u_0}_r)^{\mmm-1},  \rho_r  \big\rangle \dif r \Big)\\
				:=& J_{21} + J_{22}.
				\eae
				From the integration by part formula  in the Malliavin calculus,  we have
				\begin{equation*}
					J_{11} + J_{21} = \mathbb E\big[\mathcal{D}^{v}\big(f(u_{n}^{u_{0}}) T_n^{u_0}\big)\big] = \mathbb{E}\bigg[f(u_{n}^{u_{0}})T_n^{u_0}\int_{0}^{n}v(s) dW(s)\bigg].
				\end{equation*}
				In the above,  the integral $\int_0^{n} v(s)\dif W(s)$ is interpreted as the Skohorod integral.

				In summary, the estimate of $I_2$ is derived through gradient estimates of $D_\xi  K_nf(u_0).$
				To achieve this, it is necessary to select an appropriate direction $v$ and establish moment estimates for  $\rho_t$   and the non-adapted integral   $\int_0^{t} v(s)\dif W(s).$
				These tasks are addressed in Subsections \ref{p1216-5}--\ref{skorohod}. Finally, in Subsection \ref{sec 3.1}, we complete the proof of Proposition \ref{3-11}.

				\subsection{The choice of $v$.}
				\label{p1216-5}
				
				In this section, we always assume that $\|\xi\|=1$ and
				$u_t$ is the solution of (\ref{1-1}) with initial value
				$u_0\in B_{ H^{\nn+5}}(\mathfrak {R})$.
				We work with the perturbation $v$ which is given by $0$ on all intervals of the type $({n+1},{n+2}), n\in 2\mN$, and by $v_{{n},{{n+1}}} \in L^2([n,{n+1}],H), n\in 2\mN$
				on the remaining intervals.
				{  For any $n\in 2\mN$, the infinitesimal variations $v_{n,n+1}$  is defined  by}
				\begin{eqnarray}
					\label{p-1}
					\begin{split}
						& v_{{n},{{n+1}}}(r):=\cA^*_{n,{n+1}}
						{(\cM_{n,{n+1}}+\beta\mI)}^{-1}J_{n,{n+1}}\rho_{{n}} ,
						~ r\in [{n},{{n+1}}],
						\\
						& v_{{{n+1}},{{n+2}}  }(r):=0,~ r\in ({{n+1}},{{n+2}}).
					\end{split}
				\end{eqnarray}
				where $\rho_{n} $ is the residual of the infinitesimal
				displacement at time $n$ which has not yet been compensated
				by $v$, i.e., $\rho_{n} =J_{0,n }\xi-\cA_{0,n}v_{0,{n}}.$
				Here and after, we use  $v_{a,b}$  to denote the function $v$ when
				we restrict its  domain to be $[a,b]$  and
				the constant $\beta$  in  (\ref{p-1})
				will be decided later.
				Obviously,    $\rho_0=J_{0,0}\xi-\cA_{0,0}v=\xi.$
				
				Similar to \cite{HM-2006}, we have the following  recursions  for $\rho_{n}$:
				\begin{eqnarray*}
					\rho_{{n+2} }&=&{J}_{{n+1},{n+2}}\beta
					(\cM_{n,{n+1}}+\beta \mI)^{-1}{J}_{n,{n+1}}
					\rho_{{n}}, \quad \forall n\in 2\mN.
				\end{eqnarray*}

				\subsection{The control of $\rho_n$}
				\label{p1216-4}
				

				For each $n\in \mN,u_0\in \tilde H^{\nn+5}$ and $\Upsilon>0$,  the random variable $\tilde T_{n}^{u_0}=\tilde T_{n}^{u_0}(\Upsilon)$
				is defined  by
				\begin{eqnarray}
					\label{p1110-2}
					\tilde T_{n}^{u_0}=\Pi_{i=1}^n   \chi(\int_0^i\|u_r^{u_0}\|_{L^\mmm}^\mmm \dif r-\Upsilon  i-2 ).
				\end{eqnarray}
				The aim of this subsection is to prove the following proposition.
				\begin{proposition}
					\label{4-2}
					For any $\gamma_0,\mathfrak {R}$ and $\Upsilon>0$ ,  there exists
					a constant $\beta=\beta(\gamma_0,\mathfrak {R}, \Upsilon)>0$ depending on $\gamma_0,\mathfrak {R}, \Upsilon $ and   $\nu,d,\Bbbk,(b_j)_{j\in \cZ_0},\mathbb{U},(c_{\mathbbm{i},\mathbbm{j}})_{1\leq \mathbbm{i}\leq d,0\leq \mathbbm{j} \leq \Bbbk}$
					such that
					if we define the direction $v$  according to  \eqref{p-1},
					then  
					the followings
					\begin{eqnarray}
						\label{15-1}
						&& \mE_{u_0} \left[  \|\rho_{t}\|^{80} \tilde T_{2n+1}^{u_0} \right] \leq C_{\gamma_0,\mathfrak {R}, \Upsilon}  \exp\{-\gamma_0 n\},\quad t\in [2n,2n+1],
						\\
						\label{15-2}
						&&   \mE_{u_0} \left[    \|\rho_{t}\|^{80}\tilde T_{2n+2}^{u_0} \right]
						\leq C_{\gamma_0,\mathfrak {R}, \Upsilon} \exp\{-\gamma_0 n\}, \quad t\in [2n+1,2n+2],
						\\ \label{15-3} &&  {  \mE_{u_0} \|\cD_s^i \rho_{2n}\|^{40}  \tilde T_{2n}^{u_0} \leq C_{\gamma_0,\mathfrak {R}, \Upsilon} \exp\{-\gamma_0 n\}, \quad  s\in (2l,2l+1),
						}
					\end{eqnarray}
					hold for any $1\leq i\leq \mathbb{U}$, $u_0\in \tilde H^{\nn+5}$ with $\|u_0\|_{\nn+5}  \leq \mathfrak {R}$ and  $n,l\in \mN$ with $l \leq n-1$,
					where $C_{\gamma_0,\mathfrak {R}, \Upsilon}$ is a constant depending on $\gamma_0,\mathfrak {R}, \Upsilon, \nu,d,\Bbbk,(b_j)_{j\in \cZ_0},\mathbb{U},(c_{\mathbbm{i},\mathbbm{j}})_{1\leq \mathbbm{i}\leq d,0\leq \mathbbm{j} \leq \Bbbk}$.
				\end{proposition}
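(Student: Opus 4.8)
\textbf{Proof proposal for Proposition \ref{4-2}.}

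The plan is to run the recursive scheme of \cite{HM-2006} for $\rho_n$, but on the ``good'' part of the sample space cut out by the multiplicative weights $\tilde T_n^{u_0}$, so that the energy-type quantities $\int_0^i\|u_r\|_{L^m}^m\,\dif r$ are effectively bounded by $\Upsilon i+2$ on the support of $\tilde T_n^{u_0}$. First I would record the consequences of that truncation: on $\{\tilde T_{2n+1}^{u_0}>0\}$ one has, for every $i\le 2n$, $\int_0^i\|u_r\|_{L^m}^m\,\dif r\le \Upsilon i+2$, and similarly with $2n+2$. Combined with Lemmas \ref{15-4}, \ref{L^p J}, \ref{2.18 heissan} and \ref{L:2.2}, this turns the a priori pathwise bounds on $J_{s,t}$, $J^{(2)}_{s,t}$ and on the operator norms of $\cA_{s,t}$, $(\cM+\beta\I)^{-1/2}$ into \emph{deterministic} bounds of the form $\exp\{C(\Upsilon i+2)\}$ on that event, at the cost of a loss of a few moments (hence the exponent $80$ in the statement: one fixes some large exponent $p$, proves the bound for $p$, then uses $80\le p$).

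The heart of the argument is the one-step contraction. Using $\rho_{n+2}=J_{n+1,n+2}\,\beta(\cM_{n,n+1}+\beta\I)^{-1}J_{n,n+1}\rho_n$ together with \eqref{2.11}, one gets $\|\rho_{n+2}\|\le \beta\|J_{n+1,n+2}\|\,\beta^{-1}\|J_{n,n+1}\|\,\|\rho_n\|$ — but this naive estimate is not contracting. The point, exactly as in \cite[Section 4]{HM-2006} and \cite[Section 7]{HM-2011}, is to exploit the invertibility of the Malliavin matrix on $\cS_{\alpha,N}$ (Propositions \ref{1-66}, \ref{3-8}): decompose $\rho_n$ into its low-mode part $P_N\rho_n$ and high-mode part $Q_N\rho_n$. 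On the high modes one uses the smoothing of the linearization, Lemma \ref{1340-2} (estimate \eqref{16-2}), to gain a factor $N^{-1/4}$ over the unit time step; on the low modes one uses that $\beta(\cM_{n,n+1}+\beta\I)^{-1}$ is small in the directions where $\cM_{n,n+1}$ is bounded below, which is where $r(\eps,\alpha,\mathfrak R,N)\to 0$ from Proposition \ref{3-8} enters to control the exceptional event $\{X^{u_0,\alpha,N}<\eps\}$. Choosing first $N$ large, then $\alpha$ small, then $\beta$ small (depending on $\gamma_0$, $\mathfrak R$, $\Upsilon$, and on the decay rate of $r(\cdot)$), one obtains on a large-probability event a bound $\mE_{u_0}[\|\rho_{n+2}\|^{p}\tilde T_{n+2}^{u_0}\mid\cF_{n}]\le \tfrac12\|\rho_n\|^{p}\tilde T_n^{u_0}+(\text{small remainder})$, where crucially $\tilde T_{n+2}^{u_0}\le \tilde T_n^{u_0}$ (the weights are nonincreasing in $n$) so that conditioning is legitimate and the truncation propagates. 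Iterating this inequality from $\rho_0=\xi$, $\|\xi\|=1$, yields the geometric decay $C_{\gamma_0,\mathfrak R,\Upsilon}e^{-\gamma_0 n}$ in \eqref{15-1}–\eqref{15-2}, after absorbing the deterministic $\exp\{C(\Upsilon\cdot)\}$ factors from the truncation into the constant and choosing the contraction ratio small enough to beat them. The intermediate-time estimates on $t\in[2n,2n+1]$ and $t\in[2n+1,2n+2]$ follow from the endpoint bounds by applying Lemma \ref{15-4} (or Lemma \ref{expo J}) on the sub-interval, again using the truncation to make the exponential factor deterministic.

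For \eqref{15-3} — the bound on the Malliavin derivative $\cD_s^i\rho_{2n}$ — I would differentiate the recursion for $\rho$ in the direction of the noise. Since $\cD_s^i u_t=J_{s,t}Q\theta_i$ (Lemma \ref{17-1}), the Malliavin derivative of $\rho_{n+2}$ produces, besides a term structurally identical to the recursion for $\rho$ itself (hence inheriting the same contraction), extra terms involving $\cD_s^iJ_{\cdot,\cdot}$ and $\cD_s^i\cM_{\cdot,\cdot}$; the former is governed by the second variation $J^{(2)}$ via Lemma \ref{2.18 heissan} and the latter by differentiating $\cM=\cA\cA^*$. All of these inhomogeneous terms are, on the support of $\tilde T_{2n}^{u_0}$, bounded deterministically by $\exp\{C(\Upsilon\cdot)\}$ times powers of $\|\rho_\cdot\|$, so they are already controlled (in the appropriate moments) by \eqref{15-1}–\eqref{15-2} applied with a slightly larger exponent and a slightly larger rate; one then runs the same geometric iteration for $l\le n-1$ and closes. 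I expect the main obstacle to be bookkeeping: choosing the parameters $N,\alpha,\beta$ in the right order so that the one-step contraction constant is genuinely $<1$ after swallowing both the $\exp\{C(\Upsilon i+2)\}$ losses from the truncation and the polynomially-small-but-not-exponentially-small remainder $r(\eps,\alpha,\mathfrak R,N)$ coming from the Malliavin non-degeneracy, and simultaneously ensuring the weights $\tilde T_n^{u_0}$ are nonincreasing so the conditional-expectation iteration is valid — this is where the fact that here $\Theta$ (equivalently $\tilde T_n^{u_0}$) depends on the solution, and hence has nonzero derivative in $u_0$ and in the noise, makes the estimates strictly harder than in \cite{PZZ24}, since the extra $J_2$-type terms in \eqref{Kn} feed back into the derivative estimates.
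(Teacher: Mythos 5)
Your proposal captures the overall architecture of the paper's argument — the $P_N/Q_N$ splitting, the use of \eqref{16-2} for high modes and of Proposition~\ref{3-8} for low modes, the r\^ole of the truncation in turning the random factors $\exp\{C\int_0^i\|u_r\|_{L^m}^m\,\dif r\}$ into deterministic $e^{C\Upsilon i}$ on the support of $\tilde T_n^{u_0}$, the nonincreasing property of $\tilde T_n^{u_0}$ that legitimizes iterating conditional expectations, and the differentiation of the recursion to get \eqref{15-3}. However, there is a genuine gap at the heart of the iteration step, and it is precisely the one the paper spends Lemma~\ref{p3-9} to fix.

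You propose a one-step contraction of the form
$\mE_{u_0}\big[\|\rho_{n+2}\|^{p}\tilde T_{n+2}^{u_0}\,\big|\,\cF_{n}\big]\le \tfrac12\|\rho_n\|^{p}\tilde T_n^{u_0}+\text{(small)}$,
which after decomposing $\rho_{n+2}$ amounts to making $\mE[\zeta_{2n}\mid\cF_{2n}]$ small, where $\zeta_{2n}=N^{-20}+\|P_N\cR_{2n,2n+1}^\beta\|^{80}$. The bound on $\mE\|P_N\cR_{0,1}^\beta\|^{p}$ coming from Proposition~\ref{3-8} (via Lemma~\ref{3-9}) is a supremum over $u_0$ in an $L^m$-\emph{ball}: $\sup_{\|u_0\|_{L^m}<\mathfrak R}\mP(X^{u_0,\alpha,N}<\eps)\to0$. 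To apply it at time $2n$ you must guarantee $\|u_{2n}\|_{L^m}\le\mathfrak R'$ for some fixed $\mathfrak R'$, but the truncations $\tilde T_n^{u_0}$ only control the \emph{time-averaged} quantity $\int_0^i\|u_r\|_{L^m}^m\,\dif r\le\Upsilon i$; they give no pointwise bound on $\|u_{2n}\|_{L^m}$. So the contraction you want cannot be obtained by ``$\beta$ small, $N$ large'' alone — the step where you invoke the Malliavin non-degeneracy fails on the event where the state at time $2n$ is large, and nothing in your sketch controls that event. The paper gets around this in Lemma~\ref{p3-9}: rather than trying to bound $\mE[\zeta_{2n}\mid\cF_{2n}]$, it bounds $\mE\big[\zeta_{2n}\exp\{-\int_{2n-1}^{2n}\|u_r\|_{L^m}^m\,\dif r\}\,\big|\,\cF_{2n-1}\big]\le\delta$; the extra exponential weight is harvested from the truncation (on $\{\tilde T_{2n}^{u_0}\neq 0\}$ one has $e^{C\int_0^{2n}}\le e^{(C+1)2n\Upsilon}e^{-\int_0^{2n-2}}$, so the negative exponential can be peeled off interval by interval) and, via a stopping time $\tau=\inf\{t\ge 2n-1:\|u_t\|_{L^m}^m\le 4/\delta\}$, it dominates the bad event where $\|u_{2n}\|_{L^m}$ is large. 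That weighted conditional estimate is then threaded through the product $\prod_i\zeta_{2i}$ to give $\mE[\,e^{-\int_0^{2n-2}}\prod_{i=0}^{n-1}\zeta_{2i}\,]\le 2\delta^{n-1}$, which (together with the deterministic $e^{Cn\Upsilon}$ factor) yields the geometric decay. Without this device your ``small remainder'' is not actually small, and the one-step contraction does not close. The rest of your sketch (the $J^{(2)}$ and $\cD\cM$ terms in \eqref{15-3}, the parameter ordering, the monotonicity of $\tilde T_n^{u_0}$) matches the paper.
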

				In order to  prove this proposition, we need to truncate $\rho_t$ into the low/high frequency parts. For any $n\in 2\mN, \beta>0$ and $N\in \mN$, let $\cR_{n,{n+1}}^\beta=\beta(\cM_{n,{n+1}}+\beta\mI)^{-1}$,  one easily sees that
				$
				\|\cR_{n,{n+1}}^\beta\|\leq1$.Then
				\begin{eqnarray}
					\nonumber  && \rho_{{n+2}}
					= J_{{n+1},{n+2}}
					\cR_{n,{n+1}}^\beta J_{n,{n+1}}\rho_{n}
					\\  \nonumber && = J_{{n+1},{n+2}}Q_N\cR_{n,{n+1}}^\beta J_{n,{n+1}}\rho_{n}
					+J_{{n+1},{n+2}}P_N\cR_{n,{n+1}}^\beta J_{n,{n+1}}\rho_{n}
					\\  \label{4-1} && :=  \rho_{{n+2}}^{(1)}+ \rho_{{n+2}}^{(2)}.
				\end{eqnarray}
				Therefore, by Lemma \ref{15-4},   one has
				\begin{eqnarray}
					\label{p1103-1}
					\begin{split}
						& \|\rho_{n+2}\|^{80}
						\\ & \leq  C e^{C \int_n^{n+2}\|u_r\|_{L^\mmm}^\mmm\dif r}(\|J_{{n+1},{n+2}}Q_N\|^{80}+\|P_N\cR_{n,{n+1}}^\beta\|^{80})
						\|\rho_{n}\|^{80}.
					\end{split}
				\end{eqnarray}
				Furthermore,  with the help of   (\ref{16-2}),
				we also have
				\begin{eqnarray}
					\label{p1104-1}
					\|\rho_{n+2}\|^{80}\leq  C e^{C \int_n^{n+2}\|u_r\|_{L^\mmm}^\mmm\dif r}\Big(\frac{1}{N^{20}}+\|P_N\cR_{n,{n+1}}^\beta\|^{80}\Big)\|\rho_{n}\|^{80},
				\end{eqnarray}

				Denote
				\begin{eqnarray}
					\label{p05-3}
					\zeta_n:=\frac{1}{N^{20}} +\|P_N\cR_{n,{n+1}}^\beta\|^{80}.
				\end{eqnarray}
				Then, by (\ref{p1104-1}) {  and $\|\rho_0\|=\|\xi\|=1$}, we have
				\begin{equation}\label{error-theta}
					\|\rho_{2n+2}\|^{80}\leq C^n  e^{C\int_0^{2 n+2}\|u_r\|_{L^\mmm}^\mmm \dif r}
					\prod_{i=0}^n\zeta_{2i},
				\end{equation}
				{	where the constant  $C$    depends on $\nu,d,\Bbbk, (b_{i})_{i\in \cZ_0},\mathbb{U}$
					and   $
					(c_{\mathbbm{i},\mathbbm{j}})_{1\leq \mathbbm{i}\leq d,0\leq \mathbbm{j} \leq \Bbbk}.$}
				
				Let  \begin{equation*}A_\eps=A_{\eps,u_0,\alpha,N}=\{X^{u_0,\alpha,N}\geq \eps\},\end{equation*}
				where  the random variable $X^{w_0,\alpha,N}$ is defined in (\ref{1-5}).
				First, we demonstrate three  Lemmas. Then, we give an  estimate of
				$\|\rho_{{n+2}}\|.$
				\begin{lemma}(c.f. \cite[Lemma 5.14]{HM-2011})
					\label{3-3}
					For any positive constants  $\beta,\eps,\alpha\in(0,1],N\in \mN$
					and $\xi\in \tilde H, u_0\in \tilde  H^{\nn+5} $,  the following inequality holds with probability $1$:
					\begin{eqnarray}
						\label{3-1}
						\beta \|P_N(\beta+\cM_{0,1})^{-1}\xi\|
						\leq   \| \xi\|
						\big(\alpha \vee \sqrt{{\beta}/{\eps}}\big) I_{A_\eps} +\|\xi\|I_{A_\eps^c }.
					\end{eqnarray}
				\end{lemma}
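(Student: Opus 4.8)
\textbf{Proof proposal for Lemma \ref{3-3}.}

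The plan is to split on the event $A_\eps$ and its complement, and to use the two elementary operator bounds from Lemma \ref{L:2.2} together with the definition of $X^{u_0,\alpha,N}$. On $A_\eps^c$ there is nothing to do beyond the bound $\beta(\beta+\cM_{0,1})^{-1}\preceq \mI$, which gives $\|\beta P_N(\beta+\cM_{0,1})^{-1}\xi\|\le \|\xi\|$ since $P_N$ is an orthogonal projection; this accounts for the second term on the right of \eqref{3-1}. So the substance of the argument is entirely on $A_\eps$, where by definition $\langle\cM_{0,1}\phi,\phi\rangle\ge\eps$ for every $\phi\in\cS_{\alpha,N}$, i.e.\ for every unit vector $\phi\in\tilde H$ with $\|P_N\phi\|\ge\alpha$.

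On $A_\eps$ I would argue as follows. Set $\phi=\beta(\beta+\cM_{0,1})^{-1}\xi$ (or rather its normalization) and distinguish two regimes according to the size of $\|P_N\phi\|/\|\phi\|$. If $\|P_N\phi\|\le\alpha\|\phi\|$, then trivially $\|P_N\beta(\beta+\cM_{0,1})^{-1}\xi\|=\|P_N\phi\|\le\alpha\|\phi\|\le\alpha\|\beta(\beta+\cM_{0,1})^{-1}\xi\|\le\alpha\|\xi\|$, using again $\beta(\beta+\cM_{0,1})^{-1}\preceq\mI$; this yields the $\alpha$-part of the factor $\alpha\vee\sqrt{\beta/\eps}$. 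If instead $\|P_N\phi\|>\alpha\|\phi\|$, then $\phi/\|\phi\|\in\cS_{\alpha,N}$, so on $A_\eps$ we have $\langle\cM_{0,1}\phi,\phi\rangle\ge\eps\|\phi\|^2$. On the other hand, writing $g=(\beta+\cM_{0,1})^{-1}\xi$ so that $\phi=\beta g$ and $\xi=(\beta+\cM_{0,1})g=\beta g+\cM_{0,1}g$, one computes
\begin{eqnarray*}
\langle\cM_{0,1}g,g\rangle+\beta\|g\|^2=\langle\xi,g\rangle\le\|\xi\|\,\|g\|,
\end{eqnarray*}
hence $\langle\cM_{0,1}g,g\rangle\le\|\xi\|\,\|g\|$ and therefore $\langle\cM_{0,1}\phi,\phi\rangle=\beta^2\langle\cM_{0,1}g,g\rangle\le\beta^2\|\xi\|\,\|g\|=\beta\|\xi\|\,\|\phi\|$. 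Combining with the lower bound gives $\eps\|\phi\|^2\le\beta\|\xi\|\,\|\phi\|$, i.e.\ $\|\phi\|\le(\beta/\eps)\|\xi\|$; but actually one wants a square root, so I would instead use $\|\phi\|^2=\beta^2\|g\|^2\le\beta\langle\xi,g\rangle\le\beta\|\xi\|\|g\|=\|\xi\|\|\phi\|\cdot(\beta/\beta)\dots$ — more carefully, from $\beta\|g\|^2\le\langle\xi,g\rangle\le\|\xi\|\|g\|$ we get $\|g\|\le\|\xi\|/\beta$, hence $\|\phi\|=\beta\|g\|\le\|\xi\|$, and from $\eps\|\phi\|^2\le\langle\cM_{0,1}\phi,\phi\rangle\le\beta\|\xi\|\|\phi\|$ we get $\|\phi\|\le\beta\|\xi\|/\eps$. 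To land the bound $\sqrt{\beta/\eps}\,\|\xi\|$ I would interpolate: $\|\phi\|^2\le\|\phi\|\cdot(\beta\|\xi\|/\eps)$ while separately $\|\phi\|\le\|\xi\|$, so $\|\phi\|^2\le\|\xi\|\cdot\beta\|\xi\|/\eps$, giving $\|P_N\phi\|\le\|\phi\|\le\sqrt{\beta/\eps}\,\|\xi\|$. This is the $\sqrt{\beta/\eps}$-part.

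Putting the two regimes together, on $A_\eps$ we have $\beta\|P_N(\beta+\cM_{0,1})^{-1}\xi\|\le\|\xi\|(\alpha\vee\sqrt{\beta/\eps})$, and on $A_\eps^c$ we have $\beta\|P_N(\beta+\cM_{0,1})^{-1}\xi\|\le\|\xi\|$; multiplying by the indicators $I_{A_\eps}$, $I_{A_\eps^c}$ respectively and adding yields \eqref{3-1}. I do not anticipate a genuine obstacle here: the only point requiring care is the bookkeeping in the case $\phi/\|\phi\|\in\cS_{\alpha,N}$, where one must chain the self-testing identity $\langle\cM_{0,1}g,g\rangle+\beta\|g\|^2=\langle\xi,g\rangle$ with the Malliavin-matrix lower bound $\eps$ to extract the $\sqrt{\beta/\eps}$ scaling rather than the weaker $\beta/\eps$; this is exactly the interpolation step above. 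Everything else is an application of $0\preceq\beta(\beta+\cM_{0,1})^{-1}\preceq\mI$ and $\|P_N\|\le1$, which are immediate (cf.\ \eqref{2.11}).
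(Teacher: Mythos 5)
Your proposal is correct and essentially reproduces the argument of \cite[Lemma~5.14]{HM-2011}, which is exactly what the paper invokes (the paper's own ``proof'' of Lemma~\ref{3-3} is just this citation plus the trivial bound on $A_\eps^c$, both of which you supply). One small remark: the ``interpolation'' step at the end can be streamlined, since from $\langle\cM_{0,1}g,g\rangle\ge\eps\|g\|^2$ on the event in question one gets $(\beta+\eps)\|g\|^2\le\langle\xi,g\rangle\le\|\xi\|\|g\|$ directly, hence $\|\phi\|=\beta\|g\|\le\frac{\beta}{\beta+\eps}\|\xi\|\le\sqrt{\beta/\eps}\,\|\xi\|$ by AM--GM; but your route via the two separate bounds $\|\phi\|\le\|\xi\|$ and $\eps\|\phi\|\le\beta\|\xi\|$ multiplied together is equally valid.
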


				\begin{proof}
					On the event  $A_\eps^c$, the inequality (\ref{3-1})  obviously  holds.
					On the event $A_\eps$, this inequality is proved in \cite[Lemma 5.14]{HM-2011}, so we omit the details.
					
				\end{proof}

				{  	
					Now, we give the following   lemma on $\cR_{k,{k+1}}^\beta.$
					\begin{lemma}
						\label{3-9}
						For any $\kappa, \delta\in (0,1],p\geq 1$ and $N\in \mN$, there exists a $\beta=\beta(\kappa, \delta,p,N)>0$\footnote{ $ \beta(\mathfrak{R}, \delta,p,N) $  denotes   a constant that may    depend  on $\mathfrak{R}, \delta,p,N$  and
							$\nu,d,\Bbbk,(b_j)_{j\in \cZ_0},\mathbb{U}$,$(c_{\mathbbm{i},\mathbbm{j}})_{1\leq \mathbbm{i} \leq d,0\leq \mathbbm{j} \leq \Bbbk}$.
						}
						such  that the following holds for all   $k\in \mN$:
						\begin{eqnarray}
							\label{pp17-1}
							\mE\left[\|P_N \cR_{{k},{k+1}}^\beta\|^p  \big| \cF_k  \right]
							\leq \delta \exp\{\kappa \|u_{k}\|_{\nn+5}^2\}.
						\end{eqnarray}
					\end{lemma}
					\begin{proof}
						We here give a proof for the case $k=0$ and $p\geq 2.$ The other cases can be proved similarly.
						Let  $\mathfrak{R}=\mathfrak{R}_{\delta,\kappa}$ be a positive  constant  such that
						$
						\exp\{\kappa \mathfrak{R}^2  \}\geq \frac{1}{\delta}.
						$
						We divide into the following two cases to prove (\ref{pp17-1}).

						\textbf{Case 1:} $\|u_0\|_{\nn+5}\geq \mathfrak{R}$.
						In this case,
						\begin{eqnarray*}
							\mE\big[\|P_N \cR_{0,1}^{\beta} \|^p \big| \cF_0 \big]\leq
							1\leq \delta e^{\kappa \|u_{0}\|_{\nn+5}^2}.
						\end{eqnarray*}

						\textbf{Case 2:}
						$\|u_0\|_{\nn+5} \leq \mathfrak{R}$.
						For any  positive constants $\eps,\beta$ and $\alpha\in (0,1],$ by Lemma \ref{3-3}, we have
						\begin{eqnarray*}
							\mE\big[\|P_N \cR_{0,1}^\beta \|^p \big| \cF_0 \big] & \leq&
							C_p \big(\alpha \vee \sqrt{\frac{\beta}{ \eps }}\big)^p+
							C_p r(\eps,\alpha, \mathfrak{R},N),
						\end{eqnarray*}
						where $C_p$ is a constant only depending on $p,$
						and  $r(\eps,\alpha, \mathfrak{R},N)$ is defined in (\ref{p25-2}).
						Choose now  $\alpha=\alpha({p})$ small enough such that
						\begin{eqnarray*}
							C_p \alpha^p \leq \frac{\delta}{2}.
						\end{eqnarray*}
						By Proposition \ref{3-8}, $\lim_{\eps \rightarrow 0}  r(\eps,\alpha, \mathfrak{R},N)=0$.
						Pick a small constant  $\eps$ such that
						\begin{eqnarray*}
							C_p r(\eps,\alpha, \mathfrak{R},N)\leq \frac{\delta}{2}.
						\end{eqnarray*}
						Finally, we choose  $\beta$ small enough so that
						\begin{align*}
							C_p( \sqrt{\beta/\eps})^p <\frac{\delta}{2}.
						\end{align*}
						Putting the above steps together, we see that
						$
						\mE\big[\|P_N \cR_{0,1}^{\beta} \|^p \big|\cF_0  \big]  \leq \delta
						e^{\kappa \|w_{0}\|^2} .
						$

					\end{proof}
				}

				\textbf{Now we are in a position to prove Proposition \ref{4-2}. }
				\begin{proof}
					In order to prove Proposition \ref{4-2}, we only need to  prove that   for any $\delta>0$, there exists    a  ${ \beta=\beta(\delta,\mathfrak {R}, \Upsilon)}>0$
					such  that
					if we define the direction $v$  according to  (\ref{p-1}),
					then the followings:
					\begin{eqnarray}
						\label{pp-1} && \mE_{u_0} \Big[ \|\rho_{t}\|^{80}\tilde T_{2n+1}^{u_0} \Big] \leq C_{\beta} \bar{C}^n   \delta^{(n-1)/2},  \quad t\in [2n,2n+1],
						\\  \label{pp-2} &&   \mE_{u_0} \Big[ \|\rho_{t}\|^{80}\tilde T_{2n+2}^{u_0} \Big] \leq C_{\beta}  \hat{C}^n \delta^{(n-1)/2} , \quad t\in [2n+1,2n+2],
						\\   \label{pp-3} &&
						{   \mE_{u_0}  \Big[\| \cD_s^i \rho_{{2n} }\|^{40}   \tilde T_{2n}^{u_0} \Big]
							\leq    C_{\beta}  \tilde  C^{n} \delta^{(n-1)/2}, \quad  s\in (2l,2l+1).
						}
					\end{eqnarray}
					hold for any $n,l\in \mN$
					with $l \leq n-1$ and $u_0\in \tilde  H^{\nn+5}$ with $\|u_0\|_{\nn+5} \leq \mathfrak {R}$.
					In the above,   $\bar{C},\hat C,\tilde C$
					are  positive   constants  depending on $\mathfrak {R}, \Upsilon, \nu,d,\Bbbk,(b_j)_{j\in \cZ_0},\mathbb{U},(c_{\mathbbm{i},\mathbbm{j}})_{1\leq \mathbbm{i}\leq d,0\leq \mathbbm{j} \leq \Bbbk}$. We emphasize that $\bar{C},\hat C,\tilde C$
					are independent of $\delta$. $ C_{\beta}$ is a  constant  depending on $\beta$,$\nu,d$,$\Bbbk$,$(b_j)_{j\in \cZ_0},\mathbb{U}$, $(c_{\mathbbm{i},\mathbbm{j}})_{1\leq \mathbbm{i} \leq d,0\leq \mathbbm{j} \leq \Bbbk}$
					which     eventually depends on  $\delta,\mathfrak {R}, \Upsilon,\nu,d,\Bbbk,(b_j)_{j\in \cZ_0}$,
					$\mathbb{U},(c_{\mathbbm{i},\mathbbm{j}})_{1\leq \mathbbm{i}\leq d,0\leq \mathbbm{j} \leq \Bbbk}.$

					For any   $\beta>0$ and $N\in \mN$,   by (\ref{4-1}), we have
					\begin{eqnarray}
						\nonumber  && \rho_{{n+2}}
						= J_{{n+1},{n+2}}
						\cR_{n,{n+1}}^\beta J_{n,{n+1}}\rho_{n}
						\\  \nonumber && = J_{{n+1},{n+2}}Q_N\cR_{n,{n+1}}^\beta J_{n,{n+1}}\rho_{n}
						+J_{{n+1},{n+2}}P_N\cR_{n,{n+1}}^\beta J_{n,{n+1}}\rho_{n}.
					\end{eqnarray}
					Recall that $\zeta_{n}=\frac{1}{N^{20}}+\|P_N \cR_{{n},{n+1}}^\beta\|^{80}.$
					{  	By Lemma \ref{3-9},
						for any $\kappa,\delta>0$, there exist a $N=N(\kappa,\delta)\in \mN$ and a  $\beta=\beta(\kappa,\delta)>0$
						such  that the following holds for all $n\geq 1$:
						\begin{eqnarray}
							\label{p3-4}
							\mE\Big[\zeta_{2n}
							\Big|\cF_{2n} \Big]
							\leq \delta \exp\{\kappa \|u_{2n}\|_{\nn+5}^2\}.
						\end{eqnarray}
					}
					As in the other space of this paper,
					the letters $C,C_1,C_2,\cdots$ are  always  used to denote unessential constants that  may
					change from line to line  and  implicitly  depend on the data of equation (\ref{1-1}), i.e.,
					$\nu,d,\Bbbk, \{b_i\}_{i\in \cZ_0},\mathbb{U},(c_{\mathbbm{i},\mathbbm{j}})_{1\leq \mathbbm{i}\leq d,0\leq \mathbbm{j} \leq \Bbbk}.$
					The following properties will be used frequently in the proof of Proposition \ref{4-2}:
					\begin{itemize}
						\item[(a)]on the event $\{\omega: \tilde  T_{n}^{u_0}(\omega)\neq 0\}$,
						it holds that  $\int_0^i\|u_r^{u_0}\|_{L^\mmm}^\mmm \dif r\leq \Upsilon  i, ~\forall 1\leq i\leq n;$
						\item[(b)] for any $k\leq n, $ $0\leq \tilde T_n^{u_0}\leq \tilde T_k^{u_0}\leq 1; $
						\item[(c)]
						From the variation of constants formula, and in view of (\ref{0927-1}),
						we get{
							\begin{eqnarray*}
								\cD_s^iJ_{r,t}\xi=
								\left\{
								\begin{split}
									& J_{s,t}^{(2)}(Q\theta_i,J_{r,s}\xi),\quad \text{if }s\geq r,
									\\ & J_{r,t}^{(2)}(J_{s,r}Q\theta_i,\xi),\quad \text{if }s\leq r.
								\end{split}
								\right.
						\end{eqnarray*}}
						(Recall that   $\{\theta_i\}_{i=1}^\mathbb{U}$ is  the standard   basis of $\mR^{\mathbb{U}}$ and $Q:\mR^\mathbb{U}\rightarrow H$ is an linear operator defined in subsection \ref{pp24}). 
					\end{itemize}

					Now we give a proof of  (\ref{pp-1}) and (\ref{pp-3})
					respectively. The proof of (\ref{pp-2}) is similar to  (\ref{pp-1}) and we omit the details.
					
					{  	
						\textbf{Proof of  (\ref{pp-1}) }
						By  (\ref{error-theta}),
						it holds that
						\begin{equation*}
							\|\rho_{2n}\|^{80}\leq C^n  e^{C\int_0^{2 n}\|u_r\|_{L^\mm}^\mm \dif r}
							\prod_{i=0}^{n-1}\zeta_{2i}.
						\end{equation*}

						Therefore,  for any real numbers    $\kappa,  K>0,$  it holds that
						\begin{eqnarray*}
							\nonumber
							&&  \mE\Big[ \|\rho_{2n}\|^{80} \tilde T_{2n+1}^{u_0} \Big]
							\leq  \mE\Big[C^n  e^{C\int_0^{2 n}\|u_r\|_{L^\mm}^\mm \dif r}
							\big(\prod_{i=0}^{n-1}\zeta_{2i}\big) \tilde T_{2n+1}^{u_0}   \Big]
							\\  \label{p06-7} &&\leq   C^n\exp\big\{2n \Upsilon (C+K)+2nK  \big\}
							\mE\Big[ e^{\sum_{i=0}^{2n-1}\frac{\kappa}{2}  \|u_i\|_{\nn+5}^2- K\int_0^{2 n}\|u_r\|_{L^\mm}^\mm \dif r -2nK}
							\\ && \quad\quad\quad\quad\quad\quad\quad\quad\quad\quad
							\quad\quad  \quad\quad \times 				\big(	\prod_{i=0}^{n-1}\zeta_{2i}\big) \tilde T_{2n+1}^{u_0}\cdot \exp\{-\sum_{i=0}^{2n-1}\frac{\kappa}{2} \|u_i\|_{\nn+5}^2 \}  \Big]
							\\ && \leq  C^n\exp\big\{2n \Upsilon (C+K) +2nK \big\}
							\Big( \mE \exp\big\{\sum_{i=0}^{2n-1}\kappa   \|u_i\|_{\nn+5}^2- 2K\int_0^{2 n}\|u_r\|_{L^\mm}^\mm \dif r-4nK \big\}\Big)^{1/2}
							\\ && \quad\quad\quad\quad\quad\quad\quad\quad\quad\quad  \times
							\Big(\mE\Big[ \prod_{i=0}^{n-1}\zeta_{2i}^2  \cdot \exp\{-\sum_{i=0}^{2n-1}\kappa  \|u_i\|_{\nn+5}^2 \}  \Big] \Big)^{1/2}.
						\end{eqnarray*}
						We set a  $\kappa\in (0, \kappa_0]$ and let  $K=K_\kappa$, where  $\kappa_0,K_\kappa$ are decided by  Lemma \ref{p17-4},    then  we  arrive at
						\begin{eqnarray}
							\label{1531-1}
							\begin{split}
								&\mE\Big[ \|\rho_{2n}\|^{80} \tilde T_{2n+1}^{u_0} \Big]
								\leq   C_{\kappa,\mathfrak {R}} C^n\exp\big\{2n \Upsilon (C+K) +2nK \big\}
								\\ &\quad\quad\quad\quad\quad   \quad \times
								\Big(\mE\Big[ \prod_{i=0}^{n-1}\zeta_{2i}^2   \cdot \exp\{-\sum_{i=0}^{2n-1}\kappa  \|u_i\|_{\nn+5}^2 \}  \Big] \Big)^{1/2}.
							\end{split}
						\end{eqnarray}
						With regard to the term
						$\mE\Big[ \prod_{i=0}^{n-1}\zeta_{2i}^2  \cdot \exp\{-\sum_{i=0}^{2n-1}\kappa  \|u_i\|_{\nn+5}^2 \}  \Big]$
						appeared in the rightside of the above,
						by (\ref{p3-4}), we conclude that
						\begin{eqnarray*}
							&& \mE\Big[ \prod_{i=0}^{n-1}\zeta_{2i}^2  \cdot \exp\{-\sum_{i=0}^{2n-1}\kappa  \|u_i\|_{\nn+5}^2 \}  \Big]
							\\  &&  \leq  \mE \Big[\prod_{i=0}^{n-1}\zeta_{2i}^2  \exp\{-\sum_{i=0}^{2n-2}\kappa  \|u_i\|_{\nn+5}^2 \}  \Big]
							\\ && 	=
							\mE \Big[\prod_{i=0}^{n-1}\zeta_{2i}^2  \exp\{-\sum_{i=0}^{2n-2}\kappa  \|u_i\|_{\nn+5}^2 \} \big| \cF_{2n-2} \Big]
							\\ &&\leq \delta
							\mE \Big[\prod_{i=0}^{n-2}\zeta_{2i}^2  \exp\{-\sum_{i=0}^{2n-4}\kappa  \|u_i\|_{\nn+5}^2 \} \Big].
						\end{eqnarray*}
						By iterations,   we arrive at
						\begin{eqnarray*}
							&& 	\mE\Big[ \prod_{i=0}^{n-1}\zeta_{2i}^2  \cdot \exp\{-\sum_{i=0}^{2n-1}\kappa  \|u_i\|_{\nn+5}^2 \}  \Big]
							\\ && 	\leq \delta^{n-1}  \mE \Big[ \zeta_{0}^2  \exp\{-\kappa  \|u_0\|_{\nn+5}^2 \}  \Big]
							\\ && \leq  4 \delta^{n-1}.
						\end{eqnarray*}
						In the above, we have used the fact $\zeta_0\leq 2$ in the third inequality.
						Combining the above inequality with (\ref{1531-1}), we get
						\begin{eqnarray}
							\label{p1104-2}
							\mE\Big[ \|\rho_{2n}\|^{80} \tilde T_{2n+1}^{u_0}  \Big] \leq \mathscr{C}^n \delta^{(n-1)/2},\quad \forall n\geq 1,
						\end{eqnarray}
						where $ \mathscr{C}\geq 1$ is a constant  depending on $\kappa, \Upsilon$ and
						$\nu,d,\Bbbk, \{b_i\}_{i\in \cZ_0},\mathbb{U},(c_{\mathbbm{i},\mathbbm{j}})_{1\leq \mathbbm{i}\leq d,0\leq \mathbbm{j} \leq \Bbbk}.$

						{  From the construction we have}
						\begin{equation*}
							\rho_t=\begin{cases} J_{2n,t}\rho_{2n}-
								\cA_{2n,t}v_{{2n},t}, &
								\text{for }t \in [{2n},{2n+1}],
								\\ J_{2n+1,t}\rho_{2n+1}, &
								\text{for }t\in ({2n+1},{{2n+2}}) \end{cases}	
						\end{equation*} for any  $n\in \mathbb N \cup \{0\}$ and  $t\in [2n,2n+1]$.
						
						Define $\widetilde\cM_{2k,{2k+1}}:=\beta\mI+\cM_{2k,{2k+1}}.$
						Using \eqref{p-1}, Lemma \ref{15-4} and Lemma  \ref{L:2.2}, we get
						\begin{eqnarray} \label{3.8}
							\begin{split}
								& \|v_{2n,2n+1}\|_{L^2([{2n},{2n+1}];
									\R^\mathbb{U})}
								\\ \le &
								\|\cA^*_{2n,{2n+1}}
								\widetilde\cM_{2n,{2n+1}}^{-\frac12}
								\widetilde\cM_{2n,{2n+1}}^{-\frac12}J_{2n,{2n+1}}\rho_{{2n}} \|_{L^2([{2n},{2n+1}];
									\R^\mathbb{U})}  \\
								\leq& \|\cA^*_{2n,{2n+1}}\widetilde\cM_{2n,{2n+1}}^{-\frac12}\| \|\widetilde\cM_{2n,{2n+1}}^{-\frac12}\| \| J_{2n,{2n+1}} \| \| \rho_{{2n}} \|\\
								\leq& \beta^{-\frac12} \| J_{2n,{2n+1}} \| \| \rho_{{2n}} \|
								\\ \leq &   C \beta^{-1/2}\exp\Big\{C \int_{2n}^{2n+1}\|u_r\|_{L^\mm}^\mm \dif r\Big\}\| \rho_{2n}\|.
							\end{split}
						\end{eqnarray}
						Hence,   by \eqref{2.7}, \eqref{3.8} and Lemma \ref{15-4},  for any $t\in [2n,2n+1]$,
						\begin{align}
							\|\rho_t\|&\le  \|J_{2n,t}\rho_{2n}\|+
							\|\aA_{2n,t}v_{2n,t}\|  \nonumber
							\\  \nonumber  &\le  \|J_{2n,t}\rho_{2n}\|+
							\|\aA_{{2n},t}\|_{\cL(L^2([{2n},t];\R^\mathbb U), H)}
							\|v_{{2n},t}\|_{L^2([{2n},{2n+1}];\R^\mathbb U)}
							\\  \nonumber  &\le  \|J_{2n,t}\rho_{2n}\|+
							\|\aA_{{2n},t}\|_{\cL(L^2([{2n},t];\R^\mathbb U), H)}
							\|v_{{2n},{2n+1}}\|_{L^2([{2n},
								{2n+1}];\R^\mathbb U)}
							\\ \nonumber
							&\le \|J_{2n,t}\rho_{2n}\| \\\nonumber
							&\quad+\sup_{s\in [{2n},t]}\|J_{s,t}
							\|_{\cL(H,H)}\cdot C \beta^{-1/2}
							\exp\Big\{ C \int_{2n}^{2n+1}\|u_s\|_{L^\mm}^{m}\dif s \Big\}
							\\ \label{21-1} &\leq C_\beta  \exp\Big\{ C \int_{2n}^{2n+1}\|u_s\|_{L^\mm}^{m}\dif s \Big\} \|\rho_{2n}\|.
						\end{align}
						Therefore, for $t\in [2n,2n+1]$,  by the definition of $\tilde T_{2n+1}^{u_0}$, we have
						\begin{eqnarray}
							\nonumber && \mE \Big[ \|\rho_{t}\|^{80}\tilde T_{2n+1}^{u_0}  \Big]
							\leq  C_\beta   \mE\Big[  \exp\Big\{ C \int_{2n}^{2n+1}\|u_s\|_{L^\mm}^{\mm}\dif s \Big\} \|\rho_{2n}\|^{80} \tilde T_{2n+1}^{u_0} \Big]
							\\ \nonumber &&\leq C_\beta \exp\big\{(2n+1)C\Upsilon \big\} \mE\Big[  \|\rho_{2n}\|^{80} \tilde T_{2n+1}^{u_0} \Big].
						\end{eqnarray}
						Combining the above with  (\ref{p1104-2}), it yields the desired result
						(\ref{pp-1}).
						
					}

					\textbf{Proof of (\ref{pp-3}).}
					For any  non-negative integers    $k,l \in \mN$ with   $k\geq l+1$ and $s\in [2l,2l+1]$, noticing that
					$\rho_{{2k+2} }={J}_{{2k+1},{2k+2}}\beta
					(\cM_{2k,{2k+1}}+\beta \mI)^{-1}{J}_{2k,{2k+1}}
					\rho_{{2k}}$,
					it holds that
					\begin{eqnarray}
						\nonumber  && \cD_s^i \rho_{{2k+2} }
						\\ \nonumber &&  =\beta   \cD_s^i J_{2k+1,{2k+2}}
						\widetilde\cM_{2k,{2k+1}}^{-1}J_{2k,{2k+1}}\rho_{{2k}}+ \beta J_{2k+1,{2k+2}}
						\widetilde\cM_{2k,{2k+1}}^{-1}\cD_s^i J_{2k,{2k+1}}\rho_{{2k}}\\
						&&\nonumber  \quad - \beta J_{2k+1,{2k+2}}
						\widetilde\cM_{2k,{2k+1}}^{-1}\Big[\left(\cD_s^i \cA_{2k,{2k+1}} \right)\cA^*_{2k,{2k+1}}
						\\ \nonumber && \quad\quad\quad\quad  \quad\quad\quad\quad \quad\quad \quad  +\cA_{2k,{2k+1}}\left(\cD_s^i \cA^*_{2k,{2k+1}} \right)\Big]\widetilde\cM_{2k,{2k+1}}^{-1}J_{2k,{2k+1}}\rho_{{2k}}
						\\ \label{p1103-2}  &&  \quad+\beta J_{2k+1,{2k+2}}
						\widetilde\cM_{2k,{2k+1}}^{-1}J_{2k,{2k+1}} \cD_s^i\rho_{2k}.
					\end{eqnarray}
					Observe that
					\begin{equation*}
						\mathcal{D}_s^i{J}_{2k,2k+1} \xi=J^{(2)}_{2k, 2k+1}\left(J_{s, 2k}Q\theta_i, \xi\right).
					\end{equation*}
					Therefore,  by Lemma \ref{L^p J} and Lemma \ref{2.18 heissan}, we have
					\bae\label{ppDj}
					& \|\cD_s^i J_{2k, 2k+1} \xi\|\leq C\left\|J_{s, 2k} Q\theta_i \right\|_{L^8} \|\xi\| e^{C\int_{2k}^{2k+1} \|u_t\|^\mmm_{L^\mmm}\dif r} \\
					& \leq C   \left\| \xi \right\| e^{C\int_s^{2k+1} \|u_t\|^\mmm_{L^\mmm}\dif t}.
					\eae
					Similarly, we also have
					\bae\label{ppDj2}
					& \|\cD_s^i J_{2k+1, 2k+2} \xi\| \leq C   \left\| \xi \right\| e^{C\int_s^{2k+2} \|u_t\|^\mmm_{L^\mmm}\dif t}.
					\eae
					By the chain rule of Malliavin derivative, also in view of  $s\in [2l,2l+1]$ {  and  $l\leq k-1$,}  we get
					\baee
					\mathcal{D}_s^i A_{2k,2k+1} h= & \mathcal{D}_s^i \int_{2k}^{2k+1}J_{r,{2k+1}}Qh(r)\dif r=
					\int_{2k}^{2k+1}J_{r,{2k+1}}^{(2)}(J_{s,r}Q\theta_i,Qh(r))\dif r
					.
					\eaee
					Then,   with the help of   Lemmas \ref{15-4}, \ref{L^p J}, \ref{2.18 heissan},   one arrives at
					\bae\label{ppDa}
					& \left\|\cD_s^i A_{2k, 2k+1} h\right\|
					\\
					\leq & \int_{2k}^{2k+1}\| J_{r,{2k+1}}^{(2)}(J_{s,r}Q\theta_i,Qh(r)\| \dif r
					\\
					\leq & C \int_{2k}^{2k+1}\|J_{s,r}Q\theta_i\|_{L^8} \| Qh(r)\| \dif r
					\exp\Big\{C \int_{s}^{2k+1}\|u_t\|_{L^\mmm}^\mmm \dif t\Big\}
					\\ \leq &  C \exp\Big\{C \int_{s}^{2k+1}\|u_t\|_{L^\mmm}^\mmm \dif t\Big\}
					\|h\|_{L^2([2k,2k+1],\mR^\mathbb{U})}.
					\eae
					In view of \eqref{4-1}, combining the estimates (\ref{ppDj})--(\ref{ppDa})  with (\ref{p1103-2}),  we get
					\begin{eqnarray*}
						&& \| \cD_s^i \rho_{{2k+2} }\|\leq C_\beta  \exp\Big\{C \int_{s}^{2k+1}\|u_t\|_{L^\mmm}^\mmm \dif t\Big\} \|\rho_{2k}\|
						\\ && \quad+C \exp\Big\{C \int_{2k}^{2k+2}\|u_t\|_{L^\mmm}^\mmm \dif t\Big\} \big(\|J_{2k+1,2k+2}Q_N\|+\|P_N \cR_{2k,2k+1}^\beta\|\big) \| \cD_s^i \rho_{2k }\|
						\\ && \leq C_\beta  \exp\Big\{C \int_{s}^{2k+1}\|u_t\|_{L^\mmm}^\mmm \dif t\Big\} \|\rho_{2k}\|
						\\ && \quad+C \exp\Big\{C \int_{2k}^{2k+2}\|u_t\|_{L^\mmm}^\mmm \dif t\Big\} \big(\frac{1}{N^{1/4}}+\|P_N \cR_{2k,2k+1}^\beta\|\big) \| \cD_s^i \rho_{2k }\|,
					\end{eqnarray*}
					where we have used Lemmas \ref{15-4},  \ref{L:2.2} in the first inequality,   and have used  (\ref{16-2}) in the second inequality.
					
					Therefore, there exists a $C \geq 1$ such that for any   $k\in \mN$ with $k\geq l+1$  and  $s\in [2l,2l+1]$, it holds that
					\begin{eqnarray}
						\label{p1103-3}
						\begin{split}
							& \| \cD_s^i \rho_{{2k+2} }\|^{80} \exp\Big\{-C\int_{0}^{2k+2}\|u_r\|_{L^\mmm}^\mmm \dif r-C (2k+2)\Big\}
							\\ & \leq  C_\beta \|\rho_{2k}\|^{80}   + \| \cD_s^i \rho_{{2k} }\|^{80}  \exp\Big\{
							-C\int_{0}^{2k}\|u_r\|_{L^\mmm}^\mmm \dif r-  2k  C \Big\} \zeta_{2k}.
						\end{split}
					\end{eqnarray}
					(Recall that   $\zeta_n=\frac{1}{N^{20}}+\|P_N \cR_{n,n+1}^\beta\|^{80},\forall n\in \mN.$)

					For $k=l$ and  $s\in (2l,2l+1)$,   by direct calculations, we   have
					\begin{eqnarray}
						\nonumber  && \cD_s^i \rho_{{2l+2} }
						\\ \nonumber &&  =\beta   \cD_s^i J_{2l+1,{2l+2}}
						\widetilde\cM_{2l,{2l+1}}^{-1}J_{2l,{2l+1}}\rho_{{2l}}+ \beta J_{2l+1,{2l+2}}
						\widetilde\cM_{2l,{2l+1}}^{-1}\cD_s^i J_{2l,{2l+1}}\rho_{{2l}}\\
						&&\nonumber  \quad - \beta J_{2l+1,{2l+2}}
						\widetilde\cM_{2l,{2l+1}}^{-1}\Big[\left(\cD_s^i \cA_{2l,{2l+1}} \right)\cA^*_{2l,{2l+1}}
						\\ \nonumber && \quad\quad\quad\quad  \quad\quad\quad\quad \quad\quad \quad  +\cA_{2l,{2l+1}}\left(\cD_s^i \cA^*_{2l,{2l+1}} \right)\Big]\widetilde\cM_{2l,{2l+1}}^{-1}J_{2l,{2l+1}}\rho_{{2l}}.
					\end{eqnarray}
					Since
					\begin{equation*}
						\mathcal{D}_s^i{J}_{2l,2l+1} \xi=J^{(2)}_{s, 2l+1}\left(J_{2l, s}\xi, Q \theta_i\right) \quad \text { for } s \in\left(2l, 2l+1\right),
					\end{equation*}
					by Lemma \ref{2.18 heissan}, we have
					\bae\label{pppDj}
					& \left\|\cD_s^i J_{2l, 2l+1} \xi \right\|=\big\|J_{s, 2l+1}^{(2)}\left(J_{2l, s} \xi, Q \theta_i\right)\big\| \\
					& \leq C\left\|Q \theta_i\right\|_{L^8} \left\|J_{2l, s} \xi\right\| e^{C\int_{2l}^{2l+1} \|u_r\|^\mmm_{L^\mmm}\dif r} \\
					& \leq C   \left\| \xi\right\| e^{C\int_{2l}^{2l+1} \|u_r\|^\mmm_{L^\mmm}\dif r}.
					\eae
					Similarly, for $h\in L^2([2l,2l+1],\mR^\mathbb U )$
					\baee
					& \mathcal{D}_s^i A_{2l,2l+1} h= \int_{2l}^s  \mathcal{D}_s^i J_{r, 2l+1}Qh(r)\dif r
					+\int_{s}^{2l+1}  \mathcal{D}_s^i J_{r, 2l+1}Qh(r)\dif r
					\\ &=\int_{2l}^s J^{(2)}_{s, 2l+1}\big(J_{r,s} Q h(r), Q \theta_i)\big)\dif r  +\int_s^{2l+1} J^{(2)}_{r, 2l+1}\big(J_{s,r}Q \theta_i, Qh(r)\big)\dif r.
					\eaee
					Then,  using  Lemma \ref{L^p J}, \ref{15-4} and \ref{2.18 heissan}, we get
					\bae\label{pppDa}
					& \left\|\cD_s^i A_{2l, 2l+1} h\right\| \\
					\leq & \int_{2l}^s \big\|J_{s, 2l+1}^{(2)}\left(J_{r, s} Q h(r), Q \theta_i\right)\big\|\dif s +\int_s^{2l+1}\|J_{r, 2l+1}^{(2)}\left(J_{s, r} Q \theta_i,Q h(r)\right)\|\dif s\\
					\leq & C \left\|Q \theta_i\right\|_{L^8} e^{C \int_{2l}^{2l+1} \|u_r\|^\mmm_{L^\mmm}\dif r} \int_{2l}^s \left\|J_{r, s} Q h(r)\right\|\dif r \\
					& +C e^{C \int_{2l}^{2l+1} \|u_r\|^\mmm_{L^\mmm} \dif r} \int_s^{2l+1}\left\|J_{s, r} Q \theta_i\right\|_{L^8}\|Q h (r)\|\dif r\\
					\leq& C e^{C \int_{2l}^{2l+1}\|u_r\|^\mmm_{L^\mmm} \dif s} \|h\|_{L^2{([2l,2l+1],\mR^\mathbb{U})}},
					\eae
					Noticing the above estimates (\ref{pppDj})(\ref{pppDa}), similar to (\ref{p1103-3}), we have,
					\begin{eqnarray}
						\label{p1103-4}
						\begin{split}
							& \| \cD_s^i \rho_{{2l+2} }\|^{80} \exp\Big\{-C \int_{0}^{2l+2}\|u_r\|_{L^\mmm}^\mmm \dif r-C(2l+2)\Big\}
							\leq C_\beta  \|\rho_{2l}\|^{80}.
						\end{split}
					\end{eqnarray}

					By iteration and the above inequalities  (\ref{p1103-3})(\ref{p1103-4}),
					for some $\cC\geq 1$
					that only depends  on $ \nu,d,\Bbbk,(b_j)_{j\in \cZ_0}$,
					$\mathbb{U},(c_{\mathbbm{i},\mathbbm{j}})_{1\leq \mathbbm{i}\leq d,0\leq \mathbbm{j} \leq \Bbbk}.$
					,  we arrive at
					\begin{eqnarray*}
						\begin{split}
							& \| \cD_s^i \rho_{{2n} }\|^{80} \exp\Big\{-\cC \int_{0}^{2n}\|u_r\|_{L^\mmm}^\mmm \dif r-2n\cC\Big\}
							\\  &\leq C_\beta \sum_{k=1}^{n-l} \big(\prod_{j=1}^{k-1}\zeta_{2n-2j} \big) \|\rho_{2n-2k}\|^{80},
							\forall  n,l\in \mN \text{ with  }n-l \geq 1  \text{ and }s\in (2l,2l+1),
						\end{split}
					\end{eqnarray*}
					here  and below  we adopt the notation: $\prod_{j=i_1}^{i_2}a_j=1$ and $\sum_{j=i_1}^{i_2}a_j=0$ for $i_1>i_2$.
					{
						By the above inequality, for any $\kappa>0,$ one has
						\begin{eqnarray}
							\nonumber  && \mE \Big[\| \cD_s^i \rho_{{2n} }\|^{80}  \exp\Big\{-\cC \int_{0}^{2n}\|u_r\|_{L^\mmm}^\mmm \dif r-2n\cC
							-\kappa \sum_{i=0}^{2n}\|u_i\|_{\nn+5}^2\Big\} \tilde T_{2n}^{u_0} \Big]
							\\ \nonumber &&\leq
							C_\beta  \sum_{k=1}^{n-l}\mE\Big[
							\Big( \prod_{j=1}^{k-1} \zeta_{2n-2j}
							\exp
							\big\{-\kappa \|u_{2n-2j}\|_{\nn+5}^2\big\}
							\Big) \cdot \|\rho_{2n-2k}\|^{80}\tilde T_{2n}^{u_0}
							\Big]
							\\ \label{p1104-3} &&:=C_\beta \sum_{k=1}^{n-l} \mathcal I_k.
						\end{eqnarray}
						For any $2\leq k\leq n-l$, by (\ref{p3-4}) and (\ref{p1104-2}), it holds that
						\begin{align*}
							\mathcal I_k \leq  &\mE\Big[  \prod_{j=1}^{k-1}\Big( \zeta_{2n-2j}
							\exp\Big\{-\kappa \|u_{2n-2j}\|_{\nn+5}^2\Big\}
							\Big) \cdot \|\rho_{2n-2k}\|^{80}
							\tilde T_{2n-2k+1}^{u_0}
							\Big]
							\\  \leq &  \mE\Bigg[  \mE \Big[ \prod_{j=1}^{k-1}\Big( \zeta_{2n-2j}
							\exp\Big\{-\kappa \|u_{2n-2j}\|_{\nn+5}^2 \Big\}
							\Big) \cdot \|\rho_{2n-2k}\|^{80}\tilde T_{2n-2k+1}^{u_0}   \Big|\cF_{2n-2}
							\Big]
							\Bigg]
							\\ \leq  & \delta  \mE\Big[  \prod_{j=2}^{k-1}\Big( \zeta_{2n-2j}
							\exp\Big\{-\kappa \|u_{2n-2j}\|_{\nn+5}^2\Big\}
							\Big) \cdot \|\rho_{2n-2k}\|^{80}
							\tilde T_{2n-2k+1}^{u_0}
							\Big]
							\\ \leq & \cdots
							\\ \leq & \delta^{k-1} \mE\Big[    \|\rho_{2n-2k}\|^{80}\tilde T_{2n-2k+1}^{u_0}
							\Big]
							\\ \leq & \delta^{k-1} \mathscr{C}^{n-k}    \delta^{n-k}= \mathscr{C}^{n-k}  \delta^{n-1}.
						\end{align*}
						With similar arguments, the above inequality also applies to the case $\mathcal I_1$.Combining the above estimates of $ \mathcal I_k, 1\leq k\leq n-l$  with (\ref{p1104-3}), also in view of the fact: $\int_{0}^{2n}\|u_r\|_{L^\mmm}^\mmm \dif r(\omega)\leq  2 \Upsilon n $ holds on the event $\{\omega: \tilde T_{2n}^{u_0}\neq 0\}$, we arrive at
						\begin{eqnarray*}
							&& \mE \Big[\| \cD_s^i \rho_{{2n} }\|^{40}   \tilde T_{2n}^{u_0}  \Big]
							\\ && \leq 	e^{2 \Upsilon n\cC+2n\cC } \mE \Bigg[\| \cD_s^i \rho_{{2n} }\|^{40}  \exp\Big\{-\cC \int_{0}^{2n}\|u_r\|_{L^\mmm}^\mmm \dif r-2n\cC-\kappa \sum_{i=0}^{2n}\|u_i\|_{\nn+5}^2 \Big\} \tilde T_{2n}^{u_0} \\ && \quad\quad\quad\quad \quad\quad\quad\quad \times \exp\big\{ \kappa \sum_{i=0}^{2n}\|u_i\|_{\nn+5}^2   \big\}\Bigg]
							\\
							&& \leq 	e^{2 \Upsilon n\cC+2n\cC }  \Bigg[ \mE \| \cD_s^i \rho_{{2n} }\|^{80}  \exp\Big\{-2\cC \int_{0}^{2n}\|u_r\|_{L^\mmm}^\mmm \dif r-4n\cC-2\kappa \sum_{i=0}^{2n}\|u_i\|_{\nn+5}^2 \Big\} \tilde T_{2n}^{u_0}\Bigg]^{1/2} \\ && \quad\quad\quad\quad \Big[ \mE \exp\big\{2  \kappa \sum_{i=0}^{2n}\|u_i\|_{\nn+5}^2   \big\} \tilde T_{2n}^{u_0} \Big]^{1/2}
							\\ && \leq C_{\kappa,\beta,\mathfrak {R}} (n-l){ \sqrt{ \mathscr{C}^{n-1}  \delta^{n-1}} } \exp\big\{2 C \Upsilon n +2C n \big\}C_\kappa^n,~\forall l\leq n-1 \text{ and } s\in (2l,2l+1).
						\end{eqnarray*}
						{   	In the last inequality of the above, we have used Lemma \ref{p17-4}.}
						Recall that $\mathscr{C}$ is a constant given in (\ref{p1104-2}) which only depends on $\Upsilon$, $\nu,d,\Bbbk, \{b_i\}_{i\in \cZ_0},\mathbb{U}$,
						and $(c_{\mathbbm{i},\mathbbm{j}})_{1\leq \mathbbm{i}\leq d,0\leq \mathbbm{j} \leq \Bbbk}$.
						The above inequality implies  the desired result  (\ref{pp-3}).  
					}
				\end{proof}

				\subsection{The estimation of non-adapted integral   $\int_0^{t}v(s)\dif W(s)$.}\label{skorohod}
				Recall that $\mmm=40 \Bbbk d(d+2\Bbbk)$ and   $\mathcal{E}_\mmm$ is a constant decided by     Lemma \ref{L^p 2.2}. For any  $\Upsilon>0, n\in \mN$ and $1\leq k\leq n$,  recall that
				\begin{eqnarray}
					\nonumber &&T_{n}^{u_0}  = \Pi_{i=1}^n   \chi\Big(\int_0^i\|u_r^{u_0}\|_{L^\mmm}^\mmm\dif r-\Upsilon   i\Big),
					\\  \nonumber &&    T_{n,k}^{u_0}  = \Pi_{i=1,i\neq k}^n   \chi\Big(\int_0^i\|u_r^{u_0}\|_{L^\mmm}^\mmm\dif r-\Upsilon i\Big)
					\cdot  \chi'\Big(\int_0^k\|u_r^{u_0}\|_{L^\mmm}^\mmm\dif r-\Upsilon k\Big),
				\end{eqnarray}
				where $(u_s^{u_0})_{s\geq 0}$ is the solution to (\ref{1-1}) with initial value $u_0\in \tilde H^\nn$,  $\chi:\mR\rightarrow [0,1]$ is  a function defined in (\ref{p1110-3}).
				{  The primary objective of this subsection is to establish a bound for the Skorokhod integral $\int_0^{t}v(s)\dif W(s)$, specifically to prove the following proposition.}
				\begin{proposition}
					\label{4-3}
					For any $\mathfrak {R}>0,\Upsilon\geq 4 \mathcal{E}_\mmm,u_0\in \tilde  H^{\nn+5}$ with $\|u_0\|_{\nn+5} \leq \mathfrak {R}$, there exists a
					sufficiently large number  $\gamma_0=\gamma_0( \mathfrak {R},\Upsilon)$  such that,
					if we let $\beta=\beta(\gamma_0,\mathfrak {R},\Upsilon)$ be a constant decided by Proposition \textup{\ref{4-2}}
					and set the direction of $v$ according to \eqref{p-1}, then it holds that
					\begin{eqnarray*}
						&& \mE_{u_0}   \Big| \int_{0}^{2n} v(s)\dif W(s) T_{2n}^{u_0}\Big|^2   \leq C_{\gamma_0,\mathfrak {R},\Upsilon}<\infty ,~\forall n\in \mN.
					\end{eqnarray*}
					In the above, $\gamma_0( \mathfrak {R},\Upsilon)$ denotes a constant depending on $\mathfrak {R},\Upsilon,\nu,d,\Bbbk,(b_j)_{j\in \cZ_0},\mathbb{U}$,
					$(c_{\mathbbm{i},\mathbbm{j}})_{1\leq \mathbbm{i}\leq d,0\leq \mathbbm{j} \leq \Bbbk}$, and $C_{\gamma_0,\mathfrak {R},\Upsilon}$ is a constant depending on $\gamma_0,\mathfrak {R},\Upsilon, \nu,d,\Bbbk,(b_j)_{j\in \cZ_0},\mathbb{U},$
					$(c_{\mathbbm{i},\mathbbm{j}})_{1\leq \mathbbm{i}\leq d,0\leq \mathbbm{j} \leq \Bbbk}$.
				\end{proposition}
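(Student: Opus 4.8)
\medskip
\noindent\textbf{Proof proposal.} The plan is to localize the Skorokhod integral block by block and to run the Skorokhod isometry with the cutoff carried along as a multiplicative weight, the final input being the exponential decay of $\rho$ supplied by Proposition~\ref{4-2}. Since $v$ vanishes on each interval $({2k+1},{2k+2})$ and equals $v_{2k,2k+1}$ on $[{2k},{2k+1}]$, linearity of the Skorokhod integral gives $\int_0^{2n}v(s)\dif W(s)=\sum_{k=0}^{n-1}\delta_k$, where $\delta_k:=\int_{2k}^{2k+1}v_{2k,2k+1}(s)\dif W(s)$; note $\delta_k$ is $\cF_{2k+1}$-measurable. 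I would record at the outset that in \eqref{p1110-3} the profile $\chi$ may be taken of the form $\chi=\chi_0^2$ with $\chi_0$ smooth, so that $G_k:=\prod_{i=1}^{2k+1}\chi_0\big(\int_0^i\|u_r^{u_0}\|_{L^m}^m\dif r-\Upsilon i-2\big)$ is smooth, $G_k^2=\tilde T_{2k+1}^{u_0}$, and $0\le T_{2n}^{u_0}\le T_{2k+1}^{u_0}\le\tilde T_{2k+1}^{u_0}=G_k^2$ for all $k\le n-1$. Hence it suffices to bound $\mE_{u_0}[\delta_k^2 G_k^2]$ uniformly in $n$; moreover, on the support $\{G_k>0\}$ one has $\int_0^i\|u_r^{u_0}\|_{L^m}^m\dif r<\Upsilon i$ for every $i\le 2k+1$, so every factor of the form $\exp\{C\int_{2k}^{2k+1}\|u_r^{u_0}\|_{L^m}^m\dif r\}$ appearing below is $\le e^{C\Upsilon(2k+1)}$ on that event. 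By the triangle inequality in $L^2(\mP)$ it then remains to prove $\mE_{u_0}[\delta_k^2 G_k^2]\le C_{\gamma_0,\mathfrak R,\Upsilon}\,e^{-k}$, say, so that the resulting geometric series sums to a constant independent of $n$.

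For a fixed $k$ I would use the identity $G_k\delta_k=\delta(G_k v_{2k,2k+1})+\langle\cD G_k,v_{2k,2k+1}\rangle$, whence $\mE[\delta_k^2 G_k^2]\le 2\mE|\delta(G_k v_{2k,2k+1})|^2+2\mE|\langle\cD G_k,v_{2k,2k+1}\rangle|^2$, and the Skorokhod isometry $\mE|\delta(G_k v_{2k,2k+1})|^2=\mE\|G_k v_{2k,2k+1}\|_{L^2}^2+\mE\int\int\langle\cD_s(G_k v_{2k,2k+1})(r),\cD_r(G_k v_{2k,2k+1})(s)\rangle\dif s\dif r$. Because $v_{2k,2k+1}(a)=0$ for $a\notin[{2k},{2k+1}]$, the trace collapses to an integral over $[{2k},{2k+1}]^2$; expanding $\cD_a(G_k v_{2k,2k+1})(b)=(\cD_a G_k)v_{2k,2k+1}(b)+G_k\,\cD_a v_{2k,2k+1}(b)$ and using Cauchy--Schwarz, the whole of $\mE[\delta_k^2 G_k^2]$ is dominated by $\mE$ of $G_k^2$ (or $|\cD_a G_k|^2$, $a\in[{2k},{2k+1}]$) times $\|v_{2k,2k+1}\|_{L^2}^2$, plus $\mE$ of $G_k^2\int\int_{[{2k},{2k+1}]^2}\|\cD_a v_{2k,2k+1}(b)\|^2\dif a\dif b$. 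The three estimates needed are: (i) $\|v_{2k,2k+1}\|_{L^2}\le\beta^{-1/2}\|J_{2k,2k+1}\|\,\|\rho_{2k}\|\le C\beta^{-1/2}e^{C\int_{2k}^{2k+1}\|u_r\|_{L^m}^m\dif r}\|\rho_{2k}\|$, i.e. \eqref{3.8} together with Lemma~\ref{15-4}; (ii) $\|\cD_a v_{2k,2k+1}(b)\|\le C_\beta e^{C\int_{2k}^{2k+1}\|u_r\|_{L^m}^m\dif r}\|\rho_{2k}\|$ for $a\in({2k},{2k+1})$, obtained by differentiating \eqref{p-1} through the product and resolvent rules---so $\cD_a v_{2k,2k+1}(b)$ is a sum of terms built from $\cD_a\cA^*_{2k,2k+1}$, $\cD_a\cM_{2k,2k+1}$, $\cD_a J_{2k,2k+1}$, while $\cD_a\rho_{2k}$ \emph{vanishes} since $a\ge 2k$---estimating the operator factors by Lemma~\ref{L:2.2} (each resolvent costing at most $\beta^{-1}$) and the Jacobian/Hessian factors by Lemmas~\ref{L^p J} and~\ref{2.18 heissan} exactly as in \eqref{ppDj}--\eqref{ppDa}; and (iii) $|\cD_a G_k|\le C\,e^{C\int_a^{2k+1}\|u_r\|_{L^m}^m\dif r}\,P(u)$ for $a\in({2k},{2k+1})$, from the $\chi_0'$--structure in \eqref{Kn} combined with $\cD_a u_r=J_{a,r}Q\theta_i$ (Lemma~\ref{15-4}), where $P(u)$ is a polynomial of Sobolev norms of $u$ on $[{2k},{2k+1}]$ with moments controlled by Lemma~\ref{L^p 2.2}.

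Combining (i)--(iii) with the energy control on $\{G_k>0\}$, each term bounding $\mE[\delta_k^2 G_k^2]$ takes the form $C_\beta\, e^{C\Upsilon k}\,\mE\big[\tilde T_{2k+1}^{u_0}(\|\rho_{2k}\|^2+1)\,\widetilde P(u)\big]$, where $\widetilde P(u)$ collects the polynomial-in-$\|u\|$ factors. Applying Hölder's inequality (first splitting off $\widetilde P(u)$, whose high moments are bounded polynomially in $k$ by Lemma~\ref{L^p 2.2} and hence harmless, then with exponents $(40,40/39)$ to pass from the power $2$ on $\|\rho_{2k}\|$ to the power $80$, using $\tilde T_{2k+1}^{u_0}\le 1$) bounds this by $C_{\beta,\mathfrak R}\,e^{C\Upsilon k}(1+k)^{C}\big(\mE[\|\rho_{2k}\|^{80}\tilde T_{2k+1}^{u_0}]\big)^{1/40}$. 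Proposition~\ref{4-2}, applied with $t=2k$ and $n=k$ (see \eqref{15-2}--\eqref{15-3}), yields $\mE[\|\rho_{2k}\|^{80}\tilde T_{2k+1}^{u_0}]\le C_{\gamma_0,\mathfrak R,\Upsilon}\,e^{-\gamma_0 k}$, so $\mE_{u_0}[\delta_k^2 T_{2n}^{u_0}]\le\mE_{u_0}[\delta_k^2 G_k^2]\le C_{\gamma_0,\mathfrak R,\Upsilon}\,e^{C\Upsilon k}(1+k)^{C}e^{-\gamma_0 k/40}$. Choosing $\gamma_0=\gamma_0(\mathfrak R,\Upsilon)$ large enough that $\gamma_0/40>C\Upsilon+2$ (the constant $C$ and the $\Upsilon$--dependent constants being already fixed by the preceding steps), the per-$k$ bound decays geometrically, and $\big(\mE_{u_0}|\int_0^{2n}v\,\dif W\cdot T_{2n}^{u_0}|^2\big)^{1/2}\le\sum_{k=0}^{n-1}\big(\mE_{u_0}[\delta_k^2 T_{2n}^{u_0}]\big)^{1/2}\le\sum_{k\ge 0}(C_{\gamma_0,\mathfrak R,\Upsilon})^{1/2}e^{-k/2}<\infty$, uniformly in $n$, which is the claim after squaring.

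The main obstacle is the second step: one must differentiate the compound expression \eqref{p-1} for $v_{2k,2k+1}$---which mixes the Malliavin-dependent operators $\cA_{2k,2k+1}$, $\cM_{2k,2k+1}$, $J_{2k,2k+1}$ and the resolvent $(\cM_{2k,2k+1}+\beta\mI)^{-1}$---and control every resulting term by the operator estimates of Lemma~\ref{L:2.2} and the polynomial-growth Jacobian/Hessian estimates, while keeping every exponential factor $e^{C\int\|u\|_{L^m}^m}$ dominated by the weight $G_k$. The delicate bookkeeping is to track which factor is measurable with respect to which $\sigma$-algebra (so the localized isometry and the Hölder/Proposition~\ref{4-2} step are legitimate) and to verify carefully the collapse of the isometry trace to the single interval $[{2k},{2k+1}]$, so that $\cD_a\rho_{2k}$ never enters a $\delta_k^2$-estimate; were one instead to estimate the full trace of $\delta(\sqrt{T_{2n}^{u_0}}\,v)$ without the block splitting, the cross-interval derivatives $\cD_s\rho_{2k}$ would appear and one would additionally invoke \eqref{15-3}. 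Everything else is a routine, if lengthy, assembly of estimates already established in Section~\ref{pre} and Subsections~\ref{p1216-5}--\ref{p1216-4}.
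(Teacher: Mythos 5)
Your proposal is correct and takes a genuinely different route from the paper's. The paper applies the commutation formula $T_{2n}^{u_0}\delta(v)=\delta(T_{2n}^{u_0}v)+\langle\cD T_{2n}^{u_0},v\rangle$ once, globally, then invokes the Skorokhod isometry for $\delta(T_{2n}^{u_0}v)$ and expands the resulting trace into six terms $L_1,\ldots,L_6$ over the full square $[0,2n]^2$; estimating $L_4,L_5$ then unavoidably produces cross-interval derivatives $\cD_s v_{2k,2k+1}(r)$ with $s\in(2l,2l+1)$, $l<k$, and hence $\cD_s\rho_{2k}$, so the estimate \eqref{15-3} of Proposition~\ref{4-2} is needed. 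You instead write $\int_0^{2n}v\,\dif W=\sum_k\delta_k$ with $\delta_k$ the Skorokhod integral over the single block $[2k,2k+1]$, apply the $L^2$ triangle inequality, and commute each $\delta_k$ with the $\cF_{2k+1}$-measurable localizer $G_k$ (with $G_k^2=\tilde T_{2k+1}^{u_0}$, obtained by taking $\chi=\chi_0^2$). The per-block isometry trace collapses to $[2k,2k+1]^2$, where $\cD_s\rho_{2k}=0$ (Case 2 of Lemma~\ref{p11-6}), so \eqref{15-3} is never invoked; moreover $\cD_a G_k$ for $a\in(2k,2k+1)$ is a single term (only the $i=2k+1$ factor of $G_k$ contributes), whereas the paper's $\cD_r T_{2n}^{u_0}$ carries a sum up to $2n$. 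What your route buys, then, is a shorter bookkeeping (roughly three block-level terms instead of six global ones) and independence from \eqref{15-3}; what it costs is the modification $\chi=\chi_0^2$, needed so that a smooth ``square root'' of the cutoff exists, and a slightly looser use of the localizer (you bound $T_{2n}^{u_0}$ above by $G_k^2$, while the paper keeps the sharper $T_{2n}^{u_0},T_{2n,k}^{u_0}$ structure, which it also exploits via the moment bound $\mE|T_{2n,j}^{u_0}|\le C j^{-50}$). Two small reference slips: the identity $\cD_a u_r=J_{a,r}Q\theta_i$ is Lemma~\ref{17-1}, not Lemma~\ref{15-4}, and the per-block $\rho$-moment estimate is \eqref{15-1} (with $t=2k$, $n=k$), not \eqref{15-2}--\eqref{15-3}. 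Neither affects the argument.
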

				Before we state a proof of Proposition \ref{4-3}, we give a lemma first.
				\begin{lemma}
					\label{p11-6}
					For any $\beta>0$,  set the direction of $v$ according to \eqref{p-1}.  Then for any $k,l\in \mN$ with $k\geq l$ and $ s\in (2l,2l+1)$, it holds that
					\begin{eqnarray}
						\label{p11-5}
						\begin{split}
							\|\mathcal{D}_s^i v_{2k,2k+1}\| &\leq   C_\beta  \exp\Big\{C \int_{2l}^{2k+1}\|u_t\|_{L^\mmm}^\mmm \dif t\Big\}\|\rho_{2k}\|
							\\ &\quad +C_\beta \exp\Big\{C \int_{2k}^{2k+1}\|u_t\|_{L^\mmm}^\mmm \dif t\Big\}
							\|\cD_s^i\rho_{2k}\|,
						\end{split}
					\end{eqnarray}
					where $\|\mathcal{D}_s^i v_{2k,2k+1}\| :=\big(\int_{2k}^{2k+1}|\mathcal{D}_s^i v_{2k,2k+1}(r)|_{\mR^\mathbb{U}}^2\dif r\big)^{1/2}, C_\beta$ is a constant depending on $\beta,\nu,d,\Bbbk,(b_j)_{j\in \cZ_0},\mathbb{U},(c_{\mathbbm{i},\mathbbm{j}})_{1\leq \mathbbm{i}\leq d,0\leq \mathbbm{j} \leq \Bbbk}$ and $C$ is a constant depending on $\nu,d,\Bbbk,(b_j)_{j\in \cZ_0}$, $\mathbb{U},(c_{\mathbbm{i},\mathbbm{j}})_{1\leq \mathbbm{i}\leq d,0\leq \mathbbm{j} \leq \Bbbk}.$
				\end{lemma}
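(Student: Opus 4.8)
\textbf{Proof plan for Lemma \ref{p11-6}.}

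The plan is to differentiate the explicit formula \eqref{p-1} for $v_{2k,2k+1}$ using the product/chain rule for the Malliavin derivative, and then estimate each resulting term using the operator bounds of Lemma \ref{L:2.2} together with the Jacobian and Hessian estimates of Lemmas \ref{15-4}, \ref{L^p J} and \ref{2.18 heissan}. Recall that
\begin{equation*}
v_{2k,2k+1}(r)=\cA^*_{2k,2k+1}\widetilde\cM_{2k,2k+1}^{-1}J_{2k,2k+1}\rho_{2k},\qquad r\in[2k,2k+1],
\end{equation*}
where $\widetilde\cM_{2k,2k+1}=\beta\mI+\cM_{2k,2k+1}=\beta\mI+\cA_{2k,2k+1}\cA^*_{2k,2k+1}$. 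Applying $\cD_s^i$ and using that $\cD_s^i$ commutes with the constant $\beta\mI$, one gets (writing $\widetilde\cM=\widetilde\cM_{2k,2k+1}$, $\cA=\cA_{2k,2k+1}$, etc., for brevity) four types of terms: (i) $(\cD_s^i\cA^*)\widetilde\cM^{-1}J_{2k,2k+1}\rho_{2k}$; (ii) $\cA^*\widetilde\cM^{-1}(\cD_s^i\cA)\cA^*\widetilde\cM^{-1}J_{2k,2k+1}\rho_{2k}$ and $\cA^*\widetilde\cM^{-1}\cA(\cD_s^i\cA^*)\widetilde\cM^{-1}J_{2k,2k+1}\rho_{2k}$, coming from $\cD_s^i\widetilde\cM^{-1}=-\widetilde\cM^{-1}(\cD_s^i\widetilde\cM)\widetilde\cM^{-1}$ and $\cD_s^i\cM=(\cD_s^i\cA)\cA^*+\cA(\cD_s^i\cA^*)$; (iii) $\cA^*\widetilde\cM^{-1}(\cD_s^iJ_{2k,2k+1})\rho_{2k}$; and (iv) $\cA^*\widetilde\cM^{-1}J_{2k,2k+1}(\cD_s^i\rho_{2k})$.

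Next I would bound each type. For the ``bare'' operator factors I would use $\|\cA^*\widetilde\cM^{-1/2}\|\le1$, $\|\widetilde\cM^{-1/2}\cA\|\le1$, $\|\widetilde\cM^{-1/2}\|\le\beta^{-1/2}$ and $\|\widetilde\cM^{-1}\|\le\beta^{-1}$ from Lemma \ref{L:2.2}, so that every $\widetilde\cM^{-1}$ or $\widetilde\cM^{-1/2}$ contributes a power of $\beta^{-1}$ (absorbed into $C_\beta$) and every $\cA,\cA^*$ is either cancelled against a $\widetilde\cM^{-1/2}$ or estimated by \eqref{2.7}, i.e. $\|\cA\|^2\le C\int_{2k}^{2k+1}\|J_{r,2k+1}\|^2\dif r$, which by Lemma \ref{15-4} is at most $C\exp\{C\int_{2k}^{2k+1}\|u_t\|_{L^m}^m\dif t\}$. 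For $\|J_{2k,2k+1}\|$ and $\|J_{2k+1,\cdot}\|$ one uses Lemma \ref{15-4} again. For the Malliavin-derivative factors $\cD_s^i\cA$, $\cD_s^i\cA^*$ and $\cD_s^iJ_{2k,2k+1}$, since $s\in(2l,2l+1)$ with $l\le k$, property (c) from the proof of Proposition \ref{4-2} expresses $\cD_s^iJ_{r,t}\xi$ in terms of the Hessian $J^{(2)}$ applied to $J_{s,r}Q\theta_i$ (or $J_{2l,s}\xi,Q\theta_i$ when $l=k$), and the bounds \eqref{ppDj}, \eqref{ppDj2}, \eqref{ppDa} (and \eqref{pppDj},\eqref{pppDa} when $l=k$) give $\|\cD_s^iJ_{2k,2k+1}\xi\|\le C\|\xi\|e^{C\int_s^{2k+1}\|u_t\|_{L^m}^m\dif t}$, $\|\cD_s^i\cA h\|\le Ce^{C\int_s^{2k+1}\|u_t\|_{L^m}^m\dif t}\|h\|_{L^2}$, and likewise for $\cD_s^i\cA^*$ (its adjoint). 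Splitting $\int_s^{2k+1}=\int_{2l}^{2k+1}$ (for the terms carrying a $\rho_{2k}$, where the dependence on the window $[2l,2k+1]$ is unavoidable) versus $\int_{2k}^{2k+1}$ (for term (iv), where the $\cD_s^i\rho_{2k}$ factor already carries the long-range information), one collects exactly the two terms on the right-hand side of \eqref{p11-5}.

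The main obstacle — or rather the point requiring care — is bookkeeping: tracking which exponential-integral factor attaches to which term so that term (iv) ends up with only $\exp\{C\int_{2k}^{2k+1}\|u_t\|_{L^m}^m\dif t\}$ (not the long window), while the three ``source'' terms (i)--(iii) are allowed the longer factor $\exp\{C\int_{2l}^{2k+1}\|u_t\|_{L^m}^m\dif t\}$. This matters downstream because in the proof of Proposition \ref{4-3} the long factor will be paired against the exponentially small bound for $\|\rho_{2k}\|^{80}$ from Proposition \ref{4-2}, whereas the short factor multiplying $\|\cD_s^i\rho_{2k}\|$ must be kept separate so it can be paired with \eqref{15-3}. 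Beyond that, the estimate is a routine triangle-inequality decomposition once the formula is differentiated; no new analytic input is needed, only Lemmas \ref{15-4}, \ref{L^p J}, \ref{2.18 heissan}, \ref{L:2.2} and the recursion \eqref{4-1} for the high/low-frequency split of $\rho$ (used exactly as in \eqref{p1103-2}--\eqref{p1103-4}).
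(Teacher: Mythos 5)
Your plan is correct and follows essentially the same route as the paper: differentiate the explicit formula for $v_{2k,2k+1}$, bound the ``bare'' operator factors via Lemma \ref{L:2.2}, and bound the Malliavin-derivative factors via \eqref{ppDj}--\eqref{ppDa} (for $l<k$) and \eqref{pppDj}--\eqref{pppDa} (for $l=k$). Two small remarks. First, the high/low-frequency split of $\rho$ via \eqref{4-1} is \emph{not} used in the proof of this lemma — the expansion \eqref{p1103-2}--\eqref{p1103-4} to which you refer is part of the proof of \eqref{pp-3} in Proposition~\ref{4-2}, not of Lemma~\ref{p11-6}; here one only needs the triangle inequality on the differentiated formula. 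Second, in the case $l=k$ you should note explicitly that $\cD_s^i\rho_{2k}=0$ for $s\in(2k,2k+1)$ (since $\rho_{2k}$ is $\cF_{2k}$-measurable), so your term (iv) simply vanishes there and the asserted bound reduces to the first line of \eqref{p11-5}; as stated your argument still yields the correct inequality, but this observation is what the paper uses to close Case~2.
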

				\begin{proof}
					By the chain rule of the Malliavin derivative, we get
					\baee
					&\cD_s^i v_{2k,2k+1}\\
					=& \cD_s^i \cA^*_{2k,{2k+1}}
					\widetilde\cM_{2k,{2k+1}}^{-1}J_{2k,{2k+1}}\rho_{{2k}}+ \cA^*_{2k,{2k+1}}
					\widetilde\cM_{2k,{2k+1}}^{-1}\cD_s^i J_{2k,{2k+1}}\rho_{{2k}}\\
					&\quad - \cA^*_{2k,{2k+1}}
					\widetilde\cM_{2k,{2k+1}}^{-1}\Big[\left(\cD_s^i \cA_{2k,{2k+1}} \right)\cA^*_{2k,{2k+1}}
					\\ &\quad\quad\quad\quad  \quad\quad\quad\quad \quad\quad \quad  +\cA_{2k,{2k+1}}\left(\cD_s^i \cA^*_{2k,{2k+1}} \right)\Big]\widetilde\cM_{2k,{2k+1}}^{-1}J_{2k,{2k+1}}\rho_{{2k}}
					\\ & \quad+\cA^*_{2k,{2k+1}}
					\widetilde\cM_{2k,{2k+1}}^{-1}J_{2k,{2k+1}} \cD_s^i\rho_{2k}
					\eaee
					where $\widetilde\cM_{2k,{2k+1}}= \cM_{2k,{2k+1}}+\beta \mI.$
					Using Lemma \ref{L:2.2}, and   by the similar argument as that   in \cite[Section 4.8]{HM-2006}, we see
					\bae\label{pre Dv}
					& \left\|\mathcal{D}_s^i v_{2k,2k+1}\right\| \leq C_\beta \left\|\mathcal{D}_s^i A_{2k,2k+1}\right\|\left\|{J}_{2k,2k+1}\right\|\left\|\rho_{2k}\right\|
					\\ & \quad\quad\quad\quad  +C_\beta \left\|\mathcal{D}_s^i {J}_{2k,2k+1}\right\|\left\|\rho_{2k}\right\|
					+C_\beta \|J_{2k,2k+1}\|\|\cD_s^i \rho_{2k}\|.
					\eae
					We divide  the following two cases to prove this lemma.
					
					\textbf{Case 1: $l<k$. }
					For any $\xi\in \tilde  H$ and $s\in (2l,2l+1)$ with $l<k$,
					combining (\ref{ppDj})--(\ref{ppDa}) with (\ref{pre Dv}), it yields the desired result.

					\textbf{Case 2: $l=k$. }
					For any $\xi\in \tilde  H$ and $s\in (2l,2l+1)$,
					combining  \eqref{pppDj} and \eqref{pppDa} with  \eqref{pre Dv}, also in view of $\cD_s^i \rho_{2k}=0$,  we conclude  the desired result (\ref{p11-5}). The proof  is complete.
				\end{proof}
				Now we are in a position to finish  the proof of   Proposition \ref{4-3}.
				\begin{proof}
					By \cite[Proposition 1.3.3]{nualart2006}, we obtain
					\begin{equation*}
						\begin{aligned}
							&T_{2n}^{u_0} \int_{0}^{2n} v(s)\dif W(s)=\int_0^{2n} T_{2n}^{u_0} v(s)\dif W(s) +\left\langle D(T_{2n}^{u_0}),v \right\rangle_{L^2([0,2n],\mathbb R^\mathbb{U})}.
						\end{aligned}
					\end{equation*}
					Thus, by \cite[(1.54)]{nualart2006} we have
					{
						\begin{eqnarray}
							\nonumber && \mE \left| T_{2n}^{u_0} \int_{0}^{2n} v(s)\dif W(s)\right|^2
							\\ \nonumber &&  \leq    2 \mE  \Big| \int_{0}^{2n}
							T_{2n}^{u_0} v(s)  \dd  W(s) \Big|^2
							+2 \mE  \Big|
							\int_{0}^{2n}  \left\langle v(r), \cD_r (  T_{2n}^{u_0} ) \right\rangle_{\mR^\mathbb{U}} \dif r  \Big|^2
							\\  \nonumber && =2\mE \int_0^{2n}|T_{2n}^{u_0} v(s)|^2\dif s
							+2 \mE \int_0^{2n}\int_0^{2n} \operatorname{tr}\Big(\cD_r\big(T_{2n}^{u_0} v(s)\big)\circ \cD_s\big(T_{2n}^{u_0} v(r)\big) \Big)\dif s\dif r
							\\  \nonumber && \quad\quad+2 \mE  \Big|
							\int_{0}^{2n}  \left\langle v(r), \cD_r (  T_{2n}^{u_0} ) \right\rangle_{\mR^\mathbb{U}} \dif r  \Big|^2
							\\ \nonumber &&  \leq  2 \mE \int_0^{2n}T_{2n}^{u_0}| v(s)|^2\dif s
							\\ \nonumber
							\nonumber && \quad\quad +2 \mE \int_0^{2n}\int_0^{2n} \big(T_{2n}^{u_0}\big)^2   \operatorname{tr}\Big(\cD_r v(s)\circ \cD_sv(r) \Big)\dif s\dif r
							\\ \nonumber
							&&\quad\quad  +2 \mE \int_0^{2n}\int_0^{2n} \operatorname{tr}\Big(\big(\cD_r T_{2n}^{u_0}\otimes v(s)\big)\circ \big( \cD_s T_{2n}^{u_0}  \otimes v(r)\big) \Big)\dif s\dif r\\
							\nonumber && \quad\quad +2 \mE \int_0^{2n}\int_0^{2n} T_{2n}^{u_0} \operatorname{tr}\Big(\big(\cD_r T_{2n}^{u_0}\otimes v(s)\big)\circ \cD_sv(r) \Big)\dif s\dif r
							\\ \nonumber
							&& \quad\quad +2 \mE \int_0^{2n}\int_0^{2n} T_{2n}^{u_0} \operatorname{tr}\Big(\cD_rv(s)\circ \big(\cD_sT_{2n}^{u_0}\otimes v(r)\big) \Big)\dif s\dif r
							\\  \nonumber && \quad\quad +2 \mE  \Big|
							\int_{0}^{2n}  \left\langle v(r), \cD_r (  T_{2n}^{u_0} ) \right\rangle_{\mR^\mathbb{U}} \dif r  \Big|^2
							\\ \label{p1109-1} &&:=2\sum_{i=1}^6 L_i,
						\end{eqnarray}
					}
					in the  above, $\circ $ denotes  the normal product of two matrices,
					$tr(A)$ is the trace of  matrix $A$ and for any  vector $a,b\in \mR^{\mathbb U}$, $a\otimes b:=a b^T$ is a $\mathbb{U}\times \mathbb{U}$ matrix.
					
					We will estimate $L_i,i=1,\cdots,6$, respectively.
					Recall that
					\begin{equation*}\tilde T_n^{u_0}=\Pi_{i=1}^n   \chi\Big(\int_0^i\|u_r^{u_0}\|_{L^\mmm}^\mmm \dif r-\Upsilon  i-2 \Big),\quad \forall n\in \mN.
					\end{equation*}
					With regard to $\tilde T_n,T_n,T_{n,k}$, the following properties will be frequently  utilized in the proof:
					\begin{itemize}
						\item[(a)]on the event $\{\omega: \tilde  T_{n}^{u_0} (\omega)\neq 0\}$,
						it holds that  $\int_0^i\|u_r^{u_0}\|_{L^\mmm}^\mmm \dif r\leq \Upsilon  i, ~\forall 1\leq i\leq n;$
						\item[(b)] for any $m\leq n, $ $0\leq \tilde T_n^{u_0} \leq \tilde T_m^{u_0} \leq 1; $
						\item[(c)]  $\max\{T_n^{u_0},|T_{n,k}^{u_0}|\} \leq \tilde  T_{n}^{u_0}\leq 1,  ~ k=1,\cdots, n.$
					\end{itemize}
					
					
					\textbf{(1) Estimate of $L_1.$}
					For the term $L_1$, by (\ref{3.8}) and Proposition \ref{4-2}, it yields that
					\baee
					L_1  & =\sum_{k=1}^{n}\int_{2k-2}^{2k-1}T_{2n}^{u_0}| v(s)|^2\dif s
					\\  &  \leq  C \beta^{-1} \sum_{k=1}^n
					\mE\Big[ T_{2n}^{u_0}e^{C \int_{2k-2}^{2k-1}  \|u_r\|^\mmm_{L^\mmm} \dif r }\left\| \rho_{{2k-2}} \right\|^2\Big]
					\\  &  \leq  C \beta^{-1} \sum_{k=1}^n
					\mE\Big[ \tilde  T_{2k-2}^{u_0} \exp\big\{(2k-1)C \Upsilon  \big\}\left\| \rho_{{2k-2}} \right\|^2\Big]
					\\ &\leq   C_{\gamma_0,\mathfrak {R},\Upsilon} \sum_{k=1}^n \exp\big\{(2k-1)C \Upsilon \big\} \exp\{-\gamma_0 k/40 \}.
					\eaee
					When $\gamma_0$ is sufficiently large, i.e., $\gamma_0\geq \gamma_0(\Upsilon)$\footnote{Throughout  this paper, $\gamma_0( \Upsilon)$ denotes a constant that may  depend  on $\Upsilon$ and $\nu,d,\Bbbk,(b_j)_{j\in \cZ_0},\mathbb{U}, (c_{\mathbbm{i},\mathbbm{j}})_{1\leq \mathbbm{i}\leq d,0\leq \mathbbm{j} \leq \Bbbk} .$}, we conclude that
					\baee
					L_1\leq& C_{\gamma_0,\mathfrak {R},\Upsilon}  <\infty, \quad  \forall n\in \mN.
					\eaee
					\textbf{(2) Estimate of $L_2.$}
					Next we calculate $L_2$.By the definition of $v$, it holds that
					\bae
					\label{L_1^2}
					L_2\leq&C \sum_{k=0}^{n-1}\mE \big(T_{2n}^{u_0}\big)^2 \int_{2k}^{2k+1}\int_{2k}^{2k+1}    \big|\cD_s v_{2k,2k+1}(r)\big|_{\mR^{\mathbb{U}}\times\mR^{\mathbb{U}} }^2\dif s\dif r\\
					=&C \mE \big(T_{2n}^{u_0}\big)^2 \sum_{i=1}^{\mathbb{U} }\sum_{k=0}^{n-1} \int_{2k}^{2k+1}    \left\|\cD_s^i v_{2k,2k+1}\right\|^2\dif s,\\
					\eae
					where $\|\cD_s^i v_{2k,2k+1}\|^2 =\int_{2k}^{2k+1}|\cD_s^i v_{2k,2k+1}(r)|_{\mR^{\mathbb{U}}}^2\dif r$. By \eqref{L_1^2}, Lemma \ref{p11-6},  Proposition \ref{4-2} and $\rho_{2k}\in \cF_{2k}$, we have
					\bae\label{L12}
					L_2\leq& C_\beta  \mE\Big(  \sum_{k=0}^{n-1} e^{C \int_{2k}^{2k+1} \|u_r\|^\mmm_{L^\mmm}  \dif r} \tilde T_{2k}^{u_0} \left\|\rho_{2k}\right\|^2\Big)\\
					\leq&C_\beta  \sum_{k=0}^{n-1}  e^{(2k+1)C\Upsilon } e^{-\gamma_0 k/40 }\leq C_{\gamma_0,\mathfrak {R},\Upsilon}<\infty,\quad  \forall n\in \mN.
					\eae
					In the above, we have assumed that $\gamma_0$ is sufficiently large, i.e., $\gamma_0\geq \gamma_0(\Upsilon).$

					\textbf{(3) Estimate of $L_3.$}
					Now we consider the term  $L_3.$
					{ Notice that
						\bae\label{D^i_r}
						&|\cD_r^iT_{2n}^{u_0}|=\bigg|\sum_{k=\lfloor r \rfloor+1}^{2n}  T^{u_0}_{2n,k}
						\cD_r^i\Big(\int_0^{k}\big(\|u_t\|^\mmm_{L^\mmm}+1\big)\dif t\Big)\bigg|\\
						\leq &m\sum_{k=\lfloor r\rfloor+1}^{2n} |T^{u_0}_{2n,k}|
						\int_r^k \left\langle u_t^{\mmm-1} ,J_{r,t}Q \theta_i  \right\rangle \dif s \\
						\leq &C \sum_{k=\lfloor r \rfloor+1}^{2n} |T^{u_0}_{2n,k}|
						\int_r^k  \|u_t\|_{L^{2\mmm-2}}^{\mmm-1} \| J_{r,t}Q \theta_i\|   \dif t,
						\eae
					}
					Then we conclude that
					\bae\label{L13}
					L_3 =&
					\mE \int_0^{2n}\int_0^{2n} \left\langle\cD_s T_{2n}^{u_0} , v(s)\right\rangle_{\mR^\mathbb{U}} \left\langle\cD_r T_{2n}^{u_0}, v(r)\right\rangle_{\mR^\mathbb{U} }\dif s\dif r \\
					=&\mE \Big(\sum_{l=0}^{n-1}\int_{2l}^{2l+1}\left\langle\cD_r T_{2n}^{u_0}, v(r)\right\rangle_{\mR^\mathbb{U} }\dif r\Big)^2 =\mE \Big(
					\sum_{i=1}^{\mathbb{U}} \sum_{l=0}^{n-1}\int_{2l}^{2l+1}v^i(r) \cD_r^i T_{2n}^{u_0}  \dif r\Big)^2
					\\ \leq &C \mE \Bigg(\sum_{i=1}^{\mathbb{U}}\sum_{l=0}^{n-1}\int_{2l}^{2l+1}|v^i(r)|\cdot
					\Big(\sum_{k=2l+1}^{2n} |T^{u_0}_{2n,k}|
					\int_{r}^k \|u_t\|_{L^{2\mmm-2}}^{\mmm-1}\|J_{r,t}Q \theta_i \| \dif t\Big)
					\dif r\Bigg)^2
					\\ =& C \mE \Big(\sum_{i=1}^{\mathbb{U}} \sum_{l=0}^{n-1} \sum_{k=2l+1}^{2n} |T^{u_0}_{2n,k}|  \int_{2l}^{2l+1} \int_{r}^k |v^i(r)|\cdot
					\|u_t\|_{L^{2\mmm-2}}^{\mmm-1}\|J_{r,t}Q \theta_i \| \dif t
					\dif r\Big)^2
					\\  \leq & C \Big(\sum_{i=1}^{\mathbb{U}} \sum_{l=0}^{n-1}\sum_{k=2l+1}^{2n} \big(\mE X_{i,l,k}^2\big)^{1/2}  \Big)^{2}.
					\eae
					In the above,
					$X_{i,l,k}=|T^{u_0}_{2n,k}|  \int_{2l}^{2l+1} \int_{r}^k |v^i(r)|\cdot
					\|u_t\|_{L^{2m-2}}^{m-1}\|J_{r,t}Q \theta_i \| \dif t
					\dif r$, and  in the last inequality  of the above   we have used the following  fact:
					\begin{eqnarray}
						\label{p11-8}
						\begin{split}
							& \mE ( \sum_{\ell } X_{\ell})^2=\mE ( \sum_{\ell,\ell^{'}} X_{\ell} X_{\ell'})=
							\sum_{\ell,\ell^{'}} \mE X_\ell X_{\ell^{'}}
							\\ &  \leq   \sum_{\ell,\ell^{'}}\big( \mE X_\ell^2\big)^{1/2} \big(\mE X_{\ell^{'}}^2\big)^{1/2}=\Big(\sum_{\ell}(\mE X_{\ell}^2)^{1/2} \Big)^2.
						\end{split}
					\end{eqnarray}
					(In this paper, \textbf{the double   integral $\int\int f(t,r)\dif t\dif r$
						is interpreted   as  $\int \big(\int f(t,r)\dif t\big)\dif r$.
						The  triple  integral $\int \int \int f(t,s,r)\dif t\dif s\dif r$ is interpreted   as  $\int \int \big(\int f(t,s,r)\dif t\big) \dif s \dif r.)$ }
					
					Now,   we will give an estimate of $\mE X_{i,l,k}^2$
					for any $1\leq i\leq \mathbb{U}, 0\leq l\leq n-1$ and $2l+1\leq k\leq 2n.$
					In view of  $(\int_{2l}^{2l+1} |v^i(r)| \dif r)^2\leq  \int_{2l}^{2l+1} |v^i(r)|^2 \dif r$   and
					\begin{eqnarray*}
						&&  \Big(\int_{2l}^k
						\|u_t\|_{L^{2\mmm-2}}^{\mmm-1} \dif t
						\cdot  \int_{2l}^k(\|u_t\|_{L^\mmm}^\mmm+1)\dif t  \Big)^2
						\leq C \Big(\int_{2l}^k(\|u_t\|_{L^{2\mmm-2}}^\mmm+1)\dif t  \Big)^4
						\\ && \leq C(k-2l)^{3/4}\int_{2l}^k(\|u_t\|_{L^{2\mmm-2}}^{4\mmm}+1)\dif t
						\leq C(k-2l)^{3/4}\int_{2l}^k(\|u_t\|_{L^{4\mmm}}^{4\mmm}+1)\dif t,
					\end{eqnarray*}
					we arrive at
					\begin{eqnarray}
						\nonumber   && \mE X_{i,l,k}^2= \mE\Big( |T^{u_0}_{2n,k}|   \int_{2l}^{2l+1} \int_{r}^k |v^i(r)|\cdot
						\|u_t\|_{L^{2\mmm-2}}^{\mmm-1}\|J_{r,t}Q \theta_i \| \dif t
						\dif r\Big)^2
						\\   \nonumber  && \leq    C \mE\Big( |T^{u_0}_{2n,k}|    \int_{2l}^{2l+1} \int_{2l}^k |v^i(r)|\cdot
						\|u_t\|_{L^{2\mmm-2}}^{\mmm-1} \dif t
						\dif r\cdot  \int_{2l}^k(\|u_t\|_{L^\mmm}^\mmm+1)\dif t  \Big)^2
						\\   \nonumber  && =C  \mE\Big( |T^{u_0}_{2n,k}|  \int_{2l}^{2l+1} |v^i(r)| \dif r
						\cdot  \int_{2l}^k
						\|u_t\|_{L^{2\mmm-2}}^{\mmm-1} \dif t
						\int_{2l}^k(\|u_t\|_{L^\mmm}^\mmm+1)\dif t  \Big)^2
						\\   \nonumber  && \leq  C(k-2l)^{3/4} \mE\Big[ \big(|T^{u_0}_{2n,k}| \big)^{1/4} \cdot \big(|T^{u_0}_{2n,k}|  \big)^{1/2} \big(  \int_{2l}^{2l+1} |v^i(r)|^2  \dif r\big)^{1/2}
						\cdot  \int_{2l}^k(\|u_t\|_{L^{4\mmm}}^{4\mmm}+1) \dif t   \Big]
						\\   \nonumber  && \leq  C(k-2l)^{3/4} \Big[ \mE |T^{u_0}_{2n,k}| \Big]^{1/4}\cdot \Big[\mE |T^{u_0}_{2n,k}|\cdot   \int_{2l}^{2l+1} |v^i(r)|^2  \dif r \Big]^{1/2}
						\\ \nonumber && \quad\quad\quad\quad  \quad\quad\quad\quad  \quad\quad\quad\quad \quad\quad\quad\quad    \cdot  \Big[\mE \Big(\int_{2l}^k(\|u_t\|_{L^{4\mmm}}^{4\mmm}+1) \dif t\Big)^4   \Big]^{1/4}
						\\ \label{p11-1} &&:= C(k-2l)^{3/4}\cdot  L_{31}^{1/4}\cdot L_{32}^{1/2}\cdot L_{33}^{1/4}.
					\end{eqnarray}
					In the first inequality of the above, we have used the following  fact:
					\begin{equation*}
						\|J_{r,t}Q\theta_i\|\leq C\int_{2l}^k(\|u_s\|_{L^\mmm}^\mmm+1)\dif s ,\quad  2l\leq r\leq 2l+1\leq k,~r\leq t\leq k,
					\end{equation*}
					which is demonstrated in Lemma \ref{15-4}.

					In the followings, we will estimate $L_{31},L_{32},L_{33}$, respectively.
					For any  $k\geq \max\{\frac{16}{\Upsilon},1\}$,
					by Lemma \ref{L^p 2.2} and the fact $\Upsilon\geq  4\mathcal E_\mmm$, it holds that
					\begin{align}
						\nonumber  L_{31}= &  \mE |T^{u_0}_{2n,k}|
						\\ \nonumber  \leq &
						\mP \Big(\int_0^k\|u_r^{u_0}\|_{L^\mmm}^\mmm\dif r-\Upsilon k\geq -4 \Big)
						\\ \nonumber  = & \mP \Big(\int_0^k\|u_r^{u_0}\|_{L^\mmm}^\mmm\dif r-\mathcal E_\mmm  k\geq
						\Upsilon k -\mathcal E_\mmm  k-4 \Big)
						\\ \nonumber \leq &  \mP \Big(\int_0^k\|u_r^{u_0}\|_{L^\mmm}^\mmm\dif r-\mathcal E_\mmm  k\geq
						\Upsilon k -\frac{\Upsilon}{4} k-4  \Big)
						\\ \nonumber \leq & \mP \Big(\int_0^k\|u_r^{u_0}\|_{L^\mmm}^\mmm\dif r-\mathcal E_\mmm  k\geq
						\Upsilon k/2 \Big)\
						\\  \label{pp10-1} \leq & \frac{ C_{\mathfrak {R}}k^{50}}{(\Upsilon k)^{100}}\leq
						\frac{ C_{\mathfrak {R}}k^{50}}{(\mathcal E_\mmm k)^{100}}\leq  \frac{ C_{\mathfrak {R}}}{ k^{50}}
						,
					\end{align}
					where the constant  $\mathcal C_{\mathfrak {R}}\in (1,\infty)$ is a constant depending on $\mathfrak {R},m $ and  $\nu,d,\Bbbk,(b_j)_{j\in \cZ_0},\mathbb{U}$,
					$(c_{\mathbbm{i},\mathbbm{j}})_{1\leq \mathbbm{i}\leq d,0\leq \mathbbm{j} \leq \Bbbk}.$
					Eventually,  $\mathcal C_{\mathfrak {R}}$ depends on  $\mathfrak {R}$ and $\nu,d,\Bbbk,(b_j)_{j\in \cZ_0},\mathbb{U}$,
					$(c_{\mathbbm{i},\mathbbm{j}})_{1\leq \mathbbm{i}\leq d,0\leq \mathbbm{j} \leq \Bbbk}.$
					Obviously, for any $k\geq 1$, $ L_{31}\leq 1.$
					Thus, for any $k\geq 1$ and $\Upsilon\geq  4\mathcal E_m$, by (\ref{pp10-1}),  we arrive at
					\begin{eqnarray}
						\label{p11-3}
						L_{31}=\mE |T^{u_0}_{2n,k}| \leq \frac{ C_{\mathfrak {R}, \Upsilon}}{ k^{50}},
					\end{eqnarray}
					where    $ C_{\mathfrak {R}, \Upsilon}   \in (1,\infty)$ is a constant depending on $\mathfrak {R},\Upsilon, \mmm $ and  $\nu,d,\Bbbk,(b_j)_{j\in \cZ_0},\mathbb{U}$,
					$(c_{\mathbbm{i},\mathbbm{j}})_{1\leq \mathbbm{i}\leq d,0\leq \mathbbm{j} \leq \Bbbk}.$

					For the term $L_{32}$, by (\ref{3.8}) and Proposition \ref{4-2}, one gets
					\begin{align*}
						L_{32} \leq &  C_\beta \mE \Big[ |T^{u_0}_{2n,k}|   \exp\Big\{C\int_{2l}^{2l+1}\|u_r\|_{L^\mmm}^\mmm \dif r\Big\}\|\rho_{2l}\|^2\Big]
						\\ \leq &  C_\beta \mE \Big[  \tilde  T_{2l+1}^{u_0}  \exp\Big\{C\int_{2l}^{2l+1}\|u_r\|_{L^\mmm}^\mmm \dif r\Big\}\|\rho_{2l}\|^2\Big]
						\\ \leq & C_\beta \exp\Big\{(2l+1)C\Upsilon \Big\} e^{-\gamma_0 l/40  }.
					\end{align*}
					For the term $L_{33}$, by Lemma \ref{L^p 2.2} and H\"older's inequality, we arrive that{
						\begin{eqnarray*}
							L_{33}\leq Ck^3(\|u_0\|_{L^{16\mmm}}^{16\mmm}+k), \quad \forall k\geq 1.
					\end{eqnarray*}}
					Combining the above estimates of $L_{31},L_{32},L_{33}$  with (\ref{p11-1}),
					for any $0\leq l\leq n-1$ and $2l+1\leq k\leq 2n$,
					we conclude that
					\begin{eqnarray}
						\begin{split}
							\label{p0204-1}
							& \mE X_{i,l,k}^2\leq
							C_{\beta, \mathfrak {R} ,   \Upsilon   }(k-2l)^{3/4}
							\big(\frac{1}{k^{50}}\big)^{1/4}
							\\ &\quad\quad\quad\times \exp\big\{(2l+1)C \Upsilon \} e^{-\gamma_0 l/80   } {  k^{3/4}(\|u_0\|_{L^{16\mmm}}^{16\mmm}+k)^{1/4},}
						\end{split}
					\end{eqnarray}
					where   $\mathcal C_{\beta, \mathfrak {R} ,   \Upsilon  }\in (1,\infty)$ is a constant depending on $\beta,\mathfrak {R},\Upsilon  $ and  $\nu,d,\Bbbk,(b_j)_{j\in \cZ_0},\mathbb{U}.$
					Substituting   the  above into (\ref{L13}), after some simple calculations and in view of $\beta=\beta(\gamma_0,\Upsilon,\mathfrak {R})$, we arrive at
					\begin{eqnarray*}
						L_3 \leq C_{\gamma_0,\mathfrak {R} ,   \Upsilon}<\infty,\quad  \forall n\in \mN,
					\end{eqnarray*}
					provided  that  $\gamma_0$ is sufficiently large, i.e., $\gamma_0\geq \gamma_0(\Upsilon)$.

					\textbf{(4) Estimate of $L_4.$}
					For the term   $L_4$,   it holds that
					\bae\label{L_1^{(4)}}
					&L_{4}\\
					\leq& C \mE \int_0^{2n}\int_0^{2n} \left|\cD_rT_{2n}^{u_0}\right| |v(s)|\left| \cD_s v(r) \right|_{\mR^\mathbb{U}\times\mR^\mathbb{U}}\dif s\dif r\\
					\leq& C \mE \sum_{k=0}^n\sum_{l=0}^k \int_{2k}^{2k+1}\int_{2l}^{2l+1} \left|\cD_rT_{2n}^{u_0}\right| |v(s)|\left| \cD_sv_{2k,2k+1}(r)\right|_{\mR^\mathbb{U}\times\mR^\mathbb{U}}\dif s\dif r\\
					\leq &
					C \sum_{k=0}^n\sum_{l=0}^k\sum_{j=2k+1}^{2n}  \mE  \int_{2k}^{2k+1}\int_{2l}^{2l+1}
					\int_r^j
					|T^{u_0}_{2n,j}|\\
					&\quad\quad \cdot   \|u_t\|_{L^{2\mmm-2}}^{\mmm-1} \| J_{r,t}Q \theta_i\|  |v(s)|\left| \cD_sv_{2k,2k+1}(r)\right|_{\mR^\mathbb{U}\times\mR^\mathbb{U}}\dif t\dif s\dif r
					\\ :&= C \sum_{k=0}^n\sum_{l=0}^k\sum_{j=2k+1}^{2n}
					\mathcal Q_{k,l,j},
					\eae
					where  in the last  inequality we have used (\ref{D^i_r}).
					Now we give an estimate of   $\mathcal Q_{k,l,j}$ first.

					For any  $0\leq k\leq n, 0\leq l\leq k, 2k+1\leq j\leq 2n$,
					by (\ref{3.8}), Lemma \ref{15-4} and Lemma \ref{p11-6},
					we conclude that
					\begin{align}
						\nonumber  \mathcal  Q_{k,l,j}
						\leq& C \sum_{i=1}^{\mathbb{U}}
						\mE \Big[ \int_{2k}^{2k+1}\int_{2l}^{2l+1}
						| T^{u_0}_{2n,j}|\cdot    |v(s)|\left| \cD_s^iv_{2k,2k+1}(r)\right|_{\mR^\mathbb{U}}\dif s\dif r
						\\  \nonumber & \quad\quad\quad\quad\quad \quad\quad \times  \int_{2k}^j \|u_t\|_{L^{2m-2}}^{m-1}  \dif t
						\int_{2k}^j (\|u_t\|_{L^{\mmm}}^{\mmm}+1)  \dif t\Big]
						\\  \nonumber  \leq& C \sum_{i=1}^{\mathbb{U}}
						\mE \Big[\int_{2l}^{2l+1}
						| T^{u_0}_{2n,j}|\cdot  |v(s)|\cdot \| \cD_s^iv_{2k,2k+1}\| \dif s
						\Big(
						\int_{2k}^j (\|u_t\|_{L^{2\mmm-2}}^{\mmm}+1)  \dif t\Big)^2 \Big]
						\\ \nonumber
						\leq& C \sum_{i=1}^{\mathbb{U}}
						\mE \Bigg[  | T^{u_0}_{2n,j}|  \Big(\int_{2l}^{2l+1}
						|v(s)|^2\dif s\Big)^{1/2}
						\Big(\int_{2l}^{2l+1}\| \cD_s^iv_{2k,2k+1}\|^2  \dif s
						\Big)^{1/2}
						\\ & \nonumber  \quad\quad \quad\quad\quad\quad\quad\quad \times \Big(
						\int_{2k}^j (\|u_t\|_{L^{2\mmm-2}}^{\mmm}+1)  \dif t\Big)^2 \Bigg]
						\\   \nonumber \leq& C_\beta  \sum_{i=1}^{\mathbb{U}}
						\mE \Bigg[ | T^{u_0}_{2n,j}|\cdot  \|\rho_{2l}\|
						\Big(\|\rho_{2k}\|^2 +\int_{2l}^{2l+1}\| \cD_s^i\rho_{2k}\|^2 \dif s
						\Big)^{1/2}\exp\Big\{C \int_{2l}^{2k+1}\|u_t\|_{L^\mmm}^\mmm \dif t\Big\}
						\\ & \nonumber  \quad\quad \quad\quad\quad\quad\quad\quad \times \Big(
						\int_{2k}^j (\|u_t\|_{L^{2\mmm-2}}^{\mmm}+1)  \dif t\Big)^2 \Bigg]
						\\ \nonumber  \leq& C_\beta  \exp\big\{(2k+1)C\Upsilon \big\}
						\sum_{i=1}^{\mathbb{U}} \mE ( L_{41}L_{42}L_{43}L_{44})
						\\ \label{p11-4} \leq& C_\beta  \exp\big\{(2k+1)C\Upsilon \big\}
						\sum_{i=1}^{\mathbb{U}}  \big(\mE  L_{41}^4\big)^{1/4} \big(\mE  L_{42}^4\big)^{1/4} \cdot \big(\mE  L_{43}^4\big)^{1/4}  \big(\mE  L_{44}^4\big)^{1/4},
					\end{align}
					where  $ \| \cD_s^iv_{2k,2k+1}\|=\big(\int_{2k}^{2k+1}|\cD_s^iv_{2k,2k+1}(r)|_{\mR^\mathbb{U}}^2\dif r\big)^{1/2}$ and
					\begin{eqnarray*}
						&&  L_{41}:=\big|T^{u_0}_{2n,j}\big|^{1/4}, \quad L_{42}:=\big|T^{u_0}_{2n,j}\big|^{1/4} \|\rho_{2l}\|,
						\\ && L_{43}:=\big|T^{u_0}_{2n,j}\big|^{1/4} \big(\|\rho_{2k}\|^2+\int_{2l}^{2l+1}\| \cD_s^i\rho_{2k}\|^2 \dif s
						\big)^{1/2},
						\\ && L_{44}:=\Big(
						\int_{2k}^j (\|u_t\|_{L^{2\mmm-2}}^{\mmm}+1)  \dif t\Big)^2 .
					\end{eqnarray*}
					By  (\ref{p11-3}), we get
					\begin{eqnarray*}
						\mE L_{41}^4 \leq \frac{C_{\mathfrak {R}, \Upsilon}}{j^{50}} , \quad \forall j\geq 1.
					\end{eqnarray*}
					For the term $L_{42}$, by  Proposition \ref{4-2}, we conclude that
					\begin{eqnarray*}
						\mE  L_{42}^4 \leq \mE  \tilde T^{u_0}_{2l}\|\rho_{2l}\|^4\leq C_{\gamma_0,\mathfrak {R},\Upsilon}e^{-\gamma_0 l/20}.
					\end{eqnarray*}
					For the term $L_{43}$, also   by Proposition \ref{4-2},  we have
					\begin{align*}
						\mE L_{43}^4  \leq &  C_{\gamma_0,\mathfrak {R},\Upsilon} \mE \big[ \tilde T^{u_0}_{2k+1} \big(\|\rho_{2k}\|^4+\int_{2l}^{2l+1}\| \cD_s^i\rho_{2k}\|^4 \dif s
						\big) \big]
						\\ \leq& {    C_{\gamma_0,\mathfrak {R},\Upsilon} \exp\{-\gamma_0 k/10 \}.}
					\end{align*}
					By Lemma \ref{L^p 2.2}, we see that
					\begin{eqnarray*}
						{   \mE L_{44}^4 \leq C j^7 (\|u_0\|_{L^{16\mmm}}^{16\mmm}+j ),\quad \forall j\geq 1.}
					\end{eqnarray*}
					Combining the estimates of $\mE L_{4i}^4,i=1,\cdots,4$ with (\ref{p11-4})(\ref{L_1^{(4)}}),  we arrive at
					\begin{eqnarray*}
						L_4\leq C_{\gamma_0,\mathfrak {R},\Upsilon} <\infty, \quad  \forall n\in \mN,
					\end{eqnarray*}
					if   $\gamma_0$ is sufficiently large, i.e., $\gamma_0\geq \gamma_0(\Upsilon).$
					
					\textbf{(5) Estimate of $L_5.$}
					Following the same  arguments  in the estimate of $L_4$, we arrive at
					\begin{eqnarray*}
						L_5\leq C_{\gamma_0,\mathfrak {R},\Upsilon} <\infty, \quad  \forall n\in \mN,
					\end{eqnarray*}
					provided that  $\gamma_0$ is sufficiently large, i.e., $\gamma_0\geq \gamma_0(\Upsilon).$

					\textbf{(6) Estimate of $L_6.$}
					In the end, we give give an estimate of $L_6$.
					By direct calculations,  by (\ref{D^i_r}) and (\ref{p11-8}), we get
					\begin{eqnarray}
						\nonumber && L_6
						\leq \mE  \Big|\sum_{k=0}^{n-1}
						\int_{2k}^{2k+1}  \langle v(r), \cD_r (  T_{2n}^{u_0} ) \rangle_{\mR^\mathbb{U}} \dif r  \Big|^2
						\\  \nonumber && \leq  \mE  \Big|\sum_{k=0}^{n-1}
						\sum_{j=2k+1}^{2n} \int_{2k}^{2k+1}\int_{r}^{j}  |v(r)| \cdot
						T^{u_0}_{2n,j}  \|u_t\|_{L^{2\mmm-2}}^{\mmm-1}
						\| J_{r,t}Q \theta_i\|   \dif t
						\dif r  \Big|^2
						\\  \nonumber  & & \leq    \Bigg(\sum_{k=0}^{n-1}
						\sum_{j=2k+1}^{2n} \Big(\mE \Big| \int_{2k}^{2k+1}\int_{r}^{j}  |v(r)| \cdot
						T^{u_0}_{2n,j}  \|u_t\|_{L^{2\mmm-2}}^{\mmm-1}
						\| J_{r,t}Q \theta_i\|   \dif t
						\dif r  \Big|^2 \Big)^{1/2}\Bigg)^2.
						\\ && \label{p112-1}
					\end{eqnarray}
					For any $0\leq k\leq n-1$ and $j\in [2k+1,2n]$,
					by  (\ref{3.8}), (\ref{p11-3}), { Lemma \ref{L^p 2.2}},  Lemma \ref{15-4} and Proposition \ref{4-2}, it holds that
					\begin{eqnarray*}
						&& \mE \Big| \int_{2k}^{2k+1}\int_{r}^{j}  |v(r)| \cdot
						|T^{u_0}_{2n,j}|\cdot    \|u_t\|_{L^{2\mmm-2}}^{\mmm-1}
						\| J_{r,t}Q \theta_i\|   \dif t
						\dif r  \Big|^2
						\\ && \leq \mE  \Big[ |T^{u_0}_{2n,j}|\cdot \Big|   \int_{2k}^{2k+1}  |v(r)| \dif r
						\cdot \int_{2k}^j \|u_t\|_{L^{2\mmm-2}}^{\mmm-1} \dif t \cdot
						\int_{2k}^j  (\|u_t\|_{L^{\mmm}}^{\mmm}+1)\dif t \Big|^2\Big]
						\\ &&\leq C \mE  \Big[ |T^{u_0}_{2n,j}|\cdot |v(\cdot )|_{L^2([2k,2k+1],\mR^\mathbb{U})}^2
						\cdot
						\big(\int_{2k}^j  (\|u_t\|_{L^{2\mmm-2}}^{\mmm}+1)\dif t\big)^4
						\Big]
						\\ &&\leq C_\beta e^{(2k+1)C\Upsilon   } \mE  \Big[ |T^{u_0}_{2n,j} |^{1/4}\cdot \big(\tilde T^{u_0}_{2k}\big)^{1/4}\|\rho_{2k}\|
						\cdot
						\big(\int_{2k}^j  (\|u_t\|_{L^{2\mmm-2}}^{\mmm}+1)\dif t\big)^4
						\Big]
						\\ && \leq C_\beta e^{(2k+1)C \Upsilon  }  \Big(\mE |T^{u_0}_{2n,j}|  \Big)^{1/4 }\Big(\mE  \tilde T^{u_0}_{2k} \|\rho_{2k}\|^4\Big)^{1/4}
						\Big( \mE  \big(\int_{2k}^j  (\|u_t\|_{L^{2\mmm-2}}^{\mmm}+1)\dif t\big)^{8}\Big)^{1/2}
						\\ &&\leq C_{\beta,\mathfrak {R},\Upsilon} e^{(2k+1)C \Upsilon}  \big(\frac{1}{j^{50}}\big)^{1/4}  \exp\{-\gamma_0k /80\}\cdot
						{ j^{7/2} (\|u_0\|_{L^{16\mmm}}^{16\mmm}+j )^{1/2}
						}.
					\end{eqnarray*}
					Combining the above estimate with (\ref{p112-1}), we get
					\begin{eqnarray*}
						L_6\leq C_{\gamma_0,\mathfrak {R},\Upsilon}<\infty, \quad  \forall n\in \mN,
					\end{eqnarray*}
					provided that  $\gamma_0$ is sufficiently large, i.e., $\gamma_0\geq \gamma_0(\Upsilon).$

					Combining the estimates of $L_i,i=1,\cdots,6$ we complete the proof.
				\end{proof}

				\subsection{A  proof of Proposition \ref{3-11}}
				\label{sec 3.1}

				\begin{proof}
					Assume that $\mathfrak {R}>0$,  $u_0,u_0'\in B_{H^{\nn+5}}(\mathfrak {R})=\{u\in \tilde H^{\nn+5}: \|u\|_{\nn+5}< \mathfrak {R}\}$ and $f\in C_b^1(\tilde H).$

					For any $\eps>0$, we first choose $\Upsilon\geq 4 \mathcal E_\mmm +5$  sufficiently large so that (\ref{I1 grad}) and (\ref{I3 grad}) hold.
					Next, let $\gamma_0= \gamma_0( \mathfrak {R},\Upsilon)$ and   $\beta=\beta(\gamma_0,\mathfrak {R},\Upsilon)$ be positive constants determined by Proposition \ref{4-3}.  Finally, we define $v$ according to (\ref{p-1}).  With these choices, the conclusions of Proposition \ref{4-2} and Proposition \ref{4-3} are satisfied.
					
					Following the analysis at the beginning of this section, it remains to estimate $I_2$  using gradient estimates of  $K_nf(u_0)$.For any $t \geq 0$ {  and $\xi\in \tilde H $ with $\|\xi\|=1$.} Recalling \eqref{Kn}--\eqref{div J2},
					it holds that
					\begin{eqnarray}
						\label{I_2 grad}
						D_{\xi}K_nf(u_0)=J_{12}+(J_{11}+J_{21})+J_{22}.
					\end{eqnarray}

					First, by Proposition \ref{4-2}, we have
					\bae\label{J12 grad}
					J_{12} = \mE \left[ (D  f)(u_n^{u_0})\rho_{n} \cdot T_n^{u_0}\right] \leq  \|D f\|_{L^\infty}  \mathbb{E}\|\rho_{n}T_n^{u_0}\|\leq
					C_{\gamma_0,\mathfrak {R}, \Upsilon}   \|D  f\|_{L^\infty} e^{-\gamma_0 n/80}.
					\eae
					From the integration by part formula  in the Malliavin calculus,  we have
					\begin{equation*}
						J_{11} + J_{21} = \mathbb E\big[\mathcal{D}^{v}\big(f(u_{n}^{u_{0}}) T_n^{u_0}\big)\big] = \mathbb{E}\bigg[f(u_{n}^{u_{0}})T_n^{u_0}\int_{0}^{n}v(s) dW(s)\bigg].
					\end{equation*}
					By Proposition \ref{4-3}, we have
					\begin{equation}\label{24-4-12-2}
						| J_{11} + J_{21}  | \leq \|f\|_{L^\infty}  \mathbb{E} \big| \int_{0}^{n}v(s)dW(s) \tilde T_n^{u_0}  \big|\leq C_{\gamma_0,\mathfrak {R},\Upsilon}\|f\|_{L^\infty} <\infty,\quad \forall n\in \mN.
					\end{equation}

					By direct calculation
					\begin{eqnarray}
						\nonumber 		&&  J_{22}
						\\ \nonumber && = m\sum_{k=1}^n  \mE \bigg( f(u_n^{u_0})   T^{u_0}_{n,k}
						\int_0^k  \big\langle ( u^{u_0}_r)^{\mmm-1},  \rho_r  \big\rangle\dif r \bigg)
						\\ \nonumber  && \leq C  \|f\|_{L^\infty} \sum_{k=1}^n  \mE \bigg(
						|T^{u_0}_{n,k}|     \int_0^k  \|u_r\|_{L^{2\mmm-2}}^{\mmm-1}\|\rho_r\|  \dif r  \bigg)
						\\ \nonumber
						&& 	\leq  C \|f\|_{L^\infty} \sum_{k=1}^n  \mE \bigg(
						\big|T^{u_0}_{n,k}  \big|^{1/4}\cdot    \big(\int_0^k  \|u_r\|_{L^{2\mmm-2}}^{2\mmm-2}\dif r\big)^{1/2}\cdot  \big|T^{u_0}_{n,k} \big|^{1/4}  \big(\int_0^k \|\rho_r\|^2  \dif r \big)^{1/2} \bigg)
						\\ \nonumber  &&
						\leq C\|f\|_{L^\infty}   \sum_{k=1}^n\Big(\mE  |T^{u_0}_{n,k}|  \Big)^{1/4}  \Big( \mE
						\int_0^k \left\|u_r^{u_0}\right\|_{L^{2\mmm-2}}^{2\mmm-2} \dif r\Big)^{\frac12} \Big[ \mE
						\big(\tilde T^{u_0}_{n}\int_0^k \left\|\rho_r\right\|^2 \dif r\big)^{2}\Big]^{\frac{1}{4}} \\
						\nonumber  && \leq C_\mathcal{R}\|f\|_{L^\infty}   \sum_{k=1}^n \Big(\frac{1}{k^{50}}\Big)^{1/4}  \Big(\|u_0\|_{L^{2\mmm-2}}^{2\mmm-2}+k\Big)^{1/2} \Big(
						\sqrt{k} \int_0^k \mE( \tilde T^{u_0}_{n} \|\rho_r\|^4)\dif r \Big)^{\frac14}
						\\ \nonumber
						&& \leq  C_{\gamma_0,\mathfrak {R},\Upsilon} \|f\|_{L^\infty}  \sum_{k=1}^n  \Big(\frac{1}{k^{50}}\Big)^{1/4}    \Big(\|u_0\|_{L^{2\mmm-2}}^{2\mmm-2}+k\Big)^{1/2} \Big(
						\sqrt{k} \int_0^k \exp\{-\gamma_0 r/20\}\dif r \Big)^{\frac14}
						\\  \label{J_22 grad}&& \leq    C_{\gamma_0,\mathfrak {R},\Upsilon}\|f\|_{L^\infty}  <\infty,
					\end{eqnarray}
					where in the above, we used (\ref{p11-3}),  Lemma \ref{L^p 2.2} and  Proposition \ref{4-2}. 

					Substituting \eqref{J12 grad}--\eqref{J_22 grad} into \eqref{I_2 grad},
					for any  $n\in \mN,u_0\in B_{H^{\nn+5}}(\mathfrak {R}) $
					{   and $\xi\in \tilde H $ with
						$\|\xi\|=1$,
						we have
						\baee
						D_{\xi} K_nf(u_0) \leq C_{\gamma_0,\mathfrak {R},\Upsilon} (\|f\|_{L^\infty} +\|D  f\|_{L^\infty}).
						\eaee
					}
					Let $\gamma(s)=su_0+(1-s)u_0'$. Then, by the above inequality, we arrive at
					\begin{align}
						\nonumber |I_2|  = &
						\big| \mE \big[ f(u_n^{u_0}) T_n^{u_0}\big] -\mE \big[ f\big(u_n^{u_0^\prime}\big) T_n^{u_0'}\big]\big| =|K_nf(u_0)-K_nf(u_0')|
						\\  \nonumber =&  \int_0^1 \langle D  K_nf(\gamma(s)),u_0-u_0'\rangle \dif s
						\\  \nonumber \leq &  C_{\gamma_0,\mathfrak {R},\Upsilon}  (\|f\|_{L^\infty} +\|D f\|_{L^\infty})  \|u_0-u_0'\|.
					\end{align}

					For any  bounded and Lipschitz continuous function $f$ on $H$,
					by the arguments  in \cite[Page 1431]{KPS10},
					there exists a sequence $(f_k)$ satisfies $(f_k)\subseteq C_b^1(\tilde H)$
					and $\lim_{k\rightarrow  \infty}f_k(x)=f(x)$ pointwise.
					In addition, $\|f_k\|_{L^\infty}\leq \|f\|_{L^\infty}$ and $\|D  f_k\|_{L^\infty}\leq \|f\|_{Lip}$, where  $\|f\|_{Lip}=\sup_{x\neq y}\frac{|f(x)-f(y)|}{\|x-y\|}.$
					Therefore, for any $n\in \mN$, one has
					\begin{align}
						\nonumber  |I_2|=&| K_n f(u_0)-K_n f(u_0')|=\lim_{k\rightarrow \infty}| K_n f_k(u_0)-K_n f_k(u_0')|
						\\ \nonumber \leq&  \lim_{n\rightarrow \infty} \Big[C_{\gamma_0,\Upsilon,\mathfrak {R}} \|u_0-u_0'\|\big(\|f_k\|_{L^\infty}+\|D f_k \|_{L^\infty} \big)
						\Big]
						\\ \nonumber  \leq&  C_{\gamma_0,\mathfrak {R},\Upsilon }  \|u_0-u_0'\|(\|f\|_{L^\infty}+\|f\|_{Lip}).
					\end{align}

					Combining the above estimate with
					(\ref{I1 grad})(\ref{I3 grad}),
					for any  $\mathfrak {R}>0$,  bounded and Lipschitz continuous function $f$ on $\tilde H$  and $\eps>0$, there exists a $\delta=\delta(\mathfrak {R},\|f\|_{L^\infty}, \|f\|_{Lip},\eps)>0$ such that
					the following:
					\begin{eqnarray*}
						\big|\mE f(u_n^{u_0})  -\mE f(u_n^{u_0^\prime})\big|<\eps
					\end{eqnarray*}
					holds for any  $u_0,u_0'\in B_{ H^{\nn+5}}(\mathfrak {R})$ with $\|u_0-u_0'\|<\delta.$
					The proof is complete.

				\end{proof}

				\section{Proof of  Irreducibility }
				\label{sec irr}

				First, we present a proposition that is central to establishing irreducibility. Subsequently, we provide a proof of Proposition \ref{16-6}. Recall that  $\nn=\lfloor d/2+1\rfloor.$
				\begin{proposition}
					\label{p24-5}
					For any $\mathcal{C},\gamma>0$, there exist positive  constants  $T=T(\mathcal{C},\gamma)>0,p_0=p_0( \mathcal{C},\gamma)$ such that
					\begin{eqnarray*}
						P_{T}(u_0, \cB_\gamma)\geq p_0( \mathcal{C},\gamma),\quad \forall u_0\in H^{\nn+1} \text{ with }\|u_0\|_\nn\leq \mathcal{C},
					\end{eqnarray*}
					where $\cB_\gamma:=\{u\in H^\nn,\|u\|_\nn \leq \gamma\}.$
					In the above,  $T(\mathcal{C},\gamma)$ and $ p_0( \mathcal{C},\gamma)$ denote  two positive constants depending on  $\mathcal{C},\gamma$ and  $\nu,d,\Bbbk,(b_j)_{j\in \cZ_0},\mathbb{U}$,
					$(c_{\mathbbm{i},\mathbbm{j}})_{1\leq \mathbbm{i}\leq d,0\leq \mathbbm{j} \leq \Bbbk}.$
				\end{proposition}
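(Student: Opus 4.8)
The plan is to follow the classical route to irreducibility for dissipative SPDEs: decompose the solution into the (finite–dimensional, smooth) stochastic convolution and a deterministic remainder, use the dissipativity of the viscous equation to drive the remainder into a small ball whenever the noise path is small, and extract the positive probability from a small–ball estimate on the noise. Concretely, I would write $z_t=\sum_{k\in\cZ_0}b_ke_k\int_0^te^{-\nu|k|^2(t-s)}\dif W_k(s)$ for the stochastic convolution associated with \eqref{1-1}; since $\cZ_0$ is finite, $z$ takes values in $\lspan\{e_k:k\in\cZ_0\}$, is smooth, and is a non–degenerate Gaussian process with $z_0=0$. The remainder $v_t:=u_t-z_t$ then solves the random PDE $\partial_tv_t+\operatorname{div}A(v_t+z_t)=\nu\Delta v_t$, $v_0=u_0$, and for $u_0\in H^{\nn+1}$ one has $v\in C([0,T],H^{\nn+1})$ by Proposition \ref{wp} — the extra derivative is exactly what legitimises the $H^{\nn}$ energy identities below. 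For the positive probability I would use the small–ball event $E^T_\epsilon:=\{\sup_{t\in[0,T]}\|z_t\|_{\nn+1}\le\epsilon\}$: by the small–ball property of Brownian motion (equivalently of the finite–dimensional Ornstein–Uhlenbeck process $z$), $\mP(E^T_\epsilon)>0$, and $\mP(E^T_\epsilon)$ depends only on $\epsilon$, $T$ and the data of the system — crucially \emph{not} on $u_0$. This will be the lower bound $p_0$.

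Next I would run the deterministic estimates on $E^T_\epsilon$ (with $\epsilon\le1$). Since $\int_{\mT^d}A_i(w)\partial_{x_i}w\,\dif x=0$ for periodic $w$, testing the $v$–equation with $v_t$ gives
\begin{equation*}
\tfrac{\dif}{\dif t}\|v_t\|^2=-2\nu\|v_t\|_1^2-2\sum_{i=1}^d\langle A_i(v_t+z_t),\partial_{x_i}z_t\rangle\le-2\nu\|v_t\|^2+C_1\epsilon,
\end{equation*}
where, using $H^{\nn}\hookrightarrow L^\infty$ and the deterministic a priori bound $\sup_{[0,T]}\|v_t\|_{\nn}\le\Phi(\mathcal C,T)$ (which follows from deterministic analogues of Lemmas \ref{div reg}, \ref{L^p 2.2} and \ref{priori H^m} applied one Sobolev level higher), $C_1$ depends only on $\mathcal C$, $T$ and the data; Gr\"onwall together with $\|v_0\|\le\|u_0\|_{\nn}\le\mathcal C$ yields $\|v_t\|^2\le e^{-2\nu t}\mathcal C^2+C_1\epsilon/\nu$. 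The key remaining step is to bootstrap this $L^2$–decay up to the $H^{\nn}$ norm: once $\|v_t\|$ is small on a subinterval, Gagliardo--Nirenberg interpolation between the small $L^2$ norm and the bounded $H^{\nn}$ norm makes every lower $L^p$ norm of $v_t$ small there; inserting this and $\|z_t\|_{\nn+1}\le\epsilon$ into the $H^k$ energy identities for $k=1,\dots,\nn$ in turn, and absorbing the top–order contributions via Lemma \ref{div reg} and the Poincar\'e inequality $\|w\|_{k+1}\ge\|w\|_k$ (valid on $\mT^d$ for mean–zero $w$), one obtains on $[T/2,T]$ a differential inequality $\tfrac{\dif}{\dif t}\|v_t\|_{\nn}^2\le-\nu\|v_t\|_{\nn}^2+\delta_\epsilon$ with $\delta_\epsilon\downarrow0$ as $\epsilon\downarrow0$, hence $\|v_T\|_{\nn}^2\le e^{-cT}\Phi(\mathcal C,T)^2+\delta_\epsilon/\nu$ for some $c=c(\nu)>0$.

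To conclude, given $\mathcal C,\gamma$ I would first fix $T=T(\mathcal C,\gamma)$ so large that $e^{-2\nu T}\mathcal C^2<\gamma^2/100$ and $e^{-cT}\Phi(\mathcal C,T)^2<\gamma^2/100$; this is possible precisely because $\Phi(\mathcal C,T)$ grows only \emph{polynomially} in $T$ (the underlying $L^p$ and $H^{\nn}$ bounds grow at most linearly in time), so the geometric prefactors win. With $T$ fixed, I would then choose $\epsilon=\epsilon(\mathcal C,\gamma)>0$ small enough that $C_1\epsilon/\nu<\gamma^2/100$, $\delta_\epsilon/\nu<\gamma^2/100$ and $\epsilon<\gamma/10$. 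On $E^T_\epsilon$ one then gets $\|u_T\|_{\nn}\le\|v_T\|_{\nn}+\|z_T\|_{\nn}<\gamma$, so $P_T(u_0,\cB_\gamma)\ge\mP(E^T_\epsilon)=:p_0(\mathcal C,\gamma)>0$ uniformly over $u_0\in H^{\nn+1}$ with $\|u_0\|_{\nn}\le\mathcal C$. The step I expect to be the main obstacle is the bootstrap: converting the $L^2$ dissipation into smallness of the \emph{top} Sobolev norm $H^{\nn}$ while keeping the time horizon $T$ — and with it the a priori constant $\Phi(\mathcal C,T)$ — fixable \emph{before} $\epsilon$ is chosen. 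This rests on (i) the a priori bounds for $v$ (given $z$ bounded) growing at most polynomially in $T$, and (ii) the interpolation fact that $L^2$–smallness together with $H^{\nn}$–boundedness forces the lower $L^p$ norms entering the nonlinearity to be small; without (ii) the forcing in the higher–order energy estimates would be merely bounded, not small, and the argument would break down.
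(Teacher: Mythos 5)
Your overall strategy — splitting $u$ into a noise component and a remainder, using a small--ball event for the noise, and then running deterministic dissipative estimates on that event — is the same as the paper's. The two cosmetic differences are that you peel off the Ornstein--Uhlenbeck convolution $z_t$ while the paper uses the raw Wiener process $\eta_t=\sum_{i\in\cZ_0}b_iW^i_te_i$ (so the paper carries an extra $\nu\Delta\eta_t$ forcing but no stochastic evolution of the split-off part), and that you reach $H^\nn$-smallness via $L^2$-decay plus Gagliardo--Nirenberg interpolation, whereas the paper works with the $L^\mmm$ norm directly (with $\mmm$ the specific Sobolev--embedding exponent from Lemma~\ref{div reg}).

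The genuine gap is in your ingredient (i): the claim that the a priori bound $\Phi(\mathcal{C},T)$ on $\sup_{t\in[0,T]}\|v_t\|_\nn$, taken uniformly over the small-ball event with $\epsilon\le 1$, exists for all $T$ and grows at most polynomially in $T$. The paper's pathwise $L^\mmm$ energy estimate for the remainder (see \eqref{p1126-5}) reads
\begin{equation*}
\frac{\dif}{\dif t}\|v_t\|_{L^\mmm}^{\mmm}\leq -\mathscr{C}_\mmm^{-1}\|v_t\|_{L^\mmm}^{\mmm}+\mathscr{C}_\mmm(\zeta_t+\zeta_t^2)\big(1+\|v_t\|_{L^\mmm}^{2\mmm}\big),
\end{equation*}
which is a Riccati-type inequality with a quadratic right-hand side; the same kind of super-linear term appears for your $v_t=u_t-z_t$, because the integrand $v^{\mmm-1}\operatorname{div}A(v+z)$ carries cross terms of degree up to $\mmm+\Bbbk-2>\mmm$. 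For a fixed noise level $\epsilon$ that is merely $\le 1$ (and hence not small relative to $\cN\sim\mathcal{C}^\mmm$), such an inequality can blow up in finite time, so the bound $\Phi(\mathcal{C},T)$ you feed into the interpolation step need not exist, let alone grow polynomially. Your first step ("$\le -2\nu\|v_t\|^2+C_1\epsilon$") already relies on this unproved $\Phi$ to bound $\|A_i(v_t+z_t)\|_{L^1}$, so the ordering "pick $T$ first, then shrink $\epsilon$" is circular. The paper escapes exactly this trap by choosing the noise-smallness parameter $\delta$ as a function of $\cN$ and $\gamma$ only (see the inequality \eqref{p1126-6}), so that the interval $[\gamma/2,\cN]$ becomes a decreasing (trapping) region for the $L^\mmm$ ODE, giving a $T$-independent bound $\|v_t\|_{L^\mmm}^\mmm\le\cN$; only then are the time horizons $T_1,T_2$ fixed, and the $H^\nn$ estimate \eqref{p14-2} is closed by integrating the now-bounded $L^\mmm$ forcing. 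To repair your argument you would need to swap the order: fix $\epsilon=\epsilon(\mathcal{C},\gamma)$ small enough for the Riccati trapping first, which makes $\Phi$ genuinely $T$-independent, and only then choose $T$. As written, the proposal assumes this away and the bootstrap — which you correctly flag as the crux — is not closed.
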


				\begin{proof}
					Define $v_t:=u_t-  \eta_t$,  where $ \eta_t=\sum_{i\in\cZ_0}b_iW^i_{t} e_i
					$. Then, $v_t, t\geq 0 $ satisfies
					\begin{eqnarray*}
						\left\{
						\begin{split}
							& \frac{\partial v_t}{\partial t}= \nu\Delta (v_t+ \eta_t )
							-\text{div} A(v_t+ \eta_t )
							\\ & v_t |_{t=0}=u_0.
						\end{split}
						\right.
					\end{eqnarray*}
					Using chain rule  to $
					\partial_t \langle v_t, (-\Delta )^\nn v_t   \rangle
					$, it yields that
					\begin{eqnarray}
						\nonumber && \frac{1}{2} \frac{\dif  }{\dif t } \|v_t\|_\nn^2 =\langle \partial_t  v_t, (-\Delta )^\nn v_t   \rangle
						\\  \nonumber  &&=  \nu \langle  \Delta v_t, (-\Delta )^\nn v_t   \rangle
						+\nu \langle  \Delta  \eta_t , (-\Delta )^\nn v_t   \rangle
						+\langle -\text{div} A(u_t ), (-\Delta )^\nn v_t   \rangle
						\\ \label{p14-10} &&:=I_1+I_2+I_3.
					\end{eqnarray}
					
					For the term $I_1$,  it holds that
					\begin{eqnarray}
						\label{p13-1}  I_1=-\nu \|v_t\|_{\nn+1}^2.
					\end{eqnarray}
					Now we consider the term $I_2.$
					By Young's inequality, we get
					\begin{eqnarray}
						\label{p13-2}
						|I_2|\leq \frac{\nu }{8} \|v_t\|_{\nn+1}^2+C \|\eta  \|_{\nn+1}^2.
					\end{eqnarray}
					In the end, we consider   the term $I_3$.By Lemma
					\ref{div reg}, there exist positive constants $\kappa=\kappa_{\nn,d,\Bbbk}>0,m=m_{\nn,d,\Bbbk}\in 2\mN$ with
					$m\geq 4d \Bbbk>\kappa $  such that
					\begin{align*}
						|I_3| \leq&  \|v_t\|_{\nn+1}\| \operatorname{div}A(u_t) \|_{\nn-1}
						\leq \frac{\nu}{8} \|v_t\|_{\nn+1}^2
						+C \| \operatorname{div}A(u_t) \|_{\nn-1}^2
						\\ \leq &  \frac{\nu}{4} \|v_t\|_{\nn+1}^2 +C \|u_t\|_{L^{m }}^{\kappa }+C \|u_t\|_{L^{m}}^{m}
					\end{align*}
					Combining the above with  (\ref{p14-10})--(\ref{p13-2})  , one arrives at
					\begin{eqnarray*}
						\frac{\dif  }{\dif t } \|v_t\|_\nn^2
						\leq -\nu \|v_t\|_{\nn}^2
						+C  (\|v_t\|_{L^m}^{\kappa}+\|\eta_t\|_{L^m}^{\kappa}
						+\|v_t\|_{L^m}^{m}
						+\|\eta_t\|_{L^m}^{m}+\|\eta_t  \|_{\nn+1}^2 ).
					\end{eqnarray*}
					Thus, for any $t\geq 0$, it holds that
					\begin{eqnarray}
						\label{p14-2}
						\begin{split}
							& \|v_t\|_\nn^2  \leq e^{-\nu t} \|u_0\|_\nn^2
							\\ \quad\quad & +C \int_0^t e^{-\nu (t-s)} (\|v_s\|_{L^\mmm}^{\kappa}+\|\eta_s\|_{L^m}^{\kappa}
							+\|v_s\|_{L^m}^{m}
							+\|\eta_s\|_{L^m}^{m} +\|\eta_s  \|_{\nn+1}^2) \dif s.
						\end{split}
					\end{eqnarray}
					Thus, in order to bound  $\|v_t\|_\nn^2$, it is necessary to estimate $\|v_t\|_{L^m}^m.$
					In the following, we aim to establish an estimate for  $\|v_t\|_{L^m}^m.$

					Using chain rule  to $\partial_t \int_{\mT^d} v_t(x)^m\dif x $ yields that
					\begin{eqnarray}
						\nonumber && \frac{\dif  }{\dif t }\int_{\mT^d} v_t(x)^m\dif x
						=\int_{\mT^d} m v_t^{m-1} \partial_t v_t \dif x
						\\   \nonumber &&= \int_{\mT^d} m v_t(x)^{m-1} \big(\nu \Delta (v_t+ \eta_t )
						-\text{div} A(v_t+ \eta_t ) \big) \dif x
						\\   \nonumber &&= \nu m \int_{\mT^d}  v_t(x)^{m-1} \ \Delta v_t \dif x
						+\nu m \int_{\mT^d}  v_t(x)^{m-1}\Delta \eta_t  (x)
						\dif x
						\\   \nonumber &&\quad  -   \int_{\mT^d} m v_t(x)^{m-1}
						\text{div} A(v_t+ \eta_t ) \dif x
						\\ \label{p1126-4} &&:=J_1+J_2+J_3.
					\end{eqnarray}
					
					For the term $J_1$, by (\ref{28-1}), it holds that
					\begin{align*}
						J_1=& -\nu m \int_{\mT^d }v_t(x)^{m-1}\big(-\Delta v_t(x)\big)\dif x
						\\ \leq &
						-\nu m\Big( 		C_{m}^{-1}\|v_t\|_{L^m}^m+\frac{1}{m}\|(-\Delta)^{1/2}v_t^{m/2}\|
						^2\Big)
						\\ \leq & 		-\nu m C_{m}^{-1}\|v_t\|_{L^m}^m-\nu \|(-\Delta)^{1/2}v_t^{m/2}\|^2
					\end{align*}
					in the above,  $C_{m}\in (1,\infty)$ is   a positive constant depending on $m$ and $d.$
					Now we consider the term $J_2.$
					By H\"older's inequality and  Young's inequality, for any $\eps>0$, we get
					{
						\bae
						\label{p1126-1}
						|J_2| \leq& C \int_{\mT^d}  |v_t(x)|^{m-2}\big|\nabla v_t(x)\big| \big|\nabla \eta_t  (x)\big|
						\dif x \\
						\leq& C \int_{\mT^d}  |v_t(x)|^{(m-2)/2}\big|(-\Delta)^{1/2}v_t(x)^{m/2}\big||\nabla \eta_t  (x)|
						\dif x
						\\ \leq & C  \|v_t\|_{L^m}^{\frac{m-2}{2}}
						\|  (-\Delta)^{1/2}v_t^{m/2}  \|
						\|\nabla \eta_t\|_{L^m}
						\\
						\leq&\eps \|v_t\|_{L^m}^m+\frac12 \nu\|(-\Delta)^{1/2}v_t^{m/2}\|^2 +C_\eps \| \nabla \eta_t  \|_{L^m}^m.\\
						\eae}
					For the term $J_3$, by direct calculations, we conclude that
					\begin{eqnarray}
						\label{p1126-2}    && |J_3| \leq  C\zeta_t\int_{\mT^d } \big(|v_t|^{m-1 }(x)+
						|v_t|^{m+\Bbbk-2 }(x)\big)
						\dif x,
					\end{eqnarray}
					{  where
						\begin{align*}
							\zeta_t:=&\|\eta_t \|_{L^\infty}+
							\|\eta_t \|_{L^\infty}^{\Bbbk}+
							\|\nabla\eta_t \|_{L^\infty}+
							\|\nabla\eta_t \|_{L^\infty}^{\Bbbk} +
							\|\eta_t   \|_{L^m}^{\kappa}+
							\|\eta_t \|_{L^m}^{m}+ \|\eta_t \|_{\nn+1}^2.
					\end{align*}}
					In the next, we consider the  second  term in (\ref{p1126-2}).
					Set
					\begin{eqnarray*}
						q=
						\left\{
						\begin{split}
							& \frac{2d}{d-2}, \quad d\geq 3,
							\\
							& 4, \quad d=2,
							\\
							& \infty, \quad d=1.
						\end{split}
						\right.
					\end{eqnarray*}
					Then,  by  H\"older's inequality, it holds that
					\begin{eqnarray}
						\label{p0204-12}
						\begin{split}
							&\int_{\mT^d } |v_t|^{m/2} |v_t|^{m/2} |v_t|^{\Bbbk-2 }(x) \dif x
							\\ &\leq C  \| v_t^{m/2}  \|_{L^q}    \|v_t\|_{L^m}^{m/2} ( \|v_t\|_{L^{(\Bbbk-2)d}}^{\Bbbk-2}+\|v_t\|_{L^{(\Bbbk-2)4}}^{\Bbbk-2}).
						\end{split}
					\end{eqnarray}
					In the above, for the case  $d\geq 3$,   we have used the fact  $\frac{1}{q}+\frac{1}{2}+\frac{1}{d}=1$  and for the cases $d=1,2$,
					the above inequality (\ref{p0204-12})
					can also  be verified by  the value of $q$ and   H\"older's inequality directly.
					By Sobolev embedding theorem, we also have
					\begin{eqnarray}
						\label{p38-1}
						\| v_t^{m/2}  \|_{L^q}   \leq C
						\| v_t^{m/2} \|_{L^2}+
						\|\nabla   v_t^{m/2} \|_{L^2}.
					\end{eqnarray}
					{ Thus,  by \eqref{p0204-12}--(\ref{p38-1}), one arrives at}\footnote{For $\Bbbk=2$, we use the notation that $\|f\|_{L^0}:=1$ for any function $f$ on $\mT^d$.}
					\begin{eqnarray*}
						\nonumber  &&  \zeta_t \int_{\mT^d } |v_t|^{m+\Bbbk-2 }(x) \dif x                          = \zeta_t  \int_{\mT^d } |v_t|^{m/2} |v_t|^{m/2} |v_t|^{\Bbbk-2 }(x) \dif x
						\\ \nonumber &&\leq C  \zeta_t   \| v_t^{m/2}  \|_{L^q}    \|v_t\|_{L^m}^{m/2} ( \|v_t\|_{L^{(\Bbbk-2)d}}^{\Bbbk-2}+\|v_t\|_{L^{(\Bbbk-2)4}}^{\Bbbk-2})
						\\   \nonumber &&\leq     C  \zeta_t   (\|v_t\|_{L^m}^{m/2}+\| \nabla  v_t^{m/2}\|_{L^2})
						\|v_t\|_{L^m}^{m/2} ( \|v_t\|_{L^{(\Bbbk-2)d}}^{\Bbbk-2}+\|v_t\|_{L^{(\Bbbk-2)4}}^{\Bbbk-2})
						\\  \nonumber &&\leq C \zeta_t   \|v_t\|_{L^m}^{m} \|v_t\|_{L^{m}}^{\Bbbk-2}
						+C \zeta_t  \| \nabla  v_t^{m/2}\| \|v_t\|_{L^m}^{m/2}  \|v_t\|_{L^{\mmm}}^{\Bbbk-2}
						\\  \nonumber   &&\leq \eps \| \nabla  v_t^{m/2}\|^2+
						C_\eps (\zeta_t +\zeta_t^2) \|v_t\|_{L^m}^{m}(  \|v_t\|_{L^{m}}^{\Bbbk-2} +  \|v_t\|_{L^{m}}^{2\Bbbk-4}   )
						\\  \nonumber   &&\leq \eps \| \nabla  v_t^{m/2}\|^2+
						C_\eps (\zeta_t +\zeta_t^2) (1+\|v_t\|_{L^m}^{2m}).                            \end{eqnarray*}
					In the above, we have used    $m\geq 4d \Bbbk.$  
					With the help of the above inquality and \eqref{p1126-2}, for any $\eps>0,$ we arrive at
					\begin{eqnarray}
						\label{p38-3}    && |J_3| \leq  \eps  \| \nabla  v_t^{m/2}\|^2
						+
						C_\eps (\zeta_t +\zeta_t^2) (1+\|v_t\|_{L^m}^{2m}).
					\end{eqnarray}
					
					Setting $\eps$ small enough, combining the estimates of $J_1,J_2,J_3$ with \eqref{p1126-4}, we arrive at
					\begin{eqnarray}
						\label{p1126-5}
						\begin{split}
							\frac{\dif  }{\dif t }\|v_t\|_{L^m}^{m}  & \leq -\mathscr{C}_m^{-1}\|v_t\|_{L^m}^{m}  +\mathscr{C}_m(\zeta_t+\zeta_t^2)(1
							+\|v_t\|_{L^m}^{2m} ),~\forall t\geq 0,
						\end{split}
					\end{eqnarray}
					where $\mathscr{C}_m\in (1,\infty)$ is a constant depending on $m$ and
					$d,\Bbbk, (b_{i})_{i\in \cZ_0}, \mathbb{U}$, $(c_{\mathbbm{i},\mathbbm{j}})_{1\leq \mathbbm{i}\leq d,0\leq \mathbbm{j} \leq \Bbbk}.$

					For any $u_0$ with
					$\|u_0\|_\nn \leq \cC$,
					obviously, there exists a constant $\cN\geq 1$ such that
					\begin{eqnarray*}
						\|u_0\|_{L^m}^{m} \leq \cN.
					\end{eqnarray*}
					For $\gamma\in (0,1)$, let
					\begin{equation*}
						T_1=T_1(\gamma,m,\cN)>0
					\end{equation*}
					be a constant such that
					$\exp\big\{-\mathscr{C}_m^{-1} T_1 /2 \big\} \cN<\frac{\gamma}{2}$, let  {  $\delta=\delta(\gamma,\cN, m,\mathscr{C}_m)\in (0,1)$ be constant
						such that}
					\begin{eqnarray}
						&& \label{p1126-6}   -\mathscr{C}_m^{-1} x +\mathscr{C}_m
						\delta (1+x^{2}   ) \leq -\frac{\mathscr{C}_m^{-1}x}{2},\quad  \forall x\in [\gamma/2, \cN].
					\end{eqnarray}
					For any $T_2>0$,  define
					\begin{eqnarray*}
						\Omega^{\gamma, \delta, T_1,T_2}:= \big\{ \omega:  \sup_{s\in [0,T_1+T_2]}  (\zeta_s+\zeta^2_s)   \leq \delta \wedge \gamma \big\}.
					\end{eqnarray*}
					There are the following two cases about $\|v_0\|_{L^m}^{m}.$
					
					\textbf{Case 1:}  $\|v_0\|_{L^m}^{m}=\|u_0\|_{L^m}^{m}\leq \gamma/2.$
					Combining the  fact (\ref{p1126-6}) with  (\ref{p1126-5}),
					for any $\omega\in \Omega^{\gamma, \delta, T_1,T_2}$
					and $t\in [0,T_1]$, it holds that
					\begin{eqnarray}
						\label{p1126-8}
						\|v_t\|_{L^m}^{m}\leq \gamma/2.
					\end{eqnarray}

					\textbf{Case 2:}  $\|v_0\|_{L^m}^{m}=\|u_0\|_{L^m}^{m}\in (\gamma/2, \cN).$
					Define
					\begin{eqnarray*}
						\tau=\inf\{t\geq 0,  \|v_t\|_{L^m}^{m}\leq \gamma/2\}.
					\end{eqnarray*}
					For any $t\leq \tau$ and $\omega\in \Omega^{\gamma, \delta, T_1,T_2}$,
					in view of  (\ref{p1126-6}) and (\ref{p1126-5}),   we have
					\begin{eqnarray*}
						&& \frac{\dif  }{\dif t }\|v_t\|_{L^m}^{m} \leq -\frac{\mathscr{C}_m^{-1}}{2}\|v_t\|_{L^m}^{m}.
					\end{eqnarray*}
					Thus,
					\begin{eqnarray*}
						\|v_t\|_{L^m}^{m}\leq \exp\big\{-\mathscr{C}_m^{-1} t /2 \big\} \cN.
					\end{eqnarray*}
					In view of  the above inequality and the  fact that  $\exp\big\{-\mathscr{C}_m^{-1} T_1 /2 \big\}\cN <\frac{\gamma}{2}$, one easily sees  that $\tau\leq T_1.$
					
					Combining the above two cases,
					for $\omega\in \Omega^{\gamma,\delta,T_1,T_2}$,   we have
					\begin{eqnarray*}
						\|v_{T_1} \|_{L^m}^{m}\leq \frac{\gamma}{2}.
					\end{eqnarray*}
					Putting everything together, one has
					\begin{eqnarray*}
						\|u_0\|\leq  \mathcal{C}   \text{ and  }
						\omega \in  \Omega^{\gamma,\delta,T_1,T_2}  \Rightarrow
						\|v_{T_1} \|_{L^m}^m \leq  \frac{\gamma}{2}.
					\end{eqnarray*}
					In view of the above fact, also with the help of    the following  fact:
					\begin{eqnarray*}
						\frac{\dif  }{\dif t }\|v_t\|_{L^m}^{m}  \leq 0 \text{ if }
						\|v_t\|_{L^m}^{m}\in [\frac{\gamma}{2}, \cN],  \omega \in \Omega^{\gamma, \delta, T_1,T_2}
						\text{ and } t\in [0,T_1+T_2],
					\end{eqnarray*}
					for any $\omega \in \Omega^{\gamma, \delta, T_1,T_2}$,
					one arrives at
					\begin{eqnarray}
						\label{p14-1}
						\begin{split}
							& \|v_{t} \|_{L^m}^m \leq  \frac{\gamma}{2}, \quad \forall t\in [T_1,T_1+T_2],
							\\
							& \|v_{t} \|_{L^m}^m\leq \cN,\quad \forall t\in [0,T_1+T_2].
						\end{split}
					\end{eqnarray}
					
					{
						By (\ref{p14-1}), (\ref{p14-2}) and the definition of $\zeta_t$, for any  $\omega \in \Omega^{\gamma, \delta, T_1,T_2}$,   we conclude that
						\begin{eqnarray}
							\nonumber && \|v_{T_1+T_2}\|_\nn^2  \leq e^{-(T_1+T_2)} \|u_0\|_\nn^2
							\\ \nonumber&& +C \int_0^{T_1} e^{-(T_1+T_2-s)}(\|v_s\|_{L^\mmm}^{\kappa}+\|\eta_s\|_{L^m}^{\kappa}
							+\|v_s\|_{L^m}^{m}
							+\|\eta_s\|_{L^m}^{m} +\|\eta_s  \|_{\nn+1}^2) \dif s
							\\ \nonumber && \quad\quad+ C \int_{T_1}^{T_1+T_2} e^{-(T_1+T_2-s)}(\|v_s\|_{L^m}^{\kappa}+\|\eta_s\|_{L^m}^{\kappa}
							+\|v_s\|_{L^m}^{m}
							+\|\eta_s\|_{L^m}^{m} +\|\eta_s  \|_{\nn+1}^2)  \dif s
							\\ \nonumber &&\leq
							e^{-(T_1+T_2)} \|u_0\|_\nn^2
							+C e^{-T_2} \int_0^{T_1} e^{-(T_1-s)}(\cN+\gamma)  \dif s
							\\ \label{p14-3} && \quad\quad+ C \int_{T_1}^{T_1+T_2} e^{-(T_1+T_2-s)}\gamma \dif s.
						\end{eqnarray}
						In the second inequality of the above, we have used   $\cN\geq 1$ and $\kappa<m.$
					} Therefore, for any $\gamma>0$ and  $u_0\in H$ with $ \|u_0\|\leq  \mathcal{C}$,
					by (\ref{p14-3}),
					we set $ T_2>0$ such that for any $\omega\in \Omega^{\gamma, \delta, T_1,T_2}$,
					it holds that
					\begin{eqnarray*}
						\|v_{T_1+T_2}\|_\nn^2\leq C\gamma.
					\end{eqnarray*}
					Fix this $T_2.$
					For any $\omega\in \Omega^{\gamma, \delta, T_1,T_2}$,
					the above implies that
					\begin{eqnarray*}
						&& \|u^{u_0}_{T_1+T_2}\|_\nn^2 \leq  C\|v^{u_0}_{T_1+T_2}\|_\nn^2 +C\|\eta_t\|_\nn^2
						\\ &&  \leq
						C\|v^{u_0}_{T_1+T_2}\|_\nn^2 +C\zeta_t \leq C\gamma.
					\end{eqnarray*}
					In the end, we conclude that
					\begin{eqnarray*}
						&& \mP\big(\|u^{u_0}_{T_1+T_2}\|_\nn \leq \sqrt{C\gamma}   \big)
						\geq \mP(\Omega^{\gamma, \delta, T_1,T_2})>0.
					\end{eqnarray*}
					The proof is complete.
				\end{proof}

				
				\textbf{Now we are in a position to prove Proposition \ref{16-6}. }
				\begin{proof}
					For any $\eps\in (0,\frac{\gamma}{2}),N\geq 1$,
					$u_0\in H^{\nn}$  with $\|u_0\|_\nn\leq \mathcal{C}$,
					and $u_0' \in H^{\nn+1}$  with $\|u_0'\|_\nn\leq \mathcal{C}$,
					one has
					\begin{eqnarray*}
						&& \mP( \|u_T^{u_0}\|_\nn <\gamma)
						\geq \mP( \|P_N u_T^{u_0}\|_\nn+ \|Q_N u_T^{u_0}\|_\nn <\gamma)
						\\ &&\geq \mP( \|P_N u_T^{u_0}\|_\nn<\gamma-\eps,  \|Q_N u_T^{u_0}\|_\nn <\eps)
						\\ &&\geq  \mP( \|P_N u_T^{u_0}\|_\nn<\gamma-\eps)-\mP(  \|Q_N u_T^{u_0}\|_\nn \geq \eps)
						\\ &&\geq  \mP( \|P_N u_T^{u_0'}\|_\nn<\gamma-2\eps, \|P_N u_T^{u_0'}-P_N u_T^{u_0}\|_\nn<\eps )-\mP(  \|Q_N u_T^{u_0}\|_\nn \geq \eps)
						\\ &&\geq \mP( \|P_N u_T^{u_0'}\|_\nn<\gamma-2\eps)-\mP( \|P_N u_T^{u_0'}-P_N u_T^{u_0}\|_\nn\geq \eps )-\mP(  \|Q_N u_T^{u_0}\|_\nn \geq \eps)
						\\ && \geq p_0(\cC,\gamma-\eps)-\mP( \|P_N u_T^{u_0'}-P_N u_T^{u_0}\|_\nn\geq \eps )-\mP(  \|Q_N u_T^{u_0}\|_\nn \geq \eps),
					\end{eqnarray*}
					where $p_0(\cC,\gamma-\eps)$ is a constant given by Proposition \ref{p24-5}.
					In the above inequality, we first let $u_0'\rightarrow u_0$  and then let $N\rightarrow \infty$,
					we deduce that
					\begin{eqnarray*}
						\mP( \|u_T^{u_0}\|_\nn <\gamma)\geq p_0(\cC,\gamma-\eps).
					\end{eqnarray*}
					The proof is complete.
				\end{proof}

				\begin{appendices}
				\section*{Appendix}
				\appendix

				\section{Existence of an invariant measure}\label{appen B}
				\begin{lemma}
					There exists a probability measure $\mu \in \mathcal{P}\left(H^\nn\right)$ which is invariant \textup{w.r.t.} the Markov semigroup $\left(P_t\right)_{t \geq 0}$.
				\end{lemma}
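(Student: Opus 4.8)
The plan is to obtain the invariant measure through the Krylov--Bogoliubov (Bogolyubov--Krylov) averaging procedure, following the route of \cite[Lemma 8]{MR20}. First I would fix the initial datum $u_0=0$, which belongs to $H^n$ for every $n\geq\nn$, let $(u_t)_{t\geq 0}$ be the corresponding solution of \eqref{1-1} supplied by Proposition \ref{wp}, and form the empirical measures $R_T^*\delta_0=\frac1T\int_0^T P_t^*\delta_0\,\dif t\in\mathcal P(H^\nn)$ for $T\geq 1$. The whole problem then reduces to showing that $\{R_T^*\delta_0:T\geq 1\}$ is tight in $\mathcal P(H^\nn)$: once this is known, Prokhorov's theorem yields a sequence $T_k\uparrow\infty$ along which $R_{T_k}^*\delta_0$ converges weakly to some $\mu\in\mathcal P(H^\nn)$, and then for every $s>0$ and $\varphi\in C_b(H^\nn)$ one has
\begin{equation*}
\Big|\int_{H^\nn}(P_s\varphi-\varphi)\,\dif(R_{T_k}^*\delta_0)\Big|
\le \frac1{T_k}\Big(\int_{T_k}^{T_k+s}|P_t\varphi(0)|\,\dif t+\int_0^{s}|P_t\varphi(0)|\,\dif t\Big)
\le \frac{2s\,\|\varphi\|_{L^\infty}}{T_k},
\end{equation*}
which tends to $0$ as $k\to\infty$; while the Feller property established above (together with $\|P_s\varphi\|_{L^\infty}\leq\|\varphi\|_{L^\infty}$ and $P_s\varphi\in C_b(H^\nn)$) gives $\int P_s\varphi\,\dif(R_{T_k}^*\delta_0)\to\int P_s\varphi\,\dif\mu$ and $\int\varphi\,\dif(R_{T_k}^*\delta_0)\to\int\varphi\,\dif\mu$. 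Hence $\int(P_s\varphi-\varphi)\,\dif\mu=0$ for all $s>0$ and $\varphi\in C_b(H^\nn)$, i.e.\ $P_s^*\mu=\mu$.

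For the tightness I would prove an a priori estimate one derivative above the state space. Applying It\^o's formula to $\|u_t\|_\nn^2=\langle u_t,(-\Delta)^\nn u_t\rangle$ and repeating essentially the computation of the proof of Lemma \ref{priori H^m} with $\nn$ in place of $\nn-1$, the viscous term produces the dissipation $2\nu\int_0^t\|u_s\|_{\nn+1}^2\,\dif s$; the flux term is estimated by $2\int_0^t\|\operatorname{div}A(u_s)\|_{\nn-1}\,\|u_s\|_{\nn+1}\,\dif s$, which by Lemma \ref{div reg} applied with $n=\nn+1$ and Young's inequality is at most $\nu\int_0^t\|u_s\|_{\nn+1}^2\,\dif s+C\int_0^t(\|u_s\|_{L^{\mm}}^{\kappa}+\|u_s\|_{L^{\mm}}^{\mm})\,\dif s$ for suitable exponents $\mm>\kappa>0$ depending only on $\nn,d,\Bbbk$; and the noise contributes a finite deterministic term $Ct$ plus a local martingale which, after the usual localization and passage to the limit, has zero expectation. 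Taking expectations and using the moment bound \eqref{L-8-62} of Lemma \ref{L^p 2.2} to control $\int_0^t\mE\|u_s\|_{L^{\mm}}^{\mm}\,\dif s$ (recalling $\kappa<\mm$), I would arrive at
\begin{equation*}
\nu\int_0^T\mE\|u_s\|_{\nn+1}^2\,\dif s\leq\|u_0\|_\nn^2+C(T+1)=C(T+1),\qquad T\geq 1,
\end{equation*}
hence $\frac1T\int_0^T\mE\|u_s\|_{\nn+1}^2\,\dif s\leq C_0$ with $C_0$ independent of $T$.

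Tightness then follows at once: given $\eps>0$, choose $R$ with $C_0/R^2<\eps$; by Chebyshev's inequality $R_T^*\delta_0(\{u:\|u\|_{\nn+1}>R\})=\frac1T\int_0^T\mP(\|u_s\|_{\nn+1}>R)\,\dif s\leq C_0/R^2<\eps$ for all $T\geq 1$, and the set $\{u\in H^{\nn+1}:\|u\|_{\nn+1}\leq R\}$ is compact in $H^\nn$ since the embedding $H^{\nn+1}\hookrightarrow H^\nn$ on $\mathbb T^d$ is compact. Combined with the reduction of the first paragraph, this produces the invariant measure $\mu$.

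The step I expect to be the main obstacle is the rigorous justification of the $H^{\nn+1}$ energy identity: the It\^o formula for $\|u_t\|_\nn^2$ requires the solution to be regular enough for the pairings $\langle u_s,(-\Delta)^{\nn+1}u_s\rangle$ and $\langle\operatorname{div}A(u_s),(-\Delta)^\nn u_s\rangle$ to be meaningful, which is precisely why one starts from the smooth datum $u_0=0$ and relies on the well-posedness of \eqref{1-1} in $H^n$ for every $n$ (Proposition \ref{wp}); one must then handle the stochastic integral as a local martingale and pass to the limit through the localizing stopping times, exactly as in the proof of Lemma \ref{priori H^m}. Once this bound is secured, Rellich compactness, Prokhorov's theorem, the Feller property, and the Krylov--Bogoliubov limit are all standard.
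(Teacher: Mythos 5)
Your proof is correct, but it takes a genuinely different route from the paper's. The paper does \emph{not} establish tightness directly in $\mathcal{P}(H^\nn)$: it invokes Lemma \ref{priori H^m}, which only yields a time-averaged bound on $\int_0^T\|u_s\|_\nn^2\,\dif s$, and therefore proves tightness of the pushed-forward empirical measures $\iota^*R_T^*\delta_{u_0}$ in $\mathcal{P}(L^1)$, using the compact embedding $H^\nn\hookrightarrow L^1$. The Prokhorov limit then lives a priori only in $\mathcal{P}(L^1)$, and the paper has to do extra work: show $\tilde\mu(H^\nn)=1$ via lower semicontinuity of $\|\cdot\|_\nn$ in $L^1$ and Portemanteau, and then transfer the invariance identity from $L^1$ to $H^\nn$ by exploiting the $L^1$-contraction property of the nonlinear semigroup (Lemma \ref{path L1 contr}) so that $P_t\varphi$ is $L^1$-continuous, finally invoking \cite[Lemma 6(2)]{MR20}. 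You avoid this entire transfer step by instead deriving a one-derivative-higher estimate, $\frac1T\int_0^T\mE\|u_s\|_{\nn+1}^2\,\dif s\le C_0$, from It\^o applied to $\|u_t\|_\nn^2$ together with Lemma \ref{div reg} at level $n=\nn+1$, so that tightness holds directly in $\mathcal{P}(H^\nn)$ via the compact embedding $H^{\nn+1}\hookrightarrow H^\nn$; the Krylov--Bogoliubov closure then needs nothing beyond the Feller property on $H^\nn$. Your derivation of the $H^{\nn+1}$ bound is sound (Lemma \ref{div reg} covers $n=\nn+1$, and with $u_0=0$ Proposition \ref{wp} supplies a solution in every $H^n$ so the energy identity and the localization of the martingale are justified as in Lemma \ref{priori H^m}, and \eqref{L-8-62} controls the resulting $L^\mmm$ terms). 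In short, the paper trades an extra regularity estimate for extra functional-analytic bookkeeping through $L^1$, while you trade the opposite way; both are valid, and yours is arguably the more streamlined of the two given that Lemma \ref{div reg} already provides the needed ingredient.
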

				\begin{proof}
					Using the Markov inequality and  Lemma \ref{priori H^m}, we have
					\bae\label{tight}
					\frac{1}{T} \int_0^T \mathbb{P}\Big(\|u(t)\|_{\nn}^2>\frac{1}{\varepsilon}\Big) \dif t \leq \frac{C \epsilon}{T}\Big(T+\left\|u_0\right\|_{\nn}^2+\left\|u_0\right\|_{L^{\mm}}^{\mm}
\Big),~{  T\geq 1}.
					\eae
					Let
					\begin{equation*}
						K_{\varepsilon}:=\left\{u \in L^1:\|u\|_{\nn}^2 \leq \frac{1}{\varepsilon}\right\}.
					\end{equation*}
					Notice that the embedding $ \iota:  H^\nn \rightarrow L^1$ is compact. Then $K_{\varepsilon}$ is compact in $L^1$. For $\varsigma\in\mathcal{P}(H^\nn)$, we use the notation $\iota^* \varsigma:= \varsigma\circ \iota^{-1}\in\mathcal{P}(L^1)$. Recall that the empirical measure is given by $R_T^* \varsigma(O)=\frac{1}{T} \int_0^T P_t^* \varsigma(O) \dif t$. For $T\geq1$,  we  rewrite \eqref{tight} as follows:
					\begin{equation*}
						\iota^* {R}_T^* \delta_{u_0}\left(L^1(\mathbb{T}^d) \backslash K_{\varepsilon}\right) \leq \eps C\left(1+\left\|u_0\right\|_{\nn}^2+\left\|u_0\right\|_{L^{\mm}}^{\mm}\right).
					\end{equation*}
					Thanks to Prokhorov's theorem, we deduce that $\{\iota^* {R}_T^* \delta_{u_0}\}_{T\geq1}$ is tight in $ \mathcal{P}\left(L^1\right)$. Then we have a weak convergence subsequence $\tilde\mu_n:=\iota^* {R}_{T_n}^* \delta_{u_0}\rightarrow \tilde\mu\in \mathcal{P}\left(L^1\right)$.
					
					Notice that $\|\cdot\|_{H^\nn}^2$ is lower semi-continuous in $L^1$. By Portemanteau's theorem, we have
					\baee
					&\int_{L^1(\mathbb{T}^d)}\|x\|_{\nn}^2 \dif\tilde\mu\leq\lim_{n\rightarrow\infty}\int_{L^1(\mathbb{T}^d)}\|x\|_{\nn}^2 \dif\mu_n\\
					=& \lim_{n\rightarrow\infty}\mathbb{E}\Big[\frac {1}{T_n} \int_0^{T_n} \|u_s\|_{\nn}^2\dif s\Big]\leq C<\infty.
					\eaee
					Thus we know that $\tilde \mu(H^\nn)=1$.
					
					Set $ \mu:=\tilde \mu|_{H^\nn}\in\mathcal{P}\left(H^\nn\right) $ be the restriction of $\tilde \mu$ on $H^\nn$.	Let $\varphi \in C_b\left(L^1\right)$.  When we restrict the domain of $\varphi$ to  $H^\nn$, we denote this function by   $\varphi_{\mid_{  H^\nn}}.$
					Since   $\varphi_{\mid_{  H^\nn}}\in C_b(H^\nn)$,  with some abuse of notation, we still write
					$P_t \varphi_{\mid_{  H^\nn}}$ as $P_t \varphi$.
					Moreover, with the help of  Lemma \ref{path L1 contr},  $P_t \varphi$ is continuous \textup{w.r.t.} the $ L^1$ norm. Then
					\baee
					&\int_{L^1(\mathbb{T}^d)} P_t \varphi \mathrm{~d} \widetilde{\mu} =  \lim_{n\rightarrow\infty}\int_{L^1(\mathbb{T}^d)} P_t \varphi \mathrm{~d} \widetilde{\mu}_n \\
					= & \lim _{n \rightarrow \infty} \int_{L^1(\mathbb{T}^d)} P_t \varphi \mathrm{~d}\iota^* {R}_{T_n}^* \delta_{u_0} =  \lim _{n \rightarrow \infty} \int_{H^\nn} P_t \varphi \mathrm{~d} R_{T_n}^* \delta_{u_0} \\
					= & \lim _{n \rightarrow \infty} \frac{1}{T_n} \int_0^{T_n} \int_{H^\nn} \varphi \mathrm{d} P_{s+t}^* \delta_{u_0} \mathrm{~d} s =  \lim _{n \rightarrow \infty} \frac{1}{T_n} \int_t^{T_n+t} \int_{H^\nn} \varphi \mathrm{d} P_s^* \delta_{u_0} \mathrm{~d} s \\
					= & \lim _{n \rightarrow \infty}\left(\frac{1}{T_n} \int_0^{T_n} \int_{H^\nn} \varphi \mathrm{d} P_s^* \delta_{u_0} \mathrm{~d} s\right. \\
					& \left.+\frac{1}{T_n} \int_{T_n}^{T_n+t} \int_{H^\nn} \varphi \mathrm{d} P_s^* \delta_{u_0} \mathrm{~d} s-\frac{1}{T_n} \int_0^t \int_{H^\nn} \varphi \mathrm{d} P_s^* \delta_{u_0} \mathrm{~d} s\right) \\
					= & \lim _{n \rightarrow \infty} \int_{H^\nn} \varphi \mathrm{d} R_{T_n}^* \delta_{u_0} =  \lim _{n \rightarrow \infty} \int_{L^1(\mathbb{T}^d)} \varphi \mathrm{d} \iota^*{R}_{T_n}^* \delta_{u_0}=\int_{L^1(\mathbb{T}^d)} \varphi \mathrm{d} \widetilde{\mu} .
					\eaee
					Note that
					\baee
					&\int_{H^\nn}  \varphi \mathrm{~d}P_t^*{\mu}=\int_{H^\nn} P_t \varphi \mathrm{~d}{\mu}\\
					=&\int_{L^1(\mathbb{T}^d)} P_t \varphi \mathrm{~d} \widetilde{\mu}=\int_{L^1(\mathbb{T}^d)} \varphi \mathrm{d} \widetilde{\mu}=\int_{H^\nn} \varphi \mathrm{d} {\mu}.
					\eaee
					Since $\varphi\in C_b(L^1)$ is arbitrarily, it leads to $\iota^*P_t^*{\mu}=\iota^*\mu$. By an easy modification,
					\cite[Lemma 6 (2)]{MR20} also holds when we replace  the  $\mathbb T$   by $\mathbb T^d$.  Thus,  one has
					$P_t^*{\mu}=\mu.$
				\end{proof}
				\section{ Proof of Corollary \ref{1217-1} }
				\label{B}
				
				First, we begin with a lemma.
				
				\begin{lemma}
					\label{p26-5}
					Assume that
					\begin{eqnarray*}
						A_i(u)=c_{i, \Bbbk}  u^{\Bbbk}+c_{i,\Bbbk-1}u^{\Bbbk-1}
						+c_{i,\Bbbk-2}u^{\Bbbk-2}+\cdots+c_{i,1}u^1+c_{i,0},\quad i=1,\cdots,d.
					\end{eqnarray*}
					where $c_{i,j}\in \mR,  \Bbbk \geq 2$    { and at least one of elements in $\{ c_{i,\Bbbk}:i=1,\cdots,d\}$ is not zero}. If
					\begin{eqnarray*}
						\cZ_0 = \big\{\varsigma_i, -\varsigma_i, 2\varsigma_i,-2\varsigma_i,
						i=1,\cdots,d\big\},
					\end{eqnarray*}
					where $\varsigma_i=(\varsigma_{ij})_{j=1}^d\in \mZ_*^d$ with $\varsigma_{ii}=1$ and $\varsigma_{ij}=0,j\neq i$.
					Then,   one has
					\begin{eqnarray*}
						\cZ_\infty\supseteq \{k\in \mZ^d_*:\langle c_\Bbbk,k\rangle=\sum_{i=1}^d c_{i,\Bbbk} k_i\neq 0 \}.
					\end{eqnarray*}
				\end{lemma}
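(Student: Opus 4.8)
The plan is to prove the inclusion by an induction on the construction of the sets $\cZ_n$, showing that the generators $\varsigma_i, 2\varsigma_i$ in $\cZ_0$ are enough to "reach" every $k \in \mZ^d_*$ with $\langle c_\Bbbk, k\rangle \neq 0$ via repeated addition of elements of $\mathbb{L}$. First I would unpack the definitions: since $\cZ_0 = \{\pm\varsigma_i, \pm 2\varsigma_i : i = 1,\dots,d\}$ is symmetric and contains, for each coordinate direction, both the unit vector and twice it, the set $\mathbb{L} = \{\sum_{i=1}^{\Bbbk-1}\ell^{(i)} : \ell^{(i)} \in \cZ_0\}$ of sums of $\Bbbk-1$ elements of $\cZ_0$ is fairly rich. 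The key combinatorial observation I would establish first is that $\mathbb{L}$ contains every vector of the form $e_j$ (a single standard basis vector $\pm\varsigma_j$) — this is possible because one can take $\ell^{(1)} = 2\varsigma_j$, $\ell^{(2)} = -\varsigma_j$, and then pad the remaining $\Bbbk - 3$ summands with canceling pairs $\varsigma_i, -\varsigma_i$; one must check the parity/count works out, i.e. that $\Bbbk - 1 \geq 3$ can be arranged, and handle the small case $\Bbbk = 2$ separately (where $\mathbb{L} = \cZ_0$ itself, which already contains all $\pm\varsigma_j, \pm 2\varsigma_j$). More generally I would show $\mathbb{L}$ contains all vectors $m$ with $\|m\|_1 \leq \Bbbk - 1$ of the appropriate parity, and in particular $\pm\varsigma_j \in \mathbb{L}$ for all $j$ and all $\Bbbk \geq 2$.

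Next, the inductive step. Fix a target $k \in \mZ^d_*$ with $\langle c_\Bbbk, k\rangle \neq 0$. Starting from some $\kappa_0 \in \cZ_0$, I want to produce a chain $\kappa_0, \kappa_1, \dots, \kappa_M = k$ with $\kappa_{j} - \kappa_{j-1} \in \mathbb{L}$ and — crucially — $\langle c_\Bbbk, \kappa_j\rangle \neq 0$ for every $j$ (this is exactly the nondegeneracy constraint built into the recursion $\cZ_n = \{\kappa + \ell : \kappa \in \cZ_{n-1}, \ell \in \mathbb{L}, \langle c_\Bbbk, \kappa + \ell\rangle \neq 0\}$). Since at least one $c_{i_0,\Bbbk} \neq 0$, I would pick the starting point $\kappa_0 = \varsigma_{i_0}$ (or $-\varsigma_{i_0}$), which lies in $\cZ_0$ and has $\langle c_\Bbbk, \kappa_0\rangle = c_{i_0,\Bbbk} \neq 0$. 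Then I would move toward $k$ one unit step at a time, at each stage adding a single $\pm\varsigma_j \in \mathbb{L}$; the only thing to watch is that adding a unit vector might accidentally land on a point $\kappa$ with $\langle c_\Bbbk, \kappa\rangle = 0$. To avoid this, I would use the freedom to reorder the coordinate adjustments and, when a "forbidden hyperplane" hit is imminent, take a detour — e.g. first overshoot in the $i_0$ direction (where $c_{i_0,\Bbbk}\neq 0$) to keep the inner product nonzero, then correct. A clean way to package this: the set $\{k : \langle c_\Bbbk, k\rangle = 0\}$ is a proper sublattice (a hyperplane section of $\mZ^d$), and a lattice path in $\mZ^d$ with unit steps from a point off this hyperplane to another point off it can always be chosen to avoid the hyperplane entirely, because the hyperplane has codimension $1$ and we have $d \geq 1$ directions of movement plus the overshoot trick in the $i_0$ direction; I'd write this as a short lemma.

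The main obstacle I anticipate is precisely this last point — systematically guaranteeing that the intermediate lattice points stay off the degeneracy hyperplane $\{\langle c_\Bbbk, \cdot\rangle = 0\}$, especially in edge cases (small $\Bbbk$, or when $k$ itself is "close" to the hyperplane, or when many of the $c_{i,\Bbbk}$ vanish so that movement in those directions doesn't change the inner product). I would handle this by separating the coordinates into those with $c_{i,\Bbbk} \neq 0$ and those with $c_{i,\Bbbk} = 0$: adjustments in the second group are always "safe" (they never change $\langle c_\Bbbk, \cdot\rangle$, so no hyperplane can be hit), so I do those freely, and adjustments in the first group I sequence carefully using the overshoot trick, moving first to a point where the $i_0$-component is large enough that no single unit step can zero out the inner product. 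Once the chain is constructed, conclude that $k \in \cZ_M \subseteq \cZ_\infty$, which gives the claimed inclusion $\cZ_\infty \supseteq \{k \in \mZ^d_* : \langle c_\Bbbk, k\rangle \neq 0\}$. Finally I'd remark how this lemma feeds into Corollary \ref{1217-1}: combined with $\cZ_\infty^c \subseteq A^\perp$ needing to hold (Condition \ref{16-5}), this reduces the verification to checking $\{k : \langle c_\Bbbk, k\rangle = 0\}$ versus $A^\perp$, which under hypotheses (i)--(iii) is straightforward (under (ii) the rational-independence assumption \eqref{pp10-4} forces $\{k : \langle c_\Bbbk, k\rangle = 0\} = \{\mathbf 0\}$, hence $\cZ_\infty = \mZ^d_*$).
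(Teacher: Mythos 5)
Your overall strategy — first showing $\mathbb{L}$ contains $\pm\varsigma_j$ (and $\pm 2\varsigma_j$) by padding with canceling pairs, then building a chain of lattice points from a seed in $\cZ_0$ out to $k$ while staying off the hyperplane $H=\{x:\langle c_\Bbbk,x\rangle=0\}$ — is the same skeleton the paper uses. However, the ``short lemma'' you propose is false as stated, and this is a real gap. A unit-step lattice path between two points off $H$ cannot always avoid $H$. Take $d=1$ with $c_{1,\Bbbk}=1$: from $\varsigma_1$ to $-\varsigma_1$ the inner product runs $1,0,-1$, and every intermediate integer is hit. The same happens for $d\ge 2$ whenever the nonzero $c_{j,\Bbbk}$ are commensurable, e.g. $c_{1,\Bbbk}=c_{2,\Bbbk}=1$ and you go from $(1,0,\dots)$ to $(0,-1,0,\dots)$: each unit step changes $\langle c_\Bbbk,\cdot\rangle$ by $\pm1$, so by the discrete intermediate value theorem the path must pass through $0$. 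The codimension-one intuition you invoke is a continuous-topology fact; it does not transfer to $\mathbb{Z}^d$ with unit steps. The ``overshoot'' idea only guards the next single step, not the whole descent, and overshooting cannot change the parity obstruction above.

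The fix is exactly the step you have sitting unused: $\pm 2\varsigma_j\in\mathbb{L}$ lets you jump over the one forbidden intermediate value. Along any coordinate direction $j$ with $c_{j,\Bbbk}\ne0$, the inner products along a unit-step ray form an arithmetic progression with common difference $c_{j,\Bbbk}\ne0$, hence vanish at most once; when they do, replace the two unit steps through that value with a single step of $2\varsigma_j$. This is precisely what the paper does in its Subcase~2.2 (and its Case~1 handles the situation where the projection to the already-processed coordinates lies on $H$, by first nudging a safe coordinate $i_0$ by $\pm\varsigma_{i_0}$). The paper also organizes the chain differently from you: rather than a single walk toward $k$, it does a nested induction on the number of active coordinates, settling one coordinate $\ell+1$ at a time from $(k_1,\dots,k_\ell,0,\dots)\in\cZ_\infty$; this bookkeeping makes the ``one bad point per direction'' observation above clean to apply. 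Your decomposition into safe coordinates ($c_{j,\Bbbk}=0$) and active ones is a nice simplification and is compatible with the paper's structure, but the argument for the active coordinates must be replaced by the $2\varsigma_j$ jump rather than unit-step path-avoidance.
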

				
				\begin{proof}
					To shorten the notation, we  always  write $c_{i,\Bbbk}$ as $a_i,i=1,\cdots,d.$
					This means that
					\begin{align*}
						A_i(u)=a_i  u^{\Bbbk}+\sum_{j=0}^{\Bbbk-1}c_{i,j}u^{j},\quad i=1,\cdots,d.
					\end{align*}
					For any $n\geq 1$, by the definition  of   $\cZ_n$, it holds that
					\begin{eqnarray*}
						\cZ_{n}= \big\{\kappa+\ell\in \mZ^d:\kappa=(\kappa_i)_{i=1}^d \in \cZ_{n-1},\ell=(\ell_i)_{i=1}^d \in   \mathbb{L} ,
						\sum_{i=1}^d a_i(\kappa_i+\ell_i)\neq 0 \big\}.
					\end{eqnarray*}
					where
					\begin{align*}
						\mathbb{L}=&\{\ell\in \mZ^d: \ell=\sum_{i=1}^{\Bbbk-1 }\ell^{(i)}, \ell^{(i)}\in \cZ_0,i=1,\cdots , \Bbbk-1 \}.
					\end{align*}

					In the  first,  by direct calculations,  we have   the following claim:
					
					\textbf{Claim:}
					We have
					{\begin{eqnarray*}
							&& \mathbb{L}\supseteq \{ \varsigma_i,-\varsigma_i, 2\varsigma_i,-2\varsigma_i: i=1,\cdots,d\}.
					\end{eqnarray*}}
					If $\Bbbk-1$  is an odd number, in view of
					\begin{equation*}
						\varsigma_i=\frac{\Bbbk}{2}\cdot \varsigma_i+\frac{(\Bbbk-2)}{2}\cdot (-\varsigma_i)\text{ and } 2\varsigma_i=\frac{(\Bbbk-2)}{2}\cdot \varsigma_i+\frac{(\Bbbk-2)}{2}\cdot (-\varsigma_i)+(2 \varsigma_i),
					\end{equation*}
					we obtain $ \varsigma_i, 2 \varsigma_i \in \mathbb{L}.$
					If $\Bbbk-1$  is an even  number, then $\Bbbk\geq 3.$
					In view of
					\begin{eqnarray*}
						\varsigma_i=\frac{\Bbbk-3}{2}\varsigma_i+\frac{\Bbbk-1}{2}(-\varsigma_i)+(2\varsigma_i)
						\text{ and }
						2\varsigma_i=\frac{\Bbbk+1}{2}\varsigma_i+\frac{\Bbbk-3}{2}(-\varsigma_i),
					\end{eqnarray*}
					we get   $\varsigma_i, 2\varsigma_i\in \mathbb{L}.$
					With similar arguments, whether $\Bbbk-1$  is an even  number or not,  we also get  $-\varsigma_i,-2\varsigma_i\in \mathbb{L}.$

					In the following, we will use iteration to prove that, for any $1\leq n\leq d$
					and \baee
					k=(k_1,\cdots,k_n,0,\cdots,0)\in \mZ_*^d\eaee with  $\sum_{i=1}^n a_i k_i\neq 0$,
					one has
					\begin{eqnarray}
						\label{p26-10}
						k=(k_1,\cdots,k_n,0,\cdots,0) =\sum_{i=1}^nk_i\varsigma_i\in \cZ_\infty.
					\end{eqnarray}
					Obviously, the above claim (\ref{p26-10}) holds for $n=1.$
					Assume that we have proved  the above claim  (\ref{p26-10}) for
					$n=\ell\in \mN$ and we proceed with the proof  for $n=\ell+1\leq d.$
					Assume that  $k=(k_1,\cdots,k_\ell,k_{\ell+1},0,\cdots,0)$
					with
					\begin{align}
						\label{p1219-1}
						\sum_{i=1}^{\ell+1} a_i k_i\neq 0.
					\end{align}
					If $k_{\ell+1}=0$, then  by iteration, it holds that   \begin{equation*}
						k=(k_1,\cdots,k_\ell,k_{\ell+1},0,\cdots,0)
						=(k_1,\cdots,k_\ell,0,0,\cdots,0)\in \cZ_\infty.
					\end{equation*}
					If
					$a_{\ell+1}=0$, first   by the   iteration and (\ref{p1219-1}), we have
					$(k_1,\cdots,k_\ell,0,0,\cdots,0)\in \cZ_\infty.$
					Then, by the fact  $\varsigma_{\ell+1},-\varsigma_{\ell+1} \in \cZ_0\cap \mathbb{L}$ and the definitions  of $\cZ_\infty$
					we conclude that  \baee(k_1,\cdots,k_\ell,k_{\ell+1},0,\cdots,0) \in \cZ_\infty.\eaee
					
					Therefore, we can  assume  that  $k_{\ell+1}a_{\ell+1}\neq  0.$
					Furthermore, we also assume that  $k_{\ell+1}>0.$
					For the case of  $k_{\ell+1}<0$, the proof is similar and we omit the details.
					There are the following   two cases about $(k_1,\cdots,k_\ell).$

					\textbf{Case 1: $\sum_{i=1}^{\ell} a_i k_i=0.$}
					In this case, we can furthermore assume that   at least one of $a_i,i=1,\cdots,\ell$ is not equal  $0$\footnote{If
						$a_i=0,\forall 1\leq i\leq \ell$,
						by the fact $\varsigma_{\ell+1},-\varsigma_{\ell+1} \in \cZ_0\cap \mathbb{L}$ and the definition of $\cZ_n$,
						one arrives at $k_{\ell+1}\varsigma_{\ell+1} \in \cZ_\infty.$
						Then, by the fact $a_{\ell+1}k_{\ell+1}\neq 0,a_1=0$ and $ \varsigma_{1},-\varsigma_{1} \in \cZ_0\cap \mathbb{L}$,
						it holds that $k_1\varsigma_1+k_{\ell+1}\varsigma_{\ell+1}\in \cZ_\infty.$
						With similar arguments, one also gets   $\sum_{i=1}^{2}k_i\varsigma_i+a_{\ell+1}\varsigma_{\ell+1}\in \cZ_\infty$
						and finally arrives at $\sum_{i=1}^{\ell+1}k_i\varsigma_i\in \cZ_\infty.$
					}. Without loss of generality, we   assume that $a_1\neq 0.$
					Therefore, it holds that    $a_1(k_1-1)+\sum_{i=2}^{\ell} a_i k_i\neq 0$
					and $a_1(k_1+1)+\sum_{i=2}^{\ell} a_i k_i\neq 0.$
					Obviously, $\frac{a_1}{a_{\ell+1}}\in \{1,2,\cdots,k_{\ell+1}\}$ and  $-\frac{a_1}{a_{\ell+1}}\in \{1,2,\cdots,k_{\ell+1}\}$
					can't  hold  simultaneously.

					If $\frac{a_1}{a_{\ell+1}}\notin \{1,2,\cdots,k_{\ell+1}\}$,
					then for any $j\in \{1,2,\cdots,k_{\ell+1}\}$,
					it holds that
					\begin{eqnarray}
						\label{pp10-2}
						a_1(k_1-1)+\sum_{i=2}^{\ell} a_i k_i+a_{\ell+1}j\neq 0.
					\end{eqnarray}
					In the above, we have used the fact $\sum_{i=1}^{\ell} a_i k_i=0.$
					Noticing that $a_1(k_1-1)+\sum_{i=2}^{\ell} a_i k_i\neq 0, $
					by iteration, (\ref{pp10-2}) and the definition of $\cZ_\infty$, it holds that
					\begin{eqnarray}
						\label{pp10-3}
						(k_1-1,k_2,\cdots,k_\ell,k_{\ell+1},0,\cdots,0) \in \cZ_\infty.
					\end{eqnarray}
					Furthermore, by the definitions of $\cZ_\infty$ and the fact $\varsigma_1\in \mathbb{L}\cap \cZ_0$,  we also have
					\begin{equation*} (k_1,k_2,\cdots,k_\ell,k_{\ell+1},0,\cdots,0) \in \cZ_\infty.\end{equation*}
					
					If $-\frac{a_1}{a_{\ell+1}}\notin \{1,2,\cdots,k_{\ell+1}\}$,
					it holds that
					\begin{eqnarray*}
						a_1(k_1+1)+\sum_{i=2}^{\ell} a_i k_i+a_{\ell+1}j\neq 0,\quad \forall j\in \{1,2,\cdots,k_{\ell+1}\}.
					\end{eqnarray*}
					In the above, we have used the fact $\sum_{i=1}^{\ell} a_i k_i=0.$
					Similar to (\ref{pp10-3}), it holds that  $ (k_1+1,k_2,\cdots,k_\ell,k_{\ell+1},0,\cdots,0) \in \cZ_\infty.$
					Furthermore, by the definitions of $\cZ_\infty$ and the fact $\varsigma_1,-\varsigma_1\in \mathbb{L}\cap \cZ_0$,  we also have
					$ (k_1,\cdots,k_\ell, k_{\ell+1},0,\cdots,0) \in \cZ_\infty.$

					\textbf{Case 2: $\sum_{i=1}^{\ell} a_i k_i\neq 0.$}
					We divide the following two subcases about $a_{\ell+1}.$
					
					Subcase 2.1: $a_{\ell+1}\notin \{-\sum_{i=1}^\ell a_ik_i/j, j=1,\cdots,k_{\ell+1}\}.$
					In this subcase, for any  $j\in \{1,\cdots,k_{\ell+1}\}$,
					we have
					\begin{eqnarray*}
						\sum_{i=1}^\ell a_ik_i+ a_{\ell+1} j \neq 0.
					\end{eqnarray*}
					Thus, by  iteration,  the definition of $\cZ_\infty$  and the fact $\varsigma_{\ell+1}\in \mathbb{L}\cap \cZ_0$, one easily sees that
					$ (k_1,k_2,\cdots,k_\ell,k_{\ell+1},0,\cdots,0) \in \cZ_\infty.$

					Subcase 2.2: $a_{\ell+1}\in \{-\sum_{i=1}^\ell a_ik_i/j, j=1,\cdots,k_{\ell+1}\}.$
					In this subcase, by  (\ref{p1219-1}), for some   $j_0\in \{1,2,\cdots,k_{\ell+1}-2,k_{\ell+1}-1\}$,
					we have
					\begin{eqnarray}
						\label{p29-2}
						\sum_{i=1}^\ell a_ik_i+j_0 a_{\ell+1}= 0 \text{ and }\sum_{i=1}^\ell a_ik_i+j a_{\ell+1}\neq 0,\forall j\neq j_0.
					\end{eqnarray}
					First, by the definition of $\cZ_\infty$ and the fact $\varsigma_{\ell+1}\in \mathbb{L}\cap \cZ_0$,  one easily sees that
					\begin{eqnarray}
						\label{p29-1}
						(k_1,k_2,\cdots,k_\ell,j_0-1,0,\cdots,0) \in \cZ_\infty.
					\end{eqnarray}
					Since $2\varsigma_{\ell+1}\in \mathbb{L}$, by the above fact and (\ref{p29-2}),  we also   get
					\begin{eqnarray*}
						(k_1,k_2,\cdots,k_\ell,j_0+1,0,\cdots,0) \in \cZ_\infty.
					\end{eqnarray*}
					Also by the  definitions of $\cZ_\infty$  and (\ref{p29-2}),   we arrive at
					\begin{eqnarray*}
						(k_1,k_2,\cdots,k_\ell,k_{\ell+1},0,\cdots,0) \in \cZ_\infty.
					\end{eqnarray*}

					We complete the proof this Lemma  by iteration.
				\end{proof}

				Now we are in a position  to prove   Corollary \ref{1217-1}
				based on Lemma \ref{p26-5}.

				\begin{proof}
					(\romannumeral1)
					By Lemma \ref{p26-5},
					one has
					\begin{eqnarray*}
						\cZ_\infty\supseteq \{k\in\mZ^d:\langle c_\Bbbk,k\rangle\neq 0 \}.
					\end{eqnarray*}
					On the other hand,  obviously, we have
					\begin{eqnarray*}
						A^\perp=\{k\in\mZ^d:\langle c_\Bbbk,k\rangle= 0 \}.
					\end{eqnarray*}
					Thus, $\cZ_\infty^c \subseteq   A^\perp$ and the Condition \ref{16-5} holds.

					(\romannumeral2)
					In this case,
					by Lemma \ref{p26-5} and (\ref{pp10-4}),
					it holds that
					\begin{eqnarray*}
						\cZ_\infty^c \subseteq\{k\in\mZ^d:\langle c_\Bbbk,k\rangle=0 \}
						=\{\mathbf{0}\}\subseteq A^\perp.
					\end{eqnarray*}
					Thus,  the Condition \ref{16-5} holds.

					(\romannumeral3)
					We only need to consider the case $\Bbbk\geq 2.$
					By Lemma \ref{p26-5},
					it holds that
					\begin{eqnarray*}
						\cZ_\infty \supseteq \big\{k_1\in \mZ : c_{1,\Bbbk} k_1\neq 0 \big\}
						=\big\{k_1:  k_1\neq 0\big\}.
					\end{eqnarray*}
					and
					\begin{eqnarray*}
						\cZ_\infty^c
						\subseteq \big\{k_1 :   k_1=0\big\}\subseteq A^\perp.
					\end{eqnarray*}
					Thus, the proof is complete.
					
				\end{proof}
	\end{appendices}
\bmhead{Acknowledgements}

	We  would like to thank Professor  Zhao Dong and Doctor Zhengqian Li  for  their  useful  discussions and suggestions.
\section*{Declarations}

\begin{itemize}
\item Funding

	This work  is supported
by National Key R and D Program of China
(No. 2020YFA0712700), National Natural Science Foundation of China (Nos. 12090010, 12090014, 12471138),  the science and technology innovation Program of Hunan Province (No. 2022RC1189)    and Key Laboratory of Random Complex Structures and Data Science, Academy of Mathematics and
Systems Science, Chinese Academy of Sciences (Grant No. 2008DP173182).

\end{itemize}



\bibliography{sn-bibliography}

\end{document}